\newtheorem{theorem}{\textbf{Theorem}}[section]
\newtheorem{lemma}{\textbf{Lemma}}[section]
\newtheorem{proposition}{\textbf{Proposition}}[section]
\newtheorem{corollary}{\textbf{Corollary}}[section]
\newtheorem{remark}{\textbf{Remark}}[section]
\newtheorem{definition}{\textbf{Definition}}[section]
\def\be{\begin{equation}}
\def\ee{\end{equation}}
\def\bea{\begin{eqnarray}}
\def\eea{\end{eqnarray}}
\def\bt{\begin{theorem}}
\def\et{\end{theorem}}
\def\bl{\begin{lemma}}
\def\el{\end{lemma}}
\def\br{\begin{remark}}
\def\er{\end{remark}}
\def\bp{\begin{proposition}}
\def\ep{\end{proposition}}
\def\bc{\begin{corollary}}
\def\ec{\end{corollary}}
\def\bd{\begin{definition}}
\def\ed{\end{definition}}
\begin{document}

\title{On the Cahn-Hilliard equation with kinetic rate dependent \\
dynamic boundary conditions and non-smooth potentials: \\
Well-posedness and asymptotic limits}
\author{
Maoyin Lv \thanks{%
School of Mathematical Sciences, Fudan University, Shanghai
200433, P.R. China. Email: \texttt{mylv22@m.fudan.edu.cn} }, \ \ 
Hao Wu \thanks{%
Corresponding author. School of Mathematical Sciences, Fudan University, Shanghai 200433, P.R. China. Email: \texttt{haowufd@fudan.edu.cn} } }
\date{\today }
\maketitle

\begin{abstract}
\noindent We consider a class of Cahn-Hilliard equation with kinetic rate dependent dynamic boundary conditions that describe possible short-range interactions between the binary mixture and the solid boundary. In the presence of surface diffusion on the boundary, the initial boundary value problem can be viewed as a transmission problem consisting of Cahn-Hilliard type equations both in the bulk and on the boundary. We first prove existence, uniqueness and continuous dependence of global weak solutions. In the construction of solutions, an explicit convergence rate in terms of the parameter for the Yosida approximation is established. Under some additional assumptions, we also obtain the existence and uniqueness of global strong solutions.
Next, we study the asymptotic limit as the coefficient of the boundary diffusion goes to zero and show that the limit problem with a forward-backward dynamic boundary condition is well-posed in a suitable weak formulation. Besides, we investigate the asymptotic limit as the kinetic rate tends to zero and infinity, respectively. Our results are valid for a general class of bulk and boundary potentials with double-well structure, including the physically relevant logarithmic potential and the non-smooth double-obstacle potential.

\medskip \noindent \textit{Keywords}: Cahn-Hilliard equation, dynamic boundary condition, bulk-boundary interaction, non-smooth potential, well-posedness, asymptotic limit.

\noindent \textit{MSC 2020}: 35K25, 35K61, 35B20, 35B40, 80A22.
\end{abstract}

\tableofcontents

\section{Introduction}
The Cahn-Hilliard equation was proposed in \cite{CH} as a phenomenological model to describe spinodal decomposition in binary alloys.
It characterizes the fundamental process of phase separation due to certain non-Fickian diffusion driven by gradient of the chemical potential.
The Cahn-Hilliard equation belongs to the so-called diffuse interface models, in which the free interface between two components of the mixture is represented by a thin layer with finite thickness. The diffuse interface methodology avoids tracking free interfaces explicitly as in the classical free boundary problems, and it provides a thermodynamically consistent description for the evolution of complex geometries. In particular, topological changes of free interfaces can be handled in a natural and efficient way. For further information, we refer to, e.g., \cite{An97,BDGP,DF} and the references therein.
As a representative of the diffuse interface models, the Cahn-Hilliard equation has become a useful tool for the study of a wide variety of segregation-like phenomena arising, for instance, in material science, image inpainting, biology and fluid mechanics. In recent years, the study of boundary effects in the phase separation process of binary mixtures has attracted a lot of attention. To describe short-range interactions of the binary mixture with the solid wall, several types of dynamic boundary conditions for the Cahn-Hilliard equation have been proposed and investigated in the literature, see, for instance, \cite{JW23,Mi19,Wu22} and the references therein.

\subsection{Model description}

Let $T\in(0,\infty)$ be an arbitrary but fixed final time and $\Omega\subset\mathbb{R}^{d}$ ($d\in\{2,3\}$) be a smooth bounded domain with boundary $\Gamma:=\partial\Omega$. In this paper, we consider the following
initial boundary value problem of the Cahn-Hilliard equation subject to a class of dynamic boundary conditions \cite{GMS,KLLM,LW}:
\begin{align}
&\partial_{t}\varphi=\Delta\mu,&&\text{in }\Omega\times(0,T),
\label{1.1}\\
&\mu=-\Delta \varphi+F'(\varphi),&&\text{in }\Omega\times(0,T),
\label{1.2}\\
&\begin{cases}
L\partial_{\mathbf{n}}\mu=\theta-\mu,\quad\ \ \text{if}\ L\in [0,\infty),\\
\partial_{\mathbf{n}}\mu=0,\qquad\qquad  \text{if}\ L=\infty,
\end{cases}
&&\text{on }\Gamma\times(0,T),
\label{1.4}\\
&\partial_{t}\varphi=\Delta_{\Gamma}\theta-\partial_{\mathbf{n}}\mu,&&\text{on }\Gamma\times(0,T),
\label{1.5}\\
&\theta=\partial_{\mathbf{n}}\varphi-\delta\Delta_{\Gamma}\varphi+G'(\varphi),&&\text{on }\Gamma\times(0,T),
\label{1.6}\\
&\varphi(0)=\varphi_{0},&&\text{in }\overline{\Omega}
\label{1.7}.
\end{align}
Here, the phase function $\varphi: \overline{\Omega}\times (0,T)\to \mathbb{R}$ is related to local concentrations
of the two components of a binary mixture.
The total free energy functional associated with the system \eqref{1.1}--\eqref{1.6} is given by
\begin{align}
E(\varphi):=\underbrace{\int_{\Omega}\Big(\frac{1}{2}|\nabla \varphi|^{2}+F(\varphi)\Big)\,\mathrm{d}x}_{\text{bulk free energy}}
+\underbrace{\int_{\Gamma}\Big(\frac{\delta}{2}|\nabla_{\Gamma}\varphi|^{2}+G(\varphi)\Big)\,\mathrm{d}S}_{\text{surface free energy}}.
\label{eq1.16}
\end{align}
In particular, the surface free energy on the boundary is introduced to describe possible short-range interactions between the solid wall and components of the mixture \cite{FM97}. The nonlinear potential functions $F$ and $G$ denote free energy densities in the bulk and on the boundary, respectively.
To describe the phase separation process, they usually present a double-well structure, that is, with two minima and a local unstable maximum in between.
Typical and physically significant examples of such potentials include the logarithmic potential \cite{CH} and the double obstacle potential \cite{BE91}:
\begin{align}
&F_{\text{log}}(r):=(1+r)\mathrm{ln}(1+r)+(1-r)\mathrm{ln}(1-r)-c_{1}r^{2},\;\;r\in(-1,1),
\label{eq1.14}\\
&F_{\text{2obs}}(r):=
\begin{cases}
c_{2}(1-r^{2}),\quad\text{if}\ |r|\leq1,\\
\infty,\qquad\qquad \ \, \text{if}\ |r|>1,
\end{cases}
\label{eq1.15}
\end{align}
where the constants satisfy $c_{1}>1$ and $c_{2}>0$, so that $F_{\text{log}}$, $F_{\text{2obs}}$ are nonconvex.
In practice, regular double-well potential of polynomial type such as
\begin{align}
&F_{\text{reg}}(r):=\frac{1}{4}(r^{2}-1)^{2},\;\;r\in\mathbb{R},\label{eq1.13}
\end{align}
and its generalizations are widely used (see \cite{Mi19}).
In \eqref{1.2}, $\mu: \Omega\times(0,T)\to \mathbb{R}$ stands for the chemical potential in the bulk, while in \eqref{1.6},
$\theta: \Gamma\times(0,T)\to \mathbb{R}$ stands for the chemical potential on the boundary.
They can be expressed as Fr\'echet derivatives of the bulk and surface free energies in \eqref{eq1.16}, respectively.
The symbol $\Delta$ denotes the Laplace operator in $\Omega$, $\Delta_\Gamma$ denotes
the Laplace-Beltrami operator on $\Gamma$ and $\nabla_\Gamma$ denotes the tangential (surface) gradient operator.
In the boundary conditions \eqref{1.4}--\eqref{1.6}, the symbol $\mathbf{n}$ denotes the outward normal vector on $\Gamma$
and $\partial_{\mathbf{n}}$ means the outward normal derivative on $\Gamma$. The nonlinearities
$F'$ and $G'$ in \eqref{1.2} and \eqref{1.6} simply denote derivatives of the related potentials.
Nevertheless, when non-smooth potentials are taken into account, $F'$ and $G'$
correspond to the subdifferential of the convex part (may be multivalued
graphs) plus the derivative of the smooth concave contribution.
For example, we have
$F_{\text{2obs}}'(r)=\partial I_{[-1,1]}(r) - 2c_{2}r$, where $\partial I_{[-1,1]}(r)$ is the subdifferential of the indicator function of $[-1, 1]$.
In this case, one should replace the equality in \eqref{1.2} and \eqref{1.6} by inclusion.
The boundary conditions \eqref{1.4}--\eqref{1.6} allow descriptions of the physically realistic scenario with
possible mass transfer between bulk and boundary as well as a dynamic angle between
the free interface (separating components of the binary mixture) and the solid boundary at contact line.
In this aspect, we refer to \cite{GK}, where a general thermodynamically consistent Navier-Stokes-Cahn-Hilliard system with dynamic boundary
conditions for incompressible two-phase flows with non-matched densities was introduced and analyzed
(see \cite{GLW} for the existence of global weak solutions in a more general setting).
Besides, we refer to \cite{CKS} for a related Cahn-Hilliard-Brinkman model on two-phase flows through porous media.

In our problem \eqref{1.1}--\eqref{1.7}, we maintain two parameters $L\in [0,\infty]$ and $\delta\in [0,\infty)$ that are important in the subsequent analysis. Other coefficients are set to be one for the sake of simplicity.

We first explain the role of $L\in [0,\infty]$. The bulk and boundary chemical potentials $\mu$, $\theta$ are coupled through the boundary condition \eqref{1.4}, which accounts for possible adsorption or desorption processes between the materials in the bulk and on the boundary \cite{KLLM}.
The mass flux $\partial_{\mathbf{n}}\mu$ that describes the motion of materials towards and away from the boundary,
is driven by the difference in the chemical potentials. In this sense, the coefficient $1/L$ can be interpreted as a kinetic rate.
The value of $L$ distinguishes different types of bulk-boundary interactions.
The case $L=0$ was introduce by Goldstein, Miranville and Schimperna \cite{GMS} (GMS in short)
for phase separation of a binary mixture confined to a bounded domain with porous walls.
The GMS model extends the Cahn-Hilliard equation with Wentzell type boundary conditions proposed by Gal \cite{Gal06} (see \cite{GalWu,Wu07} for related mathematical analysis). Taking $L=0$ in \eqref{1.4}, we obtain the Dirichlet boundary condition $\mu=\theta$ on $\Gamma$, which implies that the chemical potentials $\mu$ and $\theta$ are always in the chemical equilibrium (see \cite{GMS} for more general situations with a factor that can be a uniformly bounded positive function). The case $L=\infty$ was introduced by Liu and Wu \cite{LW} (LW in short) based on an energetic variational approach that combines the least action principle and Onsager's principle of maximum energy dissipation.
Then the homogeneous Neumann boundary condition for $\mu$ implies that there is no mass transfer between the bulk and boundary.
In such a situation, the chemical potentials $\mu$ and $\theta$ are not directly coupled.
Nevertheless, interactions between the bulk and the surface materials take place through the phase function $\varphi$.
Indeed, let us introduce a new unknown
\begin{align}
\psi=\varphi|_{\Gamma},&& \text{on}\ \Gamma\times(0,T),
\label{trace1}
\end{align}
where $\varphi|_{\Gamma}$ denote the trace of $\varphi$ (cf. \cite{Mi19,MZ04}).
Then we find that the system \eqref{1.1}--\eqref{1.6} yields a sort of transmission problem between the dynamics in the bulk $\Omega$ and the one on the boundary $\Gamma$. Finally, the case $L\in (0,\infty)$ was recently introduced by Knopf, Lam, Liu and Metzger \cite{KLLM} (KLLM in short).
The corresponding Robin type boundary condition \eqref{1.4} describes the situation that the chemical potentials $\mu$ and $\theta$ are not in equilibrium and they are related through the mass flux. Formally speaking, the KLLM model (with $0<L<\infty$) can be regarded as an interpolation between the GMS model ($L=0$, instantaneous mass transfer) and the LW model ($L=\infty$, no mass transfer) via a finite, positive relaxation parameter $L$ (see \cite{KLLM} for a rigorous verification).

Next, let us comment on the parameter $\delta\in [0,\infty)$,
which acts as a weight for surface diffusion effects on the boundary $\Gamma$.
When $\delta>0$, \eqref{1.5} together with \eqref{1.6} leads to a Cahn-Hilliard type dynamic boundary condition.
The case $\delta=0$ is closely related to the evolution of a free interface in contact with the solid
boundary, that is, the moving contact line problem \cite{CWX,QWS}.
From the mathematical point of view, without surface diffusion,
the boundary conditions \eqref{1.5}--\eqref{1.6} (formally) reduce to
\begin{align}
&\partial_{t}\varphi - G''(\varphi)\Delta_\Gamma\varphi =G^{(3)}(\varphi)|\nabla_\Gamma \varphi|^2 + \Delta_{\Gamma}(\partial_{\mathbf{n}}\varphi) -\partial_{\mathbf{n}}\mu,&&\text{on }\Gamma\times(0,T).
\label{1.6b}
\end{align}
In the regime that the potential $G$ is non-convex, in particular, $G''(\varphi)\leq 0$, we obtain a backward heat equation on $\Gamma$, whose
well-posedness is usually a delicate issue. As pointed out in \cite{CFS,CFSJEE}, \eqref{1.6b} yields a forward-backward dynamic boundary condition,
complemented with a Cahn-Hilliard equation \eqref{1.1}--\eqref{1.2} in the bulk.

The values of $L$ and $\delta$ also lead to differences in some basic properties
for the thermodynamically consistent problem \eqref{1.1}--\eqref{1.7} such as mass conservation and energy dissipation (see \cite{GMS,KLLM,LW,Wu22}).
For a sufficiently regular solution, we have the energy dissipation law
\begin{align}
& \frac{\mathrm{d}}{\mathrm{d}t}E(\varphi(t))+\int_{\Omega}|\nabla\mu(t)|^{2}\,\mathrm{d}x
+\int_{\Gamma}|\nabla_{\Gamma}\theta(t)|^{2}\,\mathrm{d}S +\chi(L)\int_\Gamma |\mu(t)-\theta(t)|^2\,\mathrm{d}S=0,
\quad \forall\,t\in (0,T),
\notag
\end{align}
where $\chi(L)=1/L$ if $L\in (0,\infty)$ and $\chi(L)=0$ if $L=0,\infty$.
This implies that for all $L\in [0,\infty]$, the total free energy $E(\varphi)$ is decreasing as time evolves.
Nevertheless, the form of the free energy $E$ depends on $\delta$ (recall \eqref{eq1.16})
and the energy dissipation varies according to $L$.
On the other hand, we obtain the conservation of total mass for $L\in [0,\infty)$,
\begin{align}
\int_{\Omega}\varphi(t)\,\mathrm{d}x
+\int_{\Gamma}\varphi(t)\,\mathrm{d}S
=\int_{\Omega}\varphi_0\,\mathrm{d}x
+\int_{\Gamma}\varphi_0\,\mathrm{d}S,
\quad\,\forall\,t\in[0,T],
\notag
\end{align}
while for $L=\infty$, mass conservation laws hold in the bulk and on the boundary separately:
\begin{align}
\int_{\Omega}\varphi(t)\,\mathrm{d}x = \int_{\Omega}\varphi_0\,\mathrm{d}x,\quad
\int_{\Gamma}\varphi(t)\,\mathrm{d}S = \int_{\Gamma}\varphi_0\,\mathrm{d}S,
\quad\,\forall\,t\in[0,T].
\notag
\end{align}

\subsection{A brief overview of related literature}
The Cahn-Hilliard equation with different types of dynamic boundary conditions
has been extensively studied from various viewpoints \cite{Mi19,Wu22}.
For the case with a dynamic boundary condition of Allen-Cahn type, that is,
$\partial_{t}\varphi -\delta\Delta_{\Gamma}\varphi+G'(\varphi) + \partial_{\mathbf{n}}\varphi= 0$ on $\Gamma$,
we quote \cite{CWX,CF20,CGS14,CMZ,GMS09,GMS10,MZ,RZ03,WZ04} among the vast literature.
We note that the dynamic boundary condition of Allen-Cahn type corresponds to an $L^2$-relaxation of the surface energy (see \cite{QWS}),
while in our problem \eqref{1.1}--\eqref{1.7}, the dynamic boundary condition of Cahn-Hilliard type accounts for the
mass transport on $\Gamma$ with a $(H^1)'$-relaxation dynamics.

For mathematical analysis of the GMS model ($L=0$), we refer to \cite{CSM,CP,CF15,CFS,CGS18,FW,GMS} and the references therein.
In \cite{GMS}, existence, uniqueness, regularity and long-time behavior of global weak solutions
were established under general assumptions on the nonlinearities, with $G$ being regular.
On the other hand, well-posedness of the GMS model with non-smooth bulk and boundary potentials was proved in \cite{CF15}, where the boundary potential was assumed to dominate the bulk one. See also \cite{FW} for the separation property and longtime behavior, and \cite{CGS18} when convection effects are taken into account. In \cite{CFS}, the asymptotic analysis as $\delta\to 0$,
i.e., the surface diffusion term on the dynamic boundary condition tends to $0$, was carried out in a very general setting with nonlinear terms admitting maximal monotone graphs both in the bulk and on the boundary.

Concerning the LW model ($L=\infty$), well-posedness and long-time behavior were first established in \cite{LW}
when $F$ and $G$ are suitable regular potentials. By introducing a slightly weaker notion of the solution,
the authors of \cite{GP} proved existence of weak solutions via a gradient flow approach,
removing the additional geometric assumption imposed in \cite{LW} for the case $\delta=0$.
Well-posedness for the LW model with general non-smooth potentials has been obtained in \cite{CFW}.
In \cite{CFSJEE}, the asymptotic analysis as $\delta\to 0$ was investigated with non-smooth potentials in the bulk and on the boundary.
We also mention \cite{MW} for the existence of global attractors and \cite{Me21} for the numerical analysis.

For the KLLM model $(0<L<\infty$), weak and strong well-posedness was established in \cite{KLLM} with suitable regular potentials $F$ and $G$.
Under similar setting, long-time behavior such as existence of a global attractor/exponential attractors and convergence to a single equilibrium were
obtained in \cite{GKY}. Asymptotic limits as $L\to 0$ and $L\to \infty$ were rigorously verified in \cite{KLLM} with regular potentials, see also \cite{GKY} for further properties in the limit $L\to 0$. A nonlocal variant of the KLLM model (including a nonlocal dynamic boundary condition) was proposed and investigated in \cite{KS}. Besides, for simulations and numerical analysis, we refer to \cite{BZ,KLLM}.
In the recent contribution \cite{KS24}, a class of more general bulk-surface convective Cahn-Hilliard systems with dynamic boundary conditions and regular potentials $F$, $G$ were investigated. There, the trace relation \eqref{trace1} for the phase function is further relaxed as follows (cf. \eqref{1.4} for the chemical potentials):
\begin{align}
    \begin{cases}
    J\partial_{\mathbf{n}}\varphi=\psi-\varphi|_{\Gamma},\quad\ \  \text{if}\ J\in [0,\infty),\\
    \partial_{\mathbf{n}}\varphi=0,\qquad \qquad \quad \text{if}\ \,J= \infty,
    \end{cases}
    && \text{on }\Gamma\times(0,T).
    \notag
\end{align}
The Robin approximation with $J\in (0,\infty)$ describes a scenario, where the boundary phase variable and the trace of the bulk phase variable are not proportional (see \cite{KLam20} for a similar consideration for the LW model).
Existence of weak solutions for $J,L\in(0,\infty)$ was proved \cite{KS24} by means of a Fadeo-Galerkin approach. For other cases, existence results were obtained by studying the asymptotic limit as sending $J,L$ to $0$ and $\infty$, respectively.
It is worth mentioning that all the related results mentioned above for the KLLM model were achieved for regular potentials $F$, $G$, singular potentials such as the logarithmic potential \eqref{eq1.14} and the double-obstacle potential \eqref{eq1.15} are unfortunately not admissible.
The only known result on the existence of weak solutions to the KLLM model with singular potentials including \eqref{eq1.14} can be obtained as a consequence of \cite{GLW} on a generalized Navier-Stokes-Cahn-Hilliard system with dynamic boundary conditions. However, uniqueness and regularity properties were not available there due to the presence of fluid interaction.

\subsection{Goal of the paper}
In this paper, we aim to study well-posedness and asymptotic limits as $\delta\to 0$, $L\to 0$ or $L\to \infty$ of the initial boundary value problem \eqref{1.1}--\eqref{1.7} with a wide class of bulk/boundary potentials $F$, $G$ that have a double-well structure, in particular, including the non-smooth
logarithmic potential \eqref{eq1.14} and the double-obstacle potential \eqref{eq1.15}.
\begin{itemize}
\item[(1)] Well-posedness of the KLLM model ($0<L<\infty$) with surface diffusion ($\delta>0$).
We prove the existence of global weak solutions (see Theorem \ref{weakexist}) and their continuous dependence on the data that yields the uniqueness (see Theorem \ref{contidepen}).
Thanks to the solvability of a second-order elliptic problem with bulk-surface coupling \cite{KL},
we are allowed to apply the approach in \cite{CF15} for the GMS model to conclude the existence result.
To this end, we consider a regularized problem by adding viscous terms in the Cahn-Hilliard equation
as well as the dynamic boundary condition and substituting the maximal monotone graphs with their Yosida regularizations.
The regularized problem can be solved by the abstract theory of doubly nonlinear evolution inclusions (see Proposition \ref{approexist}).
After that, we derive suitable estimates for the approximate solutions, which are uniform with
respect to the parameter $\varepsilon$ for the Yosida regularization.
Passing to the limit as $\varepsilon\to 0$, we can construct a weak solution by the compactness argument.
The continuous dependence estimate can be proved by the energy method.
Moreover, we establish an $O(\varepsilon^{1/2})$-estimate for the convergence of approximate phase functions in $L^{\infty}(0,T;(\mathcal{H}^{1})')\cap L^{2}(0,T;\mathcal{V}^{1})$ (see Proposition \ref{rate}), which seems to be the first result of this kind for problem \eqref{1.1}--\eqref{1.7}.
Finally, after deriving some higher order (in time) uniform estimates for the approximate solutions, we obtain the existence of a unique strong solution (see Theorem \ref{strongexist}). Our contribution extends previous works on well-poseness of the KLLM model with regular potentials \cite{KLLM,KS24}.

\item[(2)] Asymptotic limit as $\delta\to 0$ for the KLLM model with fixed $L\in (0,\infty)$.
We show that weak solutions to problem \eqref{1.1}--\eqref{1.7} obtained in Theorems \ref{weakexist}, \ref{contidepen}
converge as $\delta\to 0$ (in the sense of a subsequence), and thus prove the existence of weak solutions to the limit problem in which
the boundary condition \eqref{1.6} is replaced by the one with $\delta=0$ (see Theorem \ref{existthm}).
Besides, we establish a continuous dependence result for the phase function with respect to the data (see Theorem \ref{continuousdepen}).
This implies that the coupling of a forward-backward type boundary condition with the Cahn--Hilliard equation in the bulk
can be well-posed in a suitable sense. Analogous conclusions have been obtained for the GMS model and the LW model with general bulk/boundary potentials respectively in \cite{CFS} and \cite{CFSJEE}. Hence, our results fill the gap left by \cite{CFS,CFSJEE}.
Like in \cite{CFS,CFSJEE}, the solution of the limit problem with vanishing surface diffusion looses some spatial regularity, so several terms on the boundary
including $\partial_\mathbf{n}\varphi$ should be understood in a weaker sense.

\item[(3)] Asymptotic limit as $L\to 0$ or $L\to\infty$ for the KLLM model with fixed $\delta\in (0,\infty)$.
In presence of the surface diffusion in the KLLM model, we rigorously justify the limit case of instantaneous reaction as $L\to 0$ (see Theorem \ref{asymptotic0}), where the chemical potentials are in equilibrium, and a vanishing reaction rate as $L\to \infty$ (see Theorem \ref{asymptoticinfinity}), where the chemical potentials are not directly coupled. Comparing with the previous results in \cite{KLLM} that are only valid for suitable regular potentials, the main novelty in our analysis is the treatment of non-smooth bulk and boundary potentials that include logarithmic potential \eqref{eq1.14} and the double-obstacle potential \eqref{eq1.15}.
In order to derive uniform estimates with respective to the parameter $L$, different approaches have to be applied in the regime of vanishing (or large) kinetic rate. We remark that the asymptotic limit with respect to $L$ turns out to be more involved in the case of vanishing surface diffusion and general potentials. This issue will be studied in a future work.
\end{itemize}

\textbf{Plan of the paper.}
The remaining part of this paper is organized as follows.
In Section \ref{sec2}, we introduce our notations, assumptions and then state the main results.
In Section \ref{sec3}, we consider the KLLM model with surface diffusion, proving the existence of weak solutions, the continuous dependence and the existence of strong solutions. Besides, we obtain a convergence rate for the approximate phase functions.
In Section \ref{sec4}, we investigate the asymptotic limit $\delta\to 0$, which yields the existence of weak solutions to the limit problem with vanishing surface diffusion. A continuous dependence estimate is also derived.
In Section \ref{sec5}, we study asymptotic limits with respect to the kinetic rate $L\to 0$, $L\to \infty$, in presence of the surface diffusion.
In the Appendix, we list some useful tools that are frequently used in this paper.

\section{Main Results}
\label{sec2}
\setcounter{equation}{0}
In this section, we first recall some notations for the functional settings,
then we describe our problem and state the main results.

\subsection{Preliminaries}
For any real Banach space $X$, we denote its norm by $\|\cdot\|_X$, its dual space by $X'$
and the duality pairing between $X'$ and $X$ by
$\langle\cdot,\cdot\rangle_{X',X}$. If $X$ is a Hilbert space,
its inner product will be denoted by $(\cdot,\cdot)_X$.
The space $L^q(0,T;X)$ ($1\leq q\leq \infty$)
denotes the set of all strongly measurable $q$-integrable functions with
values in $X$, or, if $q=\infty$, essentially bounded functions.
The space $C([0,T];X)$ denotes the Banach space of all bounded and
continuous functions $u:[ 0,T] \rightarrow X$ equipped with the supremum
norm, while $C_{w}([0,T];X)$ denotes the topological vector space of all
bounded and weakly continuous functions.

Let $\Omega$ be a bounded domain in $\mathbb{R}^d$ ($d\in \{2,3\}$) with sufficiently smooth boundary $\Gamma:=\partial \Omega$ (at least Lipschitz).
The associated outward unit normal vector field on $\Gamma$ is denoted by $\mathbf{n}$. We use $|\Omega|$ and $|\Gamma|$
 to denote the Lebesgue measure of $\Omega$ and the Hausdorff measure of $\Gamma$, respectively.
For any $1\leq q\leq \infty$, $k\in \mathbb{N}$, the standard Lebesgue and Sobolev spaces on $\Omega$ are denoted by $L^{q}(\Omega )$
and $W^{k,q}(\Omega)$. Here, we use $\mathbb{N}$ for the set of natural numbers including zero.
For $s\geq 0$ and $q\in [1,\infty )$, we denote by $H^{s,q}(\Omega )$ the Bessel-potential spaces and by $W^{s,q}(\Omega )$ the Slobodeckij spaces.
If $q=2$, it holds $H^{s,2}(\Omega)=W^{s,2}(\Omega )$ for all $s$ and these spaces are Hilbert spaces.
We shall use the notations $H^s(\Omega)=H^{s,2}(\Omega)=W^{s,2}(\Omega )$ and $H^0(\Omega)$ can be identified with $L^2(\Omega)$.
The Lebesgue spaces, Sobolev spaces and Slobodeckij spaces on the boundary $\Gamma$ can be defined analogously,
provided that $\Gamma$ is sufficiently regular.
We write $H^s(\Gamma)=H^{s,2}(\Gamma)=W^{s,2}(\Gamma)$ and identify $H^0(\Gamma)$ with $L^2(\Gamma)$.
Hereafter, the following shortcuts will be applied:
  \begin{align*}
  &H:=L^{2}(\Omega),\quad H_{\Gamma}:=L^{2}(\Gamma),\quad V:=H^{1}(\Omega),\quad V_{\Gamma}:=H^{1}(\Gamma).
  %&H^2_N(\Omega)=\big\{u\in H^2(\Omega)\ :\ \partial_{\mathbf{n}}u=0\ \ \text{on}\ \Gamma\big\}.
  \end{align*}
For every $y\in (H^1(\Omega))'$, we denote by $%
\langle y\rangle_\Omega=|\Omega|^{-1}\langle
y,1\rangle_{(H^1(\Omega))',\,H^1(\Omega)}$ its generalized mean
value over $\Omega$. If $y\in L^1(\Omega)$, then its spatial mean is simply
given by $\langle y\rangle_\Omega=|\Omega|^{-1}\int_\Omega y \,\mathrm{d}x$.
The spatial mean for a function $y_\Gamma$ on $\Gamma$, denoted by $\langle
y_\Gamma\rangle_\Gamma$, can be defined in a similar manner.
Then we introduce the spaces for functions with zero mean:
\begin{align*}
&V_{0}:=\big\{y\in V:\;\langle y\rangle_\Omega =0\big\},&& V_{0}^*:=\big\{y^*\in V':\;\langle y^*\rangle_\Omega =0\big\},
 \\
&V_{\Gamma,0}:=\big\{y_{\Gamma}\in V_{\Gamma}:\; \langle y_\Gamma\rangle_\Gamma =0\big\},&& V_{\Gamma,0}^*:=\big\{y_\Gamma^* \in V_{\Gamma}':\;\langle y_\Gamma^*\rangle_\Gamma =0\big\}.
\end{align*}
The following Poincar\'{e}-Wirtinger inequalities in $\Omega$ and on $\Gamma$ hold (see e.g., \cite[Theorem 2.12]{DE13} for the case on $\Gamma$):
\begin{align}
&\|u-\langle u\rangle_\Omega\|_{H} \leq C_\Omega\|\nabla u\|_{H},\qquad\quad\quad \ \ \ \forall\,
u\in V, \label{Po1}\\
&\|u_\Gamma-\langle u_\Gamma\rangle_\Gamma\|_{H_\Gamma} \leq C_\Gamma\|\nabla_\Gamma u_\Gamma\|_{H_\Gamma},\qquad \forall\,
u_\Gamma\in V_\Gamma,
\label{Po2}
\end{align}
where $C_\Omega$ (resp. $C_\Gamma$) is a positive constant depending only on $\Omega$ (resp. $\Gamma$).
In \eqref{Po2}, we denote by $\nabla_\Gamma$ the surface (tangential) gradient on $\Gamma$.
For basic facts of calculus on surfaces, see, e.g., \cite[Section 2]{DE13}.

Consider the Neumann problem
\begin{align}
\begin{cases}
-\Delta u=y, \,\quad \text{in} \ \Omega,\\
\partial_{\mathbf{n}} u=0, \quad \ \ \, \text{on}\ \Gamma.
\end{cases}
\label{P-n}
\end{align}
Owing to the Lax-Milgram theorem, for every $y\in V_0^*$, problem \eqref{P-n} admits a unique weak solution $u\in V_{0}$ satisfying
\begin{align}
\int_{\Omega}\nabla u\cdot\nabla \zeta\,\mathrm{d}x
= \langle y,\zeta \rangle_{V',V},\quad\forall\,\zeta\in V.
\notag
\end{align}
Thus, we can define the solution operator $\mathcal{N}_{\Omega}: V_{0}^*\rightarrow V_{0}$ such that $u=\mathcal{N}_{\Omega}y$.
Analogously, let us consider the surface Poisson equation
\begin{align}
-\Delta_\Gamma u_\Gamma = y_\Gamma,\quad \text{on}\ \ \Gamma,
\label{P-s}
\end{align}
where $\Delta_\Gamma=\mathrm{div}_\Gamma\nabla_\Gamma$ denotes the Laplace-Beltrami operator on $\Gamma$.
For every $y_\Gamma\in V_{\Gamma,0}^*$, problem \eqref{P-s} admits a unique weak solution $u_\Gamma\in V_{\Gamma,0}$ satisfying
\begin{align}
\int_{\Gamma}\nabla_{\Gamma} u_{\Gamma}\cdot\nabla_{\Gamma} \zeta_{\Gamma}\,\mathrm{d}S
= \langle y_{\Gamma},\zeta_{\Gamma} \rangle_{V_{\Gamma}',V_{\Gamma}},\quad\forall\,\zeta_{\Gamma}\in V_{\Gamma}.
\notag
\end{align}
Then we can define the solution operator $\mathcal{N}_{\Gamma}:V_{\Gamma,0}^*\rightarrow V_{\Gamma,0}$ such that $u_{\Gamma}=\mathcal{N}_{\Gamma}y_{\Gamma}$.
By virtue of these definitions, we can introduce the following equivalent norms
\begin{align*}
&\Vert y\Vert_{V_{0}^*}:=\Big(\int_{\Omega}|\nabla\mathcal{N}_{\Omega}y|^{2}\,\mathrm{d}x\Big)^{1/2},
&& \forall\, y\in V_{0}^*,\\
&\Vert y\Vert_{V'}:=\Big(\Vert y - \langle y\rangle_\Omega \Vert_{V_{0}^*}^2+ |\langle y\rangle_\Omega|^2\Big)^{1/2},
&& \forall\, y\in V',\\
&\Vert y_{\Gamma}\Vert_{V_{\Gamma,0}^*}:=\Big(\int_{\Gamma}|\nabla_{\Gamma}\mathcal{N}_{\Gamma}y_{\Gamma}|^{2}\,\mathrm{d}S\Big)^{1/2},
&& \forall\,y_{\Gamma}\in V_{\Gamma,0}^*,\\
&\Vert y_{\Gamma}\Vert_{V_{\Gamma}'}:=\Big(\Vert y_{\Gamma}- \langle y_{\Gamma}\rangle_\Gamma \Vert_{V_{\Gamma,0}^*}^2 + |\langle y_{\Gamma}\rangle_\Gamma|^2\Big)^{1/2},
&& \forall\,y_{\Gamma}\in V_{\Gamma}'.
\end{align*}

Next, we introduce the product spaces
$$\mathcal{L}^{q}:=L^{q}(\Omega)\times L^{q}(\Gamma)\quad\mathrm{and}\quad\mathcal{H}^{k}:=H^{k}(\Omega)\times H^{k}(\Gamma),$$
for $q\in [1,\infty]$ and $k\in \mathbb{N}$.
Like before, we can identify  $\mathcal{H}^{0}$ with $\mathcal{L}^{2}$.
 For any $k\in \mathbb{N}$, $\mathcal{H}^{k}$ is a Hilbert spaces endowed with the standard inner product
  $$
  \big((y,y_{\Gamma}),(z,z_{\Gamma})\big)_{\mathcal{H}^{k}}:=(y,z)_{H^{k}(\Omega)}+(y_{\Gamma},z_{\Gamma})_{H^{k}(\Gamma)},\quad\forall\, (y,y_{\Gamma}), (z,z_{\Gamma})\in\mathcal{H}^{k}
  $$
  and the induced norm $\Vert\cdot\Vert_{\mathcal{H}^{k}}:=(\cdot,\cdot)_{\mathcal{H}^{k}}^{1/2}$.
  We introduce the duality pairing
  \begin{align*}
  \big\langle (y,y_\Gamma),(\zeta, \zeta_\Gamma)\big\rangle_{(\mathcal{H}^1)',\mathcal{H}^1}
  = (y,\zeta)_{L^2(\Omega)}+ (y_\Gamma, \zeta_\Gamma)_{L^2(\Gamma)},
  \quad \forall\, (y,y_\Gamma)\in \mathcal{L}^2,\ (\zeta, \zeta_\Gamma)\in \mathcal{H}^1.
  \end{align*}
  By the Riesz representation theorem, this product
can be extended to a duality pairing on $(\mathcal{H}^1)'\times \mathcal{H}^1$.

For any $k\in\mathbb{Z}^+$, we introduce the Hilbert space
  $$
  \mathcal{V}^{k}:=\big\{(y,y_{\Gamma})\in\mathcal{H}^{k}\;:\;y|_{\Gamma}=y_{\Gamma}\ \ \text{a.e. on }\Gamma\big\},
  $$
  endowed with the inner product $(\cdot,\cdot)_{\mathcal{V}^{k}}:=(\cdot,\cdot)_{\mathcal{H}^{k}}$ and the associated norm $\Vert\cdot\Vert_{\mathcal{V}^{k}}:=\Vert\cdot\Vert_{\mathcal{H}^{k}}$.
  Here, $y|_{\Gamma}$ stands for the trace of $y\in H^k(\Omega)$ on the boundary $\Gamma$, which makes sense for $k\in \mathbb{Z}^+$.
  The duality pairing on $(\mathcal{V}^1)'\times \mathcal{V}^1$ can be defined in a similar manner.
  For convenience, we also use the notation
  $$
  \widetilde{\mathcal{V}}^{k}:=\big\{(y,y_{\Gamma})\in H^{k}(\Omega)\times H^{k-1/2}(\Gamma)\;:\;y|_{\Gamma}=y_{\Gamma}\ \ \text{a.e. on }\Gamma\big\},
  \quad  k\in \mathbb{Z}^+.
  $$
  Thanks to the trace theorem, for every $y\in H^k(\Omega)$, $k\in \mathbb{Z}^+$, it holds $(y,y|_{\Gamma})\in  \widetilde{\mathcal{V}}^{k}$.

 For any given $m\in\mathbb{R}$, we set
 $$
 \mathcal{L}^{2}_{(m)}:=\big\{(y,y_{\Gamma})\in\mathcal{L}^{2}\;:\;|\Omega|\langle y\rangle_{\Omega}+|\Gamma|\langle y_{\Gamma}\rangle_{\Gamma}=m\big\}.
 $$
 The closed linear subspaces
 $$
 \mathcal{H}_{(0)}^k=\mathcal{H}^{k}\cap\mathcal{L}_{(0)}^{2},
 \qquad \mathcal{V}_{(0)}^k=\mathcal{V}^{k}\cap\mathcal{L}_{(0)}^{2},
 \qquad k\in \mathbb{Z}^+,
  $$
 are Hilbert spaces endowed with the inner products $(\cdot,\cdot)_{\mathcal{H}^{k}}$
 and the associated norms $\Vert\cdot\Vert_{\mathcal{H}^{k}}$, respectively.
 For $L\in [0,\infty)$ and $k\in \mathbb{Z}^+$,  we introduce the notations
  $$
  \mathcal{H}^{k}_{L}:=
  \begin{cases}
  \mathcal{H}^k,\quad \text{if}\ L\in (0,\infty),\\
  \mathcal{V}^{k},\quad \ \text{if}\ L=0,
  \end{cases}\qquad
  \mathcal{H}^{k}_{L,0}:=
  \begin{cases}
  \mathcal{H}_{(0)}^k,\quad \text{if}\ L\in (0,\infty),\\
  \mathcal{V}^{k}_{(0)},\quad \ \text{if}\ L=0.
  \end{cases}
  $$
  Consider the bilinear form
  \begin{align}
  a_{L}\big((y,y_{\Gamma}),(z,z_{\Gamma})\big) :=\int_{\Omega}\nabla y\cdot\nabla z \,\mathrm{d}x +\int_{\Gamma}\nabla_{\Gamma}y_{\Gamma}\cdot\nabla_{\Gamma}z_{\Gamma}\,\mathrm{d}S
  +\chi(L)\int_{\Gamma}(y-y_{\Gamma})(z-z_{\Gamma})\,\mathrm{d}S,
  \notag
  \end{align}
  for all $(y,y_{\Gamma}), (z,z_{\Gamma})\in \mathcal{H}^{1}$, where
  $$
  \chi(L)=\begin{cases}
  1/L,\quad \text{if}\ L\in (0,\infty),\\
  0,\qquad \ \text{if}\ L=0.
  \end{cases}
  $$
 For any $(y,y_{\Gamma})\in \mathcal{H}^{1}_{L,0}$, $L\in [0,\infty)$, we define
  \begin{align}
  \Vert(y,y_{\Gamma})\Vert_{\mathcal{H}^{1}_{L,0}}:=\big((y,y_{\Gamma}),(y,y_{\Gamma})\big)_{\mathcal{H}^{1}_{L,0}}^{1/2}= \big[ a_{L}\big((y,y_{\Gamma}),(y,y_{\Gamma})\big)\big]^{1/2}.
  \label{norm-hL}
  \end{align}
  We note that for $(y,y_{\Gamma})\in \mathcal{V}_{(0)}^1\subseteq\mathcal{H}^{1}_{L,0}$, $\Vert(y,y_{\Gamma})\Vert_{\mathcal{H}^{1}_{L,0}}$ does not depend on $L$, since the third term in $a_L$ simply vanishes.
  The following Poincar\'{e} type inequality has been proved in \cite[Lemma A.1]{KL}:
\begin{lemma}
  There exists a constant $c_P>0$ depending only on $L\in [0,\infty)$ and $\Omega$ such that
 \begin{align}
  \|(y,y_{\Gamma})\|_{\mathcal{L}^2}\leq c_P \Vert(y,y_{\Gamma})\Vert_{\mathcal{H}^{1}_{L,0}},\quad \forall\, (y,y_{\Gamma})\in \mathcal{H}^{1}_{L,0}.
  \label{Po3}
  \end{align}
\end{lemma}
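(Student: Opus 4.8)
The plan is to argue by contradiction using the compactness of the relevant Sobolev embeddings, keeping $L\in[0,\infty)$ fixed throughout (so that the resulting constant $c_P$ is allowed to depend on $L$ and $\Omega$, as claimed). Suppose the asserted inequality fails for every constant. Then for each $n\in\mathbb{Z}^+$ there exists $(y_n,y_{\Gamma,n})\in\mathcal{H}^1_{L,0}$ with $\|(y_n,y_{\Gamma,n})\|_{\mathcal{L}^2}=1$ and $\|(y_n,y_{\Gamma,n})\|_{\mathcal{H}^1_{L,0}}<1/n$. Recalling the definition \eqref{norm-hL}, the latter forces $\|\nabla y_n\|_{H}\to 0$, $\|\nabla_\Gamma y_{\Gamma,n}\|_{H_\Gamma}\to 0$, and, when $L\in(0,\infty)$, also $\chi(L)^{1/2}\|y_n|_\Gamma-y_{\Gamma,n}\|_{H_\Gamma}\to 0$, hence $\|y_n|_\Gamma-y_{\Gamma,n}\|_{H_\Gamma}\to 0$ since $\chi(L)=1/L>0$ is fixed.

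Because the $\mathcal{L}^2$-norms are normalized to one while the gradients are bounded, $(y_n,y_{\Gamma,n})$ is bounded in $\mathcal{H}^1=V\times V_\Gamma$. By the Rellich--Kondrachov theorem ($V\hookrightarrow\hookrightarrow H$ and $V_\Gamma\hookrightarrow\hookrightarrow H_\Gamma$), we may extract a (non-relabeled) subsequence with $y_n\to y$ strongly in $H$ and weakly in $V$, and $y_{\Gamma,n}\to y_\Gamma$ strongly in $H_\Gamma$ and weakly in $V_\Gamma$. The strong convergence yields $\|(y,y_\Gamma)\|_{\mathcal{L}^2}=1$, so $(y,y_\Gamma)\neq(0,0)$. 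Since $\nabla y_n\to 0$ and $\nabla y_n\rightharpoonup\nabla y$ in $H$, we get $\nabla y\equiv 0$; as $\Omega$ is connected this means $y\equiv a$ for some constant $a\in\mathbb{R}$. Likewise $\nabla_\Gamma y_\Gamma\equiv 0$, so $y_\Gamma$ is constant on each connected component of $\Gamma$. Moreover, the mean constraint defining $\mathcal{L}^2_{(0)}$ passes to the limit: $|\Omega|\langle y\rangle_\Omega+|\Gamma|\langle y_\Gamma\rangle_\Gamma=0$.

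It remains to exploit the bulk--boundary coupling to pin down the limit. When $L=0$ we have $(y_n,y_{\Gamma,n})\in\mathcal{V}^1_{(0)}$, i.e. $y_n|_\Gamma=y_{\Gamma,n}$; passing to the limit (the trace operator $V\to H_\Gamma$ being compact, so $y_n|_\Gamma\to y|_\Gamma=a$ in $H_\Gamma$) gives $y_\Gamma\equiv a$. When $L\in(0,\infty)$, the vanishing of $\|y_n|_\Gamma-y_{\Gamma,n}\|_{H_\Gamma}$ together with $y_n|_\Gamma\to a$ and $y_{\Gamma,n}\to y_\Gamma$ in $H_\Gamma$ again forces $y_\Gamma\equiv a$. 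In either case the limit is the single constant pair $(a,a)$, and the mean constraint becomes $(|\Omega|+|\Gamma|)a=0$, whence $a=0$. This contradicts $\|(y,y_\Gamma)\|_{\mathcal{L}^2}=1$, and the inequality follows.

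The step I expect to require the most care is the passage to the limit in the coupling term, which hinges on compactness of the trace: one uses $V\hookrightarrow H^{1/2}(\Gamma)\hookrightarrow\hookrightarrow H_\Gamma$ to upgrade the weak $V$-convergence of $y_n$ to strong $H_\Gamma$-convergence of its trace, thereby linking the bulk limit $a$ to the boundary limit. A related subtlety is connectivity: a gradient-free function is only component-wise constant, so when $\Gamma$ is disconnected one genuinely needs the coupling (the equality $y|_\Gamma=y_\Gamma$ for $L=0$, or the penalty term for $L>0$) to equate all boundary constants with the single bulk constant $a$. I also note a fully quantitative alternative avoiding compactness: decompose $y=(y-\langle y\rangle_\Omega)+\langle y\rangle_\Omega$ and similarly for $y_\Gamma$, bound the fluctuations by the gradients through the Poincar\'e--Wirtinger inequalities \eqref{Po1}--\eqref{Po2}, and then control the two means via the constraint $|\Omega|\langle y\rangle_\Omega+|\Gamma|\langle y_\Gamma\rangle_\Gamma=0$, the trace estimate $|\langle y|_\Gamma\rangle_\Gamma-\langle y\rangle_\Omega|\le C\|\nabla y\|_H$, and the coupling term $\chi(L)\|y|_\Gamma-y_\Gamma\|_{H_\Gamma}^2$; this route makes the $L$-dependence of $c_P$ explicit but requires $\Gamma$ to be connected for \eqref{Po2}.
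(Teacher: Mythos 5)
Your argument is correct. Note, however, that the paper does not actually prove this lemma: it simply cites \cite[Lemma A.1]{KL}, so there is no in-paper proof to compare against. Your compactness-and-contradiction argument is the standard route (and is essentially the one used in the cited reference): normalize a putative counterexample sequence, use Rellich--Kondrachov in the bulk and on the surface to pass to a strong $\mathcal{L}^2$ limit of norm one, kill the gradients in the limit, and then use the bulk--boundary coupling (the trace identity when $L=0$, the penalization term when $L\in(0,\infty)$) together with the zero-mean constraint to force the limit to vanish. You correctly identify the two genuine subtleties: the need to upgrade weak $V$-convergence to strong $H_\Gamma$-convergence of traces via $V\hookrightarrow H^{1/2}(\Gamma)\hookrightarrow\hookrightarrow H_\Gamma$, and the fact that a surface function with vanishing tangential gradient is only constant component-wise, so the coupling is what identifies all boundary constants with the single bulk constant when $\Gamma$ is disconnected. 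Your remark on the quantitative alternative is also apt: it yields an explicit $c_P$ (in particular making the blow-up as $L\to\infty$ visible, consistent with the lemma's restriction to $L\in[0,\infty)$), at the price of requiring \eqref{Po2} on a connected $\Gamma$ or a component-wise refinement of it. No gaps.
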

  \noindent
  Hence, for every $L\in [0,\infty)$, $\mathcal{H}^{1}_{L,0}$ is a Hilbert space with the inner product $(\cdot,\cdot)_{\mathcal{H}^{1}_{L,0}}^{1/2}$. The induced norm $\Vert\cdot\Vert_{\mathcal{H}^{1}_{L,0}}$ prescribed in \eqref{norm-hL} is equivalent to the standard one $\Vert\cdot\Vert_{\mathcal{H}^{1}}$ on $\mathcal{H}^{1}_{L,0}$.
  Besides, for any fixed $\delta>0$, we define the bilinear form
  $$
 b_{\delta}\big((y,y_\Gamma),(z,z_\Gamma)\big)
:=\int_{\Omega}\nabla y \cdot\nabla z\,\mathrm{d}x
+\delta\int_{\Gamma}\nabla_{\Gamma}y \cdot\nabla_{\Gamma} z_{\Gamma}\,\mathrm{d}S,\quad \forall\, (y,y_\Gamma),(z,z_\Gamma)\in \mathcal{H}^1.
  $$
It is easy to check that
  \begin{align}
  \Vert(y,y_{\Gamma})\Vert_{\mathcal{V}_{(0)}^{1}}:=\big[b_{\delta}\big((y,y_\Gamma),(y,y_\Gamma)\big)\big]^{1/2},
  \quad \forall\, (y,y_{\Gamma})\in \mathcal{V}^{1}_{(0)},
  \label{nv01}
  \end{align}
  yields a norm equivalent to $\Vert\cdot\Vert_{\mathcal{H}^{1}}$ on $\mathcal{V}_{(0)}^{1}$.

   For $L\in[0,\infty)$, let us consider the following elliptic boundary value problem
      \begin{align}
      \left\{
      \begin{array}{rl}
      -\Delta u = y&\quad \text{in }\Omega,\\
      -\Delta_{\Gamma}u_\Gamma +\partial_{\mathbf{n}}u= y_{\Gamma}&\quad \text{on }\Gamma,\\
      L\partial_{\mathbf{n}}u =u_\Gamma-u|_{\Gamma} &\quad \mathrm{on\;}\Gamma.
      \end{array}\right.
      \label{2.2}
      \end{align}
   Define the space
   $$
   \mathcal{H}_{(0)}^{-1}:=\big\{(y,y_{\Gamma})\in(\mathcal{H}^{1})'\;:\;|\Omega|\langle y\rangle_{\Omega}+|\Gamma|\langle y_{\Gamma}\rangle_{\Gamma}=0\big\}.
   $$
   The chain of inclusions holds
   $$
   \mathcal{H}^{1}_{L,0}\subseteq \mathcal{H}^{1}_{(0)} \subset \mathcal{L}_{(0)}^2\subset \mathcal{H}_{(0)}^{-1} \subset (\mathcal{H}^{1})'\subseteq (\mathcal{H}_L^1)'\subset (\mathcal{H}^{1}_{L,0})'.
   $$
   It has been shown in \cite[Theorem 3.3]{KL} that for every $(y,y_{\Gamma})\in\mathcal{H}_{(0)}^{-1}$, problem \eqref{2.2} admits a unique weak solution $(u,u_\Gamma)\in\mathcal{H}_{L,0}^{1}$ satisfing the weak formulation
      \begin{align}
      a_L\big((u,u_{\Gamma}),(\zeta,\zeta_{\Gamma})\big) = \big\langle(y,y_{\Gamma}),(\zeta,\zeta_{\Gamma})\big\rangle_{(\mathcal{H}_L^{1})',\mathcal{H}_L^{1}},
      \quad \forall\, (\zeta,\zeta_{\Gamma})\in\mathcal{H}_L^{1},
      \notag
      \end{align}
      and the estimate
      \begin{align}
      \|(u,u_{\Gamma})\|_{\mathcal{H}^1}\leq C\|(y,y_{\Gamma})\|_{(\mathcal{H}^{1}_L)'},\label{2.2a}
      \end{align}
      for some constant $C>0$ depending only on $L$ and $\Omega$. Furthermore, if the domain $\Omega$ is of class $C^{k+2}$ and $(y,y_{\Gamma})\in \mathcal{H}_{L,0}^{k}$, $k\in \mathbb{N}$, the following regularity estimate holds
      \begin{align}
      \|(u,u_{\Gamma})\|_{\mathcal{H}^{k+2}}\leq C\|(y,y_{\Gamma})\|_{\mathcal{H}^{k}}.
      \label{2.2b}
      \end{align}
      The above facts enable us to define the solution operator $$\mathfrak{S}^{L}:\mathcal{H}_{(0)}^{-1}\rightarrow\mathcal{H}_{L,0}^{1},\quad(y,y_{\Gamma})\mapsto (u,u_\Gamma)=\mathfrak{S}^{L}(y,y_{\Gamma})=\big(\mathfrak{S}^{L}_{\Omega}(y,y_{\Gamma}),\mathfrak{S}^{L}_{\Gamma}(y,y_{\Gamma})\big).
      $$
      We mention that similar results for the special case $L=0$ have also been presented in \cite{CF15}. A direct calculation yields that
      $$
      \big((u,u_{\Gamma}), (z,z_\Gamma)\big)_{\mathcal{L}^2}
      =\big((u,u_{\Gamma}), \mathfrak{S}^{L}(z,z_\Gamma)\big)_{\mathcal{H}^1_{L,0}},\quad \forall\, (u,u_{\Gamma})\in \mathcal{H}_{(0)}^1,\ (z,z_\Gamma)\in \mathcal{L}^2_{(0)}.
      $$
      Thanks to \cite[Corollary 3.5]{KL}, we can introduce the inner product on $\mathcal{H}_{(0)}^{-1}$  as
      \begin{align}
      \big((y,y_{\Gamma}),(z,z_{\Gamma})\big)_{0,*}&:=\big(\mathfrak{S}^{L}(y,y_{\Gamma}),\mathfrak{S}^{L}(z,z_{\Gamma})\big)_{\mathcal{H}^{1}_{L,0}},
      \quad \forall\, (y,y_{\Gamma}), (z,z_{\Gamma})\in \mathcal{H}_{(0)}^{-1}.\notag
      \end{align}
      The associated norm $\Vert(y,y_{\Gamma})\Vert_{0,*} :=\big((y,y_{\Gamma}),(y,y_{\Gamma})\big)_{0,*}^{1/2}$
       is equivalent to the standard dual norm $\|\cdot\|_{(\mathcal{H}^1)'}$ on $\mathcal{H}_{(0)}^{-1}$.
      For any $(y,y_{\Gamma})\in (\mathcal{H}^{1})'$, we define the generalized mean
      \begin{align}
\overline{m}(y,y_{\Gamma}):=\frac{|\Omega|\langle y\rangle_{\Omega}+|\Gamma|\langle y_{\Gamma}\rangle_{\Gamma}}{|\Omega|+|\Gamma|}.\label{gmean}
      \end{align}
      Then it follows that
      \begin{align}
      \|(y,y_{\Gamma})\|_{*}&:=\left(\Vert(y,y_{\Gamma})-\overline{m}(y,y_{\Gamma}) \mathbf{1}\Vert_{0,*}^2+ |\overline{m}(y,y_{\Gamma})|^2\right)^{1/2},
      \quad \forall\, (y,y_{\Gamma})\in (\mathcal{H}^{1})',\notag
      \end{align}
      is equivalent to the usual dual norm $\|\cdot\|_{(\mathcal{H}^1)'}$ on $(\mathcal{H}^{1})'$.

Define the closed linear subspaces $\widetilde{\mathcal{V}}_{(0)}^{k}:=\widetilde{\mathcal{V}}^{k}\cap \mathcal{L}_{(0)}^{2}$.
The following Poincar\'{e} type inequality has been proved in \cite[Lemma A.1]{CFS}:
\begin{lemma}
  There exists a constant $\widetilde{c}_P>0$ depending only on $\Omega$ such that
  \begin{align}
   \Vert y\Vert_{H}\leq \widetilde{c}_{P} \Vert\nabla y\Vert_{H},\quad\forall \,(y,y_{\Gamma})\in \widetilde{\mathcal{V}}_{(0)}^1.
   \label{Po4}
  \end{align}
\end{lemma}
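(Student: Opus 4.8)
The statement is a Poincar\'e--Wirtinger type inequality on $\widetilde{\mathcal{V}}_{(0)}^1$, where the only structural information available, beyond $(y,y_\Gamma)\in\widetilde{\mathcal{V}}^1$ (so that $y_\Gamma=y|_\Gamma$), is the single scalar constraint $|\Omega|\langle y\rangle_\Omega+|\Gamma|\langle y_\Gamma\rangle_\Gamma=0$ inherited from $\mathcal{L}_{(0)}^2$. The plan is a direct proof: split $y$ into its bulk mean and fluctuation, and then control the mean through the boundary. The fluctuation part is immediate from \eqref{Po1}, namely $\|y-\langle y\rangle_\Omega\|_H\leq C_\Omega\|\nabla y\|_H$, so the whole problem reduces to estimating the scalar $|\langle y\rangle_\Omega|$ by $\|\nabla y\|_H$. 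This is exactly where the zero generalized-mean constraint must be exploited.

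The key observation is that the bulk mean $\langle y\rangle_\Omega$ and the boundary mean $\langle y_\Gamma\rangle_\Gamma=\langle y|_\Gamma\rangle_\Gamma$ differ by a quantity controlled by the gradient. Set $w:=y-\langle y\rangle_\Omega$, which has zero mean in $\Omega$ and satisfies $w|_\Gamma=y|_\Gamma-\langle y\rangle_\Omega$. By the trace theorem and \eqref{Po1} one gets $\|w|_\Gamma\|_{H_\Gamma}\leq C_{\mathrm{tr}}\|w\|_{V}\leq C_{\mathrm{tr}}\bigl(C_\Omega^2+1\bigr)^{1/2}\|\nabla y\|_H$, and averaging over $\Gamma$ together with the Cauchy--Schwarz inequality yields $\bigl|\langle y|_\Gamma\rangle_\Gamma-\langle y\rangle_\Omega\bigr|=\bigl|\langle w|_\Gamma\rangle_\Gamma\bigr|\leq|\Gamma|^{-1/2}\|w|_\Gamma\|_{H_\Gamma}\leq C\|\nabla y\|_H$, with $C$ depending only on $\Omega$. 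Writing $a=\langle y\rangle_\Omega$ and $b=\langle y_\Gamma\rangle_\Gamma$, the constraint reads $|\Omega|a+|\Gamma|b=0$, hence $a-b=\tfrac{|\Omega|+|\Gamma|}{|\Gamma|}\,a$, and combining with the previous bound gives $|\langle y\rangle_\Omega|=|a|\leq\tfrac{C|\Gamma|}{|\Omega|+|\Gamma|}\|\nabla y\|_H$. Finally $\|y\|_H\leq\|y-\langle y\rangle_\Omega\|_H+|\Omega|^{1/2}|\langle y\rangle_\Omega|\leq\widetilde{c}_P\|\nabla y\|_H$, with $\widetilde{c}_P$ depending only on $\Omega$ (through $C_\Omega$, $C_{\mathrm{tr}}$, $|\Omega|$, $|\Gamma|$), as claimed.

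The only delicate ingredient is the trace step: it requires $\Gamma$ to be regular enough for the trace map $V\to H_\Gamma$ to be bounded, which is guaranteed by the standing hypothesis that $\Omega$ is at least Lipschitz. I expect this to be the main (mild) obstacle, and it is the point where the geometric regularity of $\Omega$ is genuinely used. As an alternative, one may argue by compactness and contradiction: if the inequality failed, a normalized sequence $y_n$ with $\|y_n\|_H=1$ and $\|\nabla y_n\|_H\to0$ would be bounded in $V$, hence (by Rellich) converge strongly in $H$ to a nonzero constant $c$ with $\nabla c=0$; passing to the limit in the constraint using the \emph{compact} trace embedding $V\hookrightarrow H^{1/2}(\Gamma)\hookrightarrow\hookrightarrow H_\Gamma$ (to get $y_n|_\Gamma\to c$ in $H_\Gamma$) would force $c(|\Omega|+|\Gamma|)=0$, contradicting $\|y\|_H=1$. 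In this second route the crux is precisely the strong convergence of the traces, for which the compactness of the trace embedding is the essential tool.
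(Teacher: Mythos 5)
Your proof is correct. Note that the paper does not prove this lemma itself — it simply cites \cite[Lemma A.1]{CFS} — so there is no internal argument to compare against; the proof in that reference proceeds by the compactness--contradiction route that you sketch as your alternative (normalize, extract a subsequence converging to a constant via Rellich, pass to the limit in the constraint using compactness of the trace). Your primary argument is genuinely different and arguably preferable: it is direct and quantitative. Splitting $y=\langle y\rangle_\Omega+w$, controlling $\|w\|_{H}$ by \eqref{Po1}, bounding $|\langle y_\Gamma\rangle_\Gamma-\langle y\rangle_\Omega|=|\langle w|_\Gamma\rangle_\Gamma|$ via the trace theorem and Cauchy--Schwarz, and then using the linear constraint $|\Omega|\langle y\rangle_\Omega+|\Gamma|\langle y_\Gamma\rangle_\Gamma=0$ to solve for $\langle y\rangle_\Omega$ in terms of the difference of the two means is exactly the right way to exploit the generalized zero-mean condition, and it yields an explicit constant $\widetilde{c}_P$ in terms of $C_\Omega$, the trace constant, $|\Omega|$ and $|\Gamma|$, which the contradiction argument does not. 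You are also right that boundedness of the trace $V\to H_\Gamma$ is the only place where regularity of $\Gamma$ enters, and the standing Lipschitz assumption suffices. One cosmetic remark: since $(y,y_\Gamma)\in\widetilde{\mathcal{V}}^1$ forces $y_\Gamma=y|_\Gamma$ a.e.\ on $\Gamma$, your identification $\langle y_\Gamma\rangle_\Gamma=\langle y|_\Gamma\rangle_\Gamma$ is legitimate and the membership $y_\Gamma\in H^{1/2}(\Gamma)$ plays no role in the direct proof.
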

\noindent
Moreover, we have the following interpolation inequality (see \cite[Lemma 2.3]{KLLM}):
\begin{lemma}
For any $\gamma>0$, there exists a constant $C_\gamma$ depending only on $L\in [0,\infty)$ and $\Omega$ such that
\begin{align}
\|y\|_H+\|y_\Gamma\|_{H_\Gamma}\leq \gamma \|\nabla y\|_{H}+ C_\gamma \|(y,y_\Gamma)\|_{0,*},\quad \forall\, (y,y_\Gamma)\in \widetilde{\mathcal{V}}_{(0)}^{1}.
\label{int-0}
\end{align}
\end{lemma}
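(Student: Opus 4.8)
The plan is to establish \eqref{int-0} by a compactness argument of Ehrling--Lions type, exploiting that $\|\nabla\cdot\|_H$ plays the role of the strong seminorm, $\|(\cdot,\cdot)\|_{0,*}$ the role of the weak norm, and $\|y\|_H+\|y_\Gamma\|_{H_\Gamma}$ sits in between. Fix $\gamma>0$ and suppose, for contradiction, that no admissible constant $C_\gamma$ exists. Then for each $n\in\mathbb{N}$ one may pick $(y_n,y_{\Gamma,n})\in\widetilde{\mathcal{V}}_{(0)}^1$ with
\begin{align}
\|y_n\|_H+\|y_{\Gamma,n}\|_{H_\Gamma}>\gamma\|\nabla y_n\|_H+n\,\|(y_n,y_{\Gamma,n})\|_{0,*}.
\notag
\end{align}
Since the right-hand side is nonnegative, the left-hand side is strictly positive, so after rescaling (all three quantities and the mean-zero constraint being positively homogeneous of degree one) I may assume the normalization $\|y_n\|_H+\|y_{\Gamma,n}\|_{H_\Gamma}=1$ for every $n$, whence $\gamma\|\nabla y_n\|_H<1$ and $\|(y_n,y_{\Gamma,n})\|_{0,*}<1/n$.

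First I would extract a convergent subsequence. The bounds $\|y_n\|_H\le 1$ and $\|\nabla y_n\|_H<\gamma^{-1}$ show that $(y_n)$ is bounded in $V=H^1(\Omega)$; since $y_{\Gamma,n}=y_n|_\Gamma$ is the trace of $y_n$, the trace theorem makes $(y_{\Gamma,n})$ bounded in $H^{1/2}(\Gamma)$ as well. Using the compact embedding $H^1(\Omega)\hookrightarrow L^2(\Omega)$ and the compactness of the trace map $H^1(\Omega)\to L^2(\Gamma)$ (factoring through $H^{1/2}(\Gamma)\hookrightarrow L^2(\Gamma)$), along a subsequence (not relabelled) I obtain $y_n\to y$ strongly in $H$ and $y_{\Gamma,n}=y_n|_\Gamma\to y|_\Gamma=:y_\Gamma$ strongly in $H_\Gamma$. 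Consequently $(y_n,y_{\Gamma,n})\to (y,y_\Gamma)$ strongly in $\mathcal{L}^2$, and passing the constraint $|\Omega|\langle y_n\rangle_\Omega+|\Gamma|\langle y_{\Gamma,n}\rangle_\Gamma=0$ to the limit (means being continuous under $L^2$-convergence) gives $(y,y_\Gamma)\in\mathcal{L}_{(0)}^2$.

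The decisive step is to identify the limit as zero. Because the embedding $\mathcal{L}^2\hookrightarrow(\mathcal{H}^1)'$ is continuous and injective, the strong $\mathcal{L}^2$-convergence yields $(y_n,y_{\Gamma,n})\to(y,y_\Gamma)$ in $(\mathcal{H}^1)'$. On the other hand $\|(y_n,y_{\Gamma,n})\|_{0,*}\to 0$, and since the norm $\|\cdot\|_{0,*}$, built from the solution operator $\mathfrak{S}^{L}$ of \eqref{2.2}, is equivalent to $\|\cdot\|_{(\mathcal{H}^1)'}$ on $\mathcal{H}_{(0)}^{-1}$, this forces $(y_n,y_{\Gamma,n})\to 0$ in $(\mathcal{H}^1)'$. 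By uniqueness of limits $(y,y_\Gamma)=0$, hence $y=0$ in $H$ and $y_\Gamma=0$ in $H_\Gamma$. But then $\|y_n\|_H+\|y_{\Gamma,n}\|_{H_\Gamma}\to 0$, contradicting the normalization. This contradiction proves the existence of $C_\gamma$, depending only on $\gamma$, $L$ and $\Omega$.

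The only delicate point I anticipate is the handling of the boundary component: one must ensure that the trace bound together with the compactness of the trace map $H^1(\Omega)\to L^2(\Gamma)$ yields strong convergence of $y_{\Gamma,n}$ in $H_\Gamma$ (not merely weak), so that the full intermediate norm, and not just its bulk part, tends to the norm of the limit; one should also check that the mean-zero constraint survives the passage to the limit so that the limiting object genuinely lives in $\mathcal{H}_{(0)}^{-1}$, where $\|\cdot\|_{0,*}$ is meaningful. Everything else reduces to the $L$-dependent norm equivalence on $\mathcal{H}_{(0)}^{-1}$ already recorded above.
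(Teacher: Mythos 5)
Your argument is correct. Note that the paper does not prove this lemma at all: it is quoted directly from \cite[Lemma 2.3]{KLLM}, and the proof given there is precisely the compactness--contradiction argument of Ehrling type that you carry out (normalize the intermediate norm, use the $H^1(\Omega)$ bound and the compactness of the embedding into $L^2(\Omega)$ and of the trace into $L^2(\Gamma)$, then kill the limit via the equivalence of $\Vert\cdot\Vert_{0,*}$ with the $(\mathcal{H}^1)'$-norm on $\mathcal{H}_{(0)}^{-1}$). All the delicate points you flag — strong (not merely weak) convergence of the boundary traces, persistence of the mean-zero constraint, and the $L$-dependence entering only through the norm equivalence — are handled correctly, so your proof is a complete substitute for the citation.
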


Throughout the paper, the symbol $C$ stands for generic positive constants that may depend on $\Omega$, the final time $T$ and
on the coefficients and the norms of functions involved in the assumptions of either our statements or our approximation.
Specific dependence will be pointed out if necessary. Besides, we use different symbols to denote precise constants that we would refer to.

\subsection{Problem setting}
In what follows, we assume that $\Omega\subset \mathbb{R}^d$ ($d\in\{2,3\}$) is a bounded domain of class $C^2$, $T\in (0,\infty)$
is an arbitrary but fixed final time, and we set $Q:=\Omega\times (0,T)$, $\Sigma:=\Gamma\times (0,T)$.

For $L\in [0,\infty]$, $\delta \in [0,\infty)$, using \eqref{trace1}, we can rewrite our target problem \eqref{1.1}--\eqref{1.7} as follows:
\begin{align}
&\partial_{t}\varphi=\Delta\mu,&\text{in }Q,
 \label{2.1a}\\
&\mu=-\Delta\varphi+\xi+\pi(\varphi)-f,\quad\xi\in\beta(\varphi),&\text{in }Q,
 \label{2.1b}\\
&\partial_{t}\psi=\Delta_{\Gamma}\theta-\partial_{\mathbf{n}}\mu,&\text{on }\Sigma,
 \label{2.1c}\\
&\theta=\partial_{\mathbf{n}}\varphi-\delta\Delta_{\Gamma}\psi+\xi_{\Gamma}+\pi_{\Gamma}(\psi)-f_\Gamma,\quad\xi_{\Gamma}\in\beta_{\Gamma}(\psi),&\text{on }\Sigma,\label{2.1d} \\
&\varphi=\psi,&\text{on }\Sigma,
 \label{2.1e}\\
&\begin{cases}
L\partial_{\mathbf{n}}\mu=\theta-\mu,\qquad \text{if}\ L\in[0,\infty),\\
\partial_{\mathbf{n}}\mu=0,\qquad\qquad\ \   \text{if}\ L=\infty,
\end{cases}
&\text{on }\Sigma,
\label{2.1f} \\
&\varphi |_{t=0}= \varphi_{0}, &\text{in }\Omega,
\label{2.1g} \\
&\psi |_{t=0}= \psi_{0}, &\text{on }\Gamma,
\label{2.1h}
\end{align}
where $f:Q\rightarrow\mathbb{R}$, $f_\Gamma:\Sigma\rightarrow\mathbb{R}$, $\varphi_{0}:\Omega\rightarrow\mathbb{R}$, $\psi_{0}:\Gamma\rightarrow\mathbb{R}$
are given data. For convenience, we denote \eqref{2.1a}--\eqref{2.1h} by problem $(S_{L,\delta})$.

Hereafter, we consider the nonlinear bulk and boundary potentials $F$ and $G$ that can be decomposed as
$$
F = \widehat{\beta} + \widehat{\pi},\qquad G = \widehat{\beta}_\Gamma + \widehat{\pi}_\Gamma.
$$
Besides, we make the following assumptions:
\begin{description}
\item[$\mathbf{(A1)}$] $\widehat{\beta}, \widehat{\beta}_\Gamma: \mathbb{R} \to [0,\infty]$ are lower semicontinuous and convex functions with $\widehat{\beta}(0)=\widehat{\beta}_\Gamma(0) = 0$. Their subdifferentials
      $$
      \beta:=\partial \widehat{\beta},\quad \beta_\Gamma:=\partial \widehat{\beta}_\Gamma,
      $$
      are maximal monotone graphs in $\mathbb{R} \times \mathbb{R}$, with effective domains $D(\beta)$
and $D(\beta_\Gamma)$, respectively. Since $0$ is a minimum point of both $\widehat{\beta}, \widehat{\beta}_\Gamma$,
it follows that $0 \in  \beta(0)$ and $0\in  \beta_\Gamma(0)$.
\item[$\mathbf{(A2)}$] $ D(\beta_\Gamma)\subseteq D(\beta)$ and there exist positive constants $\varrho$, $c_{0}>0$ such that
  \begin{align}
|\beta^{\circ}(r)|\leq \varrho|\beta_{\Gamma}^{\circ}(r)|+c_{0},\quad\forall\,r\in D(\beta_\Gamma),\label{eq2.1}
  \end{align}
  where $\beta^{\circ}$ denotes the minimal section of the graph $\beta$, i.e., $\beta^{\circ}(r):=\{r^{*}\in\beta(r):\,|r^{*}|=\inf_{s\in\beta(r)}|s|\}$ and the same definition applies to $\beta_{\Gamma}^{\circ}$.
\item[$\mathbf{(A3)}$] $\widehat{\pi}, \widehat{\pi}_\Gamma \in C^1(\mathbb{R})$ and their derivatives $\pi:=\widehat{\pi}'$, $\pi_\Gamma:=\widehat{\pi}_\Gamma'$ are globally Lipschitz continuous with Lipschitz constants denoted by $K$ and $K_{\Gamma}$, respectively.
\item[$\mathbf{(A4)}$] $(f,f_\Gamma)\in L^{2}(0,T;\mathcal{V}^{1})$.
\item[$\mathbf{(A5)}$] $(\varphi_0,\psi_0)\in \mathcal{V}^1$ satisfying $ \widehat{\beta}(\varphi_0)\in L^1(\Omega)$, $\widehat{\beta}_\Gamma(\psi_0)\in L^1(\Gamma)$ as well as $\langle \varphi_0\rangle_\Omega, \langle \psi_0\rangle_\Gamma \in \mathrm{int}D(\beta_\Gamma)$.
\end{description}
\begin{remark}
All the typical examples of potentials given in \eqref{eq1.14}--\eqref{eq1.13} fulfill the assumptions $\mathbf{(A1)}$--$\mathbf{(A3)}$, provided that the boundary potential dominates the one in the bulk as demanded in \eqref{eq2.1}. The condition \eqref{eq2.1} follows \cite{CC,CF-CH,CF15,CFW,CGS14,LW,S}.
An alternative choice is that the bulk potential dominates the boundary one (see, e.g., \cite{GMS09,GMS10}).
These compatibility conditions are useful when we deal with the bulk-surface interaction and derive necessary a priori estimates.
Here, we take the classical choice of a dominating boundary potential in order to avoid further technicalities.
From $\mathbf{(A2)}$ and $\mathbf{(A5)}$, we find $\overline{m}(\varphi_0,\psi_0)\in \mathrm{int}D(\beta_\Gamma)$.
\end{remark}

For convenience, below we shall use the bold notations
$$
\bm{\varphi}=(\varphi, \psi), \quad \bm{\mu}=(\mu, \theta),\quad \bm{\xi}=(\xi,\xi_\Gamma),
\quad \bm{\pi}=(\pi, \pi_\Gamma), \quad \boldsymbol{f}=(f,f_\Gamma),\quad \bm{\varphi}_0=(\varphi_0, \psi_0),
$$
and also for generic elements $\boldsymbol{y}=(y,y_\Gamma)$ in the product spaces $\mathcal{L}^{2}$, $\mathcal{H}^1$, $(\mathcal{H}^1)'$ etc.
Owing to the solvability of the elliptic problem \eqref{2.2} for $L\in (0,\infty)$, inspired by \cite{CF-CH,CF15,Kubo},
we can formally write our problem $(S_{L,\delta})$ into a suitable abstract formulation.
To this end, we define the projection operator
$$
\mathbf{P}:\mathcal{L}^{2}\rightarrow\mathcal{L}_{(0)}^{2},\quad (y,y_{\Gamma})\mapsto(y-\overline{m},y_{\Gamma}-\overline{m}),
$$
where $\overline{m}$ is the generalized mean given by \eqref{gmean}.
Then, for any functions $\boldsymbol{z}\in \mathcal{L}^{2}$ and $\boldsymbol{y}\in \mathcal{L}_{(0)}^{2}$, it holds
\begin{align}
\big(\mathbf{P}\boldsymbol{z},\boldsymbol{y}\big)_{\mathcal{L}^{2}}=\int_{\Omega}\big(z-\overline{m}(\boldsymbol{z})\big)y\,\mathrm{d}x
+\int_{\Gamma}\big(z_{\Gamma}-\overline{m}(\boldsymbol{z})\big)y_{\Gamma}\,\mathrm{d}S
=\big(\boldsymbol{z},\boldsymbol{y}\big)_{\mathcal{L}^{2}}.
 \label{eq2.2}
\end{align}
Concerning the equations \eqref{2.1a}, \eqref{2.1c}, \eqref{2.1f}, using \eqref{2.2}, we can (formally) write $$\mathbf{P}\bm{\mu}=\mathfrak{S}^{L}(-\partial_{t}\bm{\varphi}).$$
On the other hand, consider the lower semicontinuous and convex functional $\Phi_{\delta}:\mathcal{L}_{(0)}^{2}\rightarrow[0,\infty]$
given by
\begin{align}
\Phi_{\delta}(\boldsymbol{z}):=
\left\{
\begin{array}{ll}
\dfrac{1}{2}\displaystyle{\int_{\Omega}}|\nabla z|^{2}\,\mathrm{d}x
+\dfrac{\delta}{2}\displaystyle{\int_{\Gamma}}|\nabla_{\Gamma}z_{\Gamma}|^{2}\,\mathrm{d}S,
&\quad \text{if }\boldsymbol{z}\in \mathcal{V}^{1}_{(0)},\\
+\infty,&\quad \text{otherwise}.
\end{array}\right.
 \notag
\end{align}
For any $\boldsymbol{z}\in \mathcal{V}^{1}_{(0)}$, we have $2\Phi_{\delta}(\boldsymbol{z})=b_\delta(\bm{z},\bm{z})$.
Thanks to \cite[Lemma C]{CF15}, the subdifferential $\partial\Phi_{\delta}$ on $\mathcal{L}_{(0)}^{2}$ fulfills
\begin{align}
\partial\Phi_{\delta}(\boldsymbol{z})=(-\Delta z,\partial_{\mathbf{n}}z-\delta\Delta_{\Gamma}z_{\Gamma}),\quad \forall\,\boldsymbol{z}\in D(\partial\Phi_{\delta})=\mathcal{H}^2\cap\mathcal{V}^{1}_{(0)}.
\label{Phi}
\end{align}
Set
$$\boldsymbol{\omega}=(\omega,\omega_\Gamma):=\bm{\varphi}-\overline{m}_{0}\boldsymbol{1}\quad
\text{with}\quad
\overline{m}_{0}=\overline{m}(\bm{\varphi}_0)=\frac{|\Omega|\langle \varphi_{0}\rangle_{\Omega}+|\Gamma|\langle \psi_{0}\rangle_{\Gamma}}{|\Omega|+|\Gamma|}.
$$
Then our target problem $(S_{L,\delta})$ is equivalent to the Cauchy problem for a suitable evolution equation:
\begin{align}
\left\{
\begin{array}{ll}
\mathfrak{S}^{L}\big(\boldsymbol{\omega}'(t)\big)
+\mathbf{P}\bm{\mu}(t) =\mathbf{0}\quad\text{in }\mathcal{H}_{(0)}^{1},
&\text{for a.a. }t\in(0,T),\\
\bm{\mu}(t)=\partial\Phi_{\delta}\big(\boldsymbol{\omega}(t)\big)+\boldsymbol{\xi}(t) +\boldsymbol{\pi}\big(\boldsymbol{\omega}(t)+\overline{m}_{0}\boldsymbol{1}\big)-\boldsymbol{f}(t)
\quad\text{in }\mathcal{L}^{2},&\text{for a.a. }t\in(0,T),
\\
\boldsymbol{\xi}(t)\in\boldsymbol{\beta}\big(\boldsymbol{\omega}(t)+\overline{m}_{0}\boldsymbol{1}\big)
\quad\text{in }\mathcal{L}^{2},&\text{for a.a. }t\in(0,T),\\
\boldsymbol{\omega}(0)=\boldsymbol{\omega}_{0}=(\omega_0,\omega_{\Gamma0})\quad\text{in }\mathcal{L}^{2}_{(0)},
\end{array}\right.
\label{2.4}
\end{align}
where $\boldsymbol{\beta}(\boldsymbol{z}):=\big(\beta(z),\beta_{\Gamma}(z_{\Gamma})\big)$ and $\boldsymbol{\omega}_0:= \bm{\varphi}_0-\overline{m}_{0}\boldsymbol{1}$.

\subsection{Statement of results}
In this subsection, we summarize the main results of this paper.

\subsubsection{Well-posedness for $L, \delta\in (0,\infty)$}

Let us start with the case $L, \delta\in (0,\infty)$. First, we give the definition of weak solutions.
\begin{definition}
\label{weakdefn}
Let $L, \delta\in (0,\infty)$ be fixed parameters.
Suppose that the assumptions $\mathbf{(A1)}$--$\mathbf{(A5)}$ are satisfied.
The triplet $(\boldsymbol{\varphi},\boldsymbol{\mu},\boldsymbol{\xi})$ is called a weak solution of problem $(S_{L,\delta})$ on $[0,T]$, if the following conditions are fulfilled:

(1) The functions $\boldsymbol{\varphi},\boldsymbol{\mu},\boldsymbol{\xi}$ have the regularity properties
\begin{align}
&\boldsymbol{\omega}=\bm{\varphi}-\overline{m}_{0}\boldsymbol{1} \in H^{1}\big(0,T;\mathcal{H}^{-1}_{(0)}\big)\cap L^{\infty}(0,T;\mathcal{V}^{1}_{(0)})
\cap L^{2}(0,T;\mathcal{V}^2),
\notag\\
&\boldsymbol{\mu}\in L^{2}(0,T;\mathcal{H}^{1}),\quad \boldsymbol{\xi}\in L^{2}(0,T;\mathcal{L}^{2}).
\notag
\end{align}

(2) The variational formulations
\begin{align}
&\big\langle\partial_t\boldsymbol{\varphi}(t),\boldsymbol{y}\big\rangle_{(\mathcal{H}^{1})',\mathcal{H}^{1}} +a_{L}\big(\boldsymbol{\mu}(t),\boldsymbol{y}\big)=0,
&&\forall\,\boldsymbol{y}\in\mathcal{H}^{1},
\label{eq3.2}\\
&\big(\boldsymbol{\mu}(t),\boldsymbol{z}\big)_{\mathcal{L}^{2}}=b_{\delta}\big(\boldsymbol{\varphi}(t),\boldsymbol{z}\big) +\left(\boldsymbol{\xi}(t)+\boldsymbol{\pi}\big(\boldsymbol{\varphi}(t)\big)-\boldsymbol{f}(t),\boldsymbol{z}\right)_{\mathcal{L}^{2}}, &&\forall\,\boldsymbol{z}\in\mathcal{V}^{1},
\label{eq3.3}
\end{align}
hold for a.a. $t\in(0,T)$, with
$$
\xi\in\beta(\varphi)\ \text{ a.e. in }Q,\quad\xi_{\Gamma}\in\beta_{\Gamma}(\psi)\ \text{ a.e. on }\Sigma.
$$

(3) The initial conditions are satisfied
\begin{align}
 \varphi|_{t=0}=\varphi_{0}\ \ \text{a.e. in}\ \Omega, \quad  \psi|_{t=0}=\psi_{0}\ \ \text{a.e. on}\ \Gamma.\notag
\end{align}
\end{definition}
\begin{remark}
The regularity of $\bm{\varphi}$ implies that $\bm{\varphi}\in C_w([0,T];\mathcal{V}^1)\cap C([0,T];\mathcal{L}^2)$.
Since $\bm{\varphi}\in L^{2}(0,T;\mathcal{V}^2)$, we infer from \eqref{eq3.3} that
\begin{align}
&\mu=-\Delta\varphi+\xi+\pi(\varphi)-f,&&\text{a.e. in }Q,
\label{eq3.5}\\
&\theta=\partial_{\mathbf{n}}\varphi-\delta\Delta_{\Gamma}\psi+\xi_{\Gamma}+\pi_{\Gamma}(\psi)-f_\Gamma,&&\text{a.e. on }\Sigma.
\label{eq3.6}
\end{align}
The variational equality \eqref{eq3.2} provides a representation of the time derivative $\partial_{t}\boldsymbol{\varphi}$ as an element of the dual space $L^{2}(0,T;(\mathcal{H}^{1})')$.
\end{remark}

\begin{theorem}[Existence of weak solution for $L,\delta\in (0,\infty)$]
\label{weakexist}
Suppose that the assumptions $\mathbf{(A1)}$--$\mathbf{(A5)}$ are satisfied.
For any given $L, \delta\in (0,\infty)$, problem $(S_{L,\delta})$ admits a weak solution $(\boldsymbol{\varphi},\boldsymbol{\mu},\boldsymbol{\xi})$ on $[0,T]$ in the sense of Definition \ref{weakdefn}.
\end{theorem}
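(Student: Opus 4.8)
The plan is to construct a weak solution by a double approximation followed by a compactness argument, exactly along the lines announced in the introduction. I would first regularize the maximal monotone graphs $\beta$, $\beta_\Gamma$ by their Yosida approximations $\beta_\varepsilon$, $\beta_{\Gamma,\varepsilon}$ (globally Lipschitz, with $\beta_\varepsilon(0)=\beta_{\Gamma,\varepsilon}(0)=0$ and Moreau–Yosida regularized potentials $\widehat{\beta}_\varepsilon\leq\widehat{\beta}$), and add viscous regularization terms to the bulk equation and to the dynamic boundary condition. For each fixed $\varepsilon>0$, the resulting regularized version of the abstract Cauchy problem \eqref{2.4} is a doubly nonlinear evolution inclusion governed by the duality operator $\mathfrak{S}^{L}$ acting on $\boldsymbol{\omega}'$ together with the subdifferential $\partial\Phi_{\delta}$ acting on $\boldsymbol{\omega}$, perturbed only by the Lipschitz lower-order terms $\boldsymbol{\pi}+\boldsymbol{\beta}_{\varepsilon}$. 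I would invoke the abstract solvability result (Proposition \ref{approexist}) to obtain, for each $\varepsilon>0$, an approximate solution $\boldsymbol{\omega}_{\varepsilon}\in H^{1}(0,T;\mathcal{H}^{-1}_{(0)})\cap L^{\infty}(0,T;\mathcal{V}^{1}_{(0)})$ with associated $\boldsymbol{\mu}_{\varepsilon}$ and $\boldsymbol{\xi}_{\varepsilon}=\boldsymbol{\beta}_{\varepsilon}(\boldsymbol{\varphi}_{\varepsilon})$, where $\boldsymbol{\varphi}_{\varepsilon}=\boldsymbol{\omega}_{\varepsilon}+\overline{m}_{0}\boldsymbol{1}$.

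The core of the proof is a family of a priori estimates uniform in $\varepsilon$. The first is the energy estimate: testing the evolution equation for $\boldsymbol{\varphi}_{\varepsilon}$ against $\boldsymbol{\mu}_{\varepsilon}$ (equivalently, pairing the first line of \eqref{2.4} with $\mathbf{P}\boldsymbol{\mu}_{\varepsilon}$) produces the dissipation identity leading to
\[
\sup_{t\in[0,T]}\Phi_{\delta}\big(\boldsymbol{\omega}_{\varepsilon}(t)\big)+\int_{0}^{T} a_{L}\big(\boldsymbol{\mu}_{\varepsilon}(t),\boldsymbol{\mu}_{\varepsilon}(t)\big)\,\mathrm{d}t\leq C,
\]
with $C$ independent of $\varepsilon$ thanks to $\widehat{\beta}_{\varepsilon}\leq\widehat{\beta}$, the Lipschitz bound on $\boldsymbol{\pi}$ from $\mathbf{(A3)}$, and the data bounds in $\mathbf{(A4)}$–$\mathbf{(A5)}$. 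This controls $\boldsymbol{\omega}_{\varepsilon}$ in $L^{\infty}(0,T;\mathcal{V}^{1}_{(0)})$ and, via the first equation in \eqref{2.4} and the continuity of $\mathfrak{S}^{L}$, controls $\boldsymbol{\omega}'_{\varepsilon}$ in $L^{2}(0,T;\mathcal{H}^{-1}_{(0)})$.

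The delicate and decisive step, which I expect to be the main obstacle, is the uniform bound on the singular part $\boldsymbol{\xi}_{\varepsilon}$ in $L^{2}(0,T;\mathcal{L}^{2})$, because the bulk and boundary graphs are coupled through $\partial_{\mathbf{n}}\varphi$ in the definition \eqref{eq3.6} of $\theta$. I would proceed in two stages. First, since $\overline{m}_{0}\in\mathrm{int}\,D(\beta_{\Gamma})$ by $\mathbf{(A2)}$, $\mathbf{(A5)}$, there exist $c_{*},C_{*}>0$ independent of $\varepsilon$ such that $\beta_{\Gamma,\varepsilon}(r)(r-\overline{m}_{0})\geq c_{*}|\beta_{\Gamma,\varepsilon}(r)|-C_{*}$; testing \eqref{eq3.3} with $\boldsymbol{\varphi}_{\varepsilon}-\overline{m}_{0}\boldsymbol{1}$ and using the domination \eqref{eq2.1} to transfer the bound from $\beta_{\Gamma,\varepsilon}$ to $\beta_{\varepsilon}$ yields an $L^{1}$ estimate for $\boldsymbol{\xi}_{\varepsilon}$, hence a uniform bound on the generalized mean of $\boldsymbol{\mu}_{\varepsilon}$; combined with the gradient bound above and the Poincaré inequality \eqref{Po3}, this gives $\boldsymbol{\mu}_{\varepsilon}$ bounded in $L^{2}(0,T;\mathcal{H}^{1})$. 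Second, testing \eqref{eq3.3} against $\boldsymbol{\xi}_{\varepsilon}$ converts the $\partial\Phi_{\delta}$ contribution into the nonnegative terms $\int_{\Omega}\beta_{\varepsilon}'(\varphi_{\varepsilon})|\nabla\varphi_{\varepsilon}|^{2}$ and $\delta\int_{\Gamma}\beta_{\Gamma,\varepsilon}'(\psi_{\varepsilon})|\nabla_{\Gamma}\psi_{\varepsilon}|^{2}$, the boundary coupling being again absorbed via \eqref{eq2.1}, while the $\boldsymbol{\mu}_{\varepsilon}$ and $\boldsymbol{\pi}$ terms are controlled by the already-established bounds; this delivers $\boldsymbol{\xi}_{\varepsilon}$ bounded in $L^{2}(0,T;\mathcal{L}^{2})$. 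Applying the elliptic regularity estimate \eqref{2.2b} to \eqref{eq3.5}–\eqref{eq3.6} then gives $\boldsymbol{\omega}_{\varepsilon}$ bounded in $L^{2}(0,T;\mathcal{V}^{2})$.

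With these uniform bounds, I would extract a subsequence and pass to the limit $\varepsilon\to0$. The Aubin–Lions–Simon lemma (using $\boldsymbol{\omega}_{\varepsilon}$ bounded in $L^{\infty}(0,T;\mathcal{V}^{1}_{(0)})\cap H^{1}(0,T;\mathcal{H}^{-1}_{(0)})$) yields strong convergence $\boldsymbol{\varphi}_{\varepsilon}\to\boldsymbol{\varphi}$ in $C([0,T];\mathcal{L}^{2})$ and a.e., which suffices to pass to the limit in the Lipschitz term $\boldsymbol{\pi}(\boldsymbol{\varphi}_{\varepsilon})\to\boldsymbol{\pi}(\boldsymbol{\varphi})$; the weak limits of $\boldsymbol{\mu}_{\varepsilon}$ in $L^{2}(0,T;\mathcal{H}^{1})$ and $\boldsymbol{\xi}_{\varepsilon}$ in $L^{2}(0,T;\mathcal{L}^{2})$ identify the remaining fields, and one passes to the limit in the variational identities \eqref{eq3.2}–\eqref{eq3.3}. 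To identify the inclusion $\boldsymbol{\xi}\in\boldsymbol{\beta}(\boldsymbol{\varphi})$ I would use the weak–strong closedness of maximal monotone operators: combining $\boldsymbol{\xi}_{\varepsilon}\rightharpoonup\boldsymbol{\xi}$ weakly and $\boldsymbol{\varphi}_{\varepsilon}\to\boldsymbol{\varphi}$ strongly in $L^{2}(0,T;\mathcal{L}^{2})$ with the $\limsup$ inequality for $\int_{0}^{T}(\boldsymbol{\xi}_{\varepsilon},\boldsymbol{\varphi}_{\varepsilon})_{\mathcal{L}^{2}}\,\mathrm{d}t$ and the Yosida resolvent estimate, one concludes $\xi\in\beta(\varphi)$ a.e.\ in $Q$ and $\xi_{\Gamma}\in\beta_{\Gamma}(\psi)$ a.e.\ on $\Sigma$. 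The initial condition is recovered from the continuity $\boldsymbol{\varphi}\in C([0,T];\mathcal{L}^{2})$, completing the construction; everything apart from the two-stage estimate for $\boldsymbol{\xi}_{\varepsilon}$ is comparatively routine once the domination \eqref{eq2.1} and the interior-point property $\overline{m}_{0}\in\mathrm{int}\,D(\beta_{\Gamma})$ are exploited.
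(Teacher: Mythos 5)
Your overall route coincides with the paper's: Yosida plus viscous regularization, solvability of the approximate problem by the abstract theory of doubly nonlinear evolution inclusions, uniform estimates, Aubin--Lions--Simon compactness, and identification of the inclusion by maximal monotonicity. The first stage of your estimate for $\boldsymbol{\xi}_{\varepsilon}$ (testing with $\boldsymbol{\varphi}_{\varepsilon}-\overline{m}_{0}\boldsymbol{1}$, using $\overline{m}_{0}\in\mathrm{int}\,D(\beta_{\Gamma})$ to get $L^{1}$ bounds, hence control of the mean of $\boldsymbol{\mu}_{\varepsilon}$ and then $\boldsymbol{\mu}_{\varepsilon}\in L^{2}(0,T;\mathcal{H}^{1})$) is exactly the paper's Lemmas 3.3 and 3.5.

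The gap is in your second stage. Testing against $\boldsymbol{\xi}_{\varepsilon}=(\beta_{\varepsilon}(\varphi_{\varepsilon}),\beta_{\Gamma,\varepsilon}(\psi_{\varepsilon}))$ is not admissible in \eqref{eq3.3}, since the trace of $\beta_{\varepsilon}(\varphi_{\varepsilon})$ is $\beta_{\varepsilon}(\psi_{\varepsilon})$, not $\beta_{\Gamma,\varepsilon}(\psi_{\varepsilon})$. The admissible choice is $(\beta_{\varepsilon}(\varphi_{\varepsilon}),\beta_{\varepsilon}(\psi_{\varepsilon}))$; then the normal-derivative contributions cancel and the cross term $\int_{\Gamma}\beta_{\Gamma,\varepsilon}(\psi_{\varepsilon})\beta_{\varepsilon}(\psi_{\varepsilon})\,\mathrm{d}S$, bounded below via \eqref{2.8}, yields $\beta_{\varepsilon}(\varphi_{\varepsilon})\in L^{2}(0,T;H)$ and $\beta_{\varepsilon}(\psi_{\varepsilon})\in L^{2}(0,T;H_{\Gamma})$ --- but \emph{not} $\beta_{\Gamma,\varepsilon}(\psi_{\varepsilon})\in L^{2}(0,T;H_{\Gamma})$. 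The domination \eqref{eq2.1} runs the wrong way for that last step ($|\beta_{\varepsilon}|\leq\varrho|\beta_{\Gamma,\varepsilon}|+c_{0}$ gives no upper bound on $\beta_{\Gamma,\varepsilon}$ in terms of $\beta_{\varepsilon}$; take $\beta\equiv 0$ and $\beta_{\Gamma}=\partial I_{[-1,1]}$ to see the cross term is vacuous). If instead you pair the boundary relation \eqref{eq3.6} with $\beta_{\Gamma,\varepsilon}(\psi_{\varepsilon})$, the term $\int_{\Gamma}\partial_{\mathbf{n}}\varphi_{\varepsilon}\,\beta_{\Gamma,\varepsilon}(\psi_{\varepsilon})\,\mathrm{d}S$ appears and needs a uniform $L^{2}(0,T;H_{\Gamma})$ bound on $\partial_{\mathbf{n}}\varphi_{\varepsilon}$, which is not yet available --- it would follow from the $\mathcal{H}^{2}$ bound, which itself requires the $\beta_{\Gamma,\varepsilon}$ bound, so the argument is circular as you state it. The paper breaks this circle (Lemma 3.4) by first obtaining $\Delta\varphi_{\varepsilon}\in L^{2}(0,T;H)$ by comparison in the bulk equation, then using the bulk--surface elliptic estimate \eqref{2.2b} to get $\|\bm{\omega}_{\varepsilon}\|_{L^{2}(0,T;\mathcal{H}^{2})}\leq C(1+\|\partial_{\mathbf{n}}\omega_{\varepsilon}\|_{L^{2}(0,T;H_{\Gamma})})$, and finally absorbing $\|\partial_{\mathbf{n}}\omega_{\varepsilon}\|_{L^{2}(0,T;H_{\Gamma})}\leq\gamma\|\omega_{\varepsilon}\|_{L^{2}(0,T;H^{2}(\Omega))}+C_{\gamma}\|\omega_{\varepsilon}\|_{L^{2}(0,T;V)}$ via the trace theorem and the Ehrling lemma. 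This bootstrap is the missing idea; without it neither the $L^{2}(0,T;\mathcal{L}^{2})$ bound on $\boldsymbol{\xi}_{\varepsilon}$ nor the $L^{2}(0,T;\mathcal{V}^{2})$ regularity of the limit can be concluded. (A minor additional remark: your energy estimate needs the preliminary $L^{\infty}(0,T;\mathcal{H}^{-1}_{(0)})$ bound, obtained by first testing with $\boldsymbol{\omega}_{\varepsilon}$, to control the quadratic-growth terms $\widehat{\pi}(\varphi_{\varepsilon})$, $\widehat{\pi}_{\Gamma}(\psi_{\varepsilon})$ via the interpolation inequality \eqref{int-0}.)
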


Uniqueness of the phase function $\boldsymbol{\varphi}$ associated with problem $(S_{L,\delta})$ is guaranteed by the following continuous dependence estimate:
\begin{theorem}[Continuous dependence for $L,\delta\in (0,\infty)$]
\label{contidepen}
Suppose that the assumptions of Theorem \ref{weakexist} are satisfied. Let $(\boldsymbol{\varphi}_{i},\boldsymbol{\mu}_{i},\boldsymbol{\xi}_{i})$, $i=1,2$, be two weak solutions of problem $(S_{L,\delta})$ corresponding to the data $(\boldsymbol{f}_{i}$, $\boldsymbol{\varphi}_{0,i})$ with $\overline{m}(\boldsymbol{\varphi}_{0,1})=\overline{m}(\boldsymbol{\varphi}_{0,2})=\overline{m}_0$.
Then there exists a constant $C>0$, depending only on $K$, $K_{\Gamma}$, $\Omega$ and $T$, such that
\begin{align}
&\Vert\boldsymbol{\varphi}_{1}(t)-\boldsymbol{\varphi}_{2}(t)\Vert_{0,*}^{2}
+\int_{0}^{t}\Vert\boldsymbol{\varphi}_{1}(s)-\boldsymbol{\varphi}_{2}(s)\Vert_{\mathcal{V}_{(0)}^{1}}^{2}\,\mathrm{d}s
 \notag\\
&\quad \leq C\Big(\Vert\boldsymbol{\varphi}_{0,1}-\boldsymbol{\varphi}_{0,2}\Vert_{0,*}^{2} +\int_{0}^{t}\Vert\boldsymbol{f}_{1}(s)-\boldsymbol{f}_{2}(s)\Vert_{(\mathcal{V}^{1})'}^{2}\,\mathrm{d}s\Big),
 \quad\forall\,t\in[0,T].
 \label{3.19}
\end{align}
\end{theorem}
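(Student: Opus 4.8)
The plan is to run a duality (weak-norm) energy estimate on the difference of the two solutions. Write $\boldsymbol{w}:=\boldsymbol{\varphi}_1-\boldsymbol{\varphi}_2$, $\boldsymbol{\nu}:=\boldsymbol{\mu}_1-\boldsymbol{\mu}_2$, $\boldsymbol{\eta}:=\boldsymbol{\xi}_1-\boldsymbol{\xi}_2$ and $\boldsymbol{g}:=\boldsymbol{f}_1-\boldsymbol{f}_2$. First I would record that testing \eqref{eq3.2} with $\boldsymbol{y}=\boldsymbol{1}$ and using $a_L(\boldsymbol{\mu},\boldsymbol{1})=0$ shows the generalized mean $\overline{m}(\boldsymbol{\varphi}_i(t))$ is conserved in time; since $\overline{m}(\boldsymbol{\varphi}_{0,1})=\overline{m}(\boldsymbol{\varphi}_{0,2})=\overline{m}_0$, this gives $\overline{m}(\boldsymbol{w}(t))=0$ for a.a. $t$, so that $\boldsymbol{w}(t)\in\mathcal{L}^2_{(0)}$ (indeed $\boldsymbol{w}(t)\in\mathcal{V}^1_{(0)}$) and $\mathfrak{S}^L\boldsymbol{w}(t)\in\mathcal{H}^1_{L,0}$ is a legitimate test function. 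The two building blocks are then: subtract the two copies of \eqref{eq3.2} and test with $\boldsymbol{y}=\mathfrak{S}^L\boldsymbol{w}(t)$, and subtract the two copies of \eqref{eq3.3} and test with $\boldsymbol{z}=\boldsymbol{w}(t)$.

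For the first block, the defining identity of $\mathfrak{S}^L$ together with the symmetry of $a_L$ gives $a_L(\boldsymbol{\nu},\mathfrak{S}^L\boldsymbol{w})=(\boldsymbol{w},\boldsymbol{\nu})_{\mathcal{L}^2}$, while the duality term reproduces the time derivative of the weak norm, $\langle\partial_t\boldsymbol{w},\mathfrak{S}^L\boldsymbol{w}\rangle_{(\mathcal{H}^1)',\mathcal{H}^1}=\tfrac12\tfrac{d}{dt}\|\boldsymbol{w}\|_{0,*}^2$. For the second block, $b_\delta(\boldsymbol{w},\boldsymbol{w})=\|\boldsymbol{w}\|_{\mathcal{V}^1_{(0)}}^2$ by \eqref{nv01}. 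Eliminating the common term $(\boldsymbol{\nu},\boldsymbol{w})_{\mathcal{L}^2}$ yields the differential equality
\begin{align}
\frac12\frac{d}{dt}\|\boldsymbol{w}\|_{0,*}^2+\|\boldsymbol{w}\|_{\mathcal{V}^1_{(0)}}^2+(\boldsymbol{\eta},\boldsymbol{w})_{\mathcal{L}^2}+\big(\boldsymbol{\pi}(\boldsymbol{\varphi}_1)-\boldsymbol{\pi}(\boldsymbol{\varphi}_2),\boldsymbol{w}\big)_{\mathcal{L}^2}=(\boldsymbol{g},\boldsymbol{w})_{\mathcal{L}^2}.
\notag
\end{align}

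Next I estimate the three remaining terms. Monotonicity of the graphs $\beta,\beta_\Gamma$ in $\mathbf{(A1)}$ makes $(\boldsymbol{\eta},\boldsymbol{w})_{\mathcal{L}^2}=(\xi_1-\xi_2,\varphi_1-\varphi_2)_H+(\xi_{\Gamma,1}-\xi_{\Gamma,2},\psi_1-\psi_2)_{H_\Gamma}\geq0$, so this term is simply discarded. The Lipschitz bound from $\mathbf{(A3)}$ gives $|(\boldsymbol{\pi}(\boldsymbol{\varphi}_1)-\boldsymbol{\pi}(\boldsymbol{\varphi}_2),\boldsymbol{w})_{\mathcal{L}^2}|\leq\max\{K,K_\Gamma\}\|\boldsymbol{w}\|_{\mathcal{L}^2}^2$, and the source term is controlled by $|(\boldsymbol{g},\boldsymbol{w})_{\mathcal{L}^2}|\leq\|\boldsymbol{g}\|_{(\mathcal{V}^1)'}\|\boldsymbol{w}\|_{\mathcal{V}^1}$, the last factor being equivalent to $\|\boldsymbol{w}\|_{\mathcal{V}^1_{(0)}}$ on $\mathcal{V}^1_{(0)}$. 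The crucial point is to convert the $\mathcal{L}^2$ norm in the Lipschitz term into the weak norm plus a controllable multiple of the dissipation: since $\boldsymbol{w}(t)\in\mathcal{V}^1_{(0)}\subset\widetilde{\mathcal{V}}^1_{(0)}$, the interpolation inequality \eqref{int-0} gives $\|\boldsymbol{w}\|_{\mathcal{L}^2}\leq\|w\|_H+\|w_\Gamma\|_{H_\Gamma}\leq\gamma\|\nabla w\|_H+C_\gamma\|\boldsymbol{w}\|_{0,*}$, and because $\|\nabla w\|_H^2\leq\|\boldsymbol{w}\|_{\mathcal{V}^1_{(0)}}^2$, choosing $\gamma$ small (depending on $K,K_\Gamma$) together with Young's inequality absorbs every dissipation-type contribution into $\|\boldsymbol{w}\|_{\mathcal{V}^1_{(0)}}^2$ on the left. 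This leaves an inequality of the form $\tfrac{d}{dt}\|\boldsymbol{w}\|_{0,*}^2+\|\boldsymbol{w}\|_{\mathcal{V}^1_{(0)}}^2\leq C\|\boldsymbol{w}\|_{0,*}^2+C\|\boldsymbol{g}\|_{(\mathcal{V}^1)'}^2$, and Gr\"onwall's lemma followed by integration in time yields \eqref{3.19}, with $\boldsymbol{w}(0)=\boldsymbol{\varphi}_{0,1}-\boldsymbol{\varphi}_{0,2}$.

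The step I expect to be the main obstacle is the rigorous justification of the chain-rule identity $\langle\partial_t\boldsymbol{w},\mathfrak{S}^L\boldsymbol{w}\rangle_{(\mathcal{H}^1)',\mathcal{H}^1}=\tfrac12\tfrac{d}{dt}\|\boldsymbol{w}\|_{0,*}^2$, which presupposes that $t\mapsto\|\boldsymbol{w}(t)\|_{0,*}^2$ is absolutely continuous. This is not a pointwise computation but relies on the regularity $\boldsymbol{w}\in H^1(0,T;\mathcal{H}^{-1}_{(0)})\cap L^2(0,T;\mathcal{V}^1_{(0)})$ built into Definition \ref{weakdefn}, together with the fact that $\mathfrak{S}^L$ is exactly the Riesz isomorphism associated with the inner product $(\cdot,\cdot)_{0,*}$ on $\mathcal{H}^{-1}_{(0)}$; I would invoke the standard Lions-type chain-rule lemma in this Gelfand-triple setting (as collected in the Appendix). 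A secondary point requiring care is ensuring $\boldsymbol{w}(t)\in\mathcal{L}^2_{(0)}$ for a.a. $t$ so that $\mathfrak{S}^L$ applies at all, which is precisely the mass-conservation remark above and which uses the hypothesis $\overline{m}(\boldsymbol{\varphi}_{0,1})=\overline{m}(\boldsymbol{\varphi}_{0,2})$.
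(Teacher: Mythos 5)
Your proposal is correct and follows essentially the same route as the paper: the paper subtracts the abstract evolution equation \eqref{2.5} for the two solutions and tests with $\boldsymbol{\omega}_1-\boldsymbol{\omega}_2$ in $\mathcal{L}^2$, which produces exactly your differential identity (weak-norm time derivative plus $\|\cdot\|_{\mathcal{V}^1_{(0)}}^2$ dissipation plus the monotone term), and then discards the monotone term, absorbs the Lipschitz term via the Ehrling/interpolation inequality and the $(\mathcal{V}^1)'$ duality for the source, and concludes by Gr\"onwall. Your reformulation through the weak formulations \eqref{eq3.2}--\eqref{eq3.3} with test functions $\mathfrak{S}^L\boldsymbol{w}$ and $\boldsymbol{w}$ is the same estimate in different notation, and your flagged points (mass conservation so that $\mathfrak{S}^L$ applies, and the chain rule for $\|\boldsymbol{w}\|_{0,*}^2$) are handled the same way in the paper.
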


In order to prove the existence of strong solutions, some additional regularity assumptions on $\boldsymbol{\varphi}_{0}$ and $\boldsymbol{f}$ are needed:
\begin{description}
\item[$\mathbf{(A6)}$] $\boldsymbol{f}=(f,f_\Gamma)\in H^{1}(0,T;\mathcal{L}^{2})$.
\item[$\mathbf{(A7)}$] $\boldsymbol{\varphi}_{0}\in\mathcal{V}^2$ and the family $\big\{\partial\Phi_{\delta}(\boldsymbol{\varphi}_{0}-\overline{m}_{0}\boldsymbol{1})
    +\boldsymbol{\beta}_{\varepsilon}(\boldsymbol{\varphi}_{0}) +\boldsymbol{\pi}(\boldsymbol{\varphi}_{0})-\boldsymbol{f}(0):\varepsilon\in(0,\varepsilon_{0}]\big\}$ is bounded in $\mathcal{H}^{1}$ for some $\varepsilon_{0}\in(0,1)$.
    Here, $\boldsymbol{\beta}_{\varepsilon}=(\beta_{\varepsilon}, \beta_{\Gamma,\varepsilon})$ denotes the Yosida regularization for maximal monotone operators $\boldsymbol{\beta}=(\beta, \beta_\Gamma)$, see \eqref{Y1}--\eqref{Y2} in Section 3.
\end{description}

\begin{definition}
\label{strongdefn}
Let $L, \delta\in (0,\infty)$ be fixed parameters.
Suppose that the assumptions $\mathbf{(A1)}$--$\mathbf{(A7)}$ are satisfied.
The triplet $(\boldsymbol{\varphi},\boldsymbol{\mu},\boldsymbol{\xi})$ is called a strong solution of problem $(S_{L,\delta})$ on $[0,T]$,
if the following additional regularity properties
\begin{align}
&\boldsymbol{\varphi}\in W^{1,\infty}(0,T;(\mathcal{H}^{1})')\cap H^{1}(0,T;\mathcal{V}^{1})\cap L^{\infty}(0,T;\mathcal{H}^2),\notag\\
&\boldsymbol{\mu}\in L^{\infty}(0,T;\mathcal{H}^{1})\cap L^{2}(0,T;\mathcal{H}^{2}),\notag\\
&\boldsymbol{\xi}\in L^{\infty}(0,T;\mathcal{L}^{2}),\notag
\end{align}
are satisfied, and it holds
\begin{align*}
&\partial_{t}\varphi=\Delta\mu,&& \text{a.e. in }Q, \\
&\mu=-\Delta u+\xi+\pi(\varphi)-f,\quad \xi\in\beta(\varphi),&&\text{a.e. in }Q, \\
&\psi=\varphi|_{\Gamma},\quad L\partial_{\mathbf{n}}\mu=\theta-\mu|_{\Gamma},&&\text{a.e. on }\Sigma, \\
&\partial_{t}\psi=\Delta_{\Gamma}\theta-\partial_{\mathbf{n}}\mu,&&\text{a.e. on }\Sigma, \\
&\theta=\partial_{\mathbf{n}}\varphi-\delta\Delta_{\Gamma}\psi+\xi_{\Gamma}+\pi_{\Gamma}(\psi)-f_\Gamma,\quad \xi_{\Gamma}\in\beta_{\Gamma}(\psi),&&\text{a.e. on }\Sigma, \\
&\varphi(0)=\varphi_{0},&&\text{a.e. in }\Omega,\\
&\psi(0)=\psi_{0},&&\text{a.e. on }\Gamma.
\end{align*}
\end{definition}

\begin{theorem}[Existence and uniqueness of strong solution for $L,\delta\in (0,\infty)$]
\label{strongexist}
Suppose that the assumptions $\mathbf{(A1)}$--$\mathbf{(A7)}$ are satisfied.
For any given $L, \delta\in (0,\infty)$, problem $(S_{L,\delta})$ admits a unique strong solution in the sense of Definition \ref{strongdefn}.
\end{theorem}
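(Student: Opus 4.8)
The plan is to construct the strong solution by upgrading the weak solution of Theorem \ref{weakexist}: starting from the regularized approximation $(\boldsymbol{\varphi}_\varepsilon,\boldsymbol{\mu}_\varepsilon,\boldsymbol{\xi}_\varepsilon)$ already employed in that proof, I would derive higher-order (in time) a priori estimates that are uniform in the Yosida parameter $\varepsilon$, and then pass to the limit $\varepsilon\to0$. Uniqueness is essentially free: any two strong solutions are in particular weak solutions sharing the same data, so the continuous dependence estimate \eqref{3.19} of Theorem \ref{contidepen} forces $\boldsymbol{\varphi}_1=\boldsymbol{\varphi}_2$; the first line of \eqref{2.4} then fixes $\mathbf{P}\boldsymbol{\mu}$, the strong relations \eqref{eq3.5}--\eqref{eq3.6} determine $\boldsymbol{\mu}-\boldsymbol{\xi}$, and the remaining additive constant is pinned down by the inclusion $\boldsymbol{\xi}\in\boldsymbol{\beta}(\boldsymbol{\varphi})$. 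Hence only existence requires genuine work.

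For existence the crucial new ingredient is a bound on the time derivative. Since the viscous regularization renders $\boldsymbol{\varphi}_\varepsilon$ sufficiently smooth in time, I would differentiate the abstract system \eqref{2.4} (with $\boldsymbol{\beta}$ replaced by $\boldsymbol{\beta}_\varepsilon$) in $t$ and test with $\partial_t\boldsymbol{\varphi}_\varepsilon$. Using the inner product structure of $\mathcal{H}^{-1}_{(0)}$, the linearity of $\mathfrak{S}^L$ and $\partial\Phi_\delta$, and the identity $(\boldsymbol{u},\boldsymbol{z})_{\mathcal{L}^2}=(\boldsymbol{u},\mathfrak{S}^L\boldsymbol{z})_{\mathcal{H}^1_{L,0}}$ together with \eqref{eq2.2}, this yields
\begin{align}
&\frac{1}{2}\frac{\mathrm{d}}{\mathrm{d}t}\|\partial_t\boldsymbol{\varphi}_\varepsilon\|_{0,*}^2 + b_\delta(\partial_t\boldsymbol{\varphi}_\varepsilon,\partial_t\boldsymbol{\varphi}_\varepsilon) + \big(\boldsymbol{\beta}_\varepsilon'(\boldsymbol{\varphi}_\varepsilon)\partial_t\boldsymbol{\varphi}_\varepsilon,\partial_t\boldsymbol{\varphi}_\varepsilon\big)_{\mathcal{L}^2} \notag \\
&\qquad = \big(\partial_t\boldsymbol{f}-\boldsymbol{\pi}'(\boldsymbol{\varphi}_\varepsilon)\partial_t\boldsymbol{\varphi}_\varepsilon,\partial_t\boldsymbol{\varphi}_\varepsilon\big)_{\mathcal{L}^2}. \notag
\end{align}
The monotonicity of $\boldsymbol{\beta}_\varepsilon$ makes the third term on the left nonnegative, the $\boldsymbol{\pi}'$-contribution is absorbed using the Lipschitz constants $K,K_\Gamma$ from $\mathbf{(A3)}$ and interpolation (e.g. \eqref{int-0}), while $\partial_t\boldsymbol{f}\in L^2(0,T;\mathcal{L}^2)$ by $\mathbf{(A6)}$; a Gronwall argument then bounds $\partial_t\boldsymbol{\varphi}_\varepsilon$ in $L^\infty(0,T;\mathcal{H}^{-1}_{(0)})\cap L^2(0,T;\mathcal{V}^1)$ uniformly in $\varepsilon$. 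The initial value entering this estimate is read off from \eqref{2.4} at $t=0$ via $\mathfrak{S}^L(\partial_t\boldsymbol{\varphi}_\varepsilon(0))=-\mathbf{P}\boldsymbol{\mu}_\varepsilon(0)$, and $\mathbf{(A7)}$ is precisely the uniform $\mathcal{H}^1$-bound on $\boldsymbol{\mu}_\varepsilon(0)$ guaranteeing $\|\partial_t\boldsymbol{\varphi}_\varepsilon(0)\|_{0,*}\leq C$ through \eqref{2.2a}.

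With $\partial_t\boldsymbol{\varphi}_\varepsilon$ controlled, the remaining regularity follows by bootstrap. From $\mathbf{P}\boldsymbol{\mu}_\varepsilon=-\mathfrak{S}^L(\partial_t\boldsymbol{\varphi}_\varepsilon)$ together with a controlled generalized mean of $\boldsymbol{\mu}_\varepsilon$ (obtained as usual by testing the chemical-potential relation with constants and using $\mathbf{(A2)}$ and $\langle\varphi_0\rangle_\Omega,\langle\psi_0\rangle_\Gamma\in\mathrm{int}\,D(\beta_\Gamma)$ from $\mathbf{(A5)}$), the regularity estimates \eqref{2.2a}--\eqref{2.2b} give $\boldsymbol{\mu}_\varepsilon$ bounded in $L^\infty(0,T;\mathcal{H}^1)\cap L^2(0,T;\mathcal{H}^2)$. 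Reading the chemical-potential relation as the elliptic system $\partial\Phi_\delta(\boldsymbol{\omega}_\varepsilon)+\boldsymbol{\beta}_\varepsilon(\boldsymbol{\varphi}_\varepsilon)=\boldsymbol{\mu}_\varepsilon-\boldsymbol{\pi}(\boldsymbol{\varphi}_\varepsilon)+\boldsymbol{f}$ with right-hand side bounded in $L^\infty(0,T;\mathcal{H}^1)$, I would test with $\boldsymbol{\beta}_\varepsilon(\boldsymbol{\varphi}_\varepsilon)$ to bound $\boldsymbol{\xi}_\varepsilon$ in $L^\infty(0,T;\mathcal{L}^2)$ and then invoke elliptic $\mathcal{H}^2$-regularity for the bulk-surface coupling to reach $\boldsymbol{\varphi}_\varepsilon\in L^\infty(0,T;\mathcal{H}^2)$. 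Passing to the limit $\varepsilon\to0$ by Aubin--Lions compactness, weak/weak-$\ast$ lower semicontinuity of the norms, and the strong-weak closedness of the maximal monotone graphs (yielding $\boldsymbol{\xi}\in\boldsymbol{\beta}(\boldsymbol{\varphi})$) then shows the solution of Theorem \ref{weakexist} enjoys the regularity of Definition \ref{strongdefn}.

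I expect the main obstacle to be the last uniform estimate: testing the elliptic relation with $\boldsymbol{\beta}_\varepsilon(\boldsymbol{\varphi}_\varepsilon)$ produces a boundary cross-term of the form $\int_\Gamma\partial_{\mathbf{n}}\varphi_\varepsilon\,\beta_{\Gamma,\varepsilon}(\psi_\varepsilon)\,\mathrm{d}S$, coupling the bulk normal derivative to the boundary nonlinearity. Controlling this term $\varepsilon$-uniformly requires careful use of the domination condition $\mathbf{(A2)}$, $|\beta^{\circ}(r)|\leq\varrho|\beta_{\Gamma}^{\circ}(r)|+c_0$, before the elliptic $\mathcal{H}^2$-estimate can be closed; handling this bulk-boundary interaction with non-smooth potentials is the technically delicate heart of the argument.
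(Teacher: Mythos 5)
Your proposal is correct and follows essentially the same route as the paper: a uniform-in-$\varepsilon$ bound on $\partial_t\boldsymbol{\varphi}_\varepsilon$ in $L^\infty(0,T;\mathcal{H}^{-1}_{(0)})\cap L^2(0,T;\mathcal{V}^1)$ obtained from the time-regularity of the data ($\mathbf{(A6)}$) and the compatibility condition $\mathbf{(A7)}$ at $t=0$, followed by the same elliptic/monotonicity bootstrap (now with $L^\infty$ in time) and passage to the limit, with uniqueness coming from Theorem \ref{contidepen}. The only difference is that the paper implements your formal time-differentiation rigorously via difference quotients in $t$ (which is needed since $\beta_\varepsilon$ is merely Lipschitz and $\boldsymbol{\omega}_\varepsilon'$ is a priori only $L^2$ in time), and your identified obstacle — the bulk-boundary cross-term controlled through $\mathbf{(A2)}$ — is exactly how the paper closes the $\mathcal{H}^2$-estimate in Lemma \ref{uni4}.
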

\begin{remark}
With minor modifications in the proof (cf. \cite{CF15,CFS}), we can still obtain the conclusions of Theorems \ref{weakexist}--\ref{strongexist}, if $\mathbf{(A4)}$ is replaced by the following alternative assumption:
\begin{description}
\item[$\mathbf{(A4)}'$] $\boldsymbol{f}:=(f,f_\Gamma)\in W^{1,1}(0,T;\mathcal{L}^{2})$.
\end{description}
Note that $\mathbf{(A6)}$ implies $\mathbf{(A4)}'$.
\end{remark}

\subsubsection{Asymptotic limit: $L\in (0,\infty)$ fixed, $\delta\to 0$}

In the case of $L\in (0,\infty)$ and $\delta=0$ (i.e., without surface diffusion), well-posedness of problem $(S_{L,0})$ can be established via the asymptotic limit of problem $(S_{L,\delta})$ as $\delta\rightarrow 0$. To this end, we first introduce the notion of weak solutions to problem $(S_{L,0})$.
\begin{definition}
\label{weakdefinition}
Let $L\in (0,\infty)$ be a fixed parameter, $\delta=0$.
Suppose that the assumptions $\mathbf{(A1)}$--$\mathbf{(A4)}$ are satisfied.
Besides, we assume
\begin{description}
\item[$\mathbf{(A5)}'$] $(\varphi_0,\psi_0)\in \widetilde{\mathcal{V}}^1$ satisfying $ \widehat{\beta}(\varphi_0)\in L^1(\Omega)$, $\widehat{\beta}_\Gamma(\psi_0)\in L^1(\Gamma)$ as well as $\langle \varphi_0\rangle_\Omega, \langle \psi_0\rangle_\Gamma \in \mathrm{int}D(\beta_\Gamma)$.
\end{description}
The triplet $(\boldsymbol{\varphi},\boldsymbol{\mu},\boldsymbol{\xi})$ is called a weak solution of problem $(S_{L,0})$ on $[0,T]$, if the following conditions are fulfilled:

(1) The functions $\boldsymbol{\varphi},\boldsymbol{\mu},\boldsymbol{\xi}$ have the regularity properties
\begin{align*}
& \boldsymbol{\omega}=\bm{\varphi}-\overline{m}_{0}\boldsymbol{1}
\in H^{1}(0,T;\mathcal{H}^{-1}_{(0)})\cap L^{\infty}\big(0,T;\widetilde{\mathcal{V}}_{(0)}^1\big)\cap C_w\big([0,T];\widetilde{\mathcal{V}}^1\big),\\
& \Delta\varphi\in L^{2}(0,T;H),\quad \boldsymbol{\mu}\in L^{2}(0,T;\mathcal{H}^{1}),\\
&\boldsymbol{\xi}\in L^{2}\big(0,T;H\times (H^{1/2}(\Gamma))'\big),
\end{align*}
such that $\boldsymbol{\varphi}(0)=\boldsymbol{\varphi}_{0}$ in $\widetilde{\mathcal{V}}^1$ (by weak continuity).

(2) The variational formulations are satisfied
\begin{align}
&\big\langle \partial_{t}\bm{\varphi}(t),\bm{y}\big\rangle_{(\mathcal{H}^{1})',\mathcal{H}^{1}} +a_{L}\big(\boldsymbol{\mu}(t),\boldsymbol{y}\big)=0,
\quad\forall\,\boldsymbol{y}\in\mathcal{H}^{1},
\label{modelu}
\end{align}
and
\begin{align}
\big(\boldsymbol{\mu}(t),\boldsymbol{z}\big)_{\mathcal{L}^{2}}
&=\int_{\Omega}\nabla \varphi(t)\cdot\nabla z\,\mathrm{d}x
+(\xi(t),z)_{H}+\big\langle\xi_{\Gamma}(t),z_{\Gamma}\big\rangle_{(H^{1/2}(\Gamma))',H^{1/2}(\Gamma)}
\notag\\
&\quad
+\left(\boldsymbol{\pi}\big(\boldsymbol{\varphi}(t)\big)-\boldsymbol{f}(t),\boldsymbol{z}\right)_{\mathcal{L}^{2}},
\quad\forall\,\boldsymbol{z}\in\widetilde{\mathcal{V}}^1,
\label{modeltheta}
\end{align}
almost everywhere in $(0,T)$, and
\begin{align}
&\xi\in\beta(\varphi),\quad\text{a.e. in }Q,\label{xi}\\
&\int_{\Sigma}\widehat{\beta}_{\Gamma}(\psi) \,\mathrm{d}S\mathrm{d}t +\int_{0}^{T}\langle\xi_{\Gamma},\zeta_{\Gamma}-\psi\rangle_{(H^{1/2}(\Gamma))',H^{1/2}(\Gamma)}\,\mathrm{d}S\mathrm{d}t
\leq\int_{\Sigma}\widehat{\beta}_{\Gamma}(\zeta_{\Gamma})\,\mathrm{d}S\mathrm{d}t,
 \label{zeta}
\end{align}
for all $\zeta_{\Gamma}\in L^{2}(0,T;H^{1/2}(\Gamma))$, where the last integral is intended to be $+\infty$ whenever $\widehat{\beta}_{\Gamma}(z_{\Gamma})\notin L^{1}(\Sigma)$.
\end{definition}

\begin{remark}
We can deduce that
\begin{align}
&\mu=-\Delta\varphi+\xi+\pi(\varphi)-f\qquad\text{a.e. in }Q,
\label{mupoint}\\
&\theta=\partial_{\mathbf{n}}\varphi+\xi_\Gamma +\pi_{\Gamma}(\psi)-f_\Gamma
\quad\ \text{in }(H^{1/2}(\Gamma))'\ \text{ a.e. in }(0,T).
\label{thetapoint}
\end{align}
Indeed, \eqref{mupoint} follows from the fact $\Delta\varphi\in L^{2}(0,T;H)$ and the weak formulation \eqref{modeltheta}.
Concerning \eqref{thetapoint}, since for almost all $t\in(0,T)$ it holds that $\varphi(t)\in V$ and $\Delta \varphi(t)\in H$, we deduce from by \cite[Theorem 2.27]{BG} that $\partial_{\mathbf{n}}\varphi(t)$ is well-defined in $(H^{1/2}(\Gamma))'$. Hence, \eqref{thetapoint} can be deduced from \eqref{modeltheta} using \eqref{mupoint}.
\end{remark}

\begin{theorem}[Asymptotic limit: $L\in (0,\infty)$ fixed, $\delta\to 0$]
\label{existthm}
Let $L\in (0,\infty)$ be given. Suppose that the assumptions $\mathbf{(A1)}$--$\mathbf{(A4)}$ together with
$\mathbf{(A5)}'$ are satisfied.
We consider a family of data $\{\bm{\varphi}_0^\delta,\bm{f}^\delta\}_{\delta\in (0,1)}$ that satisfy the assumptions
$\mathbf{(A4)}$ and $\mathbf{(A5)}$. Assume in addition, there exists a constant $M>0$ such that for all $\delta\in (0,1)$,
\begin{align}
&\delta \|\nabla_\Gamma \psi_0^\delta\|_{H_\Gamma}^2 + \|\widehat{\beta}(\varphi_0^\delta)\|_{L^1(\Omega)}
+ \|\widehat{\beta}_\Gamma(\psi_0^\delta)\|_{L^1(\Gamma)}\leq M,
 \label{del-1}\\
& \|\bm{f}^\delta\|_{L^2(0,T;\mathcal{H}^1)}\leq M,
 \label{del-2}
\end{align}
and as $\delta\to 0$
\begin{align}
\bm{\varphi}^\delta_0\to \bm{\varphi}_0\quad \text{weakly in}\ \widetilde{\mathcal{V}}^1,\qquad \bm{f}^\delta\to \bm{f}\quad \text{weakly in}\  L^2(0,T;\mathcal{L}^2).
\label{del-3}
\end{align}
Let $\{(\boldsymbol{\varphi}^\delta,\boldsymbol{\mu}^\delta,\boldsymbol{\xi}^\delta)\}_{\delta\in (0,1)}$ be weak solutions of problem $(S_{L,\delta})$ corresponding to $\{\bm{\varphi}_0^\delta,\bm{f}^\delta\}_{\delta\in (0,1)}$ that are determined by Theorem \ref{weakexist}.
Then problem $(S_{L,0})$ admits a weak solution $(\boldsymbol{\varphi},\boldsymbol{\mu},\boldsymbol{\xi})$ in the sense of Definition \ref{weakdefinition} such that as $\delta\to 0$ (in the sense of a subsequence),
\begin{align*}
&\boldsymbol{\varphi}^\delta \to \boldsymbol{\varphi} && \text{strongly in}\ C([0,T];\mathcal{L}^2),\\
&\boldsymbol{\varphi}^\delta \to \boldsymbol{\varphi} && \text{weakly star in}\ H^1(0,T;(\mathcal{H}^1)')\cap L^\infty(0,T;\widetilde{\mathcal{V}}^1),\\
&\Delta \varphi^\delta \to \Delta \varphi && \text{weakly in}\ L^2(0,T;H),\\
& \bm{\mu}^\delta\to \bm{\mu} && \text{weakly in}\ L^2(0,T;\mathcal{H}^1),\\
& \bm{\xi}^\delta\to \bm{\xi} && \text{weakly in}\ L^2(0,T; H\times (H^{1}(\Gamma))'),\\
&-\delta\Delta_\Gamma\psi^\delta +\xi_\Gamma^\delta \to \xi_\Gamma && \text{weakly in}\ L^2(0,T;(H^{1/2}(\Gamma))'),\\
&\delta \bm{\varphi}^\delta \to \bm{0} && \text{strongly in}\ L^\infty(0,T; \mathcal{V}^1).
\end{align*}
\end{theorem}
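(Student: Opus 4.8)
The plan is to produce the limit solution by deriving a priori estimates for the family $\{(\bm{\varphi}^\delta,\bm{\mu}^\delta,\bm{\xi}^\delta)\}_{\delta\in(0,1)}$ from Theorem \ref{weakexist} that are \emph{uniform} in $\delta$, extracting a convergent subsequence, and passing to the limit in the weak formulation; the decisive difficulty lies at the boundary, where surface regularity degenerates as $\delta\to0$. First I would test \eqref{eq3.2} with $\bm{\mu}^\delta$ and pair \eqref{eq3.3} with $\partial_t\bm{\varphi}^\delta$ (rigorously at the approximation level underlying Theorem \ref{weakexist}) to obtain the energy identity $\frac{\mathrm{d}}{\mathrm{d}t}E_\delta(\bm{\varphi}^\delta)+a_L(\bm{\mu}^\delta,\bm{\mu}^\delta)=-a_L(\bm{\mu}^\delta,\bm{f}^\delta)$, the forcing term arising from the choice $\bm{y}=\bm{f}^\delta$ in \eqref{eq3.2}. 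Absorbing $a_L(\bm{\mu}^\delta,\bm{f}^\delta)$ by Young's inequality and invoking \eqref{del-1}--\eqref{del-2} with a Gronwall argument yields, uniformly in $\delta$: $\varphi^\delta$ bounded in $L^\infty(0,T;V)$ (hence $\psi^\delta=\varphi^\delta|_\Gamma$ bounded in $L^\infty(0,T;H^{1/2}(\Gamma))$ and $\bm{\varphi}^\delta$ in $L^\infty(0,T;\widetilde{\mathcal{V}}^1)$), the bound $\sqrt{\delta}\,\nabla_\Gamma\psi^\delta$ in $L^\infty(0,T;H_\Gamma)$ (so that $\delta\bm{\varphi}^\delta\to\bm{0}$ in $L^\infty(0,T;\mathcal{V}^1)$), $\int_0^T a_L(\bm{\mu}^\delta,\bm{\mu}^\delta)\,\mathrm{d}t\le C$, and $\widehat{\beta}(\varphi^\delta),\widehat{\beta}_\Gamma(\psi^\delta)$ bounded in $L^\infty(0,T;L^1)$. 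Testing \eqref{eq3.3} with $\bm{\varphi}^\delta-\overline{m}_0\bm{1}$ and exploiting $\overline{m}_0\in\mathrm{int}\,D(\beta_\Gamma)$ through the standard interior-point inequality controls $\|\xi^\delta\|_{L^1(Q)}+\|\xi_\Gamma^\delta\|_{L^1(\Sigma)}$; together with the mean relation from $\bm{z}=\bm{1}$ this pins down the means of $\mu^\delta,\theta^\delta$, so $\bm{\mu}^\delta$ is bounded in $L^2(0,T;\mathcal{H}^1)$ and, via \eqref{eq3.2}, $\partial_t\bm{\varphi}^\delta$ in $L^2(0,T;(\mathcal{H}^1)')$.

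The key uniform regularity comes from testing $\bm{\mu}^\delta=\partial\Phi_\delta(\bm{\omega}^\delta)+\bm{\xi}^\delta+\bm{\pi}(\bm{\varphi}^\delta)-\bm{f}^\delta$ with $\partial\Phi_\delta(\bm{\omega}^\delta)=(-\Delta\varphi^\delta,\partial_{\mathbf{n}}\varphi^\delta-\delta\Delta_\Gamma\psi^\delta)$ in $\mathcal{L}^2$. The monotone pairing $(\bm{\xi}^\delta,\partial\Phi_\delta\bm{\omega}^\delta)_{\mathcal{L}^2}$ is nonnegative up to terms controlled by the compatibility condition \eqref{eq2.1}; indeed, at the Yosida level one has $\int_\Omega\beta_\varepsilon'(\varphi^\delta)|\nabla\varphi^\delta|^2\ge0$ and $\delta\int_\Gamma\beta_{\Gamma,\varepsilon}'(\psi^\delta)|\nabla_\Gamma\psi^\delta|^2\ge0$, while the boundary cross term $\int_\Gamma\big(\beta_{\Gamma,\varepsilon}(\psi^\delta)-\beta_\varepsilon(\psi^\delta)\big)\partial_{\mathbf{n}}\varphi^\delta$ is absorbed through \eqref{eq2.1} and Young's inequality (as in the analysis of \cite{CFS}). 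This delivers a $\delta$-uniform bound on $\|\partial\Phi_\delta\bm{\omega}^\delta\|_{L^2(0,T;\mathcal{L}^2)}$; in particular $\Delta\varphi^\delta$ is bounded in $L^2(Q)$, whence $\xi^\delta=\mu^\delta+\Delta\varphi^\delta-\pi(\varphi^\delta)+f^\delta$ is bounded in $L^2(0,T;H)$. Rearranging \eqref{eq3.6}, the combination $\eta^\delta:=-\delta\Delta_\Gamma\psi^\delta+\xi_\Gamma^\delta=\theta^\delta-\partial_{\mathbf{n}}\varphi^\delta-\pi_\Gamma(\psi^\delta)+f_\Gamma^\delta$ is bounded in $L^2(0,T;(H^{1/2}(\Gamma))')$, the normal trace being well-defined there by \cite[Theorem 2.27]{BG} since $\varphi^\delta(t)\in V$, $\Delta\varphi^\delta(t)\in H$; because $\|\delta\Delta_\Gamma\psi^\delta\|_{(H^1(\Gamma))'}\le\sqrt{\delta}\,\|\sqrt{\delta}\,\nabla_\Gamma\psi^\delta\|_{H_\Gamma}\to0$, the multiplier $\xi_\Gamma^\delta$ is itself bounded in $L^2(0,T;(H^1(\Gamma))')$ and shares the same weak limit as $\eta^\delta$.

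With these bounds, Aubin--Lions--Simon applied to the chain $\widetilde{\mathcal{V}}^1\hookrightarrow\hookrightarrow\mathcal{L}^2\hookrightarrow(\mathcal{H}^1)'$ extracts a subsequence along which $\bm{\varphi}^\delta\to\bm{\varphi}$ strongly in $C([0,T];\mathcal{L}^2)$, together with all the remaining weak and weak-$*$ convergences asserted in the statement; the initial datum passes to the limit since $\bm{\varphi}^\delta(0)=\bm{\varphi}_0^\delta\to\bm{\varphi}_0$. Passing to the limit in the linear equation \eqref{eq3.2} is immediate, and in \eqref{eq3.3} the surface-diffusion contribution vanishes because $\delta\nabla_\Gamma\psi^\delta\to0$, yielding \eqref{modelu}--\eqref{modeltheta}; the bulk inclusion $\xi\in\beta(\varphi)$ a.e. in $Q$ follows from the strong-weak closedness of the maximal monotone graph $\beta$, using the strong $L^2(Q)$ convergence of $\varphi^\delta$ against the weak $L^2(Q)$ convergence of $\xi^\delta$.

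The hard part is recovering the boundary condition \eqref{zeta}, since $\xi_\Gamma^\delta$ converges only in the negative-order space $L^2(0,T;(H^1(\Gamma))')$ while $\psi^\delta$ does \emph{not} converge strongly in $L^2(0,T;H^{1/2}(\Gamma))$, so the inclusion cannot persist pointwise and the product $\langle\eta^\delta,\psi^\delta\rangle$ cannot be passed naively. I would start from the subdifferential inequality for $\Psi_\delta(z_\Gamma):=\frac{\delta}{2}\int_\Gamma|\nabla_\Gamma z_\Gamma|^2+\int_\Gamma\widehat{\beta}_\Gamma(z_\Gamma)$, namely $\Psi_\delta(\zeta_\Gamma)\ge\Psi_\delta(\psi^\delta)+\langle\eta^\delta,\zeta_\Gamma-\psi^\delta\rangle$, integrate in time, and drop the nonnegative $\frac{\delta}{2}\|\nabla_\Gamma\psi^\delta\|_{H_\Gamma}^2$. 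The decisive estimate is $\limsup_{\delta\to0}\int_0^T\langle\eta^\delta,\psi^\delta\rangle\,\mathrm{d}t\le\int_0^T\langle\xi_\Gamma,\psi\rangle\,\mathrm{d}t$: writing $\langle\eta^\delta,\psi^\delta\rangle=\int_\Gamma(\theta^\delta-\pi_\Gamma(\psi^\delta)+f_\Gamma^\delta)\psi^\delta-\langle\partial_{\mathbf{n}}\varphi^\delta,\varphi^\delta|_\Gamma\rangle$ and invoking Green's formula $\langle\partial_{\mathbf{n}}\varphi^\delta,\varphi^\delta|_\Gamma\rangle=\int_\Omega\Delta\varphi^\delta\,\varphi^\delta\,\mathrm{d}x+\int_\Omega|\nabla\varphi^\delta|^2\,\mathrm{d}x$, every term but the last passes by weak-times-strong convergence, while the weak lower semicontinuity bound $\liminf\int_Q|\nabla\varphi^\delta|^2\ge\int_Q|\nabla\varphi|^2$ supplies exactly the correct sign, the limit being identified through \eqref{thetapoint}. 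Combining this with $\liminf\int_\Sigma\widehat{\beta}_\Gamma(\psi^\delta)\ge\int_\Sigma\widehat{\beta}_\Gamma(\psi)$ (a.e. convergence of $\psi^\delta$ and Fatou) proves \eqref{zeta} first for $\zeta_\Gamma\in L^2(0,T;H^1(\Gamma))$, and then by density and lower semicontinuity of $\int_\Sigma\widehat{\beta}_\Gamma(\cdot)$ for all $\zeta_\Gamma\in L^2(0,T;H^{1/2}(\Gamma))$. This $\limsup$ identity for the monotone pairing, obtained by trading the ill-behaved boundary term for bulk integrals with favorable convergence, is the technical crux of the proof.
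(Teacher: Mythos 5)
Your overall architecture (uniform-in-$\delta$ estimates at the approximation level, compactness, monotonicity, and a variational-inequality argument for the boundary multiplier) matches the paper's, and your final $\limsup$ argument for \eqref{zeta} is essentially the one in \cite[Section~3.2]{CFS} that the paper invokes. However, the step you call the ``key uniform regularity'' contains a genuine gap. You test $\bm{\mu}^\delta=\partial\Phi_\delta(\bm{\omega}^\delta)+\bm{\xi}^\delta+\bm{\pi}(\bm{\varphi}^\delta)-\bm{f}^\delta$ against $\partial\Phi_\delta(\bm{\omega}^\delta)=(-\Delta\varphi^\delta,\partial_{\mathbf{n}}\varphi^\delta-\delta\Delta_\Gamma\psi^\delta)$ and claim the cross term $\int_\Gamma\big(\beta_{\Gamma,\varepsilon}(\psi^\delta)-\beta_{\varepsilon}(\psi^\delta)\big)\partial_{\mathbf{n}}\varphi^\delta\,\mathrm{d}S$ is ``absorbed through \eqref{eq2.1} and Young's inequality.'' It cannot be: Young's inequality would require controlling either $\partial_{\mathbf{n}}\varphi^\delta$ or $\beta_{\Gamma,\varepsilon}(\psi^\delta)$ in $L^2(\Sigma)$ uniformly in $\delta$, and neither is available at that stage (nor, in general, in the limit). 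Worse, the conclusion you draw — a $\delta$-uniform bound on $\|\partial\Phi_\delta\bm{\omega}^\delta\|_{L^2(0,T;\mathcal{L}^2)}$, i.e.\ on $\partial_{\mathbf{n}}\varphi^\delta-\delta\Delta_\Gamma\psi^\delta$ in $L^2(\Sigma)$ — would, by comparison in \eqref{eq3.6}, force $\xi_\Gamma^\delta$ to be bounded in $L^2(\Sigma)$ and hence $\xi_\Gamma\in L^2(0,T;H_\Gamma)$ in the limit. Under $\mathbf{(A2)}$ alone (boundary graph dominating, e.g.\ $\beta\equiv 0$, $\beta_\Gamma=\partial I_{[-1,1]}$) this is false in general; the paper only obtains it under the additional two-sided growth assumption $\mathbf{(A8)}$ in Corollary \ref{Cor-d}. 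So your estimate proves too much and the argument that produces it is circular.

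The correct route, which the paper takes (Lemma \ref{uni4} transported to Lemma \ref{Uniform4}), decouples the bulk and boundary: test the \emph{bulk} relation \eqref{2.16} alone with $\beta_\varepsilon(\varphi_{\delta,\varepsilon})$, integrate by parts, and in the resulting boundary term $-\int_\Gamma\beta_\varepsilon(\psi_{\delta,\varepsilon})\partial_{\mathbf{n}}\omega_{\delta,\varepsilon}\,\mathrm{d}S$ substitute $\partial_{\mathbf{n}}\omega_{\delta,\varepsilon}$ from the boundary relation \eqref{2.17}. This produces the \emph{good-signed} product $\int_\Gamma\beta_{\Gamma,\varepsilon}(\psi)\,\beta_\varepsilon(\psi)\,\mathrm{d}S\ge\frac{1}{2\varrho}\|\beta_\varepsilon(\psi)\|_{H_\Gamma}^2-C$ via \eqref{2.8}, together with $\delta\int_\Gamma\beta_\varepsilon'(\psi)|\nabla_\Gamma\omega_\Gamma|^2\ge 0$, and closes without ever estimating $\partial_{\mathbf{n}}\varphi^\delta$ or $\beta_{\Gamma,\varepsilon}(\psi^\delta)$ in $L^2(\Sigma)$. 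One then gets $\Delta\varphi^\delta$ bounded in $L^2(Q)$ by comparison in \eqref{2.16}, $\partial_{\mathbf{n}}\varphi^\delta$ bounded only in $L^2(0,T;(H^{1/2}(\Gamma))')$ via the trace theory of \cite[Theorem~2.27]{BG}, and finally $-\delta\Delta_\Gamma\psi^\delta+\xi_\Gamma^\delta$ bounded in $L^2(0,T;(H^{1/2}(\Gamma))')$ by comparison in \eqref{eq3.6} — which is exactly the (weaker) regularity the limit problem supports. The rest of your proposal can then proceed as written.
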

\begin{remark}
For any initial data $\bm{\varphi}_0$ satisfying $\mathbf{(A5)}'$, the existence of an approximating sequence $\{\bm{\varphi}_0^\delta\}_{\delta\in (0,1)}$ as in Theorem \ref{existthm} is guaranteed by \cite[Proposition A.2]{CFS}.
\end{remark}

Uniqueness of the phase function $\bm{\varphi}$ associated with problem $(S_{L,0})$ is ensured by the following continuous dependence estimate.
\begin{theorem}[Continuous dependence, $L\in (0,\infty)$, $\delta=0$]
\label{continuousdepen}
Let $L\in (0,\infty)$ be given.
Suppose that the assumptions $\mathbf{(A1)}$--$\mathbf{(A3)}$ are satisfied.
Let $(\boldsymbol{\varphi}_i,\boldsymbol{\mu}_i$, $\boldsymbol{\xi}_i)$, $i=1,2$, be two weak solutions of problem $(S_{L,0})$ corresponding to the data $\{(\boldsymbol{\varphi}_{0,i},\boldsymbol{f}_i)\}_{i=1,2}$ satisfying $\mathbf{(A4)}$ and $\mathbf{(A5)}'$ with $\overline{m}(\boldsymbol{\varphi}_{0,1})=\overline{m}(\boldsymbol{\varphi}_{0,2}) =\overline{m}_0$. Then there exists a constant $C>0$, depending only on $K$, $K_{\Gamma}$, $\Omega$ and $T$, such that
\begin{align}
&\Vert\boldsymbol{\varphi}_{1}(t)-\boldsymbol{\varphi}_{2}(t)\Vert_{0,*}^{2}
+\int_{0}^{t}\Vert\boldsymbol{\varphi}_{1}(s)-\boldsymbol{\varphi}_{2}(s)\Vert_{\widetilde{\mathcal{V}}_{(0)}^{1}}^{2}\,\mathrm{d}s
 \notag\\
&\quad \leq C\Big(\Vert\boldsymbol{\varphi}_{0,1}-\boldsymbol{\varphi}_{0,2}\Vert_{0,*}^{2} +\int_{0}^{t}\Vert\boldsymbol{f}_{1}(s)-\boldsymbol{f}_{2}(s)\Vert_{(\widetilde{\mathcal{V}}^{1})'}^{2}\,\mathrm{d}s\Big),
 \quad\forall\,t\in[0,T].
 \label{conti-d0}
\end{align}
\end{theorem}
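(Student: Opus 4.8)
The plan is to adapt the energy method behind Theorem \ref{contidepen} to the degenerate setting $\delta=0$, the essential new difficulty being that the boundary selection $\xi_\Gamma$ now lives only in $(H^{1/2}(\Gamma))'$, so no pointwise monotonicity on $\Sigma$ is at hand. Write $\boldsymbol{\varphi}:=\boldsymbol{\varphi}_1-\boldsymbol{\varphi}_2$, and likewise $\boldsymbol{\mu},\boldsymbol{\xi},\boldsymbol{f}$ for the differences of the two solutions. Testing \eqref{modelu} with $\boldsymbol{y}=\boldsymbol{1}$ gives conservation of the generalized mean, so from $\overline{m}(\boldsymbol{\varphi}_{0,1})=\overline{m}(\boldsymbol{\varphi}_{0,2})$ one obtains $\overline{m}(\boldsymbol{\varphi}(t))=0$; hence $\boldsymbol{\varphi}(t)\in\widetilde{\mathcal{V}}^1_{(0)}\subset\mathcal{L}^2_{(0)}$ for a.a.\ $t$, while the regularity of weak solutions yields $\partial_t\boldsymbol{\varphi}\in L^2(0,T;\mathcal{H}^{-1}_{(0)})$.

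First I would test the difference of \eqref{modelu} with $\boldsymbol{y}=\mathfrak{S}^L\boldsymbol{\varphi}(t)\in\mathcal{H}^1_{L,0}\subset\mathcal{H}^1$. Using the identity $\langle\boldsymbol{v},\mathfrak{S}^L\boldsymbol{w}\rangle_{(\mathcal{H}^1)',\mathcal{H}^1}=(\boldsymbol{v},\boldsymbol{w})_{0,*}$ for $\boldsymbol{v},\boldsymbol{w}\in\mathcal{H}^{-1}_{(0)}$, together with the symmetry of $a_L$ and the relation $a_L(\mathfrak{S}^L\boldsymbol{\varphi},\boldsymbol{\mu})=(\boldsymbol{\varphi},\boldsymbol{\mu})_{\mathcal{L}^2}$ (valid since $\boldsymbol{\varphi}(t)\in\mathcal{L}^2_{(0)}$ and $\boldsymbol{\mu}(t)\in\mathcal{H}^1=\mathcal{H}^1_L$), this produces
\[
\tfrac12\tfrac{\mathrm d}{\mathrm dt}\Vert\boldsymbol{\varphi}(t)\Vert_{0,*}^2+\big(\boldsymbol{\mu}(t),\boldsymbol{\varphi}(t)\big)_{\mathcal{L}^2}=0
\quad\text{for a.a. }t\in(0,T).
\]
Next, taking the difference of \eqref{modeltheta} with the admissible competitor $\boldsymbol{z}=\boldsymbol{\varphi}(t)\in\widetilde{\mathcal{V}}^1_{(0)}$ expands $(\boldsymbol{\mu},\boldsymbol{\varphi})_{\mathcal{L}^2}$ as the sum of $\Vert\nabla\varphi\Vert_H^2$, the bulk monotone term $(\xi,\varphi)_H$, the boundary monotone term $\langle\xi_\Gamma,\psi\rangle_{(H^{1/2}(\Gamma))',H^{1/2}(\Gamma)}$, the Lipschitz remainder $(\boldsymbol{\pi}(\boldsymbol{\varphi}_1)-\boldsymbol{\pi}(\boldsymbol{\varphi}_2),\boldsymbol{\varphi})_{\mathcal{L}^2}$ and the source pairing $-(\boldsymbol{f},\boldsymbol{\varphi})_{\mathcal{L}^2}$.

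The decisive step, and the main obstacle, is to integrate the resulting identity over $[0,t]$ and establish nonnegativity of the two monotone terms in integrated form. The bulk term is harmless: since $\xi\in L^2(0,T;H)$ and $\xi_i\in\beta(\varphi_i)$, one has $\int_0^t(\xi,\varphi)_H\,\mathrm ds\ge0$ pointwise from maximal monotonicity via \eqref{xi}. For the boundary contribution a pointwise argument is unavailable, and I would instead exploit the variational inequality \eqref{zeta} with time-localized test functions. Fixing $t\in(0,T)$ and inserting into \eqref{zeta}, written for solution $1$ resp.\ $2$, the competitors equal to $\psi_2$ resp.\ $\psi_1$ on $(0,t)$ and unchanged (i.e.\ $\psi_1$ resp.\ $\psi_2$) on $(t,T)$ — which are admissible because $\psi_i\in L^\infty(0,T;H^{1/2}(\Gamma))$ — the contributions over $(t,T)$ cancel and the $\widehat{\beta}_\Gamma$-terms cancel upon summation, leaving precisely
\[
\int_0^t\big\langle\xi_{\Gamma,1}-\xi_{\Gamma,2},\,\psi_1-\psi_2\big\rangle_{(H^{1/2}(\Gamma))',H^{1/2}(\Gamma)}\,\mathrm ds\ge0.
\]
This integrated monotonicity substitutes for the missing pointwise one and is the reason the whole estimate must be carried out in integrated rather than differential form.

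It then remains to close the estimate. After discarding the two nonnegative monotone integrals, I would bound the Lipschitz term by $C\int_0^t\Vert\boldsymbol{\varphi}\Vert_{\mathcal{L}^2}^2\,\mathrm ds$ using $\mathbf{(A3)}$, and the source term by $\int_0^t\Vert\boldsymbol{f}\Vert_{(\widetilde{\mathcal{V}}^1)'}\Vert\boldsymbol{\varphi}\Vert_{\widetilde{\mathcal{V}}^1}\,\mathrm ds$ via $\boldsymbol{f}\in\mathcal{L}^2\hookrightarrow(\widetilde{\mathcal{V}}^1)'$. By the Poincaré inequality \eqref{Po4} and the trace theorem, $\Vert\nabla\varphi\Vert_H^2$ is equivalent to $\Vert\boldsymbol{\varphi}\Vert_{\widetilde{\mathcal{V}}^1_{(0)}}^2$ on $\widetilde{\mathcal{V}}^1_{(0)}$; applying the interpolation inequality \eqref{int-0} in the form $\Vert\boldsymbol{\varphi}\Vert_{\mathcal{L}^2}\le\gamma\Vert\nabla\varphi\Vert_H+C_\gamma\Vert\boldsymbol{\varphi}\Vert_{0,*}$ with $\gamma$ small, together with Young's inequality on the source pairing, I can absorb a fraction of $\int_0^t\Vert\nabla\varphi\Vert_H^2\,\mathrm ds$ into the left-hand side. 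This gives
\[
\Vert\boldsymbol{\varphi}(t)\Vert_{0,*}^2+\int_0^t\Vert\nabla\varphi\Vert_H^2\,\mathrm ds\le \Vert\boldsymbol{\varphi}(0)\Vert_{0,*}^2+C\int_0^t\Vert\boldsymbol{\varphi}\Vert_{0,*}^2\,\mathrm ds+C\int_0^t\Vert\boldsymbol{f}\Vert_{(\widetilde{\mathcal{V}}^1)'}^2\,\mathrm ds,
\]
and a final application of Gronwall's lemma, followed by the norm equivalence on $\widetilde{\mathcal{V}}^1_{(0)}$, yields \eqref{conti-d0}. I expect no further complications here beyond the time-localization in \eqref{zeta} described above.
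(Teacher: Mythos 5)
Your proposal is correct and follows essentially the same route as the paper: test the difference of the two weak formulations with $\mathfrak{S}^L(\boldsymbol{\varphi}_1-\boldsymbol{\varphi}_2)$ and with $\boldsymbol{\varphi}_1-\boldsymbol{\varphi}_2$ itself, discard the monotone contributions, control the Lipschitz and source terms via the Poincar\'e inequality \eqref{Po4} and the interpolation inequality \eqref{int-0}, and close with Gronwall. The one place where you genuinely deviate is the boundary monotone term: the paper simply asserts the pointwise-in-time nonnegativity of $\big\langle\xi_{\Gamma,1}(s)-\xi_{\Gamma,2}(s),\psi_1(s)-\psi_2(s)\big\rangle_{(H^{1/2}(\Gamma))',H^{1/2}(\Gamma)}$ "by the monotonicity of $\beta$, $\beta_\Gamma$" and keeps the whole estimate in differential form, whereas you derive only the time-integrated nonnegativity $\int_0^t\langle\xi_{\Gamma,1}-\xi_{\Gamma,2},\psi_1-\psi_2\rangle\,\mathrm{d}s\ge 0$ from the variational inequality \eqref{zeta} by inserting competitors that swap $\psi_1$ and $\psi_2$ on $(0,t)$ and leave them unchanged on $(t,T)$, and consequently you must run the whole argument in integrated form. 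Since $\xi_\Gamma$ is characterized in Definition \ref{weakdefinition} only through the space-time subdifferential relation \eqref{zeta} (i.e.\ $\xi_\Gamma\in\partial J_\Sigma(\psi)$), your localization argument is the honest justification of the step the paper states without comment; it costs nothing (the integrated Gronwall argument yields the same conclusion \eqref{conti-d0}) and buys a cleaner treatment of the $(H^{1/2}(\Gamma))'$-valued selection. No gaps otherwise.
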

\begin{remark}
Thanks to Theorem \ref{continuousdepen}, every convergent subsequence $\{\bm{\varphi}^{\delta_k}\}$ in Theorem \ref{existthm} converges to the same limit $\bm{\varphi}$. In the case when the two graphs $\beta$, $\beta_\Gamma$ exhibit the same growth, we can obtain further results on refined convergence and  error estimate, see Corollary \ref{Cor-d}.
\end{remark}

\subsubsection{Asymptotic limit: $\delta\in(0,\infty)$ fixed, $L\to 0$ or $L\to \infty$}

Problem $(S_{L,\delta})$ with $L\in (0,\infty)$ can be viewed as an interpolation of the GMS model (with $L=0$) and the LW model (with $L=\infty$).
Assuming $\delta>0$, below we provide a rigorous investigation of the asymptotic limits as $L\rightarrow 0$ and $L\rightarrow \infty$, respectively.

First, we introduce the definition of weak solutions to the GMS model (cf. \cite[Definition 2.1]{CF15}).
\begin{definition}
\label{GMS}
Let $\delta\in (0,\infty)$ be a fixed parameter, $L=0$.
Suppose that $\mathbf{(A1)}$--$\mathbf{(A5)}$ are satisfied.
The triplet $(\boldsymbol{\varphi},\boldsymbol{\mu},\boldsymbol{\xi})$ is called a weak solution of problem $(S_{0,\delta})$ on $[0,T]$, if the following conditions are fulfilled:

(1) The functions $\boldsymbol{\varphi},\boldsymbol{\mu},\boldsymbol{\xi}$ have the regularity properties
\begin{align}
&\bm{\omega}= \boldsymbol{\varphi}- \overline{m}_{0}\boldsymbol{1} \in H^{1}\big(0,T;\mathcal{H}^{-1}_{(0)}\big)\cap L^{\infty}(0,T;\mathcal{V}^{1}_{(0)})\cap L^{2}(0,T;\mathcal{V}^2),
\notag\\
&\boldsymbol{\mu}\in L^{2}(0,T;\mathcal{V}^{1}),
\quad
\boldsymbol{\xi}\in L^{2}(0,T;\mathcal{L}^{2}).
 \notag
\end{align}

(2) The variational formulations
\begin{align}
&\big\langle\partial_t\boldsymbol{\varphi}(t),\boldsymbol{y}\big\rangle_{(\mathcal{H}^{1})',\mathcal{H}^{1}} +a_{0}\big(\boldsymbol{\mu}(t),\boldsymbol{y}\big)=0,
\quad\forall\,\boldsymbol{y}\in\mathcal{V}^{1},
\label{eq3.2-0}\\
&\big(\boldsymbol{\mu}(t),\boldsymbol{z}\big)_{\mathcal{L}^{2}}=b_{\delta}\big(\boldsymbol{\varphi}(t),\boldsymbol{z}\big) +\left(\boldsymbol{\xi}(t)+\boldsymbol{\pi}\big(\boldsymbol{\varphi}(t)\big)-\boldsymbol{f}(t),\boldsymbol{z}\right)_{\mathcal{L}^{2}}, \quad\forall\,\boldsymbol{z}\in\mathcal{V}^{1},\label{eq3.3-0}
\end{align}
hold for a.a. $t\in(0,T)$, with
$$
\xi\in\beta(\varphi)\ \text{ a.e. in }Q,\quad\xi_{\Gamma}\in\beta_{\Gamma}(\psi)\ \text{ a.e. on }\Sigma.
$$

(3) The initial conditions are satisfied
\begin{align}
 \varphi|_{t=0}=\varphi_{0}\ \ \text{a.e. in}\ \Omega, \quad  \psi|_{t=0}=\psi_{0}\ \ \text{a.e. on}\ \Gamma.\notag
\end{align}
\end{definition}

\begin{theorem}[Asymptotic limit: $\delta\in(0,\infty)$ fixed, $L\to 0$]
\label{asymptotic0}
Let $\delta\in (0,\infty)$ be given. Suppose that the assumptions $\mathbf{(A1)}$--$\mathbf{(A5)}$ are satisfied.
For every $L\in (0,1)$, let $(\boldsymbol{\varphi}^{L},\boldsymbol{\mu}^{L},\boldsymbol{\xi}^{L})$ be a weak solution of problem $(S_{L,\delta})$ corresponding to the data $(\boldsymbol{\varphi}_{0}, \bm{f})$. Then there exist a triplet $(\boldsymbol{\varphi}^{0},\boldsymbol{\mu}^{0},\boldsymbol{\xi}^{0})$ such that as $L\rightarrow0$ (in the sense of a subsequence),
\begin{align}
\boldsymbol{\varphi}^{L}&\to\boldsymbol{\varphi}^{0}
&&\text{weakly in }H^{1}(0,T;(\mathcal{V}^{1})'),\label{7.20}\\
& &&\text{weakly star in } L^{\infty}(0,T;\mathcal{V}^{1}),\label{7.21}\\
& &&\text{strongly in }C([0,T];\mathcal{L}^{2}),\label{7.22}\\
\boldsymbol{\mu}^{L}&\to\boldsymbol{\mu}^{0}&&\text{weakly in }L^{2}(0,T;\mathcal{H}^{1}),\label{7.25}\\
\boldsymbol{\xi}^{L}&\to\boldsymbol{\xi}^{0}&&\text{weakly in }L^{2}(0,T;\mathcal{L}^{2}),\label{7.28}
\end{align}
with
\begin{align}
\Vert\theta^{L}-\mu^{L}|_\Gamma\Vert_{L^{2}(0,T;H_{\Gamma})}\leq C\sqrt{L}.
\label{7.30}
\end{align}
The limit triplet $(\boldsymbol{\varphi}^{0},\boldsymbol{\mu}^{0},\boldsymbol{\xi}^{0})$ is a weak solution of the GMS model $(S_{0,\delta})$ corresponding to the data $(\boldsymbol{\varphi}_{0}, \bm{f})$ in the sense of Definition \ref{GMS}.
\end{theorem}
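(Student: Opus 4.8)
The plan is to pass to the limit $L\to 0$ along a subsequence, obtaining the GMS solution through uniform-in-$L$ a priori estimates, compactness, and a passage to the limit that exploits the structure of the penalty term $\chi(L)\int_\Gamma(\mu-\theta)(y-y_\Gamma)\,\mathrm{d}S$ in $a_L$. First I would derive the energy estimate: testing \eqref{eq3.2} with $\boldsymbol{\mu}^L$ and \eqref{eq3.3} with $\partial_t\boldsymbol{\varphi}^L$ (justified at the level of the approximation used in Theorem \ref{weakexist}) yields, after integration in time, uniform bounds for $\boldsymbol{\varphi}^L$ in $L^\infty(0,T;\mathcal{V}^1)$, for $\nabla\mu^L$ in $L^2(0,T;H)$ and $\nabla_\Gamma\theta^L$ in $L^2(0,T;H_\Gamma)$, together with the crucial control
\[
\tfrac{1}{L}\|\mu^L-\theta^L\|_{L^2(0,T;H_\Gamma)}^2\le C,
\]
which is exactly estimate \eqref{7.30}. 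The mean values of $\boldsymbol{\mu}^L$ are then controlled in the standard way, using $\mathbf{(A2)}$, $\mathbf{(A5)}$ and the fact that $\overline{m}_0\in\mathrm{int}\,D(\beta_\Gamma)$, to bound $\|\boldsymbol{\xi}^L\|_{L^1}$ and hence $\langle\mu^L\rangle_\Omega$, $\langle\theta^L\rangle_\Gamma$; combined with the Poincar\'e inequalities this gives a uniform bound for $\boldsymbol{\mu}^L$ in $L^2(0,T;\mathcal{H}^1)$. Finally, rewriting \eqref{eq3.3} as $\partial\Phi_\delta(\boldsymbol{\omega}^L)=\boldsymbol{\mu}^L-\boldsymbol{\xi}^L-\boldsymbol{\pi}(\boldsymbol{\varphi}^L)+\boldsymbol{f}$, testing with $\boldsymbol{\xi}^L$ and using monotonicity, together with the elliptic regularity \eqref{2.2b} available since $\delta>0$ is fixed, produces uniform bounds for $\boldsymbol{\xi}^L$ in $L^2(0,T;\mathcal{L}^2)$ and for $\boldsymbol{\varphi}^L$ in $L^2(0,T;\mathcal{V}^2)$.

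The delicate point is the time derivative: testing \eqref{eq3.2} with a general $\boldsymbol{y}\in\mathcal{H}^1$ produces the term $\tfrac1L\int_\Gamma(\mu^L-\theta^L)(y-y_\Gamma)\,\mathrm{d}S$, which only admits an $O(L^{-1/2})$ bound from \eqref{7.30}. The remedy is to restrict to trace-compatible test functions $\boldsymbol{y}\in\mathcal{V}^1$: since then $y|_\Gamma=y_\Gamma$, the penalty term vanishes identically and \eqref{eq3.2} reduces to $\langle\partial_t\boldsymbol{\varphi}^L,\boldsymbol{y}\rangle=-\int_\Omega\nabla\mu^L\cdot\nabla y\,\mathrm{d}x-\int_\Gamma\nabla_\Gamma\theta^L\cdot\nabla_\Gamma y_\Gamma\,\mathrm{d}S$, whence $\partial_t\boldsymbol{\varphi}^L$ is bounded uniformly in $L^2(0,T;(\mathcal{V}^1)')$. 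This is precisely why the limit has the regularity of Definition \ref{GMS} and why the test space there is $\mathcal{V}^1$. With $\boldsymbol{\varphi}^L$ bounded in $L^\infty(0,T;\mathcal{V}^1)\cap H^1(0,T;(\mathcal{V}^1)')$, the Aubin--Lions--Simon lemma gives the strong convergence \eqref{7.22} in $C([0,T];\mathcal{L}^2)$, while Banach--Alaoglu furnishes the remaining limits \eqref{7.20}, \eqref{7.21}, \eqref{7.25}, \eqref{7.28} along a subsequence.

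It remains to pass to the limit. The equilibrium relation $\mu^0|_\Gamma=\theta^0$, i.e. $\boldsymbol{\mu}^0\in L^2(0,T;\mathcal{V}^1)$, follows from \eqref{7.30}: by continuity of the trace, $\mu^L|_\Gamma-\theta^L\rightharpoonup\mu^0|_\Gamma-\theta^0$ weakly in $L^2(0,T;H_\Gamma)$, while \eqref{7.30} forces it to converge strongly to $0$, so the two limits coincide. Choosing $\boldsymbol{y}\in\mathcal{V}^1$ in \eqref{eq3.2} annihilates the penalty term for every $L$, and passing to the limit yields \eqref{eq3.2-0}; in \eqref{eq3.3} the form $b_\delta$ passes by weak convergence of $\boldsymbol{\varphi}^L$ in $L^2(0,T;\mathcal{V}^2)$, the term $\boldsymbol{\pi}(\boldsymbol{\varphi}^L)$ by strong $\mathcal{L}^2$ convergence and Lipschitz continuity $\mathbf{(A3)}$, and $\boldsymbol{\xi}^L$ by its weak limit, giving \eqref{eq3.3-0}.

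The main obstacle is the identification $\xi^0\in\beta(\varphi^0)$, $\xi_\Gamma^0\in\beta_\Gamma(\psi^0)$. Here I would invoke the standard weak--strong closedness of maximal monotone graphs: since $\boldsymbol{\xi}^L\rightharpoonup\boldsymbol{\xi}^0$ in $L^2(0,T;\mathcal{L}^2)$ and $\boldsymbol{\varphi}^L\to\boldsymbol{\varphi}^0$ strongly in $L^2(0,T;\mathcal{L}^2)$, it suffices to verify $\limsup_{L\to0}\int_0^T(\boldsymbol{\xi}^L,\boldsymbol{\varphi}^L)_{\mathcal{L}^2}\,\mathrm{d}t\le\int_0^T(\boldsymbol{\xi}^0,\boldsymbol{\varphi}^0)_{\mathcal{L}^2}\,\mathrm{d}t$. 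I would obtain this by testing \eqref{eq3.3} with $\boldsymbol{z}=\boldsymbol{\varphi}^L\in\mathcal{V}^1$ to isolate $(\boldsymbol{\xi}^L,\boldsymbol{\varphi}^L)_{\mathcal{L}^2}=(\boldsymbol{\mu}^L,\boldsymbol{\varphi}^L)_{\mathcal{L}^2}-b_\delta(\boldsymbol{\varphi}^L,\boldsymbol{\varphi}^L)-(\boldsymbol{\pi}(\boldsymbol{\varphi}^L)-\boldsymbol{f},\boldsymbol{\varphi}^L)_{\mathcal{L}^2}$: the first product converges by weak--strong pairing, the last by strong convergence, while the weak lower semicontinuity of $b_\delta$ (a squared norm on $\mathcal{V}^1$) provides the correct inequality $\limsup(-b_\delta)\le-b_\delta$; comparing with \eqref{eq3.3-0} tested against $\boldsymbol{\varphi}^0$ identifies the right-hand side as $\int_0^T(\boldsymbol{\xi}^0,\boldsymbol{\varphi}^0)_{\mathcal{L}^2}\,\mathrm{d}t$. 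Once the two inclusions are established and the initial datum is recovered from the $C([0,T];\mathcal{L}^2)$ convergence, the limit triplet $(\boldsymbol{\varphi}^0,\boldsymbol{\mu}^0,\boldsymbol{\xi}^0)$ meets all requirements of Definition \ref{GMS}.
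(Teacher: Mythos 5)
Your overall route coincides with the paper's: uniform-in-$L$ energy estimates at the level of the Yosida approximation, the observation that trace-compatible test functions $\boldsymbol{y}\in\mathcal{V}^1$ annihilate the penalty term and yield the $H^1(0,T;(\mathcal{V}^1)')$ bound, the control $\tfrac1L\Vert\mu^L-\theta^L\Vert^2_{L^2(0,T;H_\Gamma)}\le C$ giving \eqref{7.30} and the equilibrium relation $\mu^0|_\Gamma=\theta^0$ in the limit, and Aubin--Lions plus maximal monotonicity to conclude. Two of your steps, however, deserve comment.

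First, the argument you propose for the uniform $L^2(0,T;\mathcal{L}^2)$ bound on $\boldsymbol{\xi}^L$ --- ``testing $\partial\Phi_\delta(\boldsymbol{\omega}^L)=\boldsymbol{\mu}^L-\boldsymbol{\xi}^L-\boldsymbol{\pi}(\boldsymbol{\varphi}^L)+\boldsymbol{f}$ with $\boldsymbol{\xi}^L$ and using monotonicity'' --- does not work as written. Integrating $-\int_\Omega\Delta\varphi^L\,\xi^L\,\mathrm{d}x$ by parts produces the boundary term $-\int_\Gamma\partial_{\mathbf{n}}\varphi^L\,\xi^L|_\Gamma\,\mathrm{d}S$, which combines with $\int_\Gamma\partial_{\mathbf{n}}\varphi^L\,\xi_\Gamma^L\,\mathrm{d}S$ into the cross term $\int_\Gamma\partial_{\mathbf{n}}\varphi^L\,(\xi^L_\Gamma-\xi^L|_\Gamma)\,\mathrm{d}S$; since $\xi^L|_\Gamma$ and $\xi^L_\Gamma$ are selections of the two \emph{different} graphs $\beta$ and $\beta_\Gamma$, this term has no sign and cannot be absorbed without an a priori bound on $\partial_{\mathbf{n}}\varphi^L$ in $H_\Gamma$ --- which is exactly what you are trying to prove. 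The paper's Lemma \ref{uni4} (reused uniformly in $L$ in the proof of the present theorem) avoids this by testing the bulk equation with $(\beta_\varepsilon(\varphi_\varepsilon),\beta_\varepsilon(\psi_\varepsilon))$ --- the \emph{same} Lipschitz function $\beta_\varepsilon$ in both components, so that the boundary term becomes $\int_\Gamma\beta_{\Gamma,\varepsilon}(\psi_\varepsilon)\beta_\varepsilon(\psi_\varepsilon)\,\mathrm{d}S$ after substituting the boundary equation, and the domination condition \eqref{2.8} from $\mathbf{(A2)}$ turns this into a coercive contribution $\tfrac1{2\varrho}\Vert\beta_\varepsilon(\psi_\varepsilon)\Vert^2_{H_\Gamma}$ up to constants; only afterwards are $\xi_\Gamma$ and the $\mathcal{V}^2$ regularity recovered by comparison and elliptic theory. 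You should replace your step by this argument (it is in any case only justified at the Yosida level, since $\xi^L$ need not be an admissible test function).

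Second, you gloss over the $L$-dependence of the norms built from $\mathfrak{S}^L$: the quantities $\Vert\cdot\Vert_{0,*}$, the Poincar\'e constant $c_P$ in \eqref{Po3} and the constant in the interpolation inequality \eqref{int-0} all depend on $L$ a priori, and the initial value $\Vert\boldsymbol{\omega}_0\Vert_{0,*}$ entering the Gronwall argument changes with $L$. The paper handles this by the monotonicity $\Vert\boldsymbol{y}\Vert_{\mathcal{H}^1_{1,0}}\le\Vert\boldsymbol{y}\Vert_{\mathcal{H}^1_{L,0}}$ for $L\le 1$, which makes all these constants uniform for $L\in(0,1]$; this observation should be made explicit. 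Finally, your Minty-type $\limsup$ argument for identifying $\xi^0\in\beta(\varphi^0)$ is correct but superfluous: since $\boldsymbol{\varphi}^L\to\boldsymbol{\varphi}^0$ strongly in $C([0,T];\mathcal{L}^2)$ and $\boldsymbol{\xi}^L\rightharpoonup\boldsymbol{\xi}^0$ weakly in $L^2(0,T;\mathcal{L}^2)$, the weak--strong closedness of maximal monotone graphs applies directly, as in the paper.
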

\begin{remark}
Thanks to \cite[Theorem 2.1]{CF15}, the limit function $\boldsymbol{\varphi}^{0}$, as a solution to problem $(S_{0,\delta})$, is unique.
Hence, every convergent subsequence $\{\bm{\varphi}^{L_k}\}$ in Theorem \ref{asymptotic0} converges to the same limit $\bm{\varphi}^0$.
\end{remark}

Next, we introduce the definition of weak solutions to the LW model (cf. \cite[Definition 2.1]{CFW}).

\begin{definition}
\label{LW}
Let $\delta\in (0,\infty)$ be a fixed parameter, $L=\infty$.
Suppose that the assumptions $\mathbf{(A1)}$--$\mathbf{(A5)}$ are satisfied.
The triplet $(\boldsymbol{\varphi},\boldsymbol{\mu},\boldsymbol{\xi})$ is called a weak solution of problem $(S_{\infty,\delta})$ on $[0,T]$,
if the functions $\boldsymbol{\varphi},\boldsymbol{\mu},\boldsymbol{\xi}$ have the regularity properties
\begin{align}
&\varphi\in H^{1}(0,T;V')\cap L^{\infty}(0,T;V)\cap L^{2}(0,T;H^{2}(\Omega)),\notag\\
&\psi\in H^{1}(0,T;V_{\Gamma}')\cap L^{\infty}(0,T;V_{\Gamma})\cap L^{2}(0,T;H^{2}(\Gamma)),\notag\\
&\mu\in L^{2}(0,T;V),\quad\xi\in L^{2}(0,T;H),\notag\\
&\theta\in L^{2}(0,T;V_{\Gamma}),\quad\xi_{\Gamma}\in L^{2}(0,T;H_{\Gamma}),\notag
\end{align}
and they satisfy
\begin{align}
&\big\langle\partial_{t}\varphi,z\big\rangle_{V',V}+\int_{\Omega}\nabla\mu\cdot\nabla z\,\mathrm{d}x=0,
\quad\forall\, z\in V, &&\text{a.e. in }(0,T),
\notag\\
&\mu=-\Delta\varphi+\xi+\pi(\varphi)-f,\quad\xi\in\beta(\varphi),
&&\text{a.e. in }Q,
\notag\\
&\varphi|_{\Gamma}=\psi,
&&\text{a.e. on }\Sigma,
\notag\\
&\big\langle\partial_{t}\psi,z_{\Gamma}\big\rangle_{V_{\Gamma}',V_{\Gamma}}
+\int_{\Gamma}\nabla_{\Gamma}\theta\cdot\nabla_{\Gamma}z_{\Gamma}\,\mathrm{d}S=0,
\quad\forall\,z_{\Gamma}\in V_{\Gamma},
&&\text{a.e. in }(0,T),
\notag\\
&\theta=\partial_{\mathbf{n}}\varphi-\delta \Delta_{\Gamma}\psi
+\xi_{\Gamma}+\pi_{\Gamma}(\psi)-f_\Gamma,\quad\xi_{\Gamma}\in\beta_{\Gamma}(\psi),
&&\text{a.e. on }\Sigma,
\notag\\
&\varphi(0)=\varphi_{0}\quad\text{a.e. in }\Omega,\qquad\psi(0)=\psi_{0}\quad\text{a.e. on }\Gamma.
\notag
\end{align}
\end{definition}

\begin{theorem}[Asymptotic limit: $\delta\in(0,\infty)$ fixed, $L\to \infty$]
\label{asymptoticinfinity}
Let $\delta\in (0,\infty)$ be given.
Suppose that the assumptions $\mathbf{(A1)}$--$\mathbf{(A5)}$ are satisfied with $\pi, \pi_\Gamma\in W^{1,\infty}(\mathbb{R})$.
For every $L\in (1,\infty)$, let $(\boldsymbol{\varphi}^{L},\boldsymbol{\mu}^{L},\boldsymbol{\xi}^{L})$ be a weak solution of problem $(S_{L,\delta})$ corresponding to the data $(\boldsymbol{\varphi}_{0},\bm{f})$. Then there exist a triplet $(\boldsymbol{\varphi}^{\infty},\boldsymbol{\mu}^{\infty},\boldsymbol{\xi}^{\infty})$ such that as $L\to \infty$ (in the sense of a subsequence),
\begin{align*}
\boldsymbol{\varphi}^{L}&\to \boldsymbol{\varphi}^{\infty}
&&\text{weakly in }H^{1}(0,T;(\mathcal{V}^{1})'),\\
& &&\text{weakly star in }L^{\infty}(0,T;\mathcal{V}^{1}),\\
& &&\text{strongly in }C([0,T];\mathcal{L}^{2}),\\
\boldsymbol{\mu}^{L}&\to\boldsymbol{\mu}^{\infty}&&\text{weakly in }L^{2}(0,T;\mathcal{H}^{1}),\\
\boldsymbol{\xi}^{L}&\to\boldsymbol{\xi}^{\infty}&&\text{weakly in }L^{2}(0,T;\mathcal{L}^{2}),
\end{align*}
and
\begin{align*}
\frac{1}{L}\|\theta^{L}-\mu^{L}|_{\Gamma}\|_{L^{2}(0,T;H_{\Gamma})} \leq \frac{C}{\sqrt{L}}.
\end{align*}
The limit triplet $(\boldsymbol{\varphi}^{\infty},\boldsymbol{\mu}^{\infty},\boldsymbol{\xi}^{\infty})$ is a weak solution of the LW model corresponding to the data $(\boldsymbol{\varphi}_{0},\bm{f})$ in the sense of Definition \ref{LW}.
\end{theorem}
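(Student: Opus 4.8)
The plan is to pass to the limit $L\to\infty$ along a subsequence of the weak solutions $(\boldsymbol{\varphi}^{L},\boldsymbol{\mu}^{L},\boldsymbol{\xi}^{L})$, after collecting a priori estimates that are \emph{uniform} in $L\in(1,\infty)$. Since $\delta$ and the data $(\boldsymbol{\varphi}_0,\boldsymbol{f})$ are fixed, the (coercive) energy estimate underlying Theorem \ref{weakexist} holds with $L$-independent constants: integrating the energy dissipation law over $(0,t)$ yields uniform bounds for $\boldsymbol{\varphi}^{L}$ in $L^{\infty}(0,T;\mathcal{V}^{1})$, for $\nabla\mu^{L}$ in $L^{2}(Q)$, for $\nabla_{\Gamma}\theta^{L}$ in $L^{2}(\Sigma)$, together with the dissipation bound $\tfrac{1}{L}\|\mu^{L}-\theta^{L}\|_{L^{2}(\Sigma)}^{2}\le C$. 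The latter gives $\|\mu^{L}-\theta^{L}\|_{L^{2}(\Sigma)}\le C\sqrt{L}$, hence the announced rate $\tfrac{1}{L}\|\theta^{L}-\mu^{L}|_{\Gamma}\|_{L^{2}(0,T;H_{\Gamma})}\le C/\sqrt{L}$. The structural fact driving the whole argument is that in the bilinear form $a_{L}$ the coupling term $\chi(L)\int_{\Gamma}(\mu^{L}-\theta^{L})(y-y_{\Gamma})$ \emph{vanishes} on test pairs $\boldsymbol{y}\in\mathcal{V}^{1}$ (where $y|_{\Gamma}=y_{\Gamma}$) and is of size $O(1/\sqrt{L})$ for general $\boldsymbol{y}\in\mathcal{H}^{1}$; this is precisely what decouples the bulk and surface chemical potentials in the limit and restores the separate mass-conservation structure of the LW model.

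Next I would derive the \emph{separated} bounds demanded by Definition \ref{LW}. Testing \eqref{eq3.2} with $\boldsymbol{y}=(y,0)$, $y\in V$, gives $\langle\partial_{t}\varphi^{L},y\rangle_{\Omega}=-\int_{\Omega}\nabla\mu^{L}\cdot\nabla y-\tfrac{1}{L}\int_{\Gamma}(\mu^{L}-\theta^{L})\,y|_{\Gamma}$, so by the trace inequality and the $\sqrt{L}$-bound one gets $\|\partial_{t}\varphi^{L}\|_{L^{2}(0,T;V')}^{2}\le C\|\nabla\mu^{L}\|_{L^{2}(Q)}^{2}+C/L\le C$; the choice $\boldsymbol{y}=(0,y_{\Gamma})$ gives analogously $\|\partial_{t}\psi^{L}\|_{L^{2}(0,T;V_{\Gamma}')}\le C$. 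For the selections and the means, I test \eqref{eq3.3} with $\boldsymbol{\omega}^{L}=\boldsymbol{\varphi}^{L}-\overline{m}_{0}\boldsymbol{1}\in\mathcal{V}^{1}_{(0)}$: by \eqref{eq2.2} one has $(\boldsymbol{\mu}^{L},\boldsymbol{\omega}^{L})_{\mathcal{L}^{2}}=(\mathbf{P}\boldsymbol{\mu}^{L},\boldsymbol{\omega}^{L})_{\mathcal{L}^{2}}$, and since $\mathbf{P}$ leaves gradients and the difference $\mu^{L}-\theta^{L}$ unchanged, $\|\mathbf{P}\boldsymbol{\mu}^{L}\|_{\mathcal{L}^{2}}^{2}\le c_{P}^{2}\,a_{L}(\mathbf{P}\boldsymbol{\mu}^{L},\mathbf{P}\boldsymbol{\mu}^{L})$ is bounded uniformly by the energy estimate (cf.\ \eqref{Po3}). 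Using the standard consequence of $\overline{m}_{0}\in\mathrm{int}\,D(\beta_{\Gamma})$, namely $r^{*}(r-\overline{m}_{0})\ge\delta_{0}|r^{*}|-C_{0}$ for $r^{*}\in\beta(r)$ and likewise for $\beta_{\Gamma}$, I obtain uniform bounds for $\xi^{L}$ in $L^{1}(\Omega)$ and $\xi_{\Gamma}^{L}$ in $L^{1}(\Gamma)$. Integrating \eqref{eq3.5} over $\Omega$ and \eqref{eq3.6} over $\Gamma$ and adding, the boundary terms $\int_{\Gamma}\partial_{\mathbf{n}}\varphi^{L}$ cancel, so $(|\Omega|+|\Gamma|)\,\overline{m}(\boldsymbol{\mu}^{L})=|\Omega|\langle\mu^{L}\rangle_{\Omega}+|\Gamma|\langle\theta^{L}\rangle_{\Gamma}$ is bounded; together with the bound on $\mathbf{P}\boldsymbol{\mu}^{L}$ this yields $\|\mu^{L}\|_{L^{2}(0,T;V)}+\|\theta^{L}\|_{L^{2}(0,T;V_{\Gamma})}\le C$. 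A further test of \eqref{eq3.5}--\eqref{eq3.6} by $\boldsymbol{\xi}^{L}$, exploiting the good sign of $\int_{\Omega}\nabla\varphi^{L}\cdot\nabla\xi^{L}$ and the compatibility $\mathbf{(A2)}$ (this is where the reinforced hypothesis $\pi,\pi_{\Gamma}\in W^{1,\infty}(\mathbb{R})$ enters), upgrades these to uniform $L^{2}$ bounds for $\xi^{L}$ in $H$ and $\xi_{\Gamma}^{L}$ in $H_{\Gamma}$; elliptic regularity for $\partial\Phi_{\delta}$ (cf.\ \eqref{Phi}) then bounds $\boldsymbol{\varphi}^{L}$ uniformly in $L^{2}(0,T;\mathcal{V}^{2})$.

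With these $L$-uniform estimates, the Aubin--Lions--Simon lemma (via $\varphi^{L}\in L^{\infty}(0,T;V)\cap H^{1}(0,T;V')$ and $\psi^{L}\in L^{\infty}(0,T;V_{\Gamma})\cap H^{1}(0,T;V_{\Gamma}')$, with $V\hookrightarrow\hookrightarrow H$, $V_{\Gamma}\hookrightarrow\hookrightarrow H_{\Gamma}$) delivers $\boldsymbol{\varphi}^{L}\to\boldsymbol{\varphi}^{\infty}$ strongly in $C([0,T];\mathcal{L}^{2})$, along with all the weak and weak star convergences listed in the statement, up to a subsequence. Passing to the limit in \eqref{eq3.2} tested against $(y,0)$ and $(0,y_{\Gamma})$, the coupling terms disappear by the $O(1/\sqrt{L})$ estimate, producing the two \emph{decoupled} variational identities of Definition \ref{LW} for $\partial_{t}\varphi^{\infty}$ and $\partial_{t}\psi^{\infty}$. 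The pointwise relations \eqref{eq3.5}--\eqref{eq3.6} pass to the limit using the weak $L^{2}$ convergences of $\Delta\varphi^{L}$, $\Delta_{\Gamma}\psi^{L}$, $\partial_{\mathbf{n}}\varphi^{L}$ and $\boldsymbol{\xi}^{L}$, and the strong convergence of $\boldsymbol{\pi}(\boldsymbol{\varphi}^{L})$ (Lipschitz continuity composed with the strong convergence of $\boldsymbol{\varphi}^{L}$). The inclusions $\xi^{\infty}\in\beta(\varphi^{\infty})$ and $\xi_{\Gamma}^{\infty}\in\beta_{\Gamma}(\psi^{\infty})$ follow from the weak--strong closedness of maximal monotone graphs: since $\xi^{L}\rightharpoonup\xi^{\infty}$ weakly and $\varphi^{L}\to\varphi^{\infty}$ strongly in $L^{2}(Q)$, one has $\int_{Q}\xi^{L}\varphi^{L}\to\int_{Q}\xi^{\infty}\varphi^{\infty}$, and Minty's argument closes the graph (similarly on $\Sigma$). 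The trace relation $\varphi^{\infty}|_{\Gamma}=\psi^{\infty}$ and the initial data survive under the strong $C([0,T];\mathcal{L}^{2})$ and weak $\mathcal{V}^{1}$ convergences, so $(\boldsymbol{\varphi}^{\infty},\boldsymbol{\mu}^{\infty},\boldsymbol{\xi}^{\infty})$ is a weak solution of the LW model in the sense of Definition \ref{LW}.

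The main obstacle is the passage from the \emph{coupled}, $\mathcal{V}^{1}$-based estimates natural to the KLLM formulation to the \emph{separate} bulk and surface estimates required by the LW model, namely $\partial_{t}\varphi^{L}\in V'$, $\partial_{t}\psi^{L}\in V_{\Gamma}'$, and $\xi^{L}\in H$, $\xi_{\Gamma}^{L}\in H_{\Gamma}$, all uniformly in $L$. The bulk and surface evolutions are linked both through the trace constraint $\varphi^{L}|_{\Gamma}=\psi^{L}$ and through the term $\tfrac{1}{L}\int_{\Gamma}(\mu^{L}-\theta^{L})(\,\cdot\,)$; the resolution rests on the two quantitative features of this term isolated above, its $O(1/\sqrt{L})$ size and its annihilation of $\mathcal{V}^{1}$ pairs, which together let the one-sided test functions $(y,0)$ and $(0,y_{\Gamma})$ cleanly split the limiting dynamics. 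The remaining delicate point is the $L$-uniform $L^{2}$ control of $\boldsymbol{\xi}^{L}$ across the bulk--surface interface, where the dominating-boundary compatibility $\mathbf{(A2)}$ and the boundedness furnished by $\pi,\pi_{\Gamma}\in W^{1,\infty}(\mathbb{R})$ are essential.
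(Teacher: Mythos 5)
Your overall skeleton (uniform energy estimates, the $O(1/\sqrt{L})$ size of the coupling term, one-sided test functions $(y,0)$ and $(0,y_{\Gamma})$ to decouple the limit dynamics, Aubin--Lions compactness and the monotonicity/Minty argument to close) matches the paper, and you correctly name the passage from coupled to separated estimates as the crux. However, the step where you actually produce the uniform bounds on $\boldsymbol{\xi}^{L}$ and $\boldsymbol{\mu}^{L}$ has a genuine gap. You test \eqref{eq3.3} with $\boldsymbol{\omega}^{L}=\boldsymbol{\varphi}^{L}-\overline{m}_{0}\boldsymbol{1}$ and control $(\mathbf{P}\boldsymbol{\mu}^{L},\boldsymbol{\omega}^{L})_{\mathcal{L}^{2}}$ via $\Vert\mathbf{P}\boldsymbol{\mu}^{L}\Vert_{\mathcal{L}^{2}}\leq c_{P}\,a_{L}(\mathbf{P}\boldsymbol{\mu}^{L},\mathbf{P}\boldsymbol{\mu}^{L})^{1/2}$; but the constant $c_{P}$ in \eqref{Po3} depends on $L$ and degenerates as $L\to\infty$, because the coupling term $\chi(L)\int_{\Gamma}|y-y_{\Gamma}|^{2}\,\mathrm{d}S$ in $a_{L}$ vanishes in this limit (take $y\equiv a$ in $\Omega$ and $y_{\Gamma}\equiv b$ on $\Gamma$ with $|\Omega|a+|\Gamma|b=0$ and $a\neq b$: then $a_{L}\to 0$ while the $\mathcal{L}^{2}$-norm is fixed). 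For the same reason, your claim that boundedness of the total mean $|\Omega|\langle\mu^{L}\rangle_{\Omega}+|\Gamma|\langle\theta^{L}\rangle_{\Gamma}$ together with the gradient bounds yields $\Vert\mu^{L}\Vert_{L^{2}(0,T;V)}+\Vert\theta^{L}\Vert_{L^{2}(0,T;V_{\Gamma})}\leq C$ fails: the dissipation only controls $\tfrac{1}{L}\Vert\mu^{L}-\theta^{L}\Vert_{L^{2}(\Sigma)}^{2}$, so the two individual means may separate at rate $O(\sqrt{L})$ even though their weighted sum is bounded.

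The paper's resolution, absent from your proposal, consists of three ingredients: (i) an estimate on the \emph{individual} phase means, derived from \eqref{time-L}, showing $|\langle\varphi^{L}(t)\rangle_{\Omega}-\langle\varphi_{0}\rangle_{\Omega}|+|\langle\psi^{L}(t)\rangle_{\Gamma}-\langle\psi_{0}\rangle_{\Gamma}|\leq C/\sqrt{L}$, which makes the coercivity \eqref{eq2.8} usable with the reference points $\langle\varphi_{0}\rangle_{\Omega}$ and $\langle\psi_{0}\rangle_{\Gamma}$ only for $L\geq\widehat{L}$ sufficiently large; (ii) testing the time-derivative equation with the pair of \emph{separate} inverse Laplacians $\big(\mathcal{N}_{\Omega}(\varphi^{L}-\langle\varphi^{L}\rangle_{\Omega}),\mathcal{N}_{\Gamma}(\psi^{L}-\langle\psi^{L}\rangle_{\Gamma})\big)$ rather than with $\boldsymbol{\omega}^{L}$; and (iii) accepting that all resulting bounds on $\boldsymbol{\xi}^{L}$, $\boldsymbol{\mu}^{L}$, $\boldsymbol{\varphi}^{L}$ carry a factor $1+\Vert\partial_{\mathbf{n}}\varphi^{L}\Vert_{L^{2}(0,T;H_{\Gamma})}$ (the terms $\int_{\Gamma}\partial_{\mathbf{n}}\varphi^{L}\,\mathrm{d}S$ no longer cancel once the means are computed separately), a bootstrap that is closed only at the very end via the trace theorem and the Ehrling lemma. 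Note also that the strengthened hypothesis $\pi,\pi_{\Gamma}\in W^{1,\infty}(\mathbb{R})$ is needed already in the basic energy estimate, to bound $\Vert\mathbf{P}\boldsymbol{\pi}(\boldsymbol{\varphi}^{L})\Vert_{\mathcal{H}^{1}_{L,0}}$ uniformly in $L$ (the interpolation \eqref{int-0} is no longer uniform as $L\to\infty$), not in a later $L^{2}$ estimate for $\boldsymbol{\xi}^{L}$ as you suggest.
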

\begin{remark}
Thanks to \cite[Theorem 2.4]{CFW}, the limit function $\boldsymbol{\varphi}^{\infty}$, as a solution to problem $(S_{\infty,\delta})$, is unique.
Hence, every convergent subsequence $\{\bm{\varphi}^{L_k}\}$ in Theorem \ref{asymptoticinfinity} converges to the same limit $\bm{\varphi}^\infty$.
\end{remark}

\section{Well-posedness in the presence of surface diffusion}
\label{sec3}
\setcounter{equation}{0}
In this section, we focus on the case when the surface diffusion on $\Gamma$ is present, that is, $\delta>0$. Then we prove Theorem \ref{weakexist}--Theorem \ref{strongexist} on the well-poseness of problem $(S_{L,\delta})$ for any given $L, \delta\in (0,\infty)$.

\subsection{Approximation via Yosida regularization}
To prove the existence of weak solutions for problem $(S_{L,\delta})$ with general potentials, we approximate the maximal
monotone operators $\beta$ and $\beta_\Gamma$ by means of suitable Yosida regularizations, see, e.g., \cite{CF15,CFW} for the case of Cahn-Hilliard equations with dynamic boundary conditions.
For each $\varepsilon\in(0,1)$, we define $\beta_{\varepsilon}$, $\beta_{\Gamma,\varepsilon}:\mathbb{R}\rightarrow\mathbb{R}$, along with the associated resolvent operators $J_{\varepsilon}$, $J_{\Gamma,\varepsilon}:\mathbb{R}\rightarrow\mathbb{R}$ by
\begin{align}
&\beta_{\varepsilon}(r):=\frac{1}{\varepsilon}\big(r-J_{\varepsilon}(r)\big):=\frac{1}{\varepsilon}\big(r-(I+\varepsilon\beta)^{-1}(r)\big),\label{Y1}\\
&\beta_{\Gamma,\varepsilon}(r):=\frac{1}{\varepsilon\varrho}\big(r-J_{\Gamma,\varepsilon}(r)\big):=\frac{1}{\varepsilon\varrho}\big(r-(I+\varepsilon\varrho\beta_{\Gamma})^{-1}(r)\big),\label{Y2}
\end{align}
for all $r\in\mathbb{R}$, where $\varrho>0$ is the constant given in the condition \eqref{eq2.1}.
The related Moreau-Yosida regularizations $\widehat{\beta}_{\varepsilon}$, $\widehat{\beta}_{\Gamma,\varepsilon}$ of $\widehat{\beta}$, $\widehat{\beta}_{\Gamma}:\mathbb{R}\rightarrow\mathbb{R}$ are given by (cf., e.g., \cite{R.E.S})
\begin{align}
&\widehat{\beta}_{\varepsilon}(r) :=\inf_{s\in\mathbb{R}}\left\{\frac{1}{2\varepsilon}|r-s|^{2} +\widehat{\beta}(s)\right\}=\frac{1}{2\varepsilon}|r-J_{\varepsilon}(r)|^{2} +\widehat{\beta}\big(J_{\varepsilon}(r)\big) =\int_{0}^{r}\beta_{\varepsilon}(s)\,\mathrm{d}s,
\notag\\
&\widehat{\beta}_{\Gamma,\varepsilon}(r) :=\inf_{s\in\mathbb{R}}\left\{\frac{1}{2\varepsilon\varrho}|r-s|^{2} +\widehat{\beta}_{\Gamma}(s)\right\}=\int_{0}^{r}\beta_{\Gamma,\varepsilon}(s)\,\mathrm{d}s \quad \forall\;r\in\mathbb{R}.
\notag
\end{align}
From $\mathbf{(A1)}$, $\mathbf{(A2)}$, we find $\beta_{\varepsilon}(0)=\beta_{\Gamma,\varepsilon}(0)=0$. Besides,
$\beta_{\varepsilon}$, $\beta_{\Gamma,\varepsilon}$ are Lipschitz continuous with Lipschitz constants $1/\varepsilon$ and $1/\varepsilon\varrho$, respectively (see \cite[Propositions 2.6 and 2.7]{Br}).
Thus, it follows that $\widehat{\beta}_{\varepsilon}$ and $\widehat{\beta}_{\Gamma,\varepsilon}$ are nonnegative convex
functions with (at most) quadratic growth.
Moreover, it holds (cf., e.g., \cite{B,R.E.S})
\begin{align}
&|\beta_{\varepsilon}(r)|\leq|\beta^{\circ}(r)|\ \ \text{for}\ r\in D(\beta),\quad|\beta_{\Gamma,\varepsilon}(r)|\leq|\beta_{\Gamma}^{\circ}(r)| \ \ \text{for}\ r\in D(\beta_\Gamma),
\label{2.6}\\
&0\leq\widehat{\beta}_{\varepsilon}(r)\leq\widehat{\beta}(r),\quad 0\leq\widehat{\beta}_{\Gamma,\varepsilon}(r)\leq\widehat{\beta}_{\Gamma}(r),
\quad\forall\,r\in\mathbb{R}.
\label{2.7}
\end{align}
Thanks to \cite[Lemma 4.4]{CC}, we keep the compatibility condition
\begin{align}
|\beta_{\varepsilon}(r)|\leq\varrho|\beta_{\Gamma,\varepsilon}(r)|+c_{0},\quad\forall\;r\in\mathbb{R},\label{2.8}
\end{align}
with the same constants $\varrho$ and $c_{0}$ as in \eqref{eq2.1}.
Additionally, the following inequalities hold for $\varepsilon\in (0,1)$ (see \cite[Section 5]{GMS09}):
for any given $r_0\in \mathrm{int}D(\beta_\Gamma)$,
\begin{align}
\beta_{\varepsilon}(r)(r-r_{0})\geq\delta_{0}|\beta_{\varepsilon}(r)|-c_{1},
\quad\beta_{\Gamma,\varepsilon}(r)(r-r_{0})\geq\delta_{0}|\beta_{\Gamma,\varepsilon}(r)|-c_{1},\quad\forall\;r\in\mathbb{R},
\label{eq2.8}
\end{align}
where $\delta_{0}$, $c_{1}$ are positive constants depending on $r_0$, but independent of $\varepsilon$.

Recalling the abstract formulation \eqref{2.4} for problem $(S_{L,\delta})$, it is straightforward to check that the first and second equations in \eqref{2.4} yield the single evolution equation:
\begin{align}
\mathfrak{S}^{L}\big(\boldsymbol{\omega}'(t)\big)+\partial\Phi_{\delta}\big(\boldsymbol{\omega}(t)\big)
=\mathbf{P}\left(-\boldsymbol{\xi}(t)-\boldsymbol{\pi}\big(\boldsymbol{\omega}(t)+\overline{m}_{0}\boldsymbol{1}\big)+\boldsymbol{f}(t)\right)
\;\mathrm{in}\;\mathcal{L}^{2}_{(0)},\quad\text{for a.a. }t\in(0,T),
\label{2.5}
\end{align}
where
$$
\boldsymbol{\xi}(t)\in\boldsymbol{\beta}\big(\boldsymbol{\omega}(t)+\overline{m}_{0}\boldsymbol{1}\big)\;\mathrm{in\;}\mathcal{L}^{2},
\quad\text{for a.a. }t\in(0,T).
$$
For each $\varepsilon\in(0,1)$, let us consider the following approximating problem for \eqref{2.5}: find $\boldsymbol{\omega}_{\varepsilon}:=(\omega_{\varepsilon},\omega_{\Gamma,\varepsilon})$ satisfying
\begin{align}
&\varepsilon\boldsymbol{\omega}_{\varepsilon}'(t) +\mathfrak{S}^{L}\big(\boldsymbol{\omega}_{\varepsilon}'(t)\big) +\partial\Phi_{\delta}\big(\boldsymbol{\omega}_{\varepsilon}(t)\big)
\notag\\
&\quad=\mathbf{P}\left(-\boldsymbol{\beta}_{\varepsilon}\big(\boldsymbol{\omega}_{\varepsilon}(t)+\overline{m}_{0}\boldsymbol{1}\big) -\boldsymbol{\pi}\big(\boldsymbol{\omega}_{\varepsilon}(t)+\overline{m}_{0}\boldsymbol{1}\big)+\boldsymbol{f}(t)\right)\;\mathrm{in}\;\mathcal{L}^{2}_{(0)},
\quad\text{for a.a. }t\in(0,T),
\label{2.9}\\
&\boldsymbol{\omega}_{\varepsilon}(0)=\boldsymbol{\omega}_{0}
:=\boldsymbol{\varphi}_{0}-\overline{m}_{0}\boldsymbol{1}\quad\text{in }\mathcal{L}_{(0)}^{2}.
\label{2.10}
\end{align}

\begin{proposition}
\label{approexist}
Assume that the assumptions of Theorem \ref{weakexist} are satisfied.
For each $\varepsilon\in(0,1)$, problem \eqref{2.9}--\eqref{2.10} admits a unique solution
$$
\boldsymbol{\omega}_{\varepsilon}\in H^{1}(0,T;\mathcal{L}_{(0)}^{2})\cap C([0,T];\mathcal{V}^{1}_{(0)})\cap L^{2}(0,T;\mathcal{V}_{(0)}^2).
$$
\end{proposition}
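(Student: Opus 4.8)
The plan is to reduce \eqref{2.9}--\eqref{2.10} to a classical gradient flow governed by the subdifferential $\partial\Phi_\delta$ with a Lipschitz perturbation, the only nonstandard feature being that the time derivative appears under the nonlocal operator $\varepsilon I+\mathfrak{S}^{L}$. First I would record that, for fixed $\varepsilon\in(0,1)$, the operator $\mathcal{B}_\varepsilon:=\varepsilon I+\mathfrak{S}^{L}$ is a bounded, self-adjoint and coercive isomorphism of $\mathcal{L}^2_{(0)}$. Boundedness follows from \eqref{2.2a}, which gives $\|\mathfrak{S}^{L}\boldsymbol{v}\|_{\mathcal{L}^2}\leq\|\mathfrak{S}^{L}\boldsymbol{v}\|_{\mathcal{H}^1}\leq C\|\boldsymbol{v}\|_{(\mathcal{H}^1_L)'}\leq C\|\boldsymbol{v}\|_{\mathcal{L}^2}$; self-adjointness and nonnegativity of $\mathfrak{S}^{L}$ follow from the identity $(\mathfrak{S}^{L}\boldsymbol{v},\boldsymbol{w})_{\mathcal{L}^2}=(\mathfrak{S}^{L}\boldsymbol{v},\mathfrak{S}^{L}\boldsymbol{w})_{\mathcal{H}^1_{L,0}}$ recorded before \eqref{gmean}, which also yields $(\mathcal{B}_\varepsilon\boldsymbol{v},\boldsymbol{v})_{\mathcal{L}^2}\geq\varepsilon\|\boldsymbol{v}\|_{\mathcal{L}^2}^2$. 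Consequently $(\boldsymbol{u},\boldsymbol{v})_{\mathcal{B}_\varepsilon}:=(\mathcal{B}_\varepsilon\boldsymbol{u},\boldsymbol{v})_{\mathcal{L}^2}$ defines an inner product on $\mathcal{L}^2_{(0)}$ equivalent to the standard one; I denote the resulting Hilbert space by $\mathcal{X}_\varepsilon$.

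Applying $\mathcal{B}_\varepsilon^{-1}$ to \eqref{2.9}, the problem becomes $\boldsymbol{\omega}_\varepsilon'(t)+\mathcal{B}_\varepsilon^{-1}\partial\Phi_\delta(\boldsymbol{\omega}_\varepsilon(t))\ni\mathcal{B}_\varepsilon^{-1}\boldsymbol{R}_\varepsilon(t,\boldsymbol{\omega}_\varepsilon(t))$, where I set $\boldsymbol{R}_\varepsilon(t,\boldsymbol{z}):=\mathbf{P}\big(-\boldsymbol{\beta}_\varepsilon(\boldsymbol{z}+\overline{m}_0\boldsymbol{1})-\boldsymbol{\pi}(\boldsymbol{z}+\overline{m}_0\boldsymbol{1})+\boldsymbol{f}(t)\big)$. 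The key observation is that $\mathcal{B}_\varepsilon^{-1}\partial\Phi_\delta$ coincides with the subdifferential $\partial_{\mathcal{X}_\varepsilon}\Phi_\delta$ of the \emph{same} convex functional computed with respect to $(\cdot,\cdot)_{\mathcal{B}_\varepsilon}$: indeed $\boldsymbol{v}\in\mathcal{B}_\varepsilon^{-1}\partial\Phi_\delta(\boldsymbol{u})$ is equivalent to $\mathcal{B}_\varepsilon\boldsymbol{v}\in\partial\Phi_\delta(\boldsymbol{u})$, i.e. to $\Phi_\delta(\boldsymbol{w})-\Phi_\delta(\boldsymbol{u})\geq(\boldsymbol{v},\boldsymbol{w}-\boldsymbol{u})_{\mathcal{B}_\varepsilon}$ for all $\boldsymbol{w}$. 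Since $\Phi_\delta$ is proper, convex and lower semicontinuous on $\mathcal{X}_\varepsilon$ with effective domain $\mathcal{V}^1_{(0)}$, and $\boldsymbol{\omega}_0\in\mathcal{V}^1_{(0)}=D(\Phi_\delta)$ by $\mathbf{(A5)}$, the reformulated equation is precisely a gradient flow for $\Phi_\delta$ in $\mathcal{X}_\varepsilon$ subject to the perturbation $\mathcal{B}_\varepsilon^{-1}\boldsymbol{R}_\varepsilon$; this is the incarnation of the abstract doubly nonlinear theory in the present setting.

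I would then solve this by a contraction argument built on the Brézis theory for gradient flows. For fixed $\varepsilon$, the map $\boldsymbol{z}\mapsto\mathcal{B}_\varepsilon^{-1}\boldsymbol{R}_\varepsilon(t,\boldsymbol{z})$ is globally Lipschitz on $\mathcal{X}_\varepsilon$, uniformly in $t$ (because $\beta_\varepsilon$, $\beta_{\Gamma,\varepsilon}$ are Lipschitz with constants $1/\varepsilon$, $1/(\varepsilon\varrho)$, $\pi$, $\pi_\Gamma$ are Lipschitz by $\mathbf{(A3)}$, and $\mathbf{P}$, $\mathcal{B}_\varepsilon^{-1}$ are bounded), while $t\mapsto\mathcal{B}_\varepsilon^{-1}\boldsymbol{R}_\varepsilon(t,\mathbf{0})$ lies in $L^2(0,T;\mathcal{X}_\varepsilon)$ thanks to $\mathbf{(A4)}$. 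Given $\bar{\boldsymbol{\omega}}\in C([0,T];\mathcal{X}_\varepsilon)$, the Cauchy problem $\boldsymbol{\omega}'+\partial_{\mathcal{X}_\varepsilon}\Phi_\delta(\boldsymbol{\omega})\ni\mathcal{B}_\varepsilon^{-1}\boldsymbol{R}_\varepsilon(\cdot,\bar{\boldsymbol{\omega}})$, $\boldsymbol{\omega}(0)=\boldsymbol{\omega}_0$, has a unique solution $\boldsymbol{\omega}\in H^1(0,T;\mathcal{X}_\varepsilon)$ with $\Phi_\delta(\boldsymbol{\omega})\in L^\infty(0,T)$; using the contraction property of the resolvent of $\partial_{\mathcal{X}_\varepsilon}\Phi_\delta$ together with the Lipschitz bound and Gronwall's lemma, the solution map $\bar{\boldsymbol{\omega}}\mapsto\boldsymbol{\omega}$ is a contraction on $C([0,t_*];\mathcal{X}_\varepsilon)$ for $t_*$ small, and since the Lipschitz constant does not depend on the initial time the local solution extends to all of $[0,T]$. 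Uniqueness follows along the same lines by testing the difference of two solutions with itself in $\mathcal{X}_\varepsilon$, using the monotonicity of $\partial_{\mathcal{X}_\varepsilon}\Phi_\delta$ and Gronwall.

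Finally, the claimed regularity is obtained a posteriori. Brézis theory already yields $\boldsymbol{\omega}_\varepsilon\in H^1(0,T;\mathcal{L}^2_{(0)})$ together with the absolute continuity and boundedness of $t\mapsto\Phi_\delta(\boldsymbol{\omega}_\varepsilon(t))$; since $2\Phi_\delta=b_\delta(\cdot,\cdot)$ controls the $\mathcal{V}^1_{(0)}$ norm via \eqref{nv01}, this gives $\boldsymbol{\omega}_\varepsilon\in L^\infty(0,T;\mathcal{V}^1_{(0)})$ and, combined with the $C([0,T];\mathcal{L}^2)$ continuity and the continuity of the energy, upgrades to $\boldsymbol{\omega}_\varepsilon\in C([0,T];\mathcal{V}^1_{(0)})$. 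For the highest regularity, rewriting \eqref{2.9} as $\partial\Phi_\delta(\boldsymbol{\omega}_\varepsilon)=\boldsymbol{R}_\varepsilon(\cdot,\boldsymbol{\omega}_\varepsilon)-\mathcal{B}_\varepsilon\boldsymbol{\omega}_\varepsilon'$ shows $\partial\Phi_\delta(\boldsymbol{\omega}_\varepsilon)\in L^2(0,T;\mathcal{L}^2_{(0)})$; by \eqref{Phi} one has $\boldsymbol{\omega}_\varepsilon(t)\in D(\partial\Phi_\delta)=\mathcal{H}^2\cap\mathcal{V}^1_{(0)}=\mathcal{V}^2_{(0)}$ for a.a.\ $t$, and the elliptic regularity associated with $\partial\Phi_\delta$ (the bulk-surface estimate of \cite[Lemma C]{CF15}, in line with \eqref{2.2b}) gives $\|\boldsymbol{\omega}_\varepsilon(t)\|_{\mathcal{V}^2}\leq C\big(1+\|\partial\Phi_\delta(\boldsymbol{\omega}_\varepsilon(t))\|_{\mathcal{L}^2}\big)$, whence $\boldsymbol{\omega}_\varepsilon\in L^2(0,T;\mathcal{V}^2_{(0)})$ upon integration in time. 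I expect the main obstacle to be precisely the doubly nonlinear feature, namely the time derivative sitting under $\varepsilon I+\mathfrak{S}^{L}$: the crucial enabling step is to verify, through the properties of $\mathfrak{S}^{L}$ from \cite{KL}, that this operator induces an equivalent Hilbert structure on $\mathcal{L}^2_{(0)}$, after which the problem collapses to a standard gradient flow with a Lipschitz perturbation.
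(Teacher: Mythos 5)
Your argument is correct and reaches the same conclusion, but the core existence step is handled differently from the paper. The paper keeps the equation in the doubly nonlinear form $A(\boldsymbol{\omega}')+\partial\Phi_{\delta}(\boldsymbol{\omega})\ni \boldsymbol{g}$ with $A=\varepsilon I+\mathfrak{S}^{L}$ and invokes the abstract theory of Colli--Visintin \cite{CV} for the frozen problem, whereas you invert $\mathcal{B}_{\varepsilon}=\varepsilon I+\mathfrak{S}^{L}$ and observe that, since $\mathcal{B}_{\varepsilon}$ is bounded, self-adjoint and coercive on $\mathcal{L}^{2}_{(0)}$ (all of which you justify correctly from \eqref{2.2a} and the identity preceding \eqref{gmean}), the operator $\mathcal{B}_{\varepsilon}^{-1}\partial\Phi_{\delta}$ is the subdifferential of the \emph{same} functional $\Phi_{\delta}$ with respect to the equivalent inner product $(\mathcal{B}_{\varepsilon}\,\cdot,\cdot)_{\mathcal{L}^{2}}$; this collapses the frozen problem to a classical Br\'ezis gradient flow. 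Both proofs then run the same outer loop: freeze the semilinear terms $\boldsymbol{\beta}_{\varepsilon}+\boldsymbol{\pi}$, solve, and contract (the paper contracts an iterate $\Psi^{k_{0}}$ globally on $[0,T]$ via the $t^{k}/k!$ bound; you contract locally and restart, which is legitimate because $\Phi_{\delta}(\boldsymbol{\omega}(t))$ stays bounded, so the restarted datum remains in $D(\Phi_{\delta})=\mathcal{V}^{1}_{(0)}$). The final regularity bootstrap --- comparison in the equation to get $\partial\Phi_{\delta}(\boldsymbol{\omega}_{\varepsilon})\in L^{2}(0,T;\mathcal{L}^{2}_{(0)})$, then \eqref{Phi} and the bulk--surface elliptic estimate to get $L^{2}(0,T;\mathcal{V}^{2}_{(0)})$, and the continuity in $\mathcal{V}^{1}_{(0)}$ --- is essentially identical to the paper's. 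What your route buys is self-containedness: it replaces the citation of the doubly nonlinear theory by an elementary observation about equivalent Hilbert structures, at the price of having to check that the subdifferential transforms correctly under the change of inner product (which hinges precisely on the self-adjointness of $\mathfrak{S}^{L}$). What the paper's route buys is that it matches the framework of \cite{CF15} verbatim and would survive generalizations in which $A$ is nonlinear or non-symmetric, where your reduction is no longer available.
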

\begin{proof}
The proof mainly follows the argument for \cite[Proposition 4.1]{CF15}, which is based on the abstract theory of doubly nonlinear evolution inclusions \cite{CV} combined with the contraction mapping principle.

By the definition of $\mathfrak{S}^{L}$, it is straightforward to check that
\begin{align}
\big((\varepsilon I+\mathfrak{S}^{L})\boldsymbol{z},\boldsymbol{z}\big)_{\mathcal{L}^{2}}
&=\varepsilon\Vert\boldsymbol{z}\Vert_{\mathcal{L}^{2}}^{2}
+\big(\mathfrak{S}^{L}\boldsymbol{z},\boldsymbol{z}\big)_{\mathcal{L}^{2}}
 =\varepsilon\Vert\boldsymbol{z}\Vert_{\mathcal{L}^{2}}^{2}
+\big(\mathfrak{S}^{L}\boldsymbol{z},\mathfrak{S}^{L}\boldsymbol{z}\big)_{\mathcal{H}^1_{L,0}}
 \geq\varepsilon\Vert\boldsymbol{z}\Vert_{\mathcal{L}^{2}}^{2},
\quad \forall\, \boldsymbol{z}\in \mathcal{L}_{(0)}^{2}.
\notag
\end{align}
On the other hand, thanks to \eqref{2.2a}, we find
\begin{align}
\Vert(\varepsilon I+\mathfrak{S}^{L})\boldsymbol{z}\Vert_{\mathcal{L}^{2}}
&\leq \varepsilon \Vert\boldsymbol{z}\Vert_{\mathcal{L}^{2}}
+ \Vert\mathfrak{S}^{L}\boldsymbol{z}\Vert_{\mathcal{L}^{2}}
\leq  (\varepsilon+C) \Vert\boldsymbol{z}\Vert_{\mathcal{L}^{2}},
\quad \forall\, \boldsymbol{z}\in\mathcal{L}_{(0)}^{2}.
\notag
\end{align}
As a consequence, for each $\varepsilon\in (0,1)$, the operator $\varepsilon I + \mathfrak{S}^{L}$ is coercive and with linear growth in $\mathcal{L}_{(0)}^{2}$.
These facts enable us to apply the abstract theory \cite[Theorem 2.1]{CV} with the particular choices $A=\varepsilon I+\mathfrak{S}^{L}$, $B=\partial\Phi_{\delta}$, $H=\mathcal{L}_{(0)}^{2}$ and $V=\mathcal{V}^{1}_{(0)}$ therein.
Hence, for any given function $\widetilde{\boldsymbol{\omega}}\in C([0,T];\mathcal{L}_{(0)}^{2})$, we can conclude that there exists a unique
$$
\boldsymbol{\omega}\in H^{1}(0,T;\mathcal{L}_{(0)}^{2})\cap L^{\infty}(0,T;\mathcal{V}^{1}_{(0)})\subset C([0,T];\mathcal{L}_{(0)}^{2})
$$
satisfying $\partial\Phi_{\delta}(\boldsymbol{\omega})\in L^{2}(0,T;\mathcal{L}_{(0)}^{2})$ and
\begin{align}
&(\varepsilon I+\mathfrak{S}^{L})\boldsymbol{\omega}'(t)+\partial\Phi_{\delta}\big(\boldsymbol{\omega}(t)\big)
\notag\\
&\quad \ni\mathbf{P}\left(-\boldsymbol{\beta}_{\varepsilon}\big(\widetilde{\boldsymbol{\omega}}(t)+\overline{m}_{0}\boldsymbol{1}\big) -\boldsymbol{\pi}\big(\widetilde{\boldsymbol{\omega}}(t)+\overline{m}_{0}\boldsymbol{1}\big)+\boldsymbol{f}(t)\right)
\;\mathrm{in}\;\mathcal{L}^{2}_{(0)},\quad\text{for a.a. }t\in(0,T),
\notag\\
&\boldsymbol{\omega}(0)=\boldsymbol{\omega}_{0}\quad\text{in }\mathcal{L}_{(0)}^{2}.
\notag
\end{align}
In this way, we can define the map $\Psi:\widetilde{\boldsymbol{\omega}}\mapsto\boldsymbol{\omega}$ from $C([0,T];\mathcal{L}_{(0)}^{2})$ into itself.
Next, for given $\widetilde{\boldsymbol{\omega}}^{(i)}\in C([0,T];\mathcal{L}_{(0)}^{2})$, $i=1,2$, we set  $\boldsymbol{\omega}^{(i)}:=\Psi(\widetilde {\boldsymbol{\omega}}^{(i)})$. Consider the equation of their differences
\begin{align}
&\varepsilon\Big((\boldsymbol{\omega}^{(1)})'-(\boldsymbol{\omega}^{(2)})'\Big)
+\mathfrak{S}^{L}\Big((\boldsymbol{\omega}^{(1)})'-(\boldsymbol{\omega}^{(2)})'\Big)
+\partial\Phi_{\delta}\Big(\boldsymbol{\omega}^{(1)}-\boldsymbol{\omega}^{(2)}\Big)
\notag\\
&\quad=\mathbf{P}\Big(-\boldsymbol{\beta}_{\varepsilon}\big(\widetilde{\boldsymbol{\omega}}^{(1)}+\overline{m}_{0}\boldsymbol{1}\big)
+\boldsymbol{\beta}_{\varepsilon}\big(\widetilde{\boldsymbol{\omega}}^{(2)}+\overline{m}_{0}\boldsymbol{1}\big)
-\boldsymbol{\pi}\big(\widetilde{\boldsymbol{\omega}}^{(1)}+\overline{m}_{0}\boldsymbol{1}\big)
+\boldsymbol{\pi}\big(\widetilde{\boldsymbol{\omega}}^{(2)}+\overline{m}_{0}\boldsymbol{1}\big)\Big).
\notag
\end{align}
Taking the $\mathcal{L}^{2}$ inner product with $\boldsymbol{\omega}^{(1)}-\boldsymbol{\omega}^{(2)}$, we obtain
\begin{align}
&\frac{\varepsilon}{2}\frac{\mathrm{d}}{\mathrm{d}t}\Vert\boldsymbol{\omega}^{(1)}
-\boldsymbol{\omega}^{(2)}\Vert_{\mathcal{L}_{(0)}^{2}}^{2}
+\frac{1}{2}\frac{\mathrm{d}}{\mathrm{d}t}\Vert\boldsymbol{\omega}^{(1)}-\boldsymbol{\omega}^{(2)}\Vert_{0,*}^{2}
+\Big(\partial\Phi_{\delta}(\boldsymbol{\omega}^{(1)}-\boldsymbol{\omega}^{(2)}),\boldsymbol{\omega}^{(1)}-\boldsymbol{\omega}^{(2)}\Big)_{\mathcal{L}^{2}}
\notag\\
&\quad=\Big(-\boldsymbol{\beta}_{\varepsilon}(\widetilde{\boldsymbol{\omega}}^{(1)}+\overline{m}_{0}\boldsymbol{1})
+\boldsymbol{\beta}_{\varepsilon}(\widetilde{\boldsymbol{\omega}}^{(2)}+\overline{m}_{0}\boldsymbol{1})
-\boldsymbol{\pi}(\widetilde{\boldsymbol{\omega}}^{(1)}+\overline{m}_{0}\boldsymbol{1})
+\boldsymbol{\pi}(\widetilde{\boldsymbol{\omega}}^{(2)}+\overline{m}_{0}\boldsymbol{1}),\boldsymbol{\omega}^{(1)}
-\boldsymbol{\omega}^{(2)}\Big)_{\mathcal{L}^{2}}
\notag\\
&\quad\leq C_{\varepsilon}\Vert\widetilde{\boldsymbol{\omega}}^{(1)}
-\widetilde{\boldsymbol{\omega}}^{(2)}\Vert_{\mathcal{L}_{(0)}^{2}}\Vert\boldsymbol{\omega}^{(1)}
-\boldsymbol{\omega}^{(2)}\Vert_{\mathcal{L}_{(0)}^{2}},
\label{es-diff}
\end{align}
where the last inequality follows from the fact that $\beta_\varepsilon$, $\beta_{\Gamma,\varepsilon}$, $\pi$, $\pi_\Gamma$ are Lipschitz continuous, and $C_\varepsilon>0$ depends on $K$, $K_\Gamma$, $\varepsilon$.
Using the monotonicity of $\partial\Phi_{\delta}$, Cauchy-Schwarz inequality and Gronwall's lemma, we deduce from \eqref{es-diff} that
\begin{align}
\Vert\boldsymbol{\omega}^{(1)}(t)-\boldsymbol{\omega}^{(2)}(t)\Vert_{\mathcal{L}_{(0)}^{2}}^{2}
\leq C_{\varepsilon}\int_{0}^{t}\Vert\widetilde{\boldsymbol{\omega}}^{(1)}(s)-\widetilde{\boldsymbol{\omega}}^{(2)}(s)\Vert_{\mathcal{L}_{(0)}^{2}}^{2}\,\mathrm{d}s, \quad\forall\;t\in[0,T],
\notag
\end{align}
which implies
\begin{align}
\Vert\boldsymbol{\omega}^{(1)}(t)-\boldsymbol{\omega}^{(2)}(t)\Vert_{\mathcal{L}_{(0)}^{2}}^{2}
\leq C_{\varepsilon} t \Vert\widetilde{\boldsymbol{\omega}}^{(1)}-\widetilde{\boldsymbol{\omega}}^{(2)}\Vert_{C([0,T];\mathcal{L}_{(0)}^{2})}^{2}, \quad\forall\;t\in[0,T].
\notag
\end{align}
For $k\in \mathbb{Z}^+$, we set $\boldsymbol{\omega}_k^{(i)}:=\Psi^k(\widetilde {\boldsymbol{\omega}}^{(i)})$.
By iteration, we infer from the above estimates that
$$
\Vert\boldsymbol{\omega}^{(1)}_{k}(t)-\boldsymbol{\omega}^{(2)}_{k}(t)\Vert_{\mathcal{L}_{(0)}^{2}}^{2}
\leq C_{\varepsilon}\left(\frac{t^{k}}{k!}\right)\Vert\widetilde{\boldsymbol{\omega}}^{(1)}-\widetilde{\boldsymbol{\omega}}^{(2)}\Vert_{C([0,T];\mathcal{L}_{(0)}^{2})}^2, \quad\forall\;t\in[0,T].
$$
Hence, there exists some $k_0\in\mathbb{Z}^+$ large enough such that
$$
\Vert\boldsymbol{\omega}^{(1)}_{k_0}-\boldsymbol{\omega}^{(2)}_{k_0}\Vert_{C([0,T];\mathcal{L}_{(0)}^{2})}^{2}
\leq \frac12 \Vert\widetilde{\boldsymbol{\omega}}^{(1)}-\widetilde{\boldsymbol{\omega}}^{(2)}\Vert_{C([0,T];\mathcal{L}_{(0)}^{2})}^2.
$$
This yields that $\Psi^{k_0}$ is a contraction mapping from $C([0,T];\mathcal{L}_{(0)}^{2})$ into itself. Thanks to the contraction mapping principle, $\Psi^{k_0}$ admits a unique fixed point $\boldsymbol{\omega}^*\in C([0,T];\mathcal{L}_{(0)}^{2})$. It follows that  $\Psi(\boldsymbol{\omega}^*)=\Psi(\Psi^{k_0}(\boldsymbol{\omega}^*))= \Psi^{k_0}(\Psi(\boldsymbol{\omega}^*))$.
Hence, we get $\Psi(\boldsymbol{\omega}^*)=\boldsymbol{\omega}^*$ thanks to the uniqueness of $\boldsymbol{\omega}^*$. By the definition of $\Psi$, $\boldsymbol{\omega}^*$ is indeed a solution to problem \eqref{2.9}--\eqref{2.10}. Uniqueness easily follows from an estimate similar to \eqref{es-diff} and Gronwall's lemma.

For every $\varepsilon\in(0,1)$, we have shown that problem \eqref{2.9}--\eqref{2.10} admits a unique solution $\boldsymbol{\omega}_{\varepsilon}\in H^{1}(0,T;\mathcal{L}_{(0)}^{2})\cap L^{\infty}(0,T;\mathcal{V}^{1}_{(0)})$ with $\partial\Phi_{\delta}(\boldsymbol{\omega}_{\varepsilon})\in L^{2}(0,T;\mathcal{L}_{(0)}^{2})$. In view of \eqref{Phi}, we can apply the elliptic estimate \cite[Theorem 3.3]{KL} to get  $\boldsymbol{\omega}_{\varepsilon}\in L^{2}(0,T;\mathcal{V}^2_{(0)})$. Finally, observing that $ H^{1}(0,T;\mathcal{L}^{2}_{(0)})\cap L^{2}(0,T;\mathcal{V}^2_{(0)})\subset C([0,T];\mathcal{V}^{1}_{(0)})$, we arrive at the conclusion of Proposition \ref{approexist}.
\end{proof}

\subsection{Uniform estimates}
For every $\varepsilon\in(0,1)$, in view of Proposition \ref{approexist}, we set $\boldsymbol{\varphi}_{\varepsilon}:=\boldsymbol{\omega}_{\varepsilon}+\overline{m}_{0}\boldsymbol{1}$ and
\begin{align}
\boldsymbol{\mu}_{\varepsilon}(t):=\varepsilon\boldsymbol{\omega}_{\varepsilon}'(t)
+\partial\Phi_{\delta}\big(\boldsymbol{\omega}_{\varepsilon}(t)\big)
+\boldsymbol{\beta}_{\varepsilon}\big(\boldsymbol{\varphi}_{\varepsilon}(t)\big)
+\boldsymbol{\pi}\big(\boldsymbol{\varphi}_{\varepsilon}(t)\big)-\boldsymbol{f}(t),
\quad\text{for a.a. }t\in(0,T).
\notag
\end{align}
Then the evolution equation \eqref{2.9} can be written as
\begin{align}
\mathfrak{S}^{L}\big(\boldsymbol{\omega}_{\varepsilon}'(t)\big)
+\boldsymbol{\mu}_{\varepsilon}(t)-m_{\varepsilon}(t)\boldsymbol{1}
=\boldsymbol{0}\quad\mathrm{in\;}\mathcal{L}_{(0)}^{2},
\quad\text{for a.a. }t\in(0,T),
\label{2.13}
\end{align}
with
\begin{align}
m_{\varepsilon}(t):=\overline{m}\left(\boldsymbol{\beta}_{\varepsilon}\big(\boldsymbol{\varphi}_{\varepsilon}(t)\big)
+\boldsymbol{\pi}\big(\boldsymbol{\varphi}_{\varepsilon}(t)\big)-\boldsymbol{f}(t)\right),
\quad\text{for a.a. }t\in(0,T).
\label{me}
\end{align}
It follows that $\mathbf{P}\boldsymbol{\mu}_{\varepsilon}=\boldsymbol{\mu}_{\varepsilon}-m_{\varepsilon}\boldsymbol{1} \in L^{2}(0,T;\mathcal{H}_{(0)}^{1})$ and $m_{\varepsilon}\in L^{2}(0,T)$.
As a consequence, $\boldsymbol{\mu}_{\varepsilon}=(\mu_\varepsilon,\theta_\varepsilon)\in L^{2}(0,T;\mathcal{H}^{1})$ and  the following weak formulations hold:
\begin{align}
&\int_{\Omega}\partial_{t}\omega_{\varepsilon}(t)z\,\mathrm{d}x
+\int_{\Gamma}\partial_{t}\omega_{\Gamma,\varepsilon}(t)z_{\Gamma}\,\mathrm{d}S
+\int_{\Omega}\nabla\mu_{\varepsilon}(t)\cdot\nabla z\,\mathrm{d}x
\notag\\
&\quad+\int_{\Gamma}\nabla_{\Gamma}\theta_{\varepsilon}(t)\cdot\nabla_{\Gamma} z_{\Gamma}\,\mathrm{d}S
+\frac{1}{L}\int_{\Gamma}\big(\mu_{\varepsilon}(t)-\theta_{\varepsilon}(t)\big)\big(z-z_{\Gamma}\big)\,\mathrm{d}S=0,
\quad\forall\,\boldsymbol{z}\in\mathcal{H}_{(0)}^{1},
\label{2.14}
\end{align}
and
\begin{align}
&\int_{\Omega}\mu_{\varepsilon}(t)z\,\mathrm{d}x
+\int_{\Gamma}\theta_{\varepsilon}(t)z_{\Gamma}\,\mathrm{d}S
\notag\\
&\quad=\varepsilon\int_{\Omega}\partial_{t}\omega_{\varepsilon}(t)z\,\mathrm{d}x
+\varepsilon\int_{\Gamma}\partial_{t}\omega_{\Gamma,\varepsilon}(t)z_{\Gamma}\,\mathrm{d}S
+\int_{\Omega}\nabla\omega_{\varepsilon}(t)\cdot\nabla z\,\mathrm{d}x
+\delta\int_{\Gamma}\nabla_{\Gamma}\omega_{\Gamma,\varepsilon}(t)\cdot\nabla_{\Gamma}z_{\Gamma}\,\mathrm{d}S
\notag\\
&\qquad+\int_{\Omega}\left[\beta_{\varepsilon}\big(\varphi_{\varepsilon}(t)\big)
+\pi\big(\varphi_{\varepsilon}(t)\big)-f(t)\right]z\,\mathrm{d}x
\notag\\
&\qquad+\int_{\Gamma}\left[\beta_{\Gamma,\varepsilon}\big(\psi_{\varepsilon}(t)\big)
+\pi_{\Gamma}\big(\psi_{\varepsilon}(t)\big)-f_\Gamma(t)\right]z_{\Gamma}\,\mathrm{d}S,
\quad\forall\,\boldsymbol{z}\in\mathcal{V}^{1},
\label{2.15}
\end{align}
for a.a. $t\in(0,T)$.
In particular, it follows from \eqref{2.15} that
\begin{align}
&\mu_{\varepsilon}=\varepsilon\partial_{t}\omega_{\varepsilon}-\Delta\omega_{\varepsilon} +\beta_{\varepsilon}(\varphi_{\varepsilon})+\pi(\varphi_{\varepsilon})-f,
&&\text{a.e. in }Q,
\label{2.16}\\
&\theta_{\varepsilon}=\varepsilon\partial_{t}\omega_{\Gamma,\varepsilon}
+\partial_{\mathbf{n}}\omega_{\varepsilon}
-\delta\Delta_{\Gamma}\omega_{\Gamma,\varepsilon}
+\beta_{\Gamma,\varepsilon}(\psi_{\varepsilon})+\pi_{\Gamma}(\psi_{\varepsilon})-f_\Gamma,
&&\text{a.e. on }\Sigma.
\label{2.17}
\end{align}
Besides, since $\bm{\omega}'_{\varepsilon} \in L^{2}(0,T;\mathcal{L}^{2}_{(0)})$, we infer from \eqref{2.13} and the elliptic estimate \eqref{2.2b} that  $\boldsymbol{\mu}_{\varepsilon}\in L^{2}(0,T;\mathcal{H}^2)$. This allows us to deduce from \eqref{2.14} the following pointwise relations:
\begin{align}
&\partial_{t}\omega_{\varepsilon}=\Delta\mu_{\varepsilon},&&\text{a.e. in }Q,
\label{2.18}\\
&\partial_{t}\omega_{\Gamma,\varepsilon}=\Delta_{\Gamma}\theta_{\varepsilon}-\partial_{\mathbf{n}}\mu_{\varepsilon},&&\text{a.e. on }\Sigma,
\label{2.19}\\
&L\partial_{\mathbf{n}}\mu_{\varepsilon}=\theta_{\varepsilon}-\mu_{\varepsilon},&&\text{a.e. on }\Sigma.
\label{appromutheta}
\end{align}

Now we proceed to derive uniform estimates with respect to the approximating parameter $\varepsilon\in (0,1)$.
\begin{lemma}
\label{uni0}
The mass is conserved in time, that is,
\begin{align}
\overline{m}(\bm{\varphi}_\varepsilon(t))= \overline{m}(\bm{\varphi}_0)=\overline{m}_0,\quad \forall\,  t\in [0,T].
\label{con-mass}
\end{align}
\end{lemma}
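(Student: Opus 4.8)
The plan is to reduce the assertion to the conservation of the (vanishing) generalized mean of the shifted variable $\bm{\omega}_\varepsilon = \bm{\varphi}_\varepsilon - \overline{m}_0\boldsymbol{1}$. Since $\overline{m}$ is affine and $\overline{m}(\boldsymbol{1}) = (|\Omega|+|\Gamma|)/(|\Omega|+|\Gamma|) = 1$, we have $\overline{m}(\bm{\varphi}_\varepsilon(t)) = \overline{m}(\bm{\omega}_\varepsilon(t)) + \overline{m}_0$, so it suffices to prove $\overline{m}(\bm{\omega}_\varepsilon(t)) = 0$ for all $t \in [0,T]$.

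First I would record the natural dynamical identity. Because $\bm{\mu}_\varepsilon \in L^2(0,T;\mathcal{H}^2)$, the pointwise relations \eqref{2.18}--\eqref{2.19} hold, and I integrate them over $\Omega$ and $\Gamma$ respectively. Using the divergence theorem $\int_\Omega \Delta\mu_\varepsilon\,\mathrm{d}x = \int_\Gamma \partial_{\mathbf{n}}\mu_\varepsilon\,\mathrm{d}S$ together with $\int_\Gamma \Delta_\Gamma \theta_\varepsilon\,\mathrm{d}S = 0$ (the integral of a Laplace--Beltrami term over the closed surface $\Gamma$ vanishes), the two resulting identities read
\[
\frac{\mathrm{d}}{\mathrm{d}t}\int_\Omega \omega_\varepsilon\,\mathrm{d}x = \int_\Gamma \partial_{\mathbf{n}}\mu_\varepsilon\,\mathrm{d}S, \qquad \frac{\mathrm{d}}{\mathrm{d}t}\int_\Gamma \omega_{\Gamma,\varepsilon}\,\mathrm{d}S = -\int_\Gamma \partial_{\mathbf{n}}\mu_\varepsilon\,\mathrm{d}S.
\]
Adding them, the boundary flux $\partial_{\mathbf{n}}\mu_\varepsilon$ cancels, whence $\frac{\mathrm{d}}{\mathrm{d}t}\big(\int_\Omega \omega_\varepsilon\,\mathrm{d}x + \int_\Gamma \omega_{\Gamma,\varepsilon}\,\mathrm{d}S\big) = 0$; that is, $|\Omega|\langle \omega_\varepsilon(t)\rangle_\Omega + |\Gamma|\langle \omega_{\Gamma,\varepsilon}(t)\rangle_\Gamma$ is constant in time.

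Then I evaluate this constant at $t = 0$. By construction $\bm{\omega}_\varepsilon(0) = \bm{\omega}_0 \in \mathcal{L}^2_{(0)}$, so the constant equals zero and $\overline{m}(\bm{\omega}_\varepsilon(t)) = 0$ for all $t$, which gives \eqref{con-mass}. Equivalently, and even more directly, Proposition \ref{approexist} already produces $\bm{\omega}_\varepsilon \in H^1(0,T;\mathcal{L}^2_{(0)})$, so $\bm{\omega}_\varepsilon'(t) \in \mathcal{L}^2_{(0)}$ has vanishing generalized mean for a.a. $t$; integrating in time keeps $\bm{\omega}_\varepsilon(t)$ in the zero-mean space $\mathcal{L}^2_{(0)}$, yielding the same conclusion.

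There is essentially no genuine obstacle here; the only point worth flagging is the bulk--surface cancellation of $\partial_{\mathbf{n}}\mu_\varepsilon$, which reflects that for $L \in (0,\infty)$ it is the \emph{total} mass (bulk plus surface) that is conserved rather than the bulk and surface masses separately, consistent with the discussion following \eqref{eq1.16}. One should also take care that all integrations are justified by the regularity $\bm{\omega}_\varepsilon \in H^1(0,T;\mathcal{L}^2_{(0)})$ and $\bm{\mu}_\varepsilon \in L^2(0,T;\mathcal{H}^2)$ from Proposition \ref{approexist}, so that $t \mapsto \int_\Omega \omega_\varepsilon\,\mathrm{d}x + \int_\Gamma \omega_{\Gamma,\varepsilon}\,\mathrm{d}S$ is absolutely continuous and its derivative may be computed pointwise a.e.
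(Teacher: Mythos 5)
Your proposal is correct, and its closing remark is in fact the paper's entire proof: Proposition \ref{approexist} constructs $\bm{\omega}_\varepsilon$ as an element of $C([0,T];\mathcal{L}^2_{(0)})$, so $\overline{m}(\bm{\omega}_\varepsilon(t))=0$ for every $t$ by definition of the space, and $\overline{m}(\bm{\varphi}_\varepsilon(t))=\overline{m}(\bm{\omega}_\varepsilon(t))+\overline{m}_0=\overline{m}_0$ follows immediately since $\overline{m}(\boldsymbol{1})=1$. The longer dynamical derivation you lead with --- integrating \eqref{2.18}--\eqref{2.19} and cancelling the flux $\partial_{\mathbf{n}}\mu_\varepsilon$ between bulk and boundary --- is also valid (the regularity $\bm{\mu}_\varepsilon\in L^2(0,T;\mathcal{H}^2)$ justifies the integrations), and it has the merit of explaining \emph{why} only the total mass is conserved for $L\in(0,\infty)$; but it is redundant here, because the zero-mean constraint is already baked into the functional setting of the approximating problem (the right-hand side of \eqref{2.9} is projected by $\mathbf{P}$ into $\mathcal{L}^2_{(0)}$ and the initial datum $\bm{\omega}_0$ lies there), so no PDE manipulation is needed. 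In short: your second, ``more direct'' argument is the intended one; the first is a consistency check rather than the proof.
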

\begin{proof}
The conclusion \eqref{con-mass} easily follows from the definition of $\bm{\varphi}_\varepsilon$ and the fact that $\bm{\omega}_\varepsilon \in C([0,T];\mathcal{L}_{(0)}^{2})$.
\end{proof}

\begin{lemma}
There exists a positive constant $M_{1}$, independent of $\varepsilon\in(0,1)$ and $\delta\in (0,\infty)$, such that
\begin{align}
&\varepsilon^{1/2}\Vert\boldsymbol{\omega}_{\varepsilon}\Vert_{L^{\infty}(0,T;\mathcal{L}_{(0)}^{2})}
+\Vert\boldsymbol{\omega}_{\varepsilon}\Vert_{L^{\infty}(0,T;\mathcal{H}_{(0)}^{-1})}
+\Vert\boldsymbol{\omega}_{\varepsilon}\Vert_{L^{2}(0,T;\mathcal{V}_{(0)}^{1})}
\notag\\
&\quad+\Vert\beta_{\varepsilon}(\varphi_{\varepsilon})\Vert_{L^{1}(0,T;L^{1}(\Omega))}
+\Vert\beta_{\Gamma,\varepsilon}(\psi_{\varepsilon})\Vert_{L^{1}(0,T;L^{1}(\Gamma))}
\leq M_{1},
\label{eq3.1}
\end{align}
where the norm $\|\cdot\|_{\mathcal{V}_{(0)}^1}$ is defined as in \eqref{nv01}.
\end{lemma}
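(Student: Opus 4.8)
The plan is to derive \eqref{eq3.1} by testing the regularized evolution equation \eqref{2.9} with $\boldsymbol{\omega}_\varepsilon$ itself (rather than with $\boldsymbol{\omega}_\varepsilon'$), since this is the choice that produces exactly the dissipation quantities appearing in the statement. Taking the $\mathcal{L}^2$ inner product of \eqref{2.9} with $\boldsymbol{\omega}_\varepsilon(t)\in\mathcal{L}^2_{(0)}$, I would handle the three terms on the left as follows. The first gives $\tfrac{\varepsilon}{2}\tfrac{\mathrm{d}}{\mathrm{d}t}\|\boldsymbol{\omega}_\varepsilon\|_{\mathcal{L}^2}^2$. For the second, using the identity $\big((u,u_\Gamma),(z,z_\Gamma)\big)_{\mathcal{L}^2}=\big((u,u_\Gamma),\mathfrak{S}^{L}(z,z_\Gamma)\big)_{\mathcal{H}^1_{L,0}}$ recorded before \eqref{gmean}, together with the definition of $(\cdot,\cdot)_{0,*}$, one finds $\big(\mathfrak{S}^{L}(\boldsymbol{\omega}_\varepsilon'),\boldsymbol{\omega}_\varepsilon\big)_{\mathcal{L}^2}=(\boldsymbol{\omega}_\varepsilon',\boldsymbol{\omega}_\varepsilon)_{0,*}=\tfrac12\tfrac{\mathrm{d}}{\mathrm{d}t}\|\boldsymbol{\omega}_\varepsilon\|_{0,*}^2$, which is legitimate because $\boldsymbol{\omega}_\varepsilon\in H^1(0,T;\mathcal{L}^2_{(0)})$. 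For the third, since $\Phi_\delta$ is quadratic, $\big(\partial\Phi_\delta(\boldsymbol{\omega}_\varepsilon),\boldsymbol{\omega}_\varepsilon\big)_{\mathcal{L}^2}=2\Phi_\delta(\boldsymbol{\omega}_\varepsilon)=\|\boldsymbol{\omega}_\varepsilon\|_{\mathcal{V}^1_{(0)}}^2$ (cf. \eqref{nv01}), yielding the $L^2$-in-time dissipation.

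On the right-hand side, since $\boldsymbol{\omega}_\varepsilon$ has zero generalized mean the projector $\mathbf{P}$ can be dropped by \eqref{eq2.2}, leaving $\big(-\boldsymbol{\beta}_\varepsilon(\boldsymbol{\varphi}_\varepsilon)-\boldsymbol{\pi}(\boldsymbol{\varphi}_\varepsilon)+\boldsymbol{f},\boldsymbol{\omega}_\varepsilon\big)_{\mathcal{L}^2}$ with $\boldsymbol{\omega}_\varepsilon=\boldsymbol{\varphi}_\varepsilon-\overline{m}_0\boldsymbol{1}$. The decisive term is the one with $\boldsymbol{\beta}_\varepsilon$: writing it as $-\int_\Omega\beta_\varepsilon(\varphi_\varepsilon)(\varphi_\varepsilon-\overline{m}_0)\,\mathrm{d}x-\int_\Gamma\beta_{\Gamma,\varepsilon}(\psi_\varepsilon)(\psi_\varepsilon-\overline{m}_0)\,\mathrm{d}S$ and invoking \eqref{eq2.8} with $r_0=\overline{m}_0\in\mathrm{int}\,D(\beta_\Gamma)$ (guaranteed by $\mathbf{(A2)}$, $\mathbf{(A5)}$ and the Remark), I obtain the lower bound $\delta_0\big(\|\beta_\varepsilon(\varphi_\varepsilon)\|_{L^1(\Omega)}+\|\beta_{\Gamma,\varepsilon}(\psi_\varepsilon)\|_{L^1(\Gamma)}\big)-c_1(|\Omega|+|\Gamma|)$, which, moved to the left, furnishes precisely the two $L^1$-norms in \eqref{eq3.1}. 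The terms with $\boldsymbol{\pi}$ and $\boldsymbol{f}$ are estimated by Cauchy--Schwarz, using $\|\boldsymbol{\pi}(\boldsymbol{\varphi}_\varepsilon)\|_{\mathcal{L}^2}\le C(1+\|\boldsymbol{\omega}_\varepsilon\|_{\mathcal{L}^2})$ from the global Lipschitz continuity in $\mathbf{(A3)}$.

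The main obstacle is to control the resulting $\|\boldsymbol{\omega}_\varepsilon\|_{\mathcal{L}^2}$ factors uniformly in $\delta$: the dissipation available is $\|\boldsymbol{\omega}_\varepsilon\|_{\mathcal{V}^1_{(0)}}^2=\|\nabla\omega_\varepsilon\|_H^2+\delta\|\nabla_\Gamma\omega_{\Gamma,\varepsilon}\|_{H_\Gamma}^2$, whose surface part degenerates as $\delta\to 0$, so a naive application of \eqref{Po3} on $\mathcal{H}^1_{L,0}$ would introduce a $\delta$-dependent constant. I would instead rely on the interpolation inequality \eqref{int-0} (applicable since $\boldsymbol{\omega}_\varepsilon\in\mathcal{V}^1_{(0)}\subset\widetilde{\mathcal{V}}^1_{(0)}$), namely $\|\omega_\varepsilon\|_H+\|\omega_{\Gamma,\varepsilon}\|_{H_\Gamma}\le\gamma\|\nabla\omega_\varepsilon\|_H+C_\gamma\|\boldsymbol{\omega}_\varepsilon\|_{0,*}$, choosing $\gamma$ small so that the $\gamma\|\nabla\omega_\varepsilon\|_H$ part is absorbed (after a further Young inequality) into the bulk part of $\|\boldsymbol{\omega}_\varepsilon\|_{\mathcal{V}^1_{(0)}}^2$, while the remaining $C_\gamma\|\boldsymbol{\omega}_\varepsilon\|_{0,*}$ is matched against $\tfrac{\mathrm{d}}{\mathrm{d}t}\|\boldsymbol{\omega}_\varepsilon\|_{0,*}^2$. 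Integrating over $(0,t)$ and applying Gronwall's lemma then bounds $\varepsilon\|\boldsymbol{\omega}_\varepsilon(t)\|_{\mathcal{L}^2}^2+\|\boldsymbol{\omega}_\varepsilon(t)\|_{0,*}^2+\int_0^t\|\boldsymbol{\omega}_\varepsilon\|_{\mathcal{V}^1_{(0)}}^2\,\mathrm{d}s+\delta_0\int_0^t\big(\|\beta_\varepsilon(\varphi_\varepsilon)\|_{L^1(\Omega)}+\|\beta_{\Gamma,\varepsilon}(\psi_\varepsilon)\|_{L^1(\Gamma)}\big)\mathrm{d}s$ by a constant depending only on $\|\boldsymbol{\omega}_0\|_{0,*}$, $\|\boldsymbol{\omega}_0\|_{\mathcal{L}^2}$, $\|\boldsymbol{f}\|_{L^2(0,T;\mathcal{L}^2)}$, $T$, $L$ and the structural constants, but crucially not on $\varepsilon$ or $\delta$. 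I emphasize that this test function, unlike $\boldsymbol{\omega}_\varepsilon'$, never generates the $\delta$-dependent initial energy $\Phi_\delta(\boldsymbol{\omega}_0)$, which is exactly why the bound is $\delta$-uniform. Taking square roots and using $\varepsilon<1$ (so $\varepsilon\le\varepsilon^{1/2}$) delivers all five quantities in \eqref{eq3.1} with $M_1$ independent of $\varepsilon$ and $\delta$.
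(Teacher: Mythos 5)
Your proposal is correct and follows essentially the same route as the paper: test \eqref{2.9} with $\boldsymbol{\omega}_{\varepsilon}$, use the identity $\big(\mathfrak{S}^{L}\boldsymbol{\omega}_{\varepsilon}',\boldsymbol{\omega}_{\varepsilon}\big)_{\mathcal{L}^{2}}=\tfrac12\tfrac{\mathrm{d}}{\mathrm{d}t}\Vert\boldsymbol{\omega}_{\varepsilon}\Vert_{0,*}^{2}$, extract the $L^{1}$-norms of $\beta_{\varepsilon}$, $\beta_{\Gamma,\varepsilon}$ via \eqref{eq2.8} with $r_{0}=\overline{m}_{0}$, control the $\boldsymbol{\pi}$ and $\boldsymbol{f}$ terms through the interpolation inequality \eqref{int-0} so that only the $\delta$-independent bulk gradient is absorbed, and conclude with Gronwall. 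The only cosmetic difference is that you use the exact quadratic identity $\big(\partial\Phi_{\delta}(\boldsymbol{z}),\boldsymbol{z}\big)_{\mathcal{L}^{2}}=2\Phi_{\delta}(\boldsymbol{z})$ where the paper invokes the subdifferential inequality $\geq\Phi_{\delta}(\boldsymbol{z})$, which changes nothing.
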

\begin{proof}
We adopt the same strategy as in \cite{CC,CF15,CF-CH}.
Testing \eqref{2.9} at time $s\in(0,T)$ by $\boldsymbol{\omega}_{\varepsilon}$, using \eqref{eq2.2} and the definition of the subdifferential $\partial\Phi_\delta$ with the fact $\Phi_\delta(\boldsymbol{0})=0$, we obtain
\begin{align}
&\varepsilon\big(\boldsymbol{\omega}_{\varepsilon}'(s),\boldsymbol{\omega}_{\varepsilon}(s)\big)_{\mathcal{L}^{2}}
+\big(\mathfrak{S}^{L}(\boldsymbol{\omega}_{\varepsilon}'(s)),\boldsymbol{\omega}_{\varepsilon}(s)\big)_{\mathcal{L}^{2}}
+\Phi_\delta\big(\boldsymbol{\omega}_{\varepsilon}(s)\big)\notag\\
&\quad+\big(\boldsymbol{\beta}_{\varepsilon}(\boldsymbol{\varphi}_{\varepsilon}(s)),\boldsymbol{\omega}_{\varepsilon}(s)\big)_{\mathcal{L}^{2}}\leq\big(\boldsymbol{f}(s)-\boldsymbol{\pi}(\boldsymbol{\varphi}_{\varepsilon}(s)),\boldsymbol{\omega}_{\varepsilon}(s)\big)_{\mathcal{L}^{2}}\quad\text{for a.a. }s\in(0,T).
\label{3.1}
\end{align}
Recalling the assumptions $\mathbf{(A2)}$ and $\mathbf{(A5)}$, we find $\overline{m}_{0}\in \mathrm{int}D(\beta_{\Gamma})$.
Then we can apply the inequalities in \eqref{eq2.8} with $r_0=\overline{m}_{0}$ to get
\begin{align}
\big(\boldsymbol{\beta}_{\varepsilon}(\boldsymbol{\varphi}_{\varepsilon}(s)),\boldsymbol{\omega}_{\varepsilon}(s)\big)_{\mathcal{L}^{2}}
&=\int_{\Omega}\beta_{\varepsilon}\big(\varphi_{\varepsilon}(s)\big)\big(\varphi_{\varepsilon}(s)- \overline{m}_{0}\big)\,\mathrm{d}x
+\int_{\Gamma}\beta_{\Gamma,\varepsilon}\big(\psi_{\varepsilon}(s)\big)\big(\psi_{\varepsilon}(s)-\overline{m}_{0}\big)\,\mathrm{d}S
\notag\\
&\quad\geq\delta_{0}\int_{\Omega}|\beta_{\varepsilon}\big(\varphi_{\varepsilon}(s)\big)|\,\mathrm{d}x-c_{1}|\Omega|
+\delta_{0}\int_{\Gamma}|\beta_{\Gamma,\varepsilon}\big(\psi_{\varepsilon}(s)\big)|\,\mathrm{d}S-c_{1}|\Gamma|,
\label{3.2}
\end{align}
for a.a.$\,s\in(0,T)$.
 Furthermore, owing to the assumption $\mathbf{(A3)}$ and the interpolation inequality \eqref{int-0}, there exists a positive constant $\widetilde{M}_{1}$, depending only on $K$, $K_{\Gamma}$, $\pi(\overline{m}_{0})$, $\pi_{\Gamma}(\overline{m}_{0})$, $|\Omega|$ and $|\Gamma|$, such that
\begin{align}
&(\boldsymbol{f}(s)-\boldsymbol{\pi}(\boldsymbol{\varphi}_{\varepsilon}(s)),\boldsymbol{\omega}_{\varepsilon}(s))_{\mathcal{L}^{2}}
\notag\\
&\quad\leq    \frac{1}{2}\int_{\Omega}|f(s)|^{2}\,\mathrm{d}x
+\frac{1}{2}\int_{\Omega}|\omega_{\varepsilon}(s)|^{2}\,\mathrm{d}x
+K\int_{\Omega}|\omega_{\varepsilon}(s)|^{2}\,\mathrm{d}x
+|\pi(\overline{m}_{0})|\int_{\Omega}|\omega_{\varepsilon}(s)|\,\mathrm{d}x
\notag\\
&\qquad+\frac{1}{2}\int_{\Gamma}|f_\Gamma(s)|^{2}\,\mathrm{d}S
+\frac{1}{2}\int_{\Gamma}|\omega_{\Gamma,\varepsilon}(s)|^{2}\,\mathrm{d}S
+K_{\Gamma}\int_{\Gamma}|\omega_{\Gamma,\varepsilon}(s)|^{2}\,\mathrm{d}S
+|\pi_{\Gamma}(\overline{m}_{0})|\int_{\Gamma}|\omega_{\Gamma,\varepsilon}(s)|\,\mathrm{d}S
\notag\\
&\quad\leq\widetilde{M}_{1}\left(1+\Vert\boldsymbol{f}(s)\Vert^{2}_{\mathcal{L}^{2}}
+\Vert\boldsymbol{\omega}_{\varepsilon}(s)\Vert_{0,*}^{2}\right)
+\frac{1}{4}\Vert \nabla \omega_{\varepsilon}(s)\Vert^{2}_{H}.
\label{3.3}
\end{align}
From \eqref{3.1}--\eqref{3.3}, we obtain
\begin{align}
&\varepsilon\frac{\mathrm{d}}{\mathrm{d}s}\Vert\boldsymbol{\omega}_{\varepsilon}(s)\Vert_{\mathcal{L}^{2}}^{2}
+\frac{\mathrm{d}}{\mathrm{d}s}\Vert\boldsymbol{\omega}_{\varepsilon}(s)\Vert_{0,*}^{2}
+\frac{1}{2}\Vert \nabla \omega_{\varepsilon}(s)\Vert^{2}_{H} + \delta  \Vert\nabla_{\Gamma}\omega_{\Gamma,\varepsilon}\Vert_{H_{\Gamma}}^{2}
\notag\\
&\qquad +2\delta_{0}\int_{\Omega}|\beta_{\varepsilon}(\varphi_{\varepsilon}(s))|\,\mathrm{d}x
+2\delta_{0}\int_{\Gamma}|\beta_{\Gamma,\varepsilon}(\psi_{\varepsilon}(s))|\,\mathrm{d}S
\notag\\
&\quad\leq 2c_{1}(|\Omega|+|\Gamma|)+2\widetilde{M}_{1}\left(1+\Vert\boldsymbol{f}(s)\Vert^{2}_{\mathcal{L}^{2}}
+\Vert\boldsymbol{\omega}_{\varepsilon}(s)\Vert_{0,*}^{2}\right),
\quad\text{for a.a. }s\in(0,T),
\notag
\end{align}
which combined with Gronwall's inequality leads to the conclusion \eqref{eq3.1}.
\end{proof}

\begin{lemma}
\label{uni2}
There exists a positive constant $M_{2}$, independent of $\varepsilon,\delta\in(0,1)$, such that
\begin{align}
&\Vert\boldsymbol{\omega}_{\varepsilon}\Vert_{L^{\infty}(0,T;\mathcal{V}_{(0)}^{1})}
+\varepsilon^{1/2}\Vert\boldsymbol{\omega}_{\varepsilon}'\Vert_{L^{2}(0,T;\mathcal{L}_{(0)}^{2})}
+\Vert\boldsymbol{\omega}_{\varepsilon}'\Vert_{L^{2}(0,T;\mathcal{H}_{(0)}^{-1})}
\notag\\
&\quad+\Vert\widehat{\beta}_{\varepsilon}(\varphi_{\varepsilon})\Vert_{L^{\infty}(0,T;L^{1}(\Omega))}
+\Vert\widehat{\beta}_{\Gamma,\varepsilon}(\psi_{\varepsilon})\Vert_{L^{\infty}(0,T;L^{1}(\Gamma))}\leq M_{2},
\label{3.4}
\end{align}
where the norm $\|\cdot\|_{\mathcal{V}_{(0)}^1}$ is defined as in \eqref{nv01}.
\end{lemma}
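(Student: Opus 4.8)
\emph{Strategy.} The plan is to test the approximating equation \eqref{2.9} by $\boldsymbol{\omega}_{\varepsilon}'(s)$ in $\mathcal{L}^{2}$, which is legitimate because Proposition \ref{approexist} yields $\boldsymbol{\omega}_{\varepsilon}\in H^{1}(0,T;\mathcal{L}^{2}_{(0)})$ together with $\partial\Phi_{\delta}(\boldsymbol{\omega}_{\varepsilon})\in L^{2}(0,T;\mathcal{L}^{2}_{(0)})$. Since $\boldsymbol{\omega}_{\varepsilon}'\in\mathcal{L}^{2}_{(0)}$, the projection $\mathbf{P}$ on the right-hand side is harmless by \eqref{eq2.2}. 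On the left-hand side I would identify the three structural contributions: the viscous term produces $\varepsilon\|\boldsymbol{\omega}_{\varepsilon}'\|_{\mathcal{L}^{2}}^{2}$; the term $(\mathfrak{S}^{L}\boldsymbol{\omega}_{\varepsilon}',\boldsymbol{\omega}_{\varepsilon}')_{\mathcal{L}^{2}}$ equals $\|\boldsymbol{\omega}_{\varepsilon}'\|_{0,*}^{2}$ via the identity $(\boldsymbol{u},\boldsymbol{z})_{\mathcal{L}^{2}}=(\boldsymbol{u},\mathfrak{S}^{L}\boldsymbol{z})_{\mathcal{H}^{1}_{L,0}}$; and the chain rule for the subdifferential of the convex functional $\Phi_{\delta}$ gives $(\partial\Phi_{\delta}(\boldsymbol{\omega}_{\varepsilon}),\boldsymbol{\omega}_{\varepsilon}')_{\mathcal{L}^{2}}=\tfrac{1}{2}\tfrac{\mathrm{d}}{\mathrm{d}s}\|\boldsymbol{\omega}_{\varepsilon}\|_{\mathcal{V}^{1}_{(0)}}^{2}$ (recall $2\Phi_{\delta}=\|\cdot\|_{\mathcal{V}^{1}_{(0)}}^{2}$ on $\mathcal{V}^{1}_{(0)}$). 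On the right-hand side, since $\boldsymbol{\omega}_{\varepsilon}'=\partial_{t}\boldsymbol{\varphi}_{\varepsilon}$ and $\widehat{\beta}_{\varepsilon},\widehat{\beta}_{\Gamma,\varepsilon}$ are $C^{1}$ with derivatives $\beta_{\varepsilon},\beta_{\Gamma,\varepsilon}$, the Yosida contribution is again an exact time derivative, namely $\tfrac{\mathrm{d}}{\mathrm{d}s}\big(\int_{\Omega}\widehat{\beta}_{\varepsilon}(\varphi_{\varepsilon})\,\mathrm{d}x+\int_{\Gamma}\widehat{\beta}_{\Gamma,\varepsilon}(\psi_{\varepsilon})\,\mathrm{d}S\big)$.

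\emph{Main obstacle.} The crux is to bound the residual source $(\boldsymbol{f}-\boldsymbol{\pi}(\boldsymbol{\varphi}_{\varepsilon}),\boldsymbol{\omega}_{\varepsilon}')_{\mathcal{L}^{2}}$ uniformly in $\varepsilon$. A plain Cauchy--Schwarz in $\mathcal{L}^{2}$ followed by Young's inequality would generate a factor $\varepsilon^{-1}$, which degenerates as $\varepsilon\to0$; the only nondegenerate dissipation on the left is $\|\boldsymbol{\omega}_{\varepsilon}'\|_{0,*}^{2}$, so I must pair against the dual norm. Setting $\boldsymbol{g}:=\mathbf{P}\big(\boldsymbol{f}-\boldsymbol{\pi}(\boldsymbol{\varphi}_{\varepsilon})\big)$, which lies in $\mathcal{H}^{1}_{(0)}=\mathcal{H}^{1}_{L,0}$ for $L\in(0,\infty)$, the same identity gives $(\boldsymbol{g},\boldsymbol{\omega}_{\varepsilon}')_{\mathcal{L}^{2}}=(\boldsymbol{g},\mathfrak{S}^{L}\boldsymbol{\omega}_{\varepsilon}')_{\mathcal{H}^{1}_{L,0}}\leq\|\boldsymbol{g}\|_{\mathcal{H}^{1}_{L,0}}\|\boldsymbol{\omega}_{\varepsilon}'\|_{0,*}\leq C\|\boldsymbol{g}\|_{\mathcal{H}^{1}}\|\boldsymbol{\omega}_{\varepsilon}'\|_{0,*}$, after which Young's inequality absorbs $\tfrac{1}{2}\|\boldsymbol{\omega}_{\varepsilon}'\|_{0,*}^{2}$ into the left-hand side. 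This is precisely where \textbf{(A4)}, i.e. $\boldsymbol{f}\in L^{2}(0,T;\mathcal{V}^{1})\subset L^{2}(0,T;\mathcal{H}^{1})$, is used. It remains to estimate $\|\boldsymbol{g}\|_{\mathcal{H}^{1}}$; by \textbf{(A3)} the maps $\pi,\pi_{\Gamma}$ are Lipschitz, so $\|\boldsymbol{\pi}(\boldsymbol{\varphi}_{\varepsilon})\|_{\mathcal{H}^{1}}^{2}\leq C\big(1+\|\boldsymbol{\omega}_{\varepsilon}\|_{\mathcal{L}^{2}}^{2}+\|\nabla\omega_{\varepsilon}\|_{H}^{2}+\delta\|\nabla_{\Gamma}\omega_{\Gamma,\varepsilon}\|_{H_{\Gamma}}^{2}\big)$, the gradient part being exactly $\|\boldsymbol{\omega}_{\varepsilon}\|_{\mathcal{V}^{1}_{(0)}}^{2}=2\Phi_{\delta}(\boldsymbol{\omega}_{\varepsilon})$.

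\emph{Conclusion.} After absorbing and integrating over $(0,t)$, the left-hand side controls $\Phi_{\delta}(\boldsymbol{\omega}_{\varepsilon}(t))+\int_{\Omega}\widehat{\beta}_{\varepsilon}(\varphi_{\varepsilon}(t))\,\mathrm{d}x+\int_{\Gamma}\widehat{\beta}_{\Gamma,\varepsilon}(\psi_{\varepsilon}(t))\,\mathrm{d}S+\varepsilon\int_{0}^{t}\|\boldsymbol{\omega}_{\varepsilon}'\|_{\mathcal{L}^{2}}^{2}\,\mathrm{d}s+\tfrac{1}{2}\int_{0}^{t}\|\boldsymbol{\omega}_{\varepsilon}'\|_{0,*}^{2}\,\mathrm{d}s$, all summands being nonnegative. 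The initial energy is uniformly bounded: $\Phi_{\delta}(\boldsymbol{\omega}_{0})\leq\tfrac{1}{2}\|\nabla\varphi_{0}\|_{H}^{2}+\tfrac{1}{2}\|\nabla_{\Gamma}\psi_{0}\|_{H_{\Gamma}}^{2}$ for $\delta<1$, while $\int_{\Omega}\widehat{\beta}_{\varepsilon}(\varphi_{0})\,\mathrm{d}x\leq\int_{\Omega}\widehat{\beta}(\varphi_{0})\,\mathrm{d}x$ (and likewise on $\Gamma$) by \eqref{2.7} and \textbf{(A5)}. The leftover term $C\int_{0}^{t}\big(1+\|\boldsymbol{f}\|_{\mathcal{H}^{1}}^{2}+\|\boldsymbol{\omega}_{\varepsilon}\|_{\mathcal{L}^{2}}^{2}+\Phi_{\delta}(\boldsymbol{\omega}_{\varepsilon})\big)\,\mathrm{d}s$ is then closed by Gronwall's lemma, using \textbf{(A4)} for $\|\boldsymbol{f}\|_{L^{2}(0,T;\mathcal{H}^{1})}$ and the already established bound \eqref{eq3.1} for $\|\boldsymbol{\omega}_{\varepsilon}\|_{L^{2}(0,T;\mathcal{L}^{2})}$. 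Reading off the five quantities gives \eqref{3.4}: the $L^{\infty}$ bound in $\mathcal{V}^{1}_{(0)}$ from $\Phi_{\delta}$, the $\varepsilon^{1/2}$-weighted and the $\mathcal{H}^{-1}_{(0)}$ (equivalently $\|\cdot\|_{0,*}$) bounds on $\boldsymbol{\omega}_{\varepsilon}'$ from the two dissipation integrals, and the $L^{\infty}(0,T;L^{1})$ bounds on $\widehat{\beta}_{\varepsilon}(\varphi_{\varepsilon})$, $\widehat{\beta}_{\Gamma,\varepsilon}(\psi_{\varepsilon})$. Since every constant depends only on the data through \eqref{eq3.1}, $\|\boldsymbol{f}\|_{L^{2}(0,T;\mathcal{H}^{1})}$ and the quantities in \textbf{(A5)}, none of which involves $\varepsilon$ (nor, for $\delta<1$, $\delta$), the resulting $M_{2}$ enjoys the asserted uniformity.
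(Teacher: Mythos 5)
Your setup is the same as the paper's (test \eqref{2.9} by $\boldsymbol{\omega}_{\varepsilon}'$, identify $\varepsilon\|\boldsymbol{\omega}_{\varepsilon}'\|_{\mathcal{L}^2}^2$, $\|\boldsymbol{\omega}_{\varepsilon}'\|_{0,*}^2$, $\tfrac{\mathrm{d}}{\mathrm{d}s}\Phi_\delta$ and the exact derivative of the Moreau--Yosida energies, and handle $\mathbf{P}\boldsymbol{f}$ through the $\mathcal{H}^1_{L,0}$ pairing using $\mathbf{(A4)}$), but your treatment of the $\boldsymbol{\pi}$-term has a genuine gap that destroys the claimed uniformity in $\delta$. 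By keeping $\boldsymbol{\pi}(\boldsymbol{\varphi}_{\varepsilon})$ as a source and pairing it with $\mathfrak{S}^{L}\boldsymbol{\omega}_{\varepsilon}'$ in $\mathcal{H}^{1}_{L,0}$, you are forced to estimate $\|\nabla_{\Gamma}\pi_{\Gamma}(\psi_{\varepsilon})\|_{H_{\Gamma}}\le K_{\Gamma}\|\nabla_{\Gamma}\psi_{\varepsilon}\|_{H_{\Gamma}}$, and the only dissipative control of $\|\nabla_{\Gamma}\psi_{\varepsilon}\|_{H_{\Gamma}}$ available at this stage is the $\delta$-weighted one inside $\Phi_{\delta}$ (and in \eqref{eq3.1}). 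Your displayed bound $\|\boldsymbol{\pi}(\boldsymbol{\varphi}_{\varepsilon})\|_{\mathcal{H}^{1}}^{2}\le C\big(1+\|\boldsymbol{\omega}_{\varepsilon}\|_{\mathcal{L}^{2}}^{2}+\|\nabla\omega_{\varepsilon}\|_{H}^{2}+\delta\|\nabla_{\Gamma}\omega_{\Gamma,\varepsilon}\|_{H_{\Gamma}}^{2}\big)$ is false with $C$ independent of $\delta$: the correct bound is $K_{\Gamma}^{2}\|\nabla_{\Gamma}\omega_{\Gamma,\varepsilon}\|_{H_{\Gamma}}^{2}$, which is only $\le (2K_{\Gamma}^{2}/\delta)\,\Phi_{\delta}(\boldsymbol{\omega}_{\varepsilon})$. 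Consequently your Gronwall closure produces a constant of order $e^{CT/\delta}$, whereas the lemma asserts (and Section 4 crucially uses, see \eqref{uni2'}) that $M_{2}$ is independent of $\delta\in(0,1)$. For fixed $\delta$ your argument is fine, but it does not prove the stated lemma.

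The way to avoid this — and what the paper does in \eqref{equ1'}--\eqref{equ2'b} — is to never take a surface gradient of $\pi_{\Gamma}(\psi_{\varepsilon})$: since $\boldsymbol{\omega}_{\varepsilon}'=\boldsymbol{\varphi}_{\varepsilon}'$, the term $\big(\boldsymbol{\pi}(\boldsymbol{\varphi}_{\varepsilon}),\boldsymbol{\omega}_{\varepsilon}'\big)_{\mathcal{L}^{2}}$ is itself an exact time derivative, $\tfrac{\mathrm{d}}{\mathrm{d}s}\big(\int_{\Omega}\widehat{\pi}(\varphi_{\varepsilon})\,\mathrm{d}x+\int_{\Gamma}\widehat{\pi}_{\Gamma}(\psi_{\varepsilon})\,\mathrm{d}S\big)$, so only the endpoint values of $\widehat{\pi},\widehat{\pi}_{\Gamma}$ appear after integration in time. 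These have at most quadratic growth by $\mathbf{(A3)}$, hence are bounded by $C\big(1+\|\boldsymbol{\varphi}_{\varepsilon}(t)\|_{\mathcal{L}^{2}}^{2}\big)$, and this is absorbed using the interpolation inequality \eqref{int-0} (a small multiple of $\|\nabla\omega_{\varepsilon}(t)\|_{H}^{2}$ goes to the left, the rest is controlled by the $L^{\infty}(0,T;\mathcal{H}^{-1}_{(0)})$ bound of Lemma 3.2). With this modification no Gronwall iteration is needed and every constant is uniform in both $\varepsilon$ and $\delta$. The remaining ingredients of your proposal (the duality estimate for $\mathbf{P}\boldsymbol{f}$ using $\boldsymbol{f}\in L^{2}(0,T;\mathcal{V}^{1})$, the bound $\Phi_{\delta}(\boldsymbol{\omega}_{0})\le\tfrac12\|\bm{\varphi}_0\|_{\mathcal{V}^1}^2$ for $\delta<1$, and $\widehat{\beta}_{\varepsilon}\le\widehat{\beta}$ from \eqref{2.7}) match the paper and are correct.
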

\begin{proof}
Testing \eqref{2.9} at time $s\in(0,T)$ by $\boldsymbol{\omega}_{\varepsilon}'=\boldsymbol{\varphi}_{\varepsilon}' $, using \eqref{eq2.2} , we find
\begin{align}
&\frac{\mathrm{d}}{\mathrm{d}s}\left(\Phi_\delta(\boldsymbol{\omega}_{\varepsilon}(s))
+ \int_{\Omega}\widehat{\beta}_{\varepsilon}(\varphi_{\varepsilon}(s))\,\mathrm{d}x
+ \int_{\Gamma}\widehat{\beta}_{\Gamma,\varepsilon}(\psi_{\varepsilon}(s))\,\mathrm{d}S
+ \int_{\Omega}\widehat{\pi}(\varphi_{\varepsilon}(s))\,\mathrm{d}x
+ \int_{\Gamma}\widehat{\pi}_{\Gamma}(\psi_{\varepsilon}(s))\,\mathrm{d}S
\right)
 \notag\\
&\qquad +\varepsilon\Vert\boldsymbol{\omega}_{\varepsilon}'(s)\Vert_{\mathcal{L}^{2}}^{2}
+\Vert\boldsymbol{\omega}_{\varepsilon}'(s)\Vert_{0,*}^{2}
 \notag\\
& \quad =  \big(\mathbf{P}\boldsymbol{f}(s),\boldsymbol{\omega}_{\varepsilon}'(s)\big)_{\mathcal{L}^{2}},
\quad \text{for a.a.}\ s\in(0,T).
\label{equ1'}
\end{align}
Since $\mathfrak{S}^L \boldsymbol{\omega}_{\varepsilon}'(s) \in \mathcal{V}_{(0)}^1$ for a.a. $s\in (0,T)$, we observe that
\begin{align}
 \big(\mathbf{P}\boldsymbol{f}(s),\boldsymbol{\omega}_{\varepsilon}'(s)\big)_{\mathcal{L}^{2}}
  = \big(\mathbf{P}\boldsymbol{f}(s),\mathfrak{S}^L \boldsymbol{\omega}_{\varepsilon}'(s) \big)_{\mathcal{H}_{L,0}^1}
 \leq\frac{1}{2}\Vert\boldsymbol{\omega}_{\varepsilon}'(s)\Vert_{0,*}^{2}
+ \frac12 \|\boldsymbol{f}(s)\|_{\mathcal{V}^{1}}^2.
\label{equ1'-ho}
\end{align}
Integrating \eqref{equ1'} over $(0,t)$ with respect to $s$, we infer from  $\mathbf{(A3)}$ (i.e., $\widehat{\pi}$, $\widehat{\pi}_\Gamma$ are at most with quadratic growth), Lemma \ref{uni0}, \eqref{equ1'-ho} and \eqref{2.7} that
\begin{align}
&\frac{1}{2}\Vert\nabla\omega_{\varepsilon}(t)\Vert^{2}_{H}+ \frac{\delta}{2}\Vert\nabla\omega_{\Gamma,\varepsilon}(t)\Vert^{2}_{H_\Gamma}
+ \int_{\Omega}\widehat{\beta}_{\varepsilon}\big(\varphi_{\varepsilon}(t)\big)\,\mathrm{d}x
+ \int_{\Gamma}\widehat{\beta}_{\Gamma,\varepsilon}\big(\psi_{\varepsilon}(t)\big)\,\mathrm{d}S\notag\\
&\qquad
+ \varepsilon\int_{0}^{t}\Vert\boldsymbol{\omega}_{\varepsilon}'(s)\Vert_{\mathcal{L}^{2}}^{2}\,\mathrm{d}s
+\frac{1}{2}\int_{0}^{t}\Vert\boldsymbol{\omega}_{\varepsilon}'(s)\Vert_{0,*}^{2}\,\mathrm{d}s
\notag\\
&\quad\leq
\frac{1}{2}\Vert\boldsymbol{\omega}_{0}\Vert^{2}_{\mathcal{V}_{(0)}^{1}}
+\int_{\Omega}\widehat{\beta}(\varphi_{0})\,\mathrm{d}x
+\int_{\Gamma}\widehat{\beta}_{\Gamma}(\psi_{0})\,\mathrm{d}S
+ \int_{\Omega}\widehat{\pi}(\varphi_{0})\,\mathrm{d}x
- \int_{\Omega}\widehat{\pi}(\varphi_{\varepsilon}(s))\,\mathrm{d}x
\notag\\
&\qquad +\int_{\Gamma}\widehat{\pi}_{\Gamma}(\psi_0)\,\mathrm{d}S
- \int_{\Gamma}\widehat{\pi}_{\Gamma}(\psi_{\varepsilon}(s))\,\mathrm{d}S
+ \frac12 \int_0^t \|\boldsymbol{f}(s)\|_{\mathcal{V}^{1}}^2\,\mathrm{d}s
\notag\\
&\quad \leq \frac{1}{2}\Vert\boldsymbol{\omega}_{0}\Vert^{2}_{\mathcal{V}_{(0)}^{1}}
+\int_{\Omega}\widehat{\beta}(\varphi_{0})\,\mathrm{d}x
+\int_{\Gamma}\widehat{\beta}_{\Gamma}(\psi_{0})\,\mathrm{d}S + C(1+\|\bm{\varphi}_0\|_{\mathcal{L}^2}^2
+ \|\bm{\omega}_\varepsilon(t)+\overline{m}_{0}\boldsymbol{1}\|_{\mathcal{L}^2}^2)
\notag\\
&\qquad + \frac12 \int_0^t \|\boldsymbol{f}(s)\|_{\mathcal{V}^{1}}^2\,\mathrm{d}s,
\quad \forall\, t\in [0,T].
\label{equ2'a}
\end{align}
The interpolation inequality \eqref{int-0} implies
\begin{align}
\|\bm{\omega}_\varepsilon(t)\|_{\mathcal{L}^2}^2
 \leq \gamma\|\nabla \omega_\varepsilon(t)\|_{H}^2+C_\gamma\Vert\boldsymbol{\omega}_{\varepsilon}(t)\Vert_{0,*}^{2},
\quad \forall\, \gamma>0.
\notag
\end{align}
Then taking $\gamma$ sufficiently small (independent of $\delta$), we infer from \eqref{eq3.1}, \eqref{equ2'a} and $\mathbf{(A5)}$ that
\begin{align}
&\frac{1}{4}\Vert\nabla\omega_{\varepsilon}(t)\Vert^{2}_{H}+ \frac{\delta}{2}\Vert\nabla\omega_{\Gamma,\varepsilon}(t)\Vert^{2}_{H_\Gamma}
+ \int_{\Omega}\widehat{\beta}_{\varepsilon}\big(\varphi_{\varepsilon}(t)\big)\,\mathrm{d}x
+ \int_{\Gamma}\widehat{\beta}_{\Gamma,\varepsilon}\big(\psi_{\varepsilon}(t)\big)\,\mathrm{d}S
\notag\\
&\qquad
+ \varepsilon\int_{0}^{t}\Vert\boldsymbol{\omega}_{\varepsilon}'(s)\Vert_{\mathcal{L}^{2}}^{2}\,\mathrm{d}s
+\frac{1}{2}\int_{0}^{t}\Vert\boldsymbol{\omega}_{\varepsilon}'(s)\Vert_{0,*}^{2}\,\mathrm{d}s
\notag\\
&\quad \leq C + \frac12 \int_0^t \|\boldsymbol{f}(s)\|_{\mathcal{V}^{1}}^2\,\mathrm{d}s,
\quad \forall\, t\in [0,T],
\label{equ2'b}
\end{align}
which gives the estimate \eqref{3.4}.
\end{proof}

\begin{lemma}
\label{uni3}
There exist positive constants $M_{3}$ and $M_{4}$, independent of $\varepsilon,\delta\in(0,1)$, such that
\begin{align}
&\Vert\beta_{\varepsilon}(\varphi_{\varepsilon})\Vert_{L^{2}(0,T;L^{1}(\Omega))}+\Vert\beta_{\Gamma,\varepsilon}(\psi_{\varepsilon})\Vert_{L^{2}(0,T;L^{1}(\Gamma))}\leq M_{3},\label{3.5}\\
&\Vert m_{\varepsilon}\Vert_{L^{2}(0,T)}+\Vert\boldsymbol{\mu}_{\varepsilon}\Vert_{L^{2}(0,T;\mathcal{H}^{1})}\leq M_{4}.\label{3.6}
\end{align}
\end{lemma}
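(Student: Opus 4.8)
The plan is to use the same zero-mean test-function strategy as in \cite{CC,CF-CH,CF15}, already exploited for \eqref{eq3.1}, but now \emph{without} integrating in time: the gain of $L^2(0,T)$- over $L^1(0,T)$-integrability will come entirely from the bound $\boldsymbol{\omega}_\varepsilon'\in L^2(0,T;\mathcal{H}^{-1}_{(0)})$ furnished by Lemma \ref{uni2}.

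First I would fix $t$ and test \eqref{2.15} with $\boldsymbol{z}=\boldsymbol{\omega}_\varepsilon(t)=\boldsymbol{\varphi}_\varepsilon(t)-\overline{m}_0\boldsymbol{1}\in\mathcal{V}^1_{(0)}$. On the left-hand side this produces the term $(\boldsymbol{\beta}_\varepsilon(\boldsymbol{\varphi}_\varepsilon),\boldsymbol{\omega}_\varepsilon)_{\mathcal{L}^2}$, which by the one-sided estimate \eqref{eq2.8} applied with $r_0=\overline{m}_0\in\mathrm{int}D(\beta_\Gamma)$ (legitimate by $\mathbf{(A2)}$ and $\mathbf{(A5)}$) is bounded below by $\delta_0\big(\|\beta_\varepsilon(\varphi_\varepsilon)\|_{L^1(\Omega)}+\|\beta_{\Gamma,\varepsilon}(\psi_\varepsilon)\|_{L^1(\Gamma)}\big)-c_1(|\Omega|+|\Gamma|)$. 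Hence \eqref{3.5} will follow once every remaining term is shown to be bounded in $L^2(0,T)$ uniformly in $\varepsilon,\delta\in(0,1)$.

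The key term is $(\boldsymbol{\mu}_\varepsilon,\boldsymbol{\omega}_\varepsilon)_{\mathcal{L}^2}$: since $\boldsymbol{\omega}_\varepsilon\in\mathcal{L}^2_{(0)}$, \eqref{eq2.2} and the identity $\mathbf{P}\boldsymbol{\mu}_\varepsilon=-\mathfrak{S}^L\boldsymbol{\omega}_\varepsilon'$ from \eqref{2.13} give $(\boldsymbol{\mu}_\varepsilon,\boldsymbol{\omega}_\varepsilon)_{\mathcal{L}^2}=-(\boldsymbol{\omega}_\varepsilon',\boldsymbol{\omega}_\varepsilon)_{0,*}$, controlled by $\|\boldsymbol{\omega}_\varepsilon'\|_{0,*}\|\boldsymbol{\omega}_\varepsilon\|_{0,*}$, an $L^2(0,T)$ factor times an $L^\infty(0,T)$ factor by Lemma \ref{uni2}. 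The viscous term $\varepsilon(\boldsymbol{\omega}_\varepsilon',\boldsymbol{\omega}_\varepsilon)_{\mathcal{L}^2}$ is handled by the $\varepsilon^{1/2}$-weighted bound on $\boldsymbol{\omega}_\varepsilon'$ in $L^2(0,T;\mathcal{L}^2)$; the Dirichlet term $b_\delta(\boldsymbol{\omega}_\varepsilon,\boldsymbol{\omega}_\varepsilon)=\|\boldsymbol{\omega}_\varepsilon\|_{\mathcal{V}^1_{(0)}}^2$ (see \eqref{nv01}) is bounded in $L^\infty(0,T)$ by $M_2^2$, which is $\delta$-uniform; and $(\boldsymbol{\pi}(\boldsymbol{\varphi}_\varepsilon)-\boldsymbol{f},\boldsymbol{\omega}_\varepsilon)_{\mathcal{L}^2}$ is estimated through the Lipschitz bound on $\boldsymbol{\pi}$ ($\mathbf{(A3)}$), the $L^\infty(0,T;\mathcal{L}^2)$-bound on $\boldsymbol{\omega}_\varepsilon$ and $\boldsymbol{f}\in L^2(0,T;\mathcal{L}^2)$ (from $\mathbf{(A4)}$); this yields \eqref{3.5}. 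Estimate \eqref{3.6} is then immediate: taking the generalized mean in \eqref{me} and invoking \eqref{3.5}, the linear growth of $\boldsymbol{\pi}$ and $\boldsymbol{f}\in L^2(0,T;\mathcal{L}^2)$ give $\|m_\varepsilon\|_{L^2(0,T)}\le C$, while writing $\boldsymbol{\mu}_\varepsilon=-\mathfrak{S}^L\boldsymbol{\omega}_\varepsilon'+m_\varepsilon\boldsymbol{1}$ and using the elliptic estimate \eqref{2.2a} to bound $\|\mathfrak{S}^L\boldsymbol{\omega}_\varepsilon'\|_{\mathcal{H}^1}\le C\|\boldsymbol{\omega}_\varepsilon'\|_{(\mathcal{H}^1_L)'}\le C\|\boldsymbol{\omega}_\varepsilon'\|_{\mathcal{H}^{-1}_{(0)}}\in L^2(0,T)$ completes $\|\boldsymbol{\mu}_\varepsilon\|_{L^2(0,T;\mathcal{H}^1)}\le M_4$.

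The main obstacle is to keep all constants uniform in $\delta\in(0,1)$ and not merely in $\varepsilon$. This forces me to treat the surface-diffusion contribution $b_\delta(\boldsymbol{\omega}_\varepsilon,\boldsymbol{\omega}_\varepsilon)$ as a single nonnegative quantity bounded by the $\delta$-independent constant $M_2$ (never dividing by $\delta$), and to rely only on the $\delta$-uniform elliptic constant in \eqref{2.2a} together with the $\delta$-independent bounds of Lemma \ref{uni2} and its predecessor. The conceptual point throughout is that the upgrade from $L^1(0,T)$ to $L^2(0,T)$ rests solely on $\boldsymbol{\omega}_\varepsilon'\in L^2(0,T;\mathcal{H}^{-1}_{(0)})$, which is precisely what renders $(\boldsymbol{\mu}_\varepsilon,\boldsymbol{\omega}_\varepsilon)_{\mathcal{L}^2}$ square-integrable in time.
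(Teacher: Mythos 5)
Your proposal is correct and follows essentially the same route as the paper: the paper likewise reuses the zero-mean test function $\boldsymbol{\omega}_{\varepsilon}$ together with \eqref{eq2.8} to bound the $L^{1}$-norms of $\boldsymbol{\beta}_{\varepsilon}(\boldsymbol{\varphi}_{\varepsilon})$ from below, controls the key duality term by $\Vert\boldsymbol{\omega}_{\varepsilon}'\Vert_{0,*}\Vert\boldsymbol{\omega}_{\varepsilon}\Vert_{0,*}$ (an $L^{2}(0,T)\times L^{\infty}(0,T)$ product via Lemma \ref{uni2}), and then obtains \eqref{3.6} from the definition of $m_{\varepsilon}$ and the identity $\Vert\mathbf{P}\boldsymbol{\mu}_{\varepsilon}\Vert_{\mathcal{H}^{1}_{L,0}}=\Vert\boldsymbol{\omega}_{\varepsilon}'\Vert_{0,*}$ combined with the Poincar\'e inequality.
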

\begin{proof}
We can deduce from \eqref{3.1}, \eqref{3.2} that
\begin{align}
&\delta_{0}\int_{\Omega}|\beta_{\varepsilon}\big(\varphi_{\varepsilon}(s)\big)|\,\mathrm{d}x
+\delta_{0}\int_{\Gamma}|\beta_{\Gamma,\varepsilon}\big(\psi_{\varepsilon}(s)\big)|\mathrm{d}S
\notag\\
&\quad\leq c_{1}(|\Omega|+|\Gamma|)+\Big(\boldsymbol{f}(s)-\boldsymbol{\pi}\big(\boldsymbol{\varphi}_{\varepsilon}(s)\big)
-\varepsilon\boldsymbol{\omega}_{\varepsilon}'(s),\boldsymbol{\omega}_{\varepsilon}(s)\Big)_{\mathcal{L}^{2}}
-\big(\mathfrak{S}^{L}(\boldsymbol{\omega}_{\varepsilon}'(s)),\boldsymbol{\omega}_{\varepsilon}(s)\big)_{\mathcal{L}^{2}}
\notag\\
&\quad \leq c_{1}\big(|\Omega|+|\Gamma|\big)
+ \left(\Vert\boldsymbol{f}(s)\Vert_{\mathcal{L}^{2}}
+\Vert\boldsymbol{\pi}\big(\boldsymbol{\varphi}_{\varepsilon}(s)\big)\Vert_{\mathcal{L}^{2}}
+\varepsilon\Vert\boldsymbol{\omega}_{\varepsilon}'(s)\Vert_{\mathcal{L}_{(0)}^{2}}\right)
\Vert\boldsymbol{\omega}_{\varepsilon}(s)\Vert_{\mathcal{L}_{(0)}^{2}}\notag\\
&\qquad +\Vert\boldsymbol{\omega}_{\varepsilon}'(s)\Vert_{0,*}
\Vert\boldsymbol{\omega}_{\varepsilon}(s)\Vert_{0,*},
\quad \text{for a.a.}\ s\in(0,T).
\label{3.7'}
\end{align}
Due to Lemma \ref{uni2} and $\mathbf{(A4)}$, the right-hand side of \eqref{3.7'} is uniformly bounded in $L^2(0,T)$. This yields the estimate \eqref{3.5}.
Next, by the definition of $m_{\varepsilon}(t)$ (recall \eqref{me}), we have
\begin{align}
|m_{\varepsilon}(t)|^{2}
&\leq\frac{6}{(|\Omega|+|\Gamma|)^{2}}\Big(\Vert\beta_{\varepsilon}\big(\varphi_{\varepsilon}(t)\big)\Vert_{L^{1}(\Omega)}^{2}
+\Vert\pi\big(\varphi_{\varepsilon}(t)\big)\Vert_{L^{1}(\Omega)}^{2}
+|\Omega|\Vert f(t)\Vert_{H}^{2}
\notag\\
&\quad+\Vert\beta_{\Gamma,\varepsilon}\big(\psi_{\varepsilon}(t)\big)\Vert_{L^{1}(\Gamma)}^{2}
+\Vert\pi_{\Gamma}\big(\psi_{\varepsilon}(t)\big)\Vert_{L^{1}(\Gamma)}^{2}
+|\Gamma|\Vert f_\Gamma(t)\Vert_{H_{\Gamma}}^{2}\Big).
\notag
\end{align}
Integrating over $(0,T)$, using Lemma \ref{uni2}, \eqref{3.5} and $\mathbf{(A3)}$, $\mathbf{(A4)}$,
we obtain the first estimate in \eqref{3.6}.
On the other hand, it follows from \eqref{2.13} that
\begin{align}
\Vert\mathbf{P}\boldsymbol{\mu}_{\varepsilon}(s)\Vert_{\mathcal{H}_{L,0}^{1}}  = \Vert\boldsymbol{\omega}_{\varepsilon}'(s)\Vert_{0,*}.
\label{pmue}
\end{align}
This fact combined with the Poincar\'{e} inequality \eqref{Po3} yields
\begin{align}
\Vert\boldsymbol{\mu}_{\varepsilon}(s)\Vert_{\mathcal{H}^{1}}
&\leq\Vert\mathbf{P}\boldsymbol{\mu}_{\varepsilon}(s)\Vert_{\mathcal{H}^{1}}
+\Vert m_{\varepsilon}(s)\boldsymbol{1}\Vert_{\mathcal{H}^{1}}
\leq C\Vert\mathbf{P}\boldsymbol{\mu}_{\varepsilon}(s)\Vert_{\mathcal{H}_{L,0}^{1}}
+\Vert m_{\varepsilon}(s)\boldsymbol{1}\Vert_{\mathcal{L}^{2}}
\notag\\
&\leq C\Vert\boldsymbol{\omega}_{\varepsilon}'(s)\Vert_{0,*}+(|\Omega|+|\Gamma|)^{1/2}|m_{\varepsilon}(s)|.
\notag
\end{align}
Recalling \eqref{3.4}, we obtain the second estimate in \eqref{3.6}.
\end{proof}

\begin{lemma}
\label{uni4}
There exist positive constants $M_{5}$, $M_{6}$ and $M_{7}$, independent of $\varepsilon\in(0,1)$, such that
\begin{align}
&\Vert\beta_{\varepsilon}(\varphi_{\varepsilon})\Vert_{L^{2}(0,T;H)}+\Vert\beta_{\varepsilon}(\psi_{\varepsilon})\Vert_{L^{2}(0,T;H_{\Gamma})}\leq M_{5},
\label{3.7}\\
&\Vert\beta_{\Gamma,\varepsilon}\big(\psi_{\varepsilon}\big)\Vert_{L^{2}(0,T;H_{\Gamma})}\leq M_{6},
\label{3.8}\\
&\Vert\boldsymbol{\omega}_{\varepsilon}\Vert_{L^{2}(0,T;\mathcal{H}^2)}\leq M_{7}.
\label{3.9}
\end{align}
\end{lemma}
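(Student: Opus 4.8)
The plan is to establish \eqref{3.7}--\eqref{3.9} in three stages, testing the pointwise relations \eqref{2.16}--\eqref{2.17} with functions built from the Yosida approximations and exploiting the boundary-dominating compatibility \eqref{2.8} together with the elliptic regularity \eqref{Phi}. It is convenient to abbreviate
\[
g_\varepsilon:=\mu_\varepsilon-\varepsilon\partial_t\omega_\varepsilon-\pi(\varphi_\varepsilon)+f,\qquad
g_{\Gamma,\varepsilon}:=\theta_\varepsilon-\varepsilon\partial_t\omega_{\Gamma,\varepsilon}-\pi_\Gamma(\psi_\varepsilon)+f_\Gamma,
\]
so that \eqref{2.16}--\eqref{2.17} read $-\Delta\varphi_\varepsilon+\beta_\varepsilon(\varphi_\varepsilon)=g_\varepsilon$ in $Q$ and $\partial_{\mathbf n}\varphi_\varepsilon-\delta\Delta_\Gamma\psi_\varepsilon+\beta_{\Gamma,\varepsilon}(\psi_\varepsilon)=g_{\Gamma,\varepsilon}$ on $\Sigma$, with $\varphi_\varepsilon|_\Gamma=\psi_\varepsilon$. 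By Lemmas \ref{uni2}--\ref{uni3} together with $\mathbf{(A3)}$--$\mathbf{(A4)}$ (and since the viscous terms carry a factor $\varepsilon^{1/2}\le1$), the quantities $g_\varepsilon$ and $g_{\Gamma,\varepsilon}$ are bounded in $L^2(0,T;H)$ and $L^2(0,T;H_\Gamma)$, uniformly in $\varepsilon\in(0,1)$.

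First I would prove \eqref{3.7}. Testing the bulk relation by $\beta_\varepsilon(\varphi_\varepsilon)$ over $\Omega$ and the boundary relation by the matching trace $\beta_\varepsilon(\psi_\varepsilon)$ over $\Gamma$ and adding, the two normal-derivative contributions $\mp\int_\Gamma\partial_{\mathbf n}\varphi_\varepsilon\,\beta_\varepsilon(\psi_\varepsilon)\,\mathrm dS$ cancel exactly; using $\beta_\varepsilon'\ge0$ and $\beta_{\Gamma,\varepsilon}'\ge0$ for the Green's-formula gradient terms, one arrives at
\[
\|\beta_\varepsilon(\varphi_\varepsilon)\|_H^2+\int_\Gamma\beta_\varepsilon(\psi_\varepsilon)\beta_{\Gamma,\varepsilon}(\psi_\varepsilon)\,\mathrm dS
\le\int_\Omega g_\varepsilon\beta_\varepsilon(\varphi_\varepsilon)\,\mathrm dx+\int_\Gamma g_{\Gamma,\varepsilon}\beta_\varepsilon(\psi_\varepsilon)\,\mathrm dS,
\]
all left-hand terms being nonnegative. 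Since $\beta_\varepsilon$ and $\beta_{\Gamma,\varepsilon}$ vanish at $0$ and are monotone, $\beta_\varepsilon(\psi_\varepsilon)$ and $\beta_{\Gamma,\varepsilon}(\psi_\varepsilon)$ share the same sign, so \eqref{2.8}, rewritten as $\tfrac1\varrho|\beta_\varepsilon(r)|^2\le|\beta_\varepsilon(r)||\beta_{\Gamma,\varepsilon}(r)|+\tfrac{c_0}\varrho|\beta_\varepsilon(r)|$, yields the lower bound $\int_\Gamma\beta_\varepsilon(\psi_\varepsilon)\beta_{\Gamma,\varepsilon}(\psi_\varepsilon)\,\mathrm dS\ge\tfrac1\varrho\|\beta_\varepsilon(\psi_\varepsilon)\|_{H_\Gamma}^2-\tfrac{c_0}\varrho\|\beta_\varepsilon(\psi_\varepsilon)\|_{L^1(\Gamma)}$. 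Absorbing the right-hand side by Young's inequality and the uniform $L^2$-bounds on $g_\varepsilon,g_{\Gamma,\varepsilon}$, then integrating in time, gives \eqref{3.7}.

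The main obstacle is \eqref{3.8}: testing the boundary relation by $\beta_{\Gamma,\varepsilon}(\psi_\varepsilon)$ produces the coupling term $\int_\Gamma\partial_{\mathbf n}\varphi_\varepsilon\,\beta_{\Gamma,\varepsilon}(\psi_\varepsilon)\,\mathrm dS$, which can no longer be cancelled. Here I would use \eqref{3.7}, which makes $\Delta\varphi_\varepsilon=\beta_\varepsilon(\varphi_\varepsilon)-g_\varepsilon$ bounded in $L^2(0,T;H)$; by the normal-trace theorem (\cite[Theorem 2.27]{BG}), $\partial_{\mathbf n}\varphi_\varepsilon$ is then bounded in $L^2(0,T;(H^{1/2}(\Gamma))')$. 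Interpolating this bound with $\|\partial_{\mathbf n}\varphi_\varepsilon\|_{H^{1/2}(\Gamma)}\le C\|\varphi_\varepsilon\|_{H^2(\Omega)}$ (and using $[\,(H^{1/2}(\Gamma))',H^{1/2}(\Gamma)\,]_{1/2}=H_\Gamma$) gives $\|\partial_{\mathbf n}\varphi_\varepsilon\|_{H_\Gamma}\le C\|\varphi_\varepsilon\|_{H^2(\Omega)}^{1/2}$. On the other hand, the elliptic regularity \eqref{Phi} for $\partial\Phi_\delta$ (see \cite[Theorem 3.3]{KL}) applied to $\partial\Phi_\delta(\boldsymbol\omega_\varepsilon)=\boldsymbol\mu_\varepsilon-\varepsilon\boldsymbol\omega_\varepsilon'-\boldsymbol\beta_\varepsilon(\boldsymbol\varphi_\varepsilon)-\boldsymbol\pi(\boldsymbol\varphi_\varepsilon)+\boldsymbol f$, combined with \eqref{3.7}, yields $\|\varphi_\varepsilon\|_{H^2(\Omega)}\le C\big(G_\varepsilon+\|\beta_{\Gamma,\varepsilon}(\psi_\varepsilon)\|_{H_\Gamma}\big)$, where $G_\varepsilon$ gathers terms bounded in $L^2(0,T)$. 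Hence the coupling term is controlled by $C(G_\varepsilon+\|\beta_{\Gamma,\varepsilon}(\psi_\varepsilon)\|_{H_\Gamma})^{1/2}\|\beta_{\Gamma,\varepsilon}(\psi_\varepsilon)\|_{H_\Gamma}$, which is strictly sub-quadratic in $\|\beta_{\Gamma,\varepsilon}(\psi_\varepsilon)\|_{H_\Gamma}$; a further Young's inequality absorbs it (together with the right-hand side $\int_\Gamma g_{\Gamma,\varepsilon}\beta_{\Gamma,\varepsilon}(\psi_\varepsilon)$) into the term $\|\beta_{\Gamma,\varepsilon}(\psi_\varepsilon)\|_{H_\Gamma}^2$ on the left, and integration in time gives \eqref{3.8}, with a constant that may depend on $\delta$ (which is allowed).

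Finally, \eqref{3.9} is immediate. Having \eqref{3.7}--\eqref{3.8}, the element $\partial\Phi_\delta(\boldsymbol\omega_\varepsilon)$ displayed above is bounded in $L^2(0,T;\mathcal L^2)$, so the elliptic estimate \eqref{Phi} (\cite[Theorem 3.3]{KL}) yields $\|\boldsymbol\omega_\varepsilon\|_{L^2(0,T;\mathcal H^2)}\le M_7$. The only genuinely delicate point is the sub-quadratic control of the boundary coupling term in the second stage, and it is precisely there that both the surface diffusion $\delta>0$ (through the $\mathcal H^2$-regularity \eqref{Phi}) and the domination hypothesis \eqref{2.8} are used; the overall scheme follows the strategy of \cite{CF15}.
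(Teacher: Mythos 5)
Your proof of \eqref{3.7} is exactly the paper's argument: test \eqref{2.16} by $\beta_{\varepsilon}(\varphi_{\varepsilon})$, use \eqref{2.17} on the boundary term so that the normal-derivative contributions cancel, discard the nonnegative gradient terms, and exploit the sign property together with the compatibility condition \eqref{2.8} to extract $\tfrac{1}{2\varrho}\Vert\beta_{\varepsilon}(\psi_{\varepsilon})\Vert_{H_\Gamma}^2$ from the cross term; this coincides with the paper's \eqref{equ2'}--\eqref{be-be}.

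For \eqref{3.8}--\eqref{3.9} you reach the same conclusions but by a different absorption mechanism, and in the reverse order. The paper first bounds $\Vert\beta_{\Gamma,\varepsilon}(\psi_{\varepsilon})\Vert_{L^2(0,T;H_\Gamma)}$ and $\Vert\boldsymbol{\omega}_{\varepsilon}\Vert_{L^2(0,T;\mathcal{H}^2)}$ linearly in terms of $\Vert\partial_{\mathbf n}\omega_{\varepsilon}\Vert_{L^2(0,T;H_\Gamma)}$ (see \eqref{es-no4}, \eqref{es-no3}) and then closes the loop with the trace embedding into $H^{2-r}(\Omega)$ plus the Ehrling lemma, which gives $\Vert\partial_{\mathbf n}\omega_{\varepsilon}\Vert_{H_\Gamma}\le \gamma\Vert\omega_{\varepsilon}\Vert_{H^2}+C_\gamma\Vert\omega_{\varepsilon}\Vert_{V}$ and allows absorption for small $\gamma$. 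You instead use the generalized normal trace bound $\partial_{\mathbf n}\varphi_{\varepsilon}\in L^2(0,T;(H^{1/2}(\Gamma))')$ (available once $\Delta\varphi_{\varepsilon}$ is controlled after \eqref{3.7}) and the duality interpolation $L^2(\Gamma)=[(H^{1/2}(\Gamma))',H^{1/2}(\Gamma)]_{1/2}$ to make the coupling term sub-quadratic in $\Vert\beta_{\Gamma,\varepsilon}(\psi_{\varepsilon})\Vert_{H_\Gamma}$, closing directly with Young's inequality. Both routes rest on the same $\mathcal{H}^2$ elliptic estimate \eqref{2.2b} for the bulk--surface system and both produce constants depending on $\delta$ but not on $\varepsilon$, which is all the lemma requires; your version trades the Ehrling lemma for an interpolation inequality, at no real gain or loss. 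One small imprecision to fix in a write-up: in the step $\Vert\partial_{\mathbf n}\varphi_{\varepsilon}\Vert_{H_\Gamma}\le C\Vert\varphi_{\varepsilon}\Vert_{H^2(\Omega)}^{1/2}$ the ``constant'' actually carries the factor $\Vert\partial_{\mathbf n}\varphi_{\varepsilon}(t)\Vert_{(H^{1/2}(\Gamma))'}^{1/2}$, which is only an $L^2(0,T)$ function of time rather than a uniform constant; this is harmless (Young's inequality with exponents $4$ and $4/3$ still yields an integrable remainder $CA(t)^2$ after integration in time), but it should be stated as such rather than hidden in $C$.
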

\begin{proof}
The proof of \eqref{3.7} follows the argument for \cite[Lemma 4.4]{CF15}, we sketch it here for completeness.
Testing \eqref{2.16}  by $\beta_{\varepsilon}(\varphi_{\varepsilon})\in L^{2}(0,T;V)$, using \eqref{2.17} and noting that $\big(\beta_{\varepsilon}(\varphi_{\varepsilon})\big)|_{\Gamma}=\beta_{\varepsilon}(\psi_{\varepsilon})\in L^{2}(0,T;V_{\Gamma})$, we find
\begin{align}
&\int_{\Omega}\beta_{\varepsilon}'\big(\varphi_{\varepsilon}(s)\big)|\nabla\omega_{\varepsilon}(s)|^{2}\,\mathrm{d}x
+\Vert\beta_{\varepsilon}\big(\varphi_{\varepsilon}(s)\big)\Vert_{H}^{2}
\notag\\
&\qquad+ \delta \int_{\Gamma}\beta_{\varepsilon}'\big(\psi_{\varepsilon}(s)\big)|\nabla_{\Gamma}\omega_{\Gamma,\varepsilon}(s)|^{2}\,\mathrm{d}S
+\int_{\Gamma}\beta_{\Gamma,\varepsilon}\big(\psi_{\varepsilon}(s)\big)\beta_{\varepsilon}\big(\psi_{\varepsilon}(s)\big)\,\mathrm{d}S
\notag\\
&\quad=\left(f(s)+\mu_{\varepsilon}(s)-\varepsilon\partial_{t}\omega_{\varepsilon}(s)-\pi\big(\varphi_{\varepsilon}(s)\big)
,\beta_{\varepsilon}\big(\varphi_{\varepsilon}(s)\big)\right)_{H}
\notag\\
&\qquad+\left(f_\Gamma(s)+\theta_{\varepsilon}(s)-\varepsilon\partial_{t}\omega_{\Gamma,\varepsilon}(s)
-\pi_{\Gamma}\big(\psi_{\varepsilon}(s)\big),\beta_{\varepsilon}\big(\psi_{\varepsilon}(s)\big)\right)_{H_{\Gamma}},
\label{equ2'}
\end{align}
for a.a. $s\in(0,T)$. Recalling the compatibility condition $(\ref{2.8})$, and noticing that $\beta_{\varepsilon}(r)$, $\beta_{\Gamma,\varepsilon}(r)$ have the same sign for all $r\in\mathbb{R}$, we get
\begin{align}
\int_{\Gamma}\beta_{\Gamma,\varepsilon}\big(\psi_{\varepsilon}(s)\big)\beta_{\varepsilon}\big(\psi_{\varepsilon}(s)\big)\,\mathrm{d}S
&=\int_{\Gamma}|\beta_{\Gamma,\varepsilon}\big(\psi_{\varepsilon}(s)\big)| |\beta_{\varepsilon}\big(\psi_{\varepsilon}(s)\big)|\,\mathrm{d}S
\notag\\
&\geq\frac{1}{\varrho}\int_{\Gamma}|\beta_{\varepsilon}\big(\psi_{\varepsilon}(s)\big)|^{2}\,\mathrm{d}S
-\frac{c_{0}}{\varrho}\int_{\Gamma}|\beta_{\varepsilon}\big(\psi_{\varepsilon}(s)\big)|\,\mathrm{d}S
\notag\\
&\geq\frac{1}{2\varrho}\int_{\Gamma}|\beta_{\varepsilon}\big(\psi_{\varepsilon}(s)\big)|^{2}\,\mathrm{d}S
-\frac{c_{0}^{2}}{2\varrho}|\Gamma|.
\notag
\end{align}
On the other hand, we observe that
\begin{align}
\int_{\Omega}\beta_{\varepsilon}'\big(\varphi_{\varepsilon}(s)\big)|\nabla\omega_{\varepsilon}(s)|^{2}\,\mathrm{d}x\geq0,
\quad\int_{\Gamma}\beta_{\Gamma,\varepsilon}'\big(\psi_{\varepsilon}(s)\big)|\nabla_{\Gamma}\omega_{\Gamma,\varepsilon}(s)|^{2}\,\mathrm{d}S\geq0.
\notag
\end{align}
By the Young's inequality and the Lipschitz continuity of $\pi$ and $\pi_{\Gamma}$, we can find a positive constant $\widetilde{M}_{5}$, independent of $\varepsilon\in(0,1)$, such that
\begin{align}
&\left(f(s)+\mu_{\varepsilon}(s)-\varepsilon\partial_{t}\omega_{\varepsilon}(s)-\pi\big(\varphi_{\varepsilon}(s)\big),
\beta_{\varepsilon}\big(\varphi_{\varepsilon}(s)\big)\right)_{H}
\notag\\
&\quad\leq\frac{1}{2}\Vert\beta_{\varepsilon}(\varphi_{\varepsilon}(s))\Vert_{H}^{2}
+\widetilde{M}_{5}\left(1+\Vert f(s)\Vert_{H}^{2}+\Vert\mu_{\varepsilon}(s)\Vert_{H}^{2}
+\Vert\omega_{\varepsilon}(s)\Vert_{H}^{2}
+\varepsilon\Vert\partial_{t}\omega_{\varepsilon}(s)\Vert_{H}^{2}\right),
\notag\\
&\left(f_\Gamma(s)+\theta_{\varepsilon}(s)-\varepsilon\partial_{t}\omega_{\Gamma,\varepsilon}(s)-\pi_{\Gamma}\big(\psi_{\varepsilon}(s)\big),
\beta_{\varepsilon}\big(\psi_{\varepsilon}(s)\big)\right)_{H_{\Gamma}}
\notag\\
&\quad \leq \frac{1}{4\varrho}\Vert\beta_{\varepsilon}(\psi_{\varepsilon}(s))\Vert_{H_{\Gamma}}^{2}
+\varrho\widetilde{M}_{5}\left(1+\Vert f_\Gamma(s)\Vert _{H_{\Gamma}}^{2}+\Vert\theta_{\varepsilon}(s)\Vert_{H_{\Gamma}}^{2}
+\Vert\omega_{\Gamma,\varepsilon}(s)\Vert_{H_{\Gamma}}^{2}
+\varepsilon\Vert\partial_{t}\omega_{\Gamma,\varepsilon}(s)\Vert_{H_{\Gamma}}^{2}\right).
\notag
\end{align}
Combining the above inequalities, integrating \eqref{equ2'} in $(0,T)$ with respect to $s$, we infer from Lemmas \ref{uni2}, \ref{uni3} that
\begin{align}
\frac{1}{2}\int_{0}^{T}\Vert\beta_{\varepsilon}\big(\varphi_{\varepsilon}(s)\big)\Vert_{H}^{2}\,\mathrm{d}s
+\frac{1}{4\varrho}\int_{0}^{T}\Vert\beta_{\varepsilon}\big(\psi_{\varepsilon}(s)\big)\Vert_{H_{\Gamma}}^{2}\,\mathrm{d}s
\leq C,
\label{be-be}
\end{align}
where $C>0$ is independent of $\varepsilon\in(0,1)$. This yields the estimate \eqref{3.7}.

By comparison in \eqref{2.16} and the estimates just proved, we have
\begin{align}
\|\Delta \omega_\varepsilon\|_{L^2(0,T;H)}\leq C,
\label{es-no1}
\end{align}
where $C>0$ is independent of $\varepsilon\in(0,1)$.
Next, testing \eqref{2.17} by $\beta_{\Gamma,\varepsilon}(\psi_{\varepsilon})\in L^{2}(0,T;V_\Gamma)$, we can deduce that
\begin{align}
&\delta \int_{\Gamma}\beta_{\Gamma,\varepsilon}'\big(\psi_{\varepsilon}(s)\big)|\nabla_{\Gamma}\omega_{\Gamma,\varepsilon}(s)|^{2}\,\mathrm{d}S
+\Vert\beta_{\Gamma,\varepsilon}\big(\psi_{\varepsilon}(s)\big)\Vert_{H_{\Gamma}}^{2}
\notag\\
&\quad=\left(f_\Gamma(s)+\theta_{\varepsilon}(s)-\varepsilon\partial_{t}\omega_{\Gamma,\varepsilon}(s)-\partial_{\mathbf{n}}\omega_{\varepsilon}(s)
-\pi_{\Gamma}\big(\psi_{\varepsilon}(s)\big),\beta_{\Gamma,\varepsilon}\big(\psi_{\varepsilon}(s)\big)\right)_{H_{\Gamma}}
\notag\\
&\quad\leq\frac{1}{2}\Vert\beta_{\Gamma,\varepsilon}\big(\psi_{\varepsilon}(s)\big)\Vert_{H_{\Gamma}}^{2}
+3\Big(\Vert f_\Gamma(s)\Vert _{H_{\Gamma}}^{2}+\Vert\theta_{\varepsilon}(s)\Vert_{H_{\Gamma}}^{2}
\notag\\
&\qquad+\varepsilon\Vert\partial_{t}\omega_{\Gamma,\varepsilon}(s)\Vert_{H_{\Gamma}}^{2}
+\Vert\partial_{\mathbf{n}}\omega_{\varepsilon}(s)\Vert_{H_{\Gamma}}^{2}+\Vert\pi_{\Gamma}\big(\psi_{\varepsilon}(s)\big)\Vert_{H_{\Gamma}}^{2}\Big)
\label{betaG}
\end{align}
for a.a. $s\in(0,T)$.
Integrating \eqref{betaG} over $(0,T)$, we infer from Lemmas \ref{uni2}, \ref{uni3} that
\begin{align}
\Vert\beta_{\Gamma,\varepsilon}\big(\psi_{\varepsilon}\big)\Vert_{L^{2}(0,T;H_{\Gamma})}
\leq C(1+\Vert\partial_{\mathbf{n}}\omega_{\varepsilon}\Vert_{L^2(0,T;H_{\Gamma})}).
\label{es-no4}
\end{align}
By comparing the terms in \eqref{2.17}, we infer that
\begin{align}
\Vert\partial_{\mathbf{n}}\omega_{\varepsilon}
-\Delta_{\Gamma}\omega_{\Gamma,\varepsilon}\Vert_{L^{2}(0,T;H_{\Gamma})}
\leq C\left(1+\frac{1}{\delta}\right)(1+\Vert\partial_{\mathbf{n}}\omega_{\varepsilon}\Vert_{L^2(0,T;H_{\Gamma})}),
\label{es-no2}
\end{align}
where the constant $C>0$ is independent of $\varepsilon\in(0,1)$.
Combining \eqref{es-no1} and \eqref{es-no2}, we can apply the elliptic estimate \eqref{2.2b} (with $L=0$, $k=0$) to get
\begin{align}
\|\bm{\omega}_\varepsilon\|_{L^2(0,T;\mathcal{H}^2)}\leq  C(1+\Vert\partial_{\mathbf{n}}\omega_{\varepsilon}\Vert_{L^2(0,T;H_{\Gamma})}),
\label{es-no3}
\end{align}
where the constant $C>0$ is independent of $\varepsilon$, but depends on $\delta$.
Thanks to the trace theorem and the Ehrling lemma (see Lemma \ref{Ehrling}), we have
\begin{align}
\Vert\partial_{\mathbf{n}}\omega_{\varepsilon}\Vert_{L^2(0,T;H_{\Gamma})}
&\leq C\Vert\omega_{\varepsilon}\Vert_{L^2(0,T;H^{2-r}(\Omega))}\quad \text{for some}\ r\in (0,1/2)\notag\\
&\leq \gamma \Vert\omega_{\varepsilon}\Vert_{L^2(0,T;H^2(\Omega))}+ C_\gamma \Vert\omega_{\varepsilon}\Vert_{L^2(0,T;V)}.
\notag
\end{align}
Hence, taking $\gamma>0$ sufficiently small, we can conclude \eqref{3.9} from \eqref{es-no3}, with some constant $M_7>0$ independent of $\varepsilon$ (but depends on $\delta$). Finally, this estimate together with \eqref{es-no4} easily yields the estimate \eqref{3.8}.
\end{proof}

\subsection{Existence and uniqueness of weak solutions}
\textbf{Proof of Theorem \ref{weakexist}.} We are in a position to prove the existence of weak solutions to problem $(S_{L,\delta})$.
This can be done by passing to the limit in the approximating problem as $\varepsilon\rightarrow 0$, following a standard compactness argument as in \cite{CF15}. Owing to the uniform estimates derived in Lemmas \ref{uni0}--\ref{uni4}, there exist a subsequence of $\varepsilon$ (not relabeled) and some limit functions $\boldsymbol{\omega}=(\omega, \omega_\Gamma)$, $\boldsymbol{\mu}=(\mu, \theta)$, $\bm{\xi}=(\xi,\xi_{\Gamma})$ and $\widetilde{m}$ such that
\begin{align}
&\boldsymbol{\omega}_{\varepsilon}\to \boldsymbol{\omega}\quad\text{weakly star in }
 L^{\infty}(0,T;\mathcal{V}_{(0)}^{1}),
\label{3.11}\\
&\boldsymbol{\omega}_{\varepsilon}\to \boldsymbol{\omega}\quad\text{weakly in }
H^{1}(0,T;\mathcal{H}_{(0)}^{-1})\cap L^{2}(0,T;\mathcal{V}^2),
\label{3.11a}\\
&\varepsilon\boldsymbol{\omega}_{\varepsilon}\to\boldsymbol{0}\quad\text{strongly in }H^{1}(0,T;\mathcal{L}^{2}),
\label{3.12}\\
&\boldsymbol{\mu}_{\varepsilon}\to\boldsymbol{\mu}\quad\text{weakly in }L^{2}(0,T;\mathcal{H}^{1}),
\label{3.13}\\
&\beta_{\varepsilon}(\varphi_{\varepsilon})\to\xi\quad\text{weakly in }L^{2}(0,T;H),
\label{3.14}\\
&\beta_{\Gamma,\varepsilon}(\psi_{\varepsilon})\to\xi_{\Gamma}\quad\text{weakly in }L^{2}(0,T;H_{\Gamma}),
\label{3.15}\\
&m_{\varepsilon}\to \widetilde{m}\quad\text{weakly in }L^{2}(0,T).\label{3.16}
\end{align}
From \eqref{3.11a}, due to the Aubin-Lions-Simon lemma (see Lemma \ref{ALS}), we find
\begin{align}
\boldsymbol{\omega}_{\varepsilon}\to\boldsymbol{\omega}
\quad\text{strongly in }C([0,T];\mathcal{L}_{(0)}^{2})\cap L^{2}(0,T;\mathcal{V}_{(0)}^{1}),
\label{3.17}
\end{align}
which implies
\begin{align}
\boldsymbol{\varphi}_{\varepsilon}\to\boldsymbol{\varphi} =\boldsymbol{\omega}+\overline{m}_{0}\boldsymbol{1}
\quad\text{strongly in }C([0,T];\mathcal{L}^{2})\cap L^{2}(0,T;\mathcal{V}^{1}).
\label{3.18}
\end{align}
The above facts also imply that
$$
\varphi(0)=\varphi_{0}\;\;\text{a.e. in }\Omega,\quad \psi(0)=\psi_{0} \;\;\text{a.e. on }\Gamma.
$$
Moreover, \eqref{3.18} and the Lipschitz continuity of $\pi$, $\pi_{\Gamma}$ ensure that
$$
\boldsymbol{\pi}(\boldsymbol{\varphi}_{\varepsilon})\to\boldsymbol{\pi}(\boldsymbol{\varphi})
\quad\text{strongly in }C([0,T];\mathcal{L}^{2}),
$$
and $\widetilde{m}=\overline{m}\big(\boldsymbol{\xi}+\boldsymbol{\pi}(\boldsymbol{\varphi})-\boldsymbol{f}\big)$ with $\boldsymbol{\xi}=(\xi,\xi_{\Gamma})$.
Due to the maximal monotonicity of $\beta$ and $\beta_{\Gamma}$,
by applying \cite[Proposition 2.2]{B} and \eqref{3.14}, \eqref{3.15}, \eqref{3.18}, we obtain
\begin{align}
\xi\in\beta(\varphi)\;\;\text{a.e. in }Q,\quad\xi_{\Gamma}\in\beta_{\Gamma}(\psi)\;\;\text{a.e. on }\Sigma.
\label{xiOG}
\end{align}
Now we are able to pass to the limit in the weak formulations \eqref{2.14}, \eqref{2.15} to recover \eqref{eq3.2}, \eqref{eq3.3}, respectively.
Hence, the limit triplet $(\boldsymbol{\varphi},\boldsymbol{\mu},\boldsymbol{\xi})$ is indeed a weak solution of problem $(S_{L,\delta})$.
\hfill $\square$
\bigskip

\textbf{Proof of Theorem \ref{contidepen}.}
In what follows, we prove the continuous dependence on the initial data and the source terms. This can be done by the standard energy method.
For $i=1,2$, let $(\boldsymbol{\varphi}_{i},\boldsymbol{\mu}_{i},\boldsymbol{\xi}_{i})$ be a weak solution to problem $(S_{L,\delta})$ corresponding to the data $(\boldsymbol{f}_{i},\boldsymbol{\omega}_{0,i})$. Set $\boldsymbol{\omega}_{i}:=\boldsymbol{\varphi}_{i}-\overline{m}_{0}\boldsymbol{1}$.
We consider the difference between \eqref{2.5}, at the time $s\in(0,T)$, for $\boldsymbol{\omega}_{1}(s)=\big(\omega_{1}(s),\omega_{\Gamma,1}(s)\big)$ and $\boldsymbol{\omega}_{2}(s)=\big(\omega_{2}(s),\omega_{\Gamma,2}(s)\big)$, that is,
\begin{align}
&\mathfrak{S}^{L}\big(\boldsymbol{\omega}_{1}'(s)-\boldsymbol{\omega}_{2}'(s)\big)
+\partial\Phi_\delta \big(\boldsymbol{\omega}_{1}(s)-\boldsymbol{\omega}_{2}(s)\big)
\notag\\
&\quad=\mathbf{P}\left(-\boldsymbol{\xi}_{1}(s)+\boldsymbol{\xi}_{2}(s)\right)
+\mathbf{P}\left(-\boldsymbol{\pi}\big(\boldsymbol{\omega}_{1}(s)+\overline{m}_{0}\boldsymbol{1}\big) +\boldsymbol{\pi}\big(\boldsymbol{\omega}_{2}(s)+\overline{m}_{0}\boldsymbol{1}\big)\right)
+\mathbf{P}\left(\boldsymbol{f}_{1}(s)-\boldsymbol{f}_{2}(s)\right).
\notag
\end{align}
Taking $\mathcal{L}^{2}$ inner product with $\boldsymbol{\omega}_{1}(s)-\boldsymbol{\omega}_{2}(s)$,
by the Lipschitz continuity of $\pi$, $\pi_{\Gamma}$, we obtain
\begin{align}
&\frac{1}{2}\frac{\mathrm{d}}{\mathrm{d}s}\Vert\boldsymbol{\omega}_{1}(s)-\boldsymbol{\omega}_{2}(s)\Vert_{0,*}^{2}
+\Vert\boldsymbol{\omega}_{1}(s)-\boldsymbol{\omega}_{2}(s)\Vert_{\mathcal{V}_{(0)}^{1}}^{2}
+ \left(\boldsymbol{\xi}_{1}(s)-\boldsymbol{\xi}_{2}(s),\boldsymbol{\omega}_{1}(s)-\boldsymbol{\omega}_{2}(s)\right)_{\mathcal{L}^{2}}
\notag\\
&\quad =
 -\left(\boldsymbol{\pi}\big(\boldsymbol{\omega}_{1}(s)+\overline{m}_{0}\boldsymbol{1}\big)
-\boldsymbol{\pi}\big(\boldsymbol{\omega}_{2}(s)+\overline{m}_{0}\boldsymbol{1}\big),
\boldsymbol{\omega}_{1}(s)-\boldsymbol{\omega}_{2}(s)\right)_{\mathcal{L}^{2}}
\notag\\
&\qquad+\left(\boldsymbol{f}_{1}(s)-\boldsymbol{f}_{2}(s),\boldsymbol{\omega}_{1}(s)-\boldsymbol{\omega}_{2}(s)\right)_{\mathcal{L}^{2}}
\notag\\
&\quad\leq(K+K_{\Gamma})\Vert\boldsymbol{\omega}_{1}(s)-\boldsymbol{\omega}_{2}(s)\Vert_{\mathcal{L}^{2}}^{2}
+\Vert\boldsymbol{f}_{1}(s)-\boldsymbol{f}_{2}(s)\Vert_{(\mathcal{V}^{1})'}
\Vert\boldsymbol{\omega}_{1}(s)-\boldsymbol{\omega}_{2}(s)\Vert_{\mathcal{V}^{1}}.
\label{3.19a}
\end{align}
Since $\boldsymbol{\omega}_{1}(s)-\boldsymbol{\omega}_{2}(s)\in \mathcal{V}_{(0)}^1$ for a.a. $s\in (0,T)$, by the Ehrling lemma and the Poincar\'e inequality \eqref{Po3}, we get
\begin{align}
\Vert\boldsymbol{\omega}_{1}(s)-\boldsymbol{\omega}_{2}(s)\Vert_{\mathcal{L}^{2}}^{2}
\leq \gamma \Vert\boldsymbol{\omega}_{1}(s)-\boldsymbol{\omega}_{2}(s)\Vert_{\mathcal{V}_{(0)}^{1}}^{2}
+C_\gamma \Vert\boldsymbol{\omega}_{1}(s)-\boldsymbol{\omega}_{2}(s)\Vert_{0,*}^{2},
\notag
\end{align}
for any $\gamma>0$.
Besides, it follows from \eqref{xiOG} and the monotonicity of $\beta$, $\beta_{\Gamma}$ that
\begin{align}
\left(\boldsymbol{\xi}_{1}(s)-\boldsymbol{\xi}_{2}(s),\boldsymbol{\omega}_{1}(s)-\boldsymbol{\omega}_{2}(s)\right)_{\mathcal{L}^{2}}
=\left(\boldsymbol{\xi}_{1}(s)-\boldsymbol{\xi}_{2}(s),\boldsymbol{\varphi}_{1}(s)-\boldsymbol{\varphi}_{2}(s)\right)_{\mathcal{L}^{2}}\geq 0,
\quad \text{for a.a.}\ s\in (0,T).
\notag
\end{align}
Inserting the above inequalities into \eqref{3.19a}, taking $\gamma$ sufficiently small and using Young's inequality, we obtain
\begin{align}
&\frac{1}{2}\frac{\mathrm{d}}{\mathrm{d}s}\Vert\boldsymbol{\omega}_{1}(s)-\boldsymbol{\omega}_{2}(s)\Vert_{0,*}^{2}
+\Vert\boldsymbol{\omega}_{1}(s)-\boldsymbol{\omega}_{2}(s)\Vert_{\mathcal{V}_{(0)}^{1}}^{2}
\notag\\
&\quad\leq\frac{1}{2}\Vert\boldsymbol{\omega}_{1}(s)-\boldsymbol{\omega}_{2}(s)\Vert_{\mathcal{V}_{(0)}^{1}}^{2}
+C\Vert\boldsymbol{\omega}_{1}(s)-\boldsymbol{\omega}_{2}(s)\Vert_{0,*}^{2}
+C\Vert\boldsymbol{f}_{1}(s)-\boldsymbol{f}_{2}(s)\Vert_{(\mathcal{V}^{1})'}^{2}.
\notag
\end{align}
Then by Gronwall's lemma, we arrive at the conclusion \eqref{3.19}.
\hfill $\square$
\bigskip

Thanks to the uniqueness of the phase function $\bm{\varphi}$ associated with problem $(S_{L,\delta})$, we are able to provide an estimate on the convergence rate for $\boldsymbol{\varphi}_{\varepsilon}\to \bm{\varphi}$ in terms of the parameter $\varepsilon$ of the Yosida approximation.
\begin{proposition}[Convergence rate]
\label{rate}
Suppose that the assumptions of Theorem \ref{weakexist} are satisfied.
Let $\boldsymbol{\varphi}_{\varepsilon}$ be the unique solution to the approximating problem \eqref{2.9}--\eqref{2.10} and $\boldsymbol{\varphi}$ be the unique weak solution to problem $(S_{L,\delta})$. Then we have
$$
\Vert\boldsymbol{\varphi}_{\varepsilon}-\boldsymbol{\varphi}\Vert_{L^{\infty}(0,T;(\mathcal{H}^{1})')}
+\Vert\boldsymbol{\varphi}_{\varepsilon}-\boldsymbol{\varphi}\Vert_{L^{2}(0,T;\mathcal{V}^{1})}
\leq C\sqrt{\varepsilon},\quad \forall\, \varepsilon\in (0,1),
$$
where the constant $C>0$ depends on $\widehat{\pi}$, $\widehat{\pi}_{\Gamma}$, $\widehat{\beta}$, $\widehat{\beta}_{\Gamma}$ and the $\mathcal{V}^{1}$-norm of the initial data $\bm{\varphi}_0$, but not on $\varepsilon$.
\end{proposition}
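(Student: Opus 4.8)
The plan is to compare the unique approximating solution $\boldsymbol{\omega}_{\varepsilon}$ with the limit $\boldsymbol{\omega}=\boldsymbol{\varphi}-\overline{m}_{0}\boldsymbol{1}$ through an energy estimate on their difference. Both quantities satisfy an abstract evolution equation in $\mathcal{L}^{2}_{(0)}$: the weak solution obeys \eqref{2.5}, while $\boldsymbol{\omega}_{\varepsilon}$ obeys \eqref{2.9}, which carries the additional viscous term $\varepsilon\boldsymbol{\omega}_{\varepsilon}'$ and the regularized nonlinearity $\boldsymbol{\beta}_{\varepsilon}(\boldsymbol{\varphi}_{\varepsilon})$ in place of $\boldsymbol{\xi}$. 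Setting $\boldsymbol{\zeta}:=\boldsymbol{\omega}_{\varepsilon}-\boldsymbol{\omega}=\boldsymbol{\varphi}_{\varepsilon}-\boldsymbol{\varphi}\in\mathcal{L}^{2}_{(0)}$ (both data share the mean $\overline{m}_{0}$, so $\boldsymbol{\zeta}(0)=\boldsymbol{0}$), I would subtract the two equations and test in $\mathcal{L}^{2}$ with $\boldsymbol{\zeta}$; note $\boldsymbol{\zeta}\in L^{2}(0,T;\mathcal{V}^{2})$, so every pairing is legitimate. The $\mathfrak{S}^{L}$-term produces $\frac{1}{2}\frac{\mathrm{d}}{\mathrm{d}s}\Vert\boldsymbol{\zeta}\Vert_{0,*}^{2}$ via the identity $(\mathfrak{S}^{L}\boldsymbol{\zeta}',\boldsymbol{\zeta})_{\mathcal{L}^{2}}=(\boldsymbol{\zeta}',\boldsymbol{\zeta})_{0,*}$; since $\partial\Phi_{\delta}$ is the linear realization of $b_{\delta}$, the elliptic term yields the coercive quantity $\Vert\boldsymbol{\zeta}\Vert_{\mathcal{V}^{1}_{(0)}}^{2}$; and the projection $\mathbf{P}$ disappears against $\boldsymbol{\zeta}$ by \eqref{eq2.2}.

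The delicate point is the viscous term $\varepsilon(\boldsymbol{\omega}_{\varepsilon}',\boldsymbol{\zeta})_{\mathcal{L}^{2}}$: bounding it crudely through $\varepsilon^{1/2}\Vert\boldsymbol{\omega}_{\varepsilon}'\Vert_{L^{2}(\mathcal{L}^{2})}$ only gives an $O(\varepsilon^{1/2})$ right-hand side, hence the suboptimal rate $\varepsilon^{1/4}$. To recover $\sqrt{\varepsilon}$, I would write $\boldsymbol{\omega}_{\varepsilon}'=\boldsymbol{\zeta}'+\boldsymbol{\omega}'$ and split $\varepsilon(\boldsymbol{\omega}_{\varepsilon}',\boldsymbol{\zeta})=\frac{\varepsilon}{2}\frac{\mathrm{d}}{\mathrm{d}s}\Vert\boldsymbol{\zeta}\Vert_{\mathcal{L}^{2}}^{2}+\varepsilon\langle\boldsymbol{\omega}',\boldsymbol{\zeta}\rangle$, interpreting both summands as $\mathcal{H}^{-1}_{(0)}$-$\mathcal{H}^{1}_{L,0}$ dualities extending the $\mathcal{L}^{2}$ product. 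The first contribution has a favourable sign and is kept on the left (with $\boldsymbol{\zeta}(0)=\boldsymbol{0}$). For the second, using $\boldsymbol{\omega}'\in\mathcal{H}^{-1}_{(0)}$ and the duality estimate $|\langle\boldsymbol{\omega}',\boldsymbol{\zeta}\rangle|\leq\Vert\boldsymbol{\omega}'\Vert_{0,*}\Vert\boldsymbol{\zeta}\Vert_{\mathcal{H}^{1}_{L,0}}\leq C_{\delta}\Vert\boldsymbol{\omega}'\Vert_{0,*}\Vert\boldsymbol{\zeta}\Vert_{\mathcal{V}^{1}_{(0)}}$, Young's inequality absorbs $\tfrac18\Vert\boldsymbol{\zeta}\Vert_{\mathcal{V}^{1}_{(0)}}^{2}$ into the left and leaves $C\varepsilon^{2}\Vert\boldsymbol{\omega}'\Vert_{0,*}^{2}$, which integrates to $O(\varepsilon^{2})$ because $\boldsymbol{\omega}\in H^{1}(0,T;\mathcal{H}^{-1}_{(0)})$.

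For the nonlinear contributions I would treat $\boldsymbol{\pi}$ by its Lipschitz bound, $(\boldsymbol{\pi}(\boldsymbol{\varphi})-\boldsymbol{\pi}(\boldsymbol{\varphi}_{\varepsilon}),\boldsymbol{\zeta})_{\mathcal{L}^{2}}\leq(K+K_{\Gamma})\Vert\boldsymbol{\zeta}\Vert_{\mathcal{L}^{2}}^{2}$, then invoke \eqref{int-0} to bound $\Vert\boldsymbol{\zeta}\Vert_{\mathcal{L}^{2}}^{2}\leq\gamma\Vert\boldsymbol{\zeta}\Vert_{\mathcal{V}^{1}_{(0)}}^{2}+C_{\gamma}\Vert\boldsymbol{\zeta}\Vert_{0,*}^{2}$ with $\gamma$ small. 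The genuinely singular term is $(\boldsymbol{\xi}-\boldsymbol{\beta}_{\varepsilon}(\boldsymbol{\varphi}_{\varepsilon}),\boldsymbol{\zeta})_{\mathcal{L}^{2}}$; here one cannot compare $\boldsymbol{\beta}_{\varepsilon}(\boldsymbol{\varphi})$ with $\boldsymbol{\xi}$ directly, since $\boldsymbol{\beta}_{\varepsilon}(\boldsymbol{\varphi})$ converges to the minimal section rather than to $\boldsymbol{\xi}$. Instead I would exploit the resolvent identities $\varphi_{\varepsilon}-J_{\varepsilon}(\varphi_{\varepsilon})=\varepsilon\beta_{\varepsilon}(\varphi_{\varepsilon})$, $\psi_{\varepsilon}-J_{\Gamma,\varepsilon}(\psi_{\varepsilon})=\varepsilon\varrho\beta_{\Gamma,\varepsilon}(\psi_{\varepsilon})$ together with $\beta_{\varepsilon}(\varphi_{\varepsilon})\in\beta(J_{\varepsilon}(\varphi_{\varepsilon}))$ and $\xi\in\beta(\varphi)$. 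Decomposing $\varphi_{\varepsilon}-\varphi=(\varphi_{\varepsilon}-J_{\varepsilon}(\varphi_{\varepsilon}))+(J_{\varepsilon}(\varphi_{\varepsilon})-\varphi)$, monotonicity of $\beta$ kills the second piece, while the first gives $\varepsilon\int_{\Omega}\xi\beta_{\varepsilon}(\varphi_{\varepsilon})-\varepsilon\int_{\Omega}|\beta_{\varepsilon}(\varphi_{\varepsilon})|^{2}\leq\varepsilon\Vert\xi\Vert_{H}\Vert\beta_{\varepsilon}(\varphi_{\varepsilon})\Vert_{H}$; after integrating in time, the uniform bounds of Lemma \ref{uni4} and $\boldsymbol{\xi}\in L^{2}(0,T;\mathcal{L}^{2})$ render this $O(\varepsilon)$ (and analogously on $\Gamma$ with the factor $\varrho$).

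Collecting all estimates, choosing $\gamma$ small and using $\boldsymbol{\zeta}(0)=\boldsymbol{0}$, I arrive at $\tfrac12\Vert\boldsymbol{\zeta}(t)\Vert_{0,*}^{2}+\tfrac12\int_{0}^{t}\Vert\boldsymbol{\zeta}\Vert_{\mathcal{V}^{1}_{(0)}}^{2}\,\mathrm{d}s\leq C\varepsilon+C\int_{0}^{t}\Vert\boldsymbol{\zeta}\Vert_{0,*}^{2}\,\mathrm{d}s$ for all $\varepsilon\in(0,1)$. Gronwall's lemma then yields $\Vert\boldsymbol{\zeta}\Vert_{L^{\infty}(0,T;\mathcal{H}^{-1}_{(0)})}^{2}+\Vert\boldsymbol{\zeta}\Vert_{L^{2}(0,T;\mathcal{V}^{1}_{(0)})}^{2}\leq C\varepsilon$, and the equivalence of $\Vert\cdot\Vert_{0,*}$ with $\Vert\cdot\Vert_{(\mathcal{H}^{1})'}$ on $\mathcal{H}^{-1}_{(0)}$, and of $\Vert\cdot\Vert_{\mathcal{V}^{1}_{(0)}}$ with $\Vert\cdot\Vert_{\mathcal{V}^{1}}$ on zero-mean fields (which $\boldsymbol{\zeta}=\boldsymbol{\varphi}_{\varepsilon}-\boldsymbol{\varphi}$ is), converts this into the claimed $\sqrt{\varepsilon}$ bound. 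I expect the main obstacle to be precisely the viscous term: extracting the favourable $\frac{\varepsilon}{2}\frac{\mathrm{d}}{\mathrm{d}s}\Vert\boldsymbol{\zeta}\Vert_{\mathcal{L}^{2}}^{2}$ and estimating the residual $\varepsilon\langle\boldsymbol{\omega}',\boldsymbol{\zeta}\rangle$ in the dual norm is what upgrades the naive $\varepsilon^{1/4}$ to the sharp $\varepsilon^{1/2}$, and this hinges on the limit regularity $\boldsymbol{\omega}\in H^{1}(0,T;\mathcal{H}^{-1}_{(0)})$ rather than on the $\varepsilon$-dependent bounds for $\boldsymbol{\omega}_{\varepsilon}'$.
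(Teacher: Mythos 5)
Your proposal is correct, but it follows a genuinely different route from the paper's. The paper does not compare $\boldsymbol{\varphi}_{\varepsilon}$ with the limit $\boldsymbol{\varphi}$ directly: it compares two approximations $\boldsymbol{\varphi}_{\varepsilon_1}$, $\boldsymbol{\varphi}_{\varepsilon_2}$, proves the symmetric Cauchy estimate $\Vert\boldsymbol{\varphi}_{\varepsilon_1}-\boldsymbol{\varphi}_{\varepsilon_2}\Vert_{L^{\infty}(0,T;\mathcal{H}^{-1}_{(0)})}^{2}+\Vert\boldsymbol{\varphi}_{\varepsilon_1}-\boldsymbol{\varphi}_{\varepsilon_2}\Vert_{L^{2}(0,T;\mathcal{V}^{1}_{(0)})}^{2}\leq C(\varepsilon_1+\varepsilon_2)$, and then sends $\varepsilon_2\to 0$ using the already-established convergence and uniqueness. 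Both arguments hinge on the same two ingredients you identified: the resolvent identity $J_{\varepsilon}(\varphi_{\varepsilon})=\varphi_{\varepsilon}-\varepsilon\beta_{\varepsilon}(\varphi_{\varepsilon})$ together with $\beta_{\varepsilon}(\varphi_{\varepsilon})\in\beta(J_{\varepsilon}(\varphi_{\varepsilon}))$ and the $L^2$ bounds of Lemma \ref{uni4} to make the singular term $O(\varepsilon)$, and a dual-norm estimate of the viscous term via $\Vert\cdot\Vert_{0,*}$ (the paper bounds $\varepsilon_i(\boldsymbol{\varphi}_{\varepsilon_i}',\boldsymbol{\varphi}_*)_{\mathcal{L}^2}=\varepsilon_i(\mathfrak{S}^{L}\boldsymbol{\varphi}_{\varepsilon_i}',\boldsymbol{\varphi}_*)_{\mathcal{H}^1_{L,0}}$ using the uniform $L^2(0,T;\mathcal{H}^{-1}_{(0)})$ bound on $\boldsymbol{\varphi}_{\varepsilon_i}'$ from Lemma \ref{uni2}, which already gives $O(\varepsilon_1+\varepsilon_2)$ without your sign-splitting; your decomposition $\varepsilon(\boldsymbol{\omega}_{\varepsilon}',\boldsymbol{\zeta})=\frac{\varepsilon}{2}\frac{\mathrm{d}}{\mathrm{d}s}\Vert\boldsymbol{\zeta}\Vert_{\mathcal{L}^2}^2+\varepsilon\langle\boldsymbol{\omega}',\boldsymbol{\zeta}\rangle$ is a workable alternative, though slightly more elaborate than necessary). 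What each approach buys: yours is shorter and uses the exact inclusion $\xi\in\beta(\varphi)$ on the limit side, so monotonicity applies with no resolvent on that side, but it requires as input the full existence/regularity package for the weak solution (in particular $\boldsymbol{\xi}\in L^2(0,T;\mathcal{L}^2)$ and the validity of the pointwise abstract equation \eqref{2.5}); the paper's version works entirely at the approximate level, so the Cauchy property simultaneously re-derives strong convergence in $L^{\infty}(0,T;\mathcal{H}^{-1}_{(0)})\cap L^{2}(0,T;\mathcal{V}^1)$ of the whole family and the rate, at the cost of carrying two Yosida parameters through the monotonicity argument.
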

\begin{proof}
For any $\varepsilon_{1}, \varepsilon_{2}\in (0,1)$,
let $\boldsymbol{\omega}_{\varepsilon_{1}}$ and $\boldsymbol{\omega}_{\varepsilon_{2}}$ be the corresponding solutions to the approximating problem \eqref{2.9}--\eqref{2.10}, respectively. Recalling the relation $\boldsymbol{\varphi}_{\varepsilon_i}=\boldsymbol{\omega}_{\varepsilon_i}+\overline{m}_{0}\boldsymbol{1}$, $i=1,2$, we consider their difference $\boldsymbol{\varphi}_{*}=\boldsymbol{\varphi}_{\varepsilon_{1}}-\boldsymbol{\varphi}_{\varepsilon_{2}}$. It follows from  \eqref{2.9} that
\begin{align}
\mathfrak{S}^{L}(\boldsymbol{\varphi}_{*}')
+\partial\Phi_\delta(\boldsymbol{\varphi}_{*})
=-\varepsilon_{1}\boldsymbol{\varphi}_{\varepsilon_{1}}'
+\varepsilon_{2}\boldsymbol{\varphi}_{\varepsilon_{2}}'
+\mathbf{P}\left(-\boldsymbol{\beta}_{\varepsilon_{1}}(\boldsymbol{\varphi}_{\varepsilon_{1}})
+\boldsymbol{\beta}_{\varepsilon_{2}}(\boldsymbol{\varphi}_{\varepsilon_{2}})
-\boldsymbol{\pi}(\boldsymbol{\varphi}_{\varepsilon_{1}})+\boldsymbol{\pi}(\boldsymbol{\varphi}_{\varepsilon_{2}})\right)
\quad\text{in }\mathcal{L}_{(0)}^{2}.
\label{3.30}
\end{align}
Testing \eqref{3.30} by $\boldsymbol{\varphi}_{*}$, and integrating in $[0,t]\subset [0,T]$, we obtain
\begin{align}
\frac{1}{2}\Vert\boldsymbol{\varphi}_{*}(t)\Vert_{0,*}^{2} + \int_{0}^{t}\Vert\boldsymbol{\varphi}_{*}(s)\Vert_{\mathcal{V}_{(0)}^{1}}^{2}\,\mathrm{d}s &=\int_{0}^{t}\left(-\varepsilon_{1}\boldsymbol{\varphi}_{\varepsilon_{1}}'(s)
+\varepsilon_{2}\boldsymbol{\varphi}_{\varepsilon_{2}}'(s),\boldsymbol{\varphi}_{*}(s)\right)_{\mathcal{L}^{2}}\,\mathrm{d}s
\notag\\
&\quad +\int_{0}^{t}\left(-\boldsymbol{\beta}_{\varepsilon_{1}}\big(\boldsymbol{\varphi}_{\varepsilon_{1}}(s)\big)
+\boldsymbol{\beta}_{\varepsilon_{2}} \big(\boldsymbol{\varphi}_{\varepsilon_{2}}(s)\big),\boldsymbol{\varphi}_{*}(s)\right)_{\mathcal{L}^{2}}\,\mathrm{d}s
\notag\\
&\quad +\int_{0}^{t}\left(-\boldsymbol{\pi}\big(\boldsymbol{\varphi}_{\varepsilon_{1}}(s)\big)
+\boldsymbol{\pi}\big(\boldsymbol{\varphi}_{\varepsilon_{2}}(s)\big),\boldsymbol{\varphi}_{*}(s)\right)_{\mathcal{L}^{2}}\,\mathrm{d}s
\notag\\
&=: J_1+J_2+J_3.
\label{3.32}
\end{align}
 Using Lemma \ref{uni2}, we obtain that
\begin{align}
J_1
&=\int_{0}^{t}\left(-\varepsilon_{1}\mathfrak{S}^L\boldsymbol{\varphi}_{\varepsilon_{1}}'(s)
+\varepsilon_{2}\mathfrak{S}^L\boldsymbol{\varphi}_{\varepsilon_{2}}'(s),\boldsymbol{\varphi}_{*}(s)\right)_{\mathcal{H}_{L,0}^{1}}\,\mathrm{d}s
\notag\\
&\leq C\int_{0}^{t}\left(\varepsilon_{1} \Vert \boldsymbol{\varphi}_{\varepsilon_{1}}'(s)\|_{0,*} +\varepsilon_{2}\|\boldsymbol{\varphi}_{\varepsilon_{2}}'(s)\Vert_{0,*}\right)
\Vert\boldsymbol{\varphi}_{*}(s)\Vert_{\mathcal{V}^1}\,\mathrm{d}s
\notag\\
&\leq C(\varepsilon_{1}+\varepsilon_{2}).
\label{3.34}
\end{align}
Concerning $J_2$, we apply the argument in \cite{CG00}.
It follows from the definition of the Yosida approximation that $J_{\varepsilon}\varphi_{\varepsilon}=\varphi_{\varepsilon}-\varepsilon\beta_{\varepsilon}(\varphi_{\varepsilon})$.
From this we find
\begin{align}
&-\int_{0}^{t}\int_{\Omega}\big(\beta_{\varepsilon_{1}}(\varphi_{\varepsilon_{1}})
-\beta_{\varepsilon_{2}}(\varphi_{\varepsilon_{2}})\big)\varphi_{*}\,\mathrm{d}x\mathrm{d}s
\notag\\
&\quad=-\int_{0}^{t}\int_{\Omega}\big(\beta_{\varepsilon_{1}}(\varphi_{\varepsilon_{1}})-\beta_{\varepsilon_{2}}(\varphi_{\varepsilon_{2}})\big)
\big(J_{\varepsilon_{1}}(\varphi_{\varepsilon_{1}})-J_{\varepsilon_{2}}(\varphi_{\varepsilon_{2}})\big)\,\mathrm{d}x\mathrm{d}s
\notag \\
&\qquad-\int_{0}^{t}\int_{\Omega}\big(\beta_{\varepsilon_{1}}(\varphi_{\varepsilon_{1}})-\beta_{\varepsilon_{2}}
(\varphi_{\varepsilon_{2}})\big)\big(\varepsilon_{1}\beta_{\varepsilon_{1}}(\varphi_{\varepsilon_{1}})
-\varepsilon_{2}\beta_{\varepsilon_{2}}(\varphi_{\varepsilon_{2}})\big)\,\mathrm{d}x\mathrm{d}s.
\label{equ3'}
\end{align}
The first term on the right-hand side of \eqref{equ3'} is nonpositive by the monotonicity of $\beta$ and the fact that $\beta_{\varepsilon}(\varphi_{\varepsilon})\in\beta(J_{\varepsilon}(\varphi_{\varepsilon}))$ (cf. \cite[(2.7)]{GST}).
On the other hand, the second term on the right-hand side of \eqref{equ3'} can be estimated by H\"older's inequality and \eqref{3.7}, that is,
\begin{align}
&-\int_{0}^{t}\int_{\Omega}\big(\beta_{\varepsilon_{1}}(\varphi_{\varepsilon_{1}})
-\beta_{\varepsilon_{2}}(\varphi_{\varepsilon_{2}})\big)\big(\varepsilon_{1}\beta_{\varepsilon_{1}}
(\varphi_{\varepsilon_{1}})-\varepsilon_{2}\beta_{\varepsilon_{2}}(\varphi_{\varepsilon_{2}})\big)\,\mathrm{d}x\mathrm{d}s
\notag\\
&\quad= -\varepsilon_{1}\Vert\beta_{\varepsilon_{1}}(\varphi_{\varepsilon_{1}})\Vert_{L^{2}(0,T;H)}^{2}
-\varepsilon_{2}\Vert\beta_{\varepsilon_{2}}(\varphi_{\varepsilon_{2}})\Vert_{L^{2}(0,T;H)}^{2}
+(\varepsilon_{1}+\varepsilon_{2})\int_{0}^{t}\int_{\Omega}\beta_{\varepsilon_{1}}(\varphi_{\varepsilon_{1}})
\beta_{\varepsilon_{2}}(\varphi_{\varepsilon_{2}})\,\mathrm{d}x\mathrm{d}s
\notag\\
&\quad\leq(\varepsilon_{1}+\varepsilon_{2})\Vert\beta_{\varepsilon_{1}}(\varphi_{\varepsilon_{1}})\Vert_{L^{2}(0,T;H)} \Vert\beta_{\varepsilon_{2}}(\varphi_{\varepsilon_{2}})\Vert_{L^{2}(0,T;H)}
\notag\\
&\quad \leq C (\varepsilon_{1}+\varepsilon_{2}).
\notag
\end{align}
Similarly, by the monotonicity of $\beta_\Gamma$ and \eqref{3.8}, we can obtain the following estimate for boundary potentials
\begin{align}
-\int_{0}^{t}\int_{\Gamma}\big(\beta_{\Gamma,\varepsilon_{1}}(\psi_{\varepsilon_{1}})-\beta_{\Gamma,\varepsilon_{2}}(\psi_{\varepsilon_{2}})\big)\psi_{*} \,\mathrm{d}S \mathrm{d}s
\leq C(\varepsilon_{1}+\varepsilon_{2}).
\notag
\end{align}
The above estimates provide the control of $J_2$:
\begin{align}
J_2\leq C(\varepsilon_{1}+\varepsilon_{2}).
\label{3.37}
\end{align}
To handle $J_3$, using $\mathbf{(A3)}$ and the Ehrling lemma, we find
\begin{align}
J_3
&\leq \max\{K,K_{\Gamma}\}\int_0^t \Vert\boldsymbol{\varphi}_{*}(s)\Vert_{\mathcal{L}^{2}}^2\,\mathrm{d}s
\notag\\
&\leq \max\{K,K_{\Gamma}\}\gamma \int_0^t \Vert\boldsymbol{\varphi}_{*}(s)\Vert_{\mathcal{V}_{(0)}^{1}}^2\,\mathrm{d}s
+ \max\{K,K_{\Gamma}\}C_\gamma  \int_0^t \Vert\boldsymbol{\varphi}_{*}(s)\Vert_{0,*}^2\,\mathrm{d}s,
\label{3.38}
\end{align}
Combining \eqref{3.32}, \eqref{3.34}, \eqref{3.37}, \eqref{3.38}, and taking $\gamma$ sufficiently small, we obtain
\begin{align}
\Vert\boldsymbol{\varphi}_{*}(t)\Vert_{0,*}^{2}+\int_0^t \Vert\boldsymbol{\varphi}_{*}(s)\Vert_{\mathcal{V}_{(0)}^{1}}^2\,\mathrm{d}s
\leq C(\varepsilon_{1}+\varepsilon_{2})
 +C\int_0^t \Vert\boldsymbol{\varphi}_{*}(s)\Vert_{0,*}^2\,\mathrm{d}s,
 \quad \forall\,t\in[0,T].
 \notag
\end{align}
Then we deduce from Gronwall's lemma that
\begin{align}
\Vert\boldsymbol{\varphi}_{*}\Vert_{L^{\infty}(0,T;\mathcal{H}_{(0)}^{-1})}^{2}
+\Vert\boldsymbol{\varphi}_{*}\Vert_{L^{2}(0,T;\mathcal{V}_{(0)}^{1})}^{2}
\leq C(\varepsilon_{1}+\varepsilon_{2}),
\label{error}
\end{align}
where the constant $C>0$ is independent of $\varepsilon_1$ and $\varepsilon_2$.

The estimate \eqref{error} implies that $\{\bm{\varphi}_\varepsilon\}$ is a Cauchy sequence in $L^{\infty}(0,T;\mathcal{H}_{(0)}^{-1})\cap L^{2}(0,T;\mathcal{V}^{1})$ as $\varepsilon\to 0$. Denote its limit by $\bm{\varphi}$. Recalling Theorems \ref{weakexist}, \ref{contidepen}, we find $\boldsymbol{\varphi}_{\varepsilon}\to\boldsymbol{\varphi}$ strongly in $L^{2}(0,T;\mathcal{V}^{1})\cap C([0,T];\mathcal{L}^{2})$ and $\bm{\varphi}$ is the unique weak solution to problem $(S_{L,\delta})$.
Hence, we can pass to the limit $\varepsilon_{2}\rightarrow0$ in \eqref{error} and obtain
$$\Vert\boldsymbol{\varphi}_{\varepsilon_{1}}-\boldsymbol{\varphi}\Vert^{2}_{L^{\infty}(0,T;(\mathcal{H}^{1})')}+\Vert\boldsymbol{\varphi}_{\varepsilon_{1}}-\boldsymbol{\varphi}\Vert^{2}_{L^{2}(0,T;\mathcal{V}^{1})}\leq C\varepsilon_{1},$$
which completes the proof.
\end{proof}

\begin{remark}
In the recent work \cite{GST}, the authors studied the nonlocal Cahn-Hillard equation:
\begin{align}
&\partial_{t}\varphi=\Delta\mu,&&\text{in }Q,\notag\\
&\mu=B(\varphi)+\beta(\varphi)+\pi(\varphi),&&\text{in }Q,\notag\\
&\partial_{\mathbf{n}}\mu=0,&&\text{on }\Sigma,\notag
\end{align}
with $B\varphi(x):=\int_{\Omega}J(x-y)\big(\varphi(x)-\varphi(y)\big)\,\mathrm{d}y$ and $J\in L_{\text{loc}}^{2}(\mathbb{R}^{d})$.
Assuming some additional assumptions on the nonlinearity $\pi$ and applying the theory of Hilbert-Schmidt operator for the nonlocal term,
they proved the convergence rate $\|\varphi_\varepsilon-\varphi\|_{L^2(0,T;H)}\leq C\sqrt{\varepsilon}$, where $\varepsilon$ is the parameter of the Yosida approximation.
\end{remark}

\subsection{Existence and uniqueness of strong solutions}
\textbf{Proof of Theorem \ref{strongexist}.} Now we consider strong solutions to problem $(S_{L,\delta})$.
The uniqueness is guaranteed by Theorem \ref{contidepen}. As in \cite{CF15}, we can prove the existence
by deriving some uniform estimates for higher order norms of the approximating solutions.
Consider the difference between \eqref{2.9}, at time $s$ and $s+h$, that is,
\begin{align}
&\varepsilon\big(\boldsymbol{\omega}_{\varepsilon}'(s+h)-\boldsymbol{\omega}_{\varepsilon}'(s)\big)
+\mathfrak{S}^{L}\big(\boldsymbol{\omega}_{\varepsilon}'(s+h)
-\boldsymbol{\omega}_{\varepsilon}'(s)\big)
+\partial\Phi_\delta\big(\boldsymbol{\omega}_{\varepsilon}(s+h)-\boldsymbol{\omega}_{\varepsilon}(s)\big)
\notag\\
&=\mathbf{P}\left(-\boldsymbol{\beta}_{\varepsilon}\big(\boldsymbol{\omega}_{\varepsilon}(s+h)+\overline{m}_{0}\boldsymbol{1}\big)
+\boldsymbol{\beta}_{\varepsilon}\big(\boldsymbol{\omega}_{\varepsilon}(s)+\overline{m}_{0}\boldsymbol{1}\big)\right)
+\mathbf{P}\left(-\boldsymbol{\pi}\big(\boldsymbol{\omega}_{\varepsilon}(s+h)+\overline{m}_{0}\boldsymbol{1}\big)
+\boldsymbol{\pi}\big(\boldsymbol{\omega}_{\varepsilon}(s)+\overline{m}_{0}\boldsymbol{1}\big)\right)
\notag\\
&\quad +\mathbf{P}\left(\boldsymbol{f}(s+h)-\boldsymbol{f}(s)\right),
\notag
\end{align}
for a.a. $s\in(0,T-h]$.
Taking $\mathcal{L}^{2}$ inner product with $\boldsymbol{\omega}_{\varepsilon}(s+h)-\boldsymbol{\omega}_{\varepsilon}(s)$, then integrating from $0$ to $t$ with respect to $s$, and dividing the resultant by $h^{2}$, we obtain
\begin{align}
&\frac{\varepsilon}{2}\Big\Vert\frac{\boldsymbol{\omega}_{\varepsilon}(s+h)-\boldsymbol{\omega}_{\varepsilon}(s)}{h}\Big\Vert_{\mathcal{L}^{2}}^{2}
+\frac{1}{2}\Big\Vert\frac{\boldsymbol{\omega}_{\varepsilon}(s+h)-\boldsymbol{\omega}_{\varepsilon}(s)}{h}\Big\Vert_{0,*}^{2}
+\frac{1}{2}\int_{0}^{t}\Big\Vert\frac{\boldsymbol{\omega}_{\varepsilon}(s+h)-\boldsymbol{\omega}_{\varepsilon}(s)}{h}\Big\Vert_{\mathcal{V}_{(0)}^{1}}^{2}\,\mathrm{d}s \notag\\
&\quad\leq\frac{\varepsilon}{2}\Big\Vert\frac{\boldsymbol{\omega}_{\varepsilon}(h)-\boldsymbol{\omega}_{0}}{h}\Big\Vert_{\mathcal{L}^{2}}^{2}
+\frac{1}{2}\Big\Vert\frac{\boldsymbol{\omega}_{\varepsilon}(h)-\boldsymbol{\omega}_{0}}{h}\Big\Vert_{0,*}^{2}
+\Big(K+K_{\Gamma}+\frac{1}{2}\Big)\int_{0}^{t}\Big\Vert\frac{\boldsymbol{\omega}_{\varepsilon}(s+h) -\boldsymbol{\omega}_{\varepsilon}(s)}{h}\Big\Vert_{\mathcal{L}^{2}}^{2}\,\mathrm{d}s
\notag\\
&\qquad+\frac{1}{2}\int_{0}^{t}\Big\Vert\frac{\boldsymbol{f}(s+h)-\boldsymbol{f}(s)}{h}\Big\Vert_{\mathcal{L}^{2}}^{2}\,\mathrm{d}s
\notag\\
&\quad\leq\frac{\varepsilon}{2}\Big\Vert\frac{\boldsymbol{\omega}_{\varepsilon}(h)-\boldsymbol{\omega}_{0}}{h} \Big\Vert_{\mathcal{L}^{2}}^{2}+\frac{1}{2}\Big\Vert\frac{\boldsymbol{\omega}_{\varepsilon}(h)-\boldsymbol{\omega}_{0}}{h} \Big\Vert_{0,*}^{2}
+\frac{1}{4}\int_{0}^{t}\Big\Vert\frac{\boldsymbol{\omega}_{\varepsilon}(s+h) -\boldsymbol{\omega}_{\varepsilon}(s)}{h}\Big\Vert_{\mathcal{V}_{(0)}^{1}}^{2}\,\mathrm{d}s
\notag\\
&\qquad+C\int_{0}^{t}\Big\Vert\frac{\boldsymbol{\omega}_{\varepsilon}(s+h)-\boldsymbol{\omega}_{\varepsilon}(s)}{h} \Big\Vert_{0,*}^{2}\,\mathrm{d}s
+\frac{1}{2}\int_{0}^{t}\Big\Vert\frac{\boldsymbol{f}(s+h)-\boldsymbol{f}(s)}{h}\Big\Vert_{\mathcal{L}^{2}}^{2}\,\mathrm{d}s.
\notag
\end{align}
%which yields
%\begin{align}
%&\frac{\varepsilon}{2}\Big\Vert\frac{\boldsymbol{\omega}_{\varepsilon}(s+h)-\boldsymbol{\omega}_{\varepsilon}(s)}{h}
%\Big\Vert_{\mathcal{L}^{2}}^{2}+\frac{1}{2}\Big\Vert\frac{\boldsymbol{\omega}_{\varepsilon}(s+h)
%-\boldsymbol{\omega}_{\varepsilon}(s)}{h}\Big\Vert_{0,*}^{2}
%+\frac{1}{4}\int_{0}^{t}\Big\Vert\frac{\boldsymbol{\omega}_{\varepsilon}(s+h)
%-\boldsymbol{\omega}_{\varepsilon}(s)}{h}\Big\Vert_{\mathcal{V}_{(0)}^{1}}^{2}\,\mathrm{d}s
%\notag\\
%&\quad\leq\frac{\varepsilon}{2}\Big\Vert\frac{\boldsymbol{\omega}_{\varepsilon}(h)
%-\boldsymbol{\omega}_{0}}{h}\Big\Vert_{\mathcal{L}^{2}}^{2}
%+\frac{1}{2}\Big\Vert\frac{\boldsymbol{\omega}_{\varepsilon}(h)-\boldsymbol{\omega}_{0}}{h}\Big\Vert_{0,*}^{2}
%+C\int_{0}^{t}\Big\Vert\frac{\boldsymbol{\omega}_{\varepsilon}(s+h)-\boldsymbol{\omega}_{\varepsilon}(s)}{h}\Big\Vert_{0,*}^{2}\,\mathrm{d}s
%\notag\\
%&\qquad+\frac{1}{2}\int_{0}^{t}\Big\Vert\frac{\boldsymbol{f}(s+h)-\boldsymbol{f}(s)}{h}\Big\Vert_{\mathcal{L}^{2}}^{2}\,\mathrm{d}s.
%\notag
%\end{align}
The first two terms on the right-hand side can be controlled as in \cite[(4.35)]{CF15}. Integrating \eqref{2.9}  from $0$ to $h$, taking the inner product with $(\boldsymbol{\omega}_{\varepsilon}(h)-\boldsymbol{\omega}_{0})/h^{2}$, we obtain
\begin{align}
&\frac{\varepsilon}{2}\Big\Vert\frac{\boldsymbol{\omega}_{\varepsilon}(h)
-\boldsymbol{\omega}_{0}}{h}\Big\Vert_{\mathcal{L}^{2}}^{2}
+\frac{1}{2}\Big\Vert\frac{\boldsymbol{\omega}_{\varepsilon}(h)-\boldsymbol{\omega}_{0}}{h}\Big\Vert_{0,*}^{2}
\notag\\
&\quad\leq-\Big(\frac{\boldsymbol{\omega}_{\varepsilon}(h)
-\boldsymbol{\omega}_{0}}{h},\frac{1}{h}\int_{0}^{h}\mathbf{P}\Big(\partial\Phi_\delta\big(\boldsymbol{\varphi}_{\varepsilon}(s)\big)
+\boldsymbol{\beta}_{\varepsilon}\big(\boldsymbol{\varphi}_{\varepsilon}(s)\big)
+\boldsymbol{\pi}\big(\boldsymbol{\varphi}_{\varepsilon}(s)\big)-\boldsymbol{f}(s)\Big)ds\Big)_{\mathcal{L}^{2}}
\notag\\
&\quad=-\Big(\mathfrak{S}^L\frac{\boldsymbol{\omega}_{\varepsilon}(h)
-\boldsymbol{\omega}_{0}}{h},\frac{1}{h}\int_{0}^{h}\mathbf{P}\Big(\partial\Phi_\delta\big(\boldsymbol{\varphi}_{\varepsilon}(s)\big)
+\boldsymbol{\beta}_{\varepsilon}\big(\boldsymbol{\varphi}_{\varepsilon}(s)\big)
+\boldsymbol{\pi}\big(\boldsymbol{\varphi}_{\varepsilon}(s)\big)-\boldsymbol{f}(s)\Big)ds\Big)_{\mathcal{H}_{L,0}^{1}}
\notag\\
&\quad\leq\frac{1}{4}\Big\Vert\frac{\boldsymbol{\omega}_{\varepsilon}(h)
-\boldsymbol{\omega}_{0}}{h}\Big\Vert_{0,*}^{2}
+C\Big\Vert\frac{1}{h}\int_{0}^{h}\mathbf{P}\Big(\partial\Phi_{\delta}\big(\boldsymbol{\varphi}_{\varepsilon}(s)\big)
+\boldsymbol{\beta}_{\varepsilon}\big(\boldsymbol{\varphi}_{\varepsilon}(s)\big)
+\boldsymbol{\pi}\big(\boldsymbol{\varphi}_{\varepsilon}(s)\big)-\boldsymbol{f}(s)\Big)\,\mathrm{d}s
\Big\Vert_{\mathcal{H}^{1}}^{2}.
\label{equ4'}
\end{align}
Thanks to the additional assumptions $\mathbf{(A6)}$, $\mathbf{(A7)}$, the last term on the right-hand side of \eqref{equ4'} is bounded for all $\varepsilon\in (0,\varepsilon_0]$.
Then, by Gronwall's inequality, we obtain that
$$
s\mapsto\frac{\boldsymbol{\omega}_{\varepsilon}(s+h)-\boldsymbol{\omega}_{\varepsilon}(s)}{h}\quad
\text{are bounded in}\ L^{\infty}(0,T-h;\mathcal{H}_{(0)}^{-1})\cap L^{2}(0,T-h;\mathcal{V}_{(0)}^{1}),
$$
and
$$
s\mapsto\varepsilon^{1/2}\frac{\boldsymbol{\omega}_{\varepsilon}(s+h)-\boldsymbol{\omega}_{\varepsilon}(s)}{h}
\quad \text{are bounded in}\ L^{\infty}(0,T-h;\mathcal{L}^{2}),
$$
uniformly with respect to $\varepsilon\in(0,\varepsilon_{0}]$.
Passing to the limit as $h\rightarrow0$, we obtain
\begin{align}
\varepsilon^{1/2}\Vert\boldsymbol{\omega}_{\varepsilon}'\Vert_{L^{\infty}(0,T;\mathcal{L}^{2})}
+\Vert\boldsymbol{\omega}_{\varepsilon}'\Vert_{L^{\infty}(0,T;\mathcal{H}_{(0)}^{-1})}
+\Vert\boldsymbol{\omega}_{\varepsilon}'\Vert_{L^{2}(0,T;\mathcal{V}^{1})}\leq M_8,
\label{3.25}
\end{align}
where the constant $M_8>0$ is independent of $\varepsilon\in(0,\varepsilon_{0}]$.

Keeping the improved estimate \eqref{3.25} and  the assumption $\mathbf{(A7)}$ in mind, arguing as in Lemmas \ref{uni3}--\ref{uni4}, without integration in time over $(0,T)$ but taking the $L^\infty$-norm on $(0,T)$, we obtain
\begin{align}
%&\Vert\beta_{\varepsilon}(\varphi_{\varepsilon})\Vert_{L^{\infty}(0,T;L^{1}(\Omega))}
%+\Vert\beta_{\Gamma,\varepsilon}(\psi_{\varepsilon})\Vert_{L^{\infty}(0,T;L^{1}(\Gamma))}\leq M_{9},
%\notag\\
&\Vert m_{\varepsilon}\Vert_{L^{\infty}(0,T)}+\Vert\boldsymbol{\mu}_{\varepsilon}\Vert_{L^{\infty}(0,T;\mathcal{H}^{1})}
\leq M_{9},
\notag\\
&\Vert\beta_{\varepsilon}(\varphi_{\varepsilon})\Vert_{L^{\infty}(0,T;H)}
+\Vert\beta_{\Gamma,\varepsilon}(\psi_{\varepsilon})\Vert_{L^{\infty}(0,T;H_{\Gamma})}
\leq M_{9},
\notag\\
%&\Vert\partial_{\mathbf{n}}\omega_{\varepsilon}\Vert_{L^{\infty}(0,T;H_{\Gamma})}\leq M_{9},
%\notag\\
&\Vert\boldsymbol{\omega}_{\varepsilon}\Vert_{L^{\infty}(0,T;\mathcal{H}^2)}\leq M_{9},
\notag
\end{align}
where the constant $M_{9}>0$ is independent of $\varepsilon\in(0,\varepsilon_{0}]$.
Passing to the limit as $\varepsilon\rightarrow 0$, we obtain the following additional regularities for the solution $(\bm{\omega}, \bm{\mu},\bm{\xi})$
\begin{align}
&\boldsymbol{\omega}\in W^{1,\infty}(0,T;\mathcal{H}_{(0)}^{-1})\cap H^{1}(0,T;\mathcal{V}_{(0)}^{1})\cap L^{\infty}(0,T;\mathcal{H}^2),
\notag\\
&\boldsymbol{\mu}\in L^{\infty}(0,T;\mathcal{H}^{1}),\quad\boldsymbol{\xi}\in L^{\infty}(0,T;\mathcal{L}^{2}).
\notag
\end{align}
Since $\boldsymbol{\omega}'\in L^{2}(0,T;\mathcal{V}_{(0)}^{1})$ and $\mathbf{P}\boldsymbol{\mu}=\mathfrak{S}^{L}(-\boldsymbol{\omega}')$, we can infer from the elliptic estimate \eqref{2.2b} that  $\boldsymbol{\mu}\in L^{2}(0,T;\mathcal{H}^{2})$.

Applying the above regularity properties and integration by parts, the weak formulation \eqref{eq3.2} can be rewritten as
\begin{align}
&\int_{\Omega}\big(\partial_{t}\omega(t)-\Delta\mu(t)\big)y\,\mathrm{d}x
+\int_{\Gamma}\big(\partial_{t}\omega_{\Gamma}(t)-\Delta_{\Gamma}\theta(t)+\partial_{\mathbf{n}}\mu(t)\big)y_{\Gamma}\,\mathrm{d}S
\notag\\
&\qquad+\frac{1}{L}\int_{\Gamma}\big(\theta(t)-\mu(t)-L\partial_{\mathbf{n}}\mu(t)\big)\big(y_{\Gamma}-y\big)\,\mathrm{d}S=0,
\quad\forall\,\boldsymbol{y}\in\mathcal{H}^{1},\text{ a.a. }t\in(0,T).
\label{3.27}
\end{align}
Since $\boldsymbol{y}\in\mathcal{H}^{1}$ is arbitrary, it easily follows from \eqref{3.27} that
\begin{align*}
&\partial_{t}\omega=\Delta\mu,&&\text{a.e. in }Q,\\
&\partial_{t}\omega_{\Gamma}=\Delta_{\Gamma}\theta-\partial_{\mathbf{n}}\mu,&&\text{a.e. on }\Sigma,\\
&L\partial_{\mathbf{n}}\mu=\theta-\mu,&&\text{a.e. on }\Sigma.
\end{align*}
In summary, $(\bm{\varphi},\bm{\mu},\bm{\xi})$ is a strong solution of problem $(S_{L,\delta})$ in the sense of Definition \ref{strongdefn}.
\hfill $\square$

\section{Asymptotic limit as $\delta\to 0$ and well-posedness without surface diffusion}
\label{sec4}
\setcounter{equation}{0}
In this section, we investigate the asymptotic behavior of weak solutions to problem $(S_{L,\delta})$ as $\delta\rightarrow 0$,
with $L\in (0,\infty)$ being fixed.
Our goal is to show that the limit problem $(S_{L,0})$ without surface diffusion is still well-posed.

\subsection{Uniform estimates}
Let the assumptions of Theorem \ref{existthm} be satisfied.
For any $\delta,\varepsilon\in(0,1)$, we consider the approximating problem \eqref{2.9}--\eqref{2.10}
with data $\{\bm{\varphi}_0^\delta,\bm{f}^\delta\}_{\delta\in (0,1)}$. Let $\boldsymbol{\omega}_{\delta,\varepsilon}$
be the unique solution given by Proposition \ref{approexist}.
Besides, we set
\begin{align}
\boldsymbol{\varphi}_{\delta,\varepsilon}= \boldsymbol{\omega}_{\delta,\varepsilon} +\overline{m}_0^\delta\mathbf{1},\quad
\boldsymbol{\mu}_{\delta,\varepsilon}:=\varepsilon\boldsymbol{\omega}_{\delta,\varepsilon}'(t)
+\partial\Phi_{\delta}\big(\boldsymbol{\omega}_{\delta,\varepsilon}\big)
+\boldsymbol{\beta}_{\varepsilon}\big(\boldsymbol{\varphi}_{\delta,\varepsilon}\big)
+\boldsymbol{\pi}\big(\boldsymbol{\varphi}_{\delta,\varepsilon}\big)-\boldsymbol{f}^\delta,
\notag
\end{align}
where $\overline{m}_0^\delta=\overline{m}(\bm{\varphi}_0^\delta)$.
Then $\big(\boldsymbol{\varphi}_{\delta,\varepsilon},\boldsymbol{\mu}_{\delta,\varepsilon}\big)$
is also uniquely determined and satisfies \eqref{2.14}--\eqref{appromutheta} with $\{\bm{\varphi}_0^\delta,\bm{f}^\delta\}_{\delta\in (0,1)}$.

As in the proof of Theorem \ref{weakexist}, when $\varepsilon\to 0$,
$\big(\boldsymbol{\varphi}_{\delta,\varepsilon},\boldsymbol{\mu}_{\delta,\varepsilon}, \boldsymbol{\beta}_{\varepsilon}(\boldsymbol{\varphi}_{\delta,\varepsilon})\big)$
converges (up to a subsequence) to some limit triplet $\big(\boldsymbol{\varphi}_{\delta},\boldsymbol{\mu}_{\delta},\boldsymbol{\xi}_{\delta}\big)$, which is a weak solution to problem $(S_{L,\delta})$ with data $\{\bm{\varphi}_0^\delta,\bm{f}^\delta\}_{\delta\in (0,1)}$.
Since the uniqueness of $\boldsymbol{\mu}_{\delta}$, $\boldsymbol{\xi}_{\delta}$ is not clear (cf. Theorem \ref{contidepen}), the related convergence should always be understood in the sense of a suitable subsequence. Below we will not relabel the convergent subsequence for the sake of simplicity.

We now derive uniform estimates of the solutions $(\boldsymbol{\varphi}_{\delta},\boldsymbol{\mu}_{\delta}, \boldsymbol{\xi}_{\delta})$ with respect to the parameter $\delta\in(0,1)$.

\begin{lemma}
\label{Uniform0}
There exists $\widetilde{\delta}\in (0,1)$ such that
\begin{align}
\sup_{t\in[0,T]}|\overline{m}(\bm{\varphi}_\delta(t))|\leq M_{10}, \quad \forall\, \delta\in (0,\widetilde{\delta}].
\label{con-mass-d}
\end{align}
where $M_{10}>0$ is independent of $\delta\in (0,\widetilde{\delta}]$.
\end{lemma}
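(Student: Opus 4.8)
The plan is to reduce the claimed bound to a uniform estimate on the generalized means $\overline{m}_0^\delta = \overline{m}(\bm{\varphi}_0^\delta)$ of the initial data, by exploiting the conservation of total mass that holds for the KLLM model whenever $L \in (0,\infty)$.

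First I would observe that, for each fixed $\delta \in (0,1)$, the weak solution $\bm{\varphi}_\delta$ of $(S_{L,\delta})$ conserves its generalized mean in time. The quickest route is to pass to the limit $\varepsilon \to 0$ in the approximate mass conservation (the analogue of \eqref{con-mass} in Lemma \ref{uni0}, namely $\overline{m}(\bm{\varphi}_{\delta,\varepsilon}(t)) = \overline{m}_0^\delta$): since $\bm{\varphi}_{\delta,\varepsilon} \to \bm{\varphi}_\delta$ strongly in $C([0,T];\mathcal{L}^2)$ (cf. \eqref{3.18}) and the generalized mean is continuous on $\mathcal{L}^2$, this yields $\overline{m}(\bm{\varphi}_\delta(t)) = \overline{m}_0^\delta$ for all $t \in [0,T]$. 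Equivalently, one may test the variational identity \eqref{eq3.2} with $\bm{y} = \mathbf{1} = (1,1) \in \mathcal{H}^1$: all gradient terms and the jump term in $a_L(\bm{\mu}_\delta(t), \mathbf{1})$ vanish, so $\langle \partial_t \bm{\varphi}_\delta(t), \mathbf{1}\rangle_{(\mathcal{H}^1)',\mathcal{H}^1} = 0$, and since this pairing equals $(|\Omega|+|\Gamma|)\,\overline{m}(\partial_t\bm{\varphi}_\delta(t))$ while $t \mapsto \overline{m}(\bm{\varphi}_\delta(t))$ lies in $H^1(0,T)$, the mean is constant and equal to $\overline{m}_0^\delta$. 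In either case $\sup_{t\in[0,T]} |\overline{m}(\bm{\varphi}_\delta(t))| = |\overline{m}_0^\delta|$.

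It then remains to bound $|\overline{m}_0^\delta|$ uniformly for small $\delta$. Here I would use the assumed convergence of initial data \eqref{del-3}, which in particular gives $\bm{\varphi}_0^\delta \to \bm{\varphi}_0$ weakly in $\widetilde{\mathcal{V}}^1$, hence weakly in $\mathcal{L}^2$. Pairing this weak convergence with the constant $\mathbf{1}$ shows that $\int_\Omega \varphi_0^\delta \,\mathrm{d}x + \int_\Gamma \psi_0^\delta \,\mathrm{d}S \to \int_\Omega \varphi_0 \,\mathrm{d}x + \int_\Gamma \psi_0 \,\mathrm{d}S$, i.e.\ $\overline{m}_0^\delta \to \overline{m}_0 := \overline{m}(\bm{\varphi}_0)$ as $\delta \to 0$. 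A convergent family is bounded, so there exist $\widetilde{\delta} \in (0,1)$ and $M_{10} > 0$ (for instance $M_{10} = |\overline{m}_0| + 1$, with $\widetilde{\delta}$ chosen so that $|\overline{m}_0^\delta - \overline{m}_0| \le 1$ for $\delta \le \widetilde{\delta}$) such that $|\overline{m}_0^\delta| \le M_{10}$ for all $\delta \in (0,\widetilde{\delta}]$. Combined with the previous step, this is exactly \eqref{con-mass-d}.

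There is no real obstacle in this lemma: the two ingredients are entirely elementary, namely the conservation of total mass for $L \in (0,\infty)$ and the convergence of the initial means forced by \eqref{del-3}. The only point that warrants a line of care is the constancy in time of the generalized mean, which rests on the regularity $\bm{\omega}_\delta \in H^1(0,T;\mathcal{H}^{-1}_{(0)})$ and the continuity of the mean functional on $(\mathcal{H}^1)'$; note in fact that $\bm{\omega}_\delta(t)$ has zero generalized mean by construction, so that $\overline{m}(\bm{\varphi}_\delta(t)) = \overline{m}_0^\delta$ holds automatically.
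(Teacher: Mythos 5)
Your proposal is correct and follows essentially the same route as the paper: mass conservation at the approximate level \eqref{con-mass} passed to the limit $\varepsilon\to 0$ via the strong convergence \eqref{3.18}, giving $\overline{m}(\bm{\varphi}_\delta(t))=\overline{m}(\bm{\varphi}_0^\delta)$, followed by the observation that \eqref{del-3} forces $\overline{m}(\bm{\varphi}_0^\delta)\to\overline{m}(\bm{\varphi}_0)$, hence uniform boundedness for small $\delta$. The alternative derivation of mass conservation by testing \eqref{eq3.2} with $\mathbf{1}$ is a valid extra, but not needed.
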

\begin{proof}
Recalling the mass conservation property \eqref{con-mass}, we have
$\overline{m}(\boldsymbol{\varphi}_{\delta,\varepsilon}(t))=\overline{m}(\bm{\varphi}_0^\delta)$ for $t\in [0,T]$.
This together with \eqref{3.18} implies
\begin{align}
\overline{m}(\boldsymbol{\varphi}_{\delta}(t))=\overline{m}(\bm{\varphi}_0^\delta),\quad \forall\, t\in [0,T].
\label{mass-d0}
\end{align}
On the other hand, it follows from \eqref{del-3} that $\lim_{\delta\to 0} \overline{m}(\bm{\varphi}_0^\delta)= \overline{m}(\bm{\varphi}_0)\in \mathrm{Int}D(\beta_\Gamma)$.
Then the conclusion \eqref{con-mass-d} easily follows.
\end{proof}

\begin{lemma}
There exists a positive constant $M_{11}$, independent of $\delta\in(0,\widetilde{\delta}]$, such that
\begin{align}
&\Vert\boldsymbol{\varphi}_{\delta}\Vert_{L^{\infty}(0,T;(\mathcal{H}^{1})')}
+\Vert \varphi_{\delta}\Vert_{L^{2}(0,T;V)}
+\delta^{1/2}\Vert\nabla_{\Gamma}\psi_{\delta}\Vert_{L^{2}(0,T;H_{\Gamma})}
   \notag\\
&\qquad+\Vert\xi_\delta\Vert_{L^{1}(0,T;L^{1}(\Omega))}
+\Vert\xi_{\Gamma,\delta}\Vert_{L^{1}(0,T;L^{1}(\Gamma))}
\leq M_{11}.
\label{uni1'}
\end{align}
\end{lemma}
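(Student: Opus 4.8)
The plan is to obtain every bound in \eqref{uni1'} by lifting the energy estimate \eqref{eq3.1}—which is already uniform with respect to \emph{both} $\varepsilon$ and $\delta$—to the approximating family associated with the data $(\boldsymbol{f}^\delta,\boldsymbol{\varphi}_0^\delta)$, and then passing to the limit $\varepsilon\to0$. The guiding observation is that uniformity in $\delta$ survives this limit automatically: for each fixed $\delta$ the limit is taken along the convergences \eqref{3.11}--\eqref{3.18}, and since the relevant norms are weakly (lower semi)continuous, any bound holding at the approximate level with a $\delta$-independent constant is inherited by $(\boldsymbol{\varphi}_\delta,\boldsymbol{\mu}_\delta,\boldsymbol{\xi}_\delta)$ with the same constant. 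The mean is then reinstated using Lemma \ref{Uniform0}.

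First I would repeat the derivation of \eqref{eq3.1} for $\boldsymbol{\omega}_{\delta,\varepsilon}$, testing \eqref{2.9} by $\boldsymbol{\omega}_{\delta,\varepsilon}$. The only $\delta$-dependent quantities entering the right-hand side are $\|\boldsymbol{f}^\delta\|_{L^2(0,T;\mathcal{L}^2)}$ and the initial contribution $\|\boldsymbol{\omega}_0^\delta\|_{0,*}$ (together with $\pi(\overline{m}_0^\delta)$, $\pi_\Gamma(\overline{m}_0^\delta)$); by \eqref{del-2}, the boundedness of $\boldsymbol{\varphi}_0^\delta$ in $\widetilde{\mathcal{V}}^1$ coming from \eqref{del-3}, and the mean bound of Lemma \ref{Uniform0}, these are all controlled uniformly in $\delta\in(0,\widetilde{\delta}]$. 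Recalling that the $\mathcal{V}_{(0)}^1$-norm in \eqref{eq3.1} is the $b_\delta$-norm \eqref{nv01}, this yields, with a constant $C$ independent of $\varepsilon$ and $\delta$,
\begin{align}
&\|\nabla\omega_{\delta,\varepsilon}\|_{L^2(0,T;H)}
+\delta^{1/2}\|\nabla_\Gamma\omega_{\Gamma,\delta,\varepsilon}\|_{L^2(0,T;H_\Gamma)}
+\|\boldsymbol{\omega}_{\delta,\varepsilon}\|_{L^\infty(0,T;\mathcal{H}_{(0)}^{-1})}
\notag\\
&\qquad+\|\beta_\varepsilon(\varphi_{\delta,\varepsilon})\|_{L^1(Q)}
+\|\beta_{\Gamma,\varepsilon}(\psi_{\delta,\varepsilon})\|_{L^1(\Sigma)}\leq C.
\notag
\end{align}

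Next I would let $\varepsilon\to0$ for fixed $\delta$ along \eqref{3.11}--\eqref{3.18}. Weak-star lower semicontinuity transfers the $L^\infty(0,T;\mathcal{H}^{-1}_{(0)})$ bound to $\boldsymbol{\omega}_\delta$, while weak lower semicontinuity in $L^2(0,T;H)$ and $L^2(0,T;H_\Gamma)$ transfers the gradient bounds to $\nabla\omega_\delta$ and to $\delta^{1/2}\nabla_\Gamma\omega_{\Gamma,\delta}=\delta^{1/2}\nabla_\Gamma\psi_\delta$, giving at once the third term of \eqref{uni1'}. Applying the Poincaré inequality \eqref{Po4} to $(\omega_\delta,\omega_{\Gamma,\delta})\in\widetilde{\mathcal{V}}_{(0)}^1$ controls $\|\omega_\delta\|_{L^2(0,T;H)}$ by $\|\nabla\omega_\delta\|_{L^2(0,T;H)}$, so that $\|\omega_\delta\|_{L^2(0,T;V)}\leq C$; writing $\varphi_\delta=\omega_\delta+\overline{m}_0^\delta$ and invoking $|\overline{m}_0^\delta|\leq M_{10}$ from \eqref{mass-d0} and Lemma \ref{Uniform0} produces the $L^2(0,T;V)$ bound. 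The $(\mathcal{H}^1)'$ bound follows from the equivalent norm $\|\cdot\|_*$, since $\|\boldsymbol{\varphi}_\delta\|_*\leq\|\boldsymbol{\omega}_\delta\|_{0,*}+|\overline{m}_0^\delta|$ with both terms already bounded uniformly in $\delta$.

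The one step requiring care—and the only genuine subtlety—is the transfer of the $L^1$ bounds to $\xi_\delta$ and $\xi_{\Gamma,\delta}$, because along the $\varepsilon$-limit only the \emph{weak} convergences \eqref{3.14}--\eqref{3.15} in $L^2(0,T;H)$ and $L^2(0,T;H_\Gamma)$ are available, not strong ones. Here I would exploit that $Q$ and $\Sigma$ have finite measure, so that weak $L^2$ convergence implies weak $L^1$ convergence (testing against $L^\infty$ functions); since the $L^1$-norm is weakly lower semicontinuous, $\|\xi_\delta\|_{L^1(Q)}\leq\liminf_{\varepsilon\to0}\|\beta_\varepsilon(\varphi_{\delta,\varepsilon})\|_{L^1(Q)}\leq C$, and likewise for $\xi_{\Gamma,\delta}$, with $C$ independent of $\delta$. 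Collecting all the bounds above fixes the constant $M_{11}$ and completes the proof.
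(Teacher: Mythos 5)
Your proposal is correct and follows essentially the same route as the paper: lift the $\varepsilon,\delta$-uniform estimate \eqref{eq3.1} to the approximating family with data $(\boldsymbol{\varphi}_0^\delta,\boldsymbol{f}^\delta)$, pass to the limit $\varepsilon\to0$ by weak lower semicontinuity, and reinstate the mean via \eqref{con-mass-d} and the Poincar\'e inequality \eqref{Po4}. Your extra care with the $L^1$ bounds on $\xi_\delta$, $\xi_{\Gamma,\delta}$ (weak $L^2$ convergence plus lower semicontinuity of the $L^1$-norm on a finite-measure set) just makes explicit what the paper subsumes under ``weak lower semicontinuity.''
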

\begin{proof}
Recalling \eqref{eq3.1}, we have
\begin{align}
&\Vert\boldsymbol{\omega}_{\delta,\varepsilon}\Vert_{L^{\infty}(0,T;\mathcal{H}_{(0)}^{-1})}
+\Vert\nabla \omega_{\delta,\varepsilon}\Vert_{L^{2}(0,T;H)} + \delta^{1/2}\Vert\nabla_\Gamma \omega_{\Gamma,\delta,\varepsilon}\Vert_{L^{2}(0,T;H_\Gamma)}
\notag\\
&\quad+\Vert\beta_{\varepsilon}(\varphi_{\delta,\varepsilon})\Vert_{L^{1}(0,T;L^{1}(\Omega))}
+\Vert\beta_{\Gamma,\varepsilon}(\psi_{\delta,\varepsilon})\Vert_{L^{1}(0,T;L^{1}(\Gamma))}
\leq C,
\notag
\end{align}
where $C>0$ is independent of $\delta, \varepsilon$, thanks to the assumptions \eqref{del-1}, \eqref{del-2}.
By the weak lower semicontinuity as $\varepsilon\to 0$, we find
\begin{align}
&\Vert\boldsymbol{\omega}_{\delta}\Vert_{L^{\infty}(0,T;\mathcal{H}_{(0)}^{-1})}
+\Vert\nabla \omega_{\delta}\Vert_{L^{2}(0,T;H)} + \delta^{1/2}\Vert\nabla_\Gamma \omega_{\Gamma,\delta}\Vert_{L^{2}(0,T;H_\Gamma)}
\notag\\
&\quad+\Vert\xi_\delta\Vert_{L^{1}(0,T;L^{1}(\Omega))}
+\Vert\xi_{\Gamma,\delta}\Vert_{L^{1}(0,T;L^{1}(\Gamma))}
\leq C,
\notag
\end{align}
where $ \boldsymbol{\omega}_{\delta}=\boldsymbol{\varphi}_{\delta} -\overline{m}_0^\delta\mathbf{1}$ and $C>0$ is independent of $\delta$.

The above estimate combined with \eqref{con-mass-d} and the Poincar\'{e} inequality \eqref{Po4} yields the conclusion \eqref{uni1'}.
\end{proof}

\begin{lemma}
\label{Uniform2}
There exists a positive constant $M_{12}$, independent of $\delta \in(0,\widetilde{\delta}]$, such that
\begin{align}
&\Vert \varphi_{\delta}\Vert_{L^{\infty}(0,T;V)}
+\delta^{1/2}\Vert\nabla_{\Gamma}\psi_{\delta}\Vert_{L^{\infty}(0,T;H_{\Gamma})}
+\Vert\widehat{\beta}(\varphi_{\delta})\Vert_{L^{\infty}(0,T;L^{1}(\Omega))}
+\Vert\widehat{\beta}_{\Gamma}(\psi_{\delta})\Vert_{L^{\infty}(0,T;L^{1}(\Gamma))}
\notag\\
&\qquad +\Vert\partial_{t}\boldsymbol{\varphi}_{\delta}\Vert_{L^{2}(0,T;\mathcal{H}_{(0)}^{-1})}
 + \Vert\mathbf{P}\boldsymbol{\mu}_{\delta}\Vert_{L^{2}(0,T;\mathcal{H}_{L,0}^{1})}
 \leq M_{12}.
 \label{uni2'}
\end{align}
\end{lemma}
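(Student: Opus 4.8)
The plan is to establish \eqref{uni2'} at the level of the doubly regularized solutions $(\boldsymbol{\varphi}_{\delta,\varepsilon},\boldsymbol{\mu}_{\delta,\varepsilon})$, with bounds that are uniform in \emph{both} $\varepsilon$ and $\delta$, and then to recover the stated estimate for $(\boldsymbol{\varphi}_{\delta},\boldsymbol{\mu}_{\delta},\boldsymbol{\xi}_{\delta})$ by passing to the limit $\varepsilon\to 0$ through weak and weak-star lower semicontinuity. The driving tool is the higher-order energy identity, obtained exactly as in Lemma \ref{uni2}: since $\boldsymbol{\omega}_{\delta,\varepsilon}\in H^{1}(0,T;\mathcal{L}_{(0)}^{2})\cap C([0,T];\mathcal{V}^{1}_{(0)})$ by Proposition \ref{approexist}, I may test \eqref{2.9} (now carrying the data $\boldsymbol{\varphi}_0^\delta,\boldsymbol{f}^\delta$) by $\boldsymbol{\omega}_{\delta,\varepsilon}'=\boldsymbol{\varphi}_{\delta,\varepsilon}'$ and use \eqref{eq2.2} to get
\begin{align}
&\frac{\mathrm{d}}{\mathrm{d}s}\Big(\Phi_\delta(\boldsymbol{\omega}_{\delta,\varepsilon})
+\int_\Omega\widehat{\beta}_\varepsilon(\varphi_{\delta,\varepsilon})
+\int_\Gamma\widehat{\beta}_{\Gamma,\varepsilon}(\psi_{\delta,\varepsilon})
+\int_\Omega\widehat{\pi}(\varphi_{\delta,\varepsilon})
+\int_\Gamma\widehat{\pi}_\Gamma(\psi_{\delta,\varepsilon})\Big)
\notag\\
&\qquad+\varepsilon\Vert\boldsymbol{\omega}_{\delta,\varepsilon}'\Vert_{\mathcal{L}^2}^2
+\Vert\boldsymbol{\omega}_{\delta,\varepsilon}'\Vert_{0,*}^2
=\big(\mathbf{P}\boldsymbol{f}^\delta,\boldsymbol{\omega}_{\delta,\varepsilon}'\big)_{\mathcal{L}^2}.
\notag
\end{align}
The right-hand side is treated as in \eqref{equ1'-ho}: writing $(\mathbf{P}\boldsymbol{f}^\delta,\boldsymbol{\omega}_{\delta,\varepsilon}')_{\mathcal{L}^2}=(\mathbf{P}\boldsymbol{f}^\delta,\mathfrak{S}^{L}\boldsymbol{\omega}_{\delta,\varepsilon}')_{\mathcal{H}^1_{L,0}}$ and applying Young's inequality, I absorb $\tfrac12\Vert\boldsymbol{\omega}_{\delta,\varepsilon}'\Vert_{0,*}^2$ into the left and leave a remainder bounded by $\tfrac12\Vert\boldsymbol{f}^\delta\Vert_{\mathcal{H}^1}^2$, which is integrable in time uniformly in $\delta$ by \eqref{del-2}.

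After integrating over $(0,t)$, the initial energy $\Phi_\delta(\boldsymbol{\omega}_0^\delta)+\int_\Omega\widehat{\beta}_\varepsilon(\varphi_0^\delta)+\int_\Gamma\widehat{\beta}_{\Gamma,\varepsilon}(\psi_0^\delta)+\cdots$ must be controlled uniformly in $\delta$, and this is precisely what the hypotheses of Theorem \ref{existthm} supply: by \eqref{2.7} the two Moreau–Yosida energies are dominated by $\Vert\widehat{\beta}(\varphi_0^\delta)\Vert_{L^1(\Omega)}$ and $\Vert\widehat{\beta}_\Gamma(\psi_0^\delta)\Vert_{L^1(\Gamma)}$, the $\delta$-weighted surface Dirichlet energy $\delta\Vert\nabla_\Gamma\psi_0^\delta\Vert_{H_\Gamma}^2$ and these $L^1$-norms are all $\le M$ by \eqref{del-1}, the bulk term $\Vert\nabla\varphi_0^\delta\Vert_{H}$ is bounded because $\{\boldsymbol{\varphi}_0^\delta\}$ is bounded in $\widetilde{\mathcal{V}}^1$ by \eqref{del-3}, and the $\widehat{\pi}$-contributions are controlled by the quadratic growth granted by $\mathbf{(A3)}$.

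The only genuinely delicate point — and the reason the argument is nontrivial — is uniformity as $\delta\to 0$: one cannot use the surface Dirichlet energy to control anything, because its coefficient vanishes. The concave contributions $\widehat{\pi}(\varphi_{\delta,\varepsilon}(t))$, $\widehat{\pi}_\Gamma(\psi_{\delta,\varepsilon}(t))$ evaluated at time $t$ are estimated, via quadratic growth, through $\Vert\boldsymbol{\omega}_{\delta,\varepsilon}(t)\Vert_{\mathcal{L}^2}^2$, to which I apply the interpolation inequality \eqref{int-0},
\begin{align}
\Vert\boldsymbol{\omega}_{\delta,\varepsilon}(t)\Vert_{\mathcal{L}^2}^2
\leq\gamma\Vert\nabla\omega_{\delta,\varepsilon}(t)\Vert_{H}^2
+C_\gamma\Vert\boldsymbol{\omega}_{\delta,\varepsilon}(t)\Vert_{0,*}^2,\qquad\gamma>0.
\notag
\end{align}
Choosing $\gamma$ small absorbs the gradient term into the $\delta$-independent part $\tfrac12\Vert\nabla\omega_{\delta,\varepsilon}(t)\Vert_{H}^2$ of $\Phi_\delta$, while $\Vert\boldsymbol{\omega}_{\delta,\varepsilon}(t)\Vert_{0,*}^2$ is already bounded by \eqref{eq3.1} (equivalently by the previous lemma \eqref{uni1'}), uniformly in $\delta$. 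Crucially, both $C_\gamma$ in \eqref{int-0} and the bulk term into which I absorb depend only on $L$ and $\Omega$, not on $\delta$, so no quantity degenerates as $\delta\to 0$; this structural fact is what lets the estimate survive vanishing surface diffusion.

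Collecting these bounds yields, uniformly in $\varepsilon$ and $\delta$, control of $\Vert\nabla\omega_{\delta,\varepsilon}\Vert_{L^\infty(0,T;H)}$, of $\delta^{1/2}\Vert\nabla_\Gamma\omega_{\Gamma,\delta,\varepsilon}\Vert_{L^\infty(0,T;H_\Gamma)}$, of the two Moreau–Yosida energies in $L^\infty(0,T;L^1)$, and of $\Vert\boldsymbol{\omega}_{\delta,\varepsilon}'\Vert_{L^2(0,T;\mathcal{H}^{-1}_{(0)})}$; the bound on $\mathbf{P}\boldsymbol{\mu}_{\delta,\varepsilon}$ then follows from the identity \eqref{pmue}, namely $\Vert\mathbf{P}\boldsymbol{\mu}_{\delta,\varepsilon}\Vert_{\mathcal{H}^1_{L,0}}=\Vert\boldsymbol{\omega}_{\delta,\varepsilon}'\Vert_{0,*}$. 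Finally I send $\varepsilon\to 0$: the gradient, time-derivative and $\mathbf{P}\boldsymbol{\mu}$ bounds pass by weak/weak-star lower semicontinuity along the convergences \eqref{3.11}--\eqref{3.16}, the convex energies $\int_\Omega\widehat{\beta}(\varphi_\delta)$, $\int_\Gamma\widehat{\beta}_\Gamma(\psi_\delta)$ pass by the standard monotone-convergence property of Moreau–Yosida approximants together with the a.e. convergence $\varphi_{\delta,\varepsilon}\to\varphi_\delta$ from \eqref{3.18}, and $\Vert\varphi_\delta\Vert_{L^\infty(0,T;V)}$ is recovered from $\Vert\nabla\varphi_\delta\Vert_{L^\infty(0,T;H)}$ via the Poincar\'e inequality \eqref{Po4} applied to $\boldsymbol{\omega}_\delta=\boldsymbol{\varphi}_\delta-\overline{m}_0^\delta\mathbf{1}\in\widetilde{\mathcal{V}}^1_{(0)}$ and the mean bound \eqref{con-mass-d}. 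This gives \eqref{uni2'}.
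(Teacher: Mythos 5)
Your proposal is correct and follows essentially the same route as the paper: the paper simply recalls the energy estimate \eqref{equ2'b} (whose derivation via testing \eqref{2.9} with $\boldsymbol{\omega}_{\delta,\varepsilon}'$, the bound \eqref{equ1'-ho}, and the interpolation inequality \eqref{int-0} you reproduce) together with \eqref{del-1}--\eqref{del-2}, then uses \eqref{pmue}, weak lower semicontinuity as $\varepsilon\to 0$, and finally \eqref{con-mass-d} with the Poincar\'e inequality \eqref{Po4}. Your additional remarks on why the constants do not degenerate as $\delta\to 0$ and on passing the Moreau--Yosida energies to the limit correctly fill in details the paper leaves implicit.
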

\begin{proof}
Recalling \eqref{equ2'b} and using the assumptions \eqref{del-1}, \eqref{del-2}, we get
\begin{align}
&\frac{1}{4}\Vert\nabla\omega_{\delta,\varepsilon}(t)\Vert^{2}_{H}+ \frac{\delta}{2}\Vert\nabla\omega_{\Gamma,\delta,\varepsilon}(t)\Vert^{2}_{H_\Gamma}
+\frac{1}{2}\int_{0}^{t}\Vert\boldsymbol{\omega}_{\delta,\varepsilon}'(s)\Vert_{0,*}^{2}\,\mathrm{d}s
\notag\\
&\quad
+ \int_{\Omega}\widehat{\beta}_{\varepsilon}\big(\varphi_{\delta,\varepsilon}(t)\big)\,\mathrm{d}x
+ \int_{\Gamma}\widehat{\beta}_{\Gamma,\varepsilon}\big(\psi_{\delta,\varepsilon}(t)\big)\,\mathrm{d}S
\leq C,
\quad \forall\, t\in [0,T],
\notag
\end{align}
where $C>0$ is independent of $\delta, \varepsilon$. Using \eqref{pmue} and passing to the limit as $\varepsilon\to 0$, we deduce from the weak lower semicontinuity that
\begin{align}
& \Vert\nabla\varphi_{\delta}(t)\Vert^{2}_{H}+  \delta \Vert\nabla\psi_{\delta}(t)\Vert^{2}_{H_\Gamma}
+ \int_{0}^{t}\Vert\partial_t\boldsymbol{\varphi}_{\delta}(s)\Vert_{0,*}^{2}\,\mathrm{d}s
+ \int_{0}^{t}\Vert\mathbf{P}\boldsymbol{\mu}_{\delta}(s)\Vert_{H^1_{L,0}}^{2}\,\mathrm{d}s
\notag\\
&\qquad
+ \int_{\Omega}\widehat{\beta}\big(\varphi_{\delta}(t)\big)\,\mathrm{d}x
+ \int_{\Gamma}\widehat{\beta}_{\Gamma}\big(\psi_{\delta}(t)\big)\,\mathrm{d}S
 \leq C,
\quad \forall\, t\in [0,T].
\notag
\end{align}
The above estimate combined with \eqref{con-mass-d} and the Poincar\'{e} inequality \eqref{Po4} yields the conclusion \eqref{uni2'}.
\end{proof}
\begin{remark}
From \eqref{uni2'}, we easily find the uniform estimate
$$
\Vert\boldsymbol{\varphi}_{\delta}\Vert_{L^{\infty}(0,T;\widetilde{\mathcal{V}}^1)}
+\delta^{1/2}\Vert\boldsymbol{\varphi}_{\delta}\Vert_{L^{\infty}(0,T;\mathcal{V}^{1})}\leq C.
$$
\end{remark}

\begin{lemma}
There exists a positive constant $M_{13}$, independent of $\delta \in(0,\widetilde{\delta}]$  such that
\begin{align}
\Vert\xi_{\delta}\Vert_{L^{2}(0,T;L^{1}(\Omega))}
+\Vert\xi_{\Gamma,\delta}\Vert_{L^{2}(0,T;L^{1}(\Gamma))}
+\Vert\boldsymbol{\mu}_{\delta}\Vert_{L^{2}(0,T;\mathcal{H}^{1})}
\leq M_{13}.
\label{uni4'}
\end{align}
\end{lemma}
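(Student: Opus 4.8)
The plan is to reproduce the argument of Lemma~\ref{uni3} at the level of the approximations $(\boldsymbol{\varphi}_{\delta,\varepsilon},\boldsymbol{\mu}_{\delta,\varepsilon})$, tracking uniformity in \emph{both} $\delta$ and $\varepsilon$, and then to let $\varepsilon\to 0$. First I would test the approximating equation \eqref{2.9} (written with data $\{\bm{\varphi}_0^\delta,\bm{f}^\delta\}$ and mean $\overline{m}_0^\delta$) by $\boldsymbol{\omega}_{\delta,\varepsilon}$, exactly as in \eqref{3.1}. Since $\overline{m}_0^\delta\to\overline{m}(\bm{\varphi}_0)\in\mathrm{int}\,D(\beta_\Gamma)$ by \eqref{del-3}, for $\delta\in(0,\widetilde{\delta}]$ the value $\overline{m}_0^\delta$ stays in a fixed compact subset of $\mathrm{int}\,D(\beta_\Gamma)$, so the dissipativity estimates \eqref{eq2.8} hold with $r_0=\overline{m}_0^\delta$ and constants $\delta_0,c_1$ independent of $\delta,\varepsilon$. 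Dropping the nonnegative term $\Phi_\delta(\boldsymbol{\omega}_{\delta,\varepsilon})$ and rearranging, one reaches the analogue of \eqref{3.7'}, namely
\begin{align}
&\delta_0\int_\Omega|\beta_\varepsilon(\varphi_{\delta,\varepsilon})|\,\mathrm{d}x
+\delta_0\int_\Gamma|\beta_{\Gamma,\varepsilon}(\psi_{\delta,\varepsilon})|\,\mathrm{d}S
\leq c_1(|\Omega|+|\Gamma|) \notag\\
&\qquad +\big(\boldsymbol{f}^\delta-\boldsymbol{\pi}(\boldsymbol{\varphi}_{\delta,\varepsilon})
-\varepsilon\boldsymbol{\omega}_{\delta,\varepsilon}',\boldsymbol{\omega}_{\delta,\varepsilon}\big)_{\mathcal{L}^2}
-\big(\mathfrak{S}^L\boldsymbol{\omega}_{\delta,\varepsilon}',\boldsymbol{\omega}_{\delta,\varepsilon}\big)_{\mathcal{L}^2},
\notag
\end{align}
for a.a. $s\in(0,T)$.

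The key is that each term on the right is bounded in $L^2(0,T)$ uniformly in $\delta$ and $\varepsilon$. The factors $\varepsilon^{1/2}\|\boldsymbol{\omega}_{\delta,\varepsilon}'\|_{L^2(0,T;\mathcal{L}^2)}$ and $\|\boldsymbol{\omega}_{\delta,\varepsilon}'\|_{L^2(0,T;\mathcal{H}_{(0)}^{-1})}$ are controlled by the $(\delta,\varepsilon)$-uniform bound \eqref{3.4}, while $\|\boldsymbol{f}^\delta\|_{L^2(0,T;\mathcal{L}^2)}$ is bounded by \eqref{del-2} and $\boldsymbol{\pi}(\boldsymbol{\varphi}_{\delta,\varepsilon})$ by the Lipschitz assumption $\mathbf{(A3)}$. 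The point I expect to be the main obstacle is the uniform control of $\|\boldsymbol{\omega}_{\delta,\varepsilon}\|_{L^\infty(0,T;\mathcal{L}^2)}$: the energy norm $\mathcal{V}_{(0)}^1$ carries the $\delta$-weighted surface gradient and hence only delivers $\|\nabla\omega_{\delta,\varepsilon}\|_H$ and $\delta^{1/2}\|\nabla_\Gamma\omega_{\Gamma,\delta,\varepsilon}\|_{H_\Gamma}$ uniformly. To control the full $\mathcal{L}^2$ norm $\delta$-uniformly I would use the $\delta$-independent Poincar\'e inequality \eqref{Po4} on $\widetilde{\mathcal{V}}_{(0)}^1$ for the bulk part and the trace theorem for the boundary part $\omega_{\Gamma,\delta,\varepsilon}=\omega_{\delta,\varepsilon}|_\Gamma$, both bounded through $\|\omega_{\delta,\varepsilon}\|_V$. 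Integrating over $(0,T)$ then gives $\|\beta_\varepsilon(\varphi_{\delta,\varepsilon})\|_{L^2(0,T;L^1(\Omega))}+\|\beta_{\Gamma,\varepsilon}(\psi_{\delta,\varepsilon})\|_{L^2(0,T;L^1(\Gamma))}\leq C$ uniformly, and passing to the limit $\varepsilon\to 0$ via \eqref{3.14}, \eqref{3.15}, the continuous embedding $L^2(0,T;H)\hookrightarrow L^2(0,T;L^1)$ and weak lower semicontinuity of the norm yields the first two bounds in \eqref{uni4'}, uniformly in $\delta\in(0,\widetilde{\delta}]$.

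It remains to bound $\|\boldsymbol{\mu}_\delta\|_{L^2(0,T;\mathcal{H}^1)}$. Lemma~\ref{Uniform2} already provides $\|\mathbf{P}\boldsymbol{\mu}_\delta\|_{L^2(0,T;\mathcal{H}_{L,0}^1)}\leq M_{12}$, so by the Poincar\'e inequality \eqref{Po3} and the norm equivalence on $\mathcal{H}_{L,0}^1$ the zero-mean part $\mathbf{P}\boldsymbol{\mu}_\delta$ is bounded in $L^2(0,T;\mathcal{H}^1)$. To handle the generalized mean \eqref{gmean} I would insert the constant test function $\boldsymbol{z}=\boldsymbol{1}\in\mathcal{V}^1$ into the weak formulation \eqref{eq3.3} satisfied by the $\delta$-solution; since $b_\delta(\boldsymbol{\varphi}_\delta,\boldsymbol{1})=0$, this gives
\begin{align}
(|\Omega|+|\Gamma|)\,\overline{m}(\boldsymbol{\mu}_\delta)
=\int_\Omega\big(\xi_\delta+\pi(\varphi_\delta)-f^\delta\big)\,\mathrm{d}x
+\int_\Gamma\big(\xi_{\Gamma,\delta}+\pi_\Gamma(\psi_\delta)-f_\Gamma^\delta\big)\,\mathrm{d}S.
\notag
\end{align}
The $L^2(0,T)$ norm of the right-hand side is controlled by the two $L^2(0,T;L^1)$ bounds just obtained, by $\mathbf{(A3)}$ together with the uniform bound on $\|\varphi_\delta\|_{L^\infty(0,T;V)}$ from \eqref{uni2'} (and the trace theorem for $\psi_\delta$), and by \eqref{del-2}. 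Hence $\|\overline{m}(\boldsymbol{\mu}_\delta)\|_{L^2(0,T)}\leq C$, and combining this with the bound on $\mathbf{P}\boldsymbol{\mu}_\delta$ through the decomposition $\boldsymbol{\mu}_\delta=\mathbf{P}\boldsymbol{\mu}_\delta+\overline{m}(\boldsymbol{\mu}_\delta)\boldsymbol{1}$ completes \eqref{uni4'}. The only genuinely new ingredient relative to Lemma~\ref{uni3} is the $\delta$-uniform handling of the surface terms described above; the rest is careful bookkeeping of already-established uniform estimates.
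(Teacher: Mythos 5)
Your proposal is correct and follows essentially the same route as the paper, which simply invokes the argument of Lemma \ref{uni3} together with the uniform estimates of Lemmas \ref{Uniform0}--\ref{Uniform2} and a passage to the limit $\varepsilon\to 0$. The extra points you single out — that $\overline{m}_0^\delta$ stays in a fixed compact subset of $\mathrm{int}\,D(\beta_\Gamma)$ so the constants in \eqref{eq2.8} are $\delta$-independent, and that the $\mathcal{L}^2$-norm of $\boldsymbol{\omega}_{\delta,\varepsilon}$ must be controlled through \eqref{Po4} and the trace theorem rather than the $\delta$-weighted energy norm — are exactly the bookkeeping the paper leaves implicit, and your derivation of the mean of $\boldsymbol{\mu}_\delta$ by testing \eqref{eq3.3} with $\boldsymbol{z}=\boldsymbol{1}$ is the same identity the paper obtains from \eqref{me}.
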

\begin{proof}
Using the estimates obtained in Lemmas \ref{Uniform0}--\ref{Uniform2}, we can get the conclusion \eqref{uni4'} by the same argument for Lemma \ref{uni3} and passing to the limit as $\varepsilon\to 0$.
\end{proof}

\begin{lemma}
\label{Uniform4}
There exists a positive constant $M_{14}$, independent of $\delta \in(0,\widetilde{\delta}]$, such that
\begin{align}
& \Vert\Delta \varphi_{\delta}\Vert_{L^{2}(0,T;H)}+ \Vert\partial_{\mathbf{n}}\varphi_{\delta}\Vert_{L^{2}(0,T;(H^{1/2}(\Gamma))')}
+ \delta^{1/2}\Vert\Delta_{\Gamma}\psi_{\delta}\Vert_{L^{2}(0,T;V_{\Gamma}')}
\notag\\
&\quad
+\Vert\xi_{\delta}\Vert_{L^{2}(0,T;H)}
+\Vert\xi_{\Gamma,\delta}\Vert_{L^{2}(0,T;V_{\Gamma}')}
\leq M_{14}.
\label{uni5'}
\end{align}
\end{lemma}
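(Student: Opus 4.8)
The plan is to split \eqref{uni5'} into a bulk part, which survives the limit $\delta\to0$ with full $L^2(0,T;H)$ regularity, and a boundary part, which survives only in the weaker dual norms indicated. The crucial observation is that the bulk bound \eqref{be-be} obtained in Lemma \ref{uni4} is in fact uniform in $\delta$ as well as in $\varepsilon$: in the testing identity \eqref{equ2'} the two surface contributions $\int_\Omega\beta_\varepsilon'(\varphi_\varepsilon)|\nabla\omega_\varepsilon|^2\,\mathrm{d}x\geq0$ and $\delta\int_\Gamma\beta_\varepsilon'(\psi_\varepsilon)|\nabla_\Gamma\omega_{\Gamma,\varepsilon}|^2\,\mathrm{d}S\geq0$ are nonnegative and are simply discarded, while the remaining right-hand side is controlled solely through the quantities bounded in Lemmas \ref{uni2} and \ref{uni3}, whose constants are uniform in both $\varepsilon$ and $\delta$, together with the data bounds \eqref{del-1}--\eqref{del-2}. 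By contrast, the estimate \eqref{3.8} for $\beta_{\Gamma,\varepsilon}(\psi_\varepsilon)$ genuinely deteriorates as $\delta\to0$ through the factor $1/\delta$ in \eqref{es-no2}, which is precisely why the boundary terms in \eqref{uni5'} are measured in $V_\Gamma'$ rather than in $H_\Gamma$.

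Performing this computation at the level of $\bm{\varphi}_{\delta,\varepsilon}$ and passing to the limit $\varepsilon\to0$ by weak lower semicontinuity, I first obtain $\|\xi_\delta\|_{L^2(0,T;H)}\leq C$ uniformly in $\delta$. Next, reading \eqref{eq3.5} as $-\Delta\varphi_\delta=-\mu_\delta+\xi_\delta+\pi(\varphi_\delta)-f^\delta$ and recalling that $\mu_\delta$ is bounded in $L^2(0,T;\mathcal{H}^1)$ by \eqref{uni4'}, that $\pi$ is Lipschitz with $\varphi_\delta\in L^\infty(0,T;V)$ (the Remark after Lemma \ref{Uniform2}), and that $f^\delta$ is bounded by \eqref{del-2}, a comparison argument yields $\|\Delta\varphi_\delta\|_{L^2(0,T;H)}\leq C$. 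Since for almost every $t$ one has $\varphi_\delta(t)\in V$ and $\Delta\varphi_\delta(t)\in H$, the generalized Green formula \cite[Theorem 2.27]{BG} defines $\partial_{\mathbf{n}}\varphi_\delta(t)\in(H^{1/2}(\Gamma))'$ with $\|\partial_{\mathbf{n}}\varphi_\delta(t)\|_{(H^{1/2}(\Gamma))'}\leq C(\|\nabla\varphi_\delta(t)\|_H+\|\Delta\varphi_\delta(t)\|_H)$; squaring and integrating in time bounds $\partial_{\mathbf{n}}\varphi_\delta$ in $L^2(0,T;(H^{1/2}(\Gamma))')$.

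For the remaining boundary terms I use the weak form of the Laplace--Beltrami operator: for $\zeta_\Gamma\in V_\Gamma$ one has $\langle\Delta_\Gamma\psi_\delta,\zeta_\Gamma\rangle=-\int_\Gamma\nabla_\Gamma\psi_\delta\cdot\nabla_\Gamma\zeta_\Gamma\,\mathrm{d}S$, so that $\|\Delta_\Gamma\psi_\delta(t)\|_{V_\Gamma'}\leq\|\nabla_\Gamma\psi_\delta(t)\|_{H_\Gamma}$. Multiplying by $\delta^{1/2}$ and invoking the bound $\delta^{1/2}\|\nabla_\Gamma\psi_\delta\|_{L^\infty(0,T;H_\Gamma)}\leq M_{12}$ from \eqref{uni2'} controls $\delta^{1/2}\Delta_\Gamma\psi_\delta$ in $L^2(0,T;V_\Gamma')$, hence also $\delta\Delta_\Gamma\psi_\delta$ in the same space up to the factor $\delta^{1/2}\leq1$. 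Finally, I rewrite \eqref{eq3.6} as $\xi_{\Gamma,\delta}=\theta_\delta-\partial_{\mathbf{n}}\varphi_\delta+\delta\Delta_\Gamma\psi_\delta-\pi_\Gamma(\psi_\delta)+f_\Gamma^\delta$ and estimate it in $V_\Gamma'$ term by term, using the continuous embeddings $H_\Gamma\hookrightarrow V_\Gamma'$ and $(H^{1/2}(\Gamma))'\hookrightarrow V_\Gamma'$ (the latter dual to $V_\Gamma=H^1(\Gamma)\hookrightarrow H^{1/2}(\Gamma)$): $\theta_\delta$ is bounded in $L^2(0,T;V_\Gamma)$ by \eqref{uni4'}, $\pi_\Gamma(\psi_\delta)$ in $L^\infty(0,T;H_\Gamma)$ since $\psi_\delta\in L^\infty(0,T;H_\Gamma)$ and $\pi_\Gamma$ is Lipschitz, $f_\Gamma^\delta$ in $L^2(0,T;V_\Gamma)$ by \eqref{del-2}, and $\partial_{\mathbf{n}}\varphi_\delta$, $\delta\Delta_\Gamma\psi_\delta$ were just controlled. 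Collecting all contributions gives \eqref{uni5'}.

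The main obstacle is conceptual rather than computational: one must recognize the structural asymmetry between bulk and boundary. The coefficient $\delta$ is exactly what was exploited in Lemma \ref{uni4} to bootstrap the boundary selection $\beta_{\Gamma,\varepsilon}(\psi_\varepsilon)$ into $L^2(0,T;H_\Gamma)$ and to obtain $\mathcal{H}^2$ regularity, so those estimates are unavailable in the limit. The correct remedy is to load all recoverable regularity onto the bulk quantities — legitimate precisely because \eqref{be-be} is $\delta$-robust — and then to read off $\partial_{\mathbf{n}}\varphi_\delta$ and $\xi_{\Gamma,\delta}$ from the two field equations in the weakest norms compatible with the surviving bulk bounds, namely $(H^{1/2}(\Gamma))'$ and $V_\Gamma'$. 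Throughout, care is needed to ensure that every constant in the chain depends only on the $\delta$-uniform data bounds \eqref{del-1}--\eqref{del-2} and on the previously established $\delta$-uniform estimates, and never on $\delta$ through the discarded surface terms.
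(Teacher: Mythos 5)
Your proposal is correct and follows essentially the same route as the paper: the $\delta$-uniform version of \eqref{be-be} gives $\xi_\delta$ in $L^2(0,T;H)$, comparison in \eqref{eq3.5} gives $\Delta\varphi_\delta$, the generalized trace theorem gives $\partial_{\mathbf{n}}\varphi_\delta$ in $(H^{1/2}(\Gamma))'$, and comparison in \eqref{eq3.6} combined with the $V_\Gamma'$-bound $\|\Delta_\Gamma\psi_\delta\|_{V_\Gamma'}\le\|\nabla_\Gamma\psi_\delta\|_{H_\Gamma}$ yields the boundary terms. The only cosmetic difference is that the paper first bounds the combination $-\delta\Delta_\Gamma\psi_\delta+\xi_{\Gamma,\delta}$ in $(H^{1/2}(\Gamma))'$ and then peels off $\delta\Delta_\Gamma\psi_\delta$, whereas you isolate $\xi_{\Gamma,\delta}$ directly; both are valid.
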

\begin{proof}
By the same argument for \eqref{be-be} and using the assumptions \eqref{del-1}, \eqref{del-2}, we can deduce that
\begin{align}
\Vert\beta_{\varepsilon}(\varphi_{\delta,\varepsilon})\Vert_{L^{2}(0,T;H)}
+ \varrho^{-1/2}\Vert\beta_{\varepsilon}(\psi_{\delta,\varepsilon})\Vert_{L^{2}(0,T;H_{\Gamma})}\leq C,
\notag
\end{align}
where $C>0$ is independent of $\delta, \varepsilon$. Passing to the limit as $\varepsilon\to 0$, we get
\begin{align}
\Vert\xi_{\delta}\Vert_{L^{2}(0,T;H)}
+ \varrho^{-1/2}\Vert\xi_{\delta}\Vert_{L^{2}(0,T;H_{\Gamma})}
\leq C.
\label{equ11}
\end{align}
By comparison in \eqref{eq3.5}, we infer from the above estimates and the assumption \eqref{del-2} that
\begin{align}
\Vert\Delta \varphi_{\delta}\Vert_{L^{2}(0,T;H)} \leq C,
\label{equ12}
\end{align}
where $C>0$ is independent of $\delta$.
Thanks to the trace theorem \cite[Theorem 2.27]{BG} and the elliptic regularity theorem \cite[Theorem 3.2]{BG}, we infer from the estimates \eqref{uni2'}, \eqref{equ12} that
\begin{align}
\Vert\partial_{\mathbf{n}}\varphi_{\delta}\Vert_{L^{2}(0,T;(H^{1/2}(\Gamma))')} \leq C.
\label{equ13}
\end{align}
By comparison in \eqref{eq3.6}, we infer from the estimates \eqref{uni2'}, \eqref{uni4'}, \eqref{equ13} and the assumption \eqref{del-2} that
\begin{align}
& \Vert-\delta\Delta_{\Gamma}\psi_{\delta}+\xi_{\Gamma,\delta}\Vert_{L^{2}(0,T;(H^{1/2}(\Gamma))')} \notag\\
&\quad \leq \|\theta_\delta\|_{L^{2}(0,T;H_\Gamma)} + \|\partial_{\mathbf{n}}\varphi_\delta\|_{L^{2}(0,T;(H^{1/2}(\Gamma))')} + \|\pi_{\Gamma}(\psi_\delta)\|_{L^{2}(0,T;H_\Gamma)}
+\|f_\Gamma^\delta\|_{L^{2}(0,T;H_\Gamma)}
\notag\\
&\quad \leq C.
\label{equ14}
\end{align}
Since $\delta^{1/2}\Delta_\Gamma \psi_\delta$  is uniformly bounded in $L^2(0,T;V_\Gamma')$ by the estimate \eqref{uni1'}, a direct comparison
in \eqref{equ14} yields
\begin{align}
\Vert\xi_{\Gamma,\delta}\Vert_{L^{2}(0,T;V_{\Gamma}')}
\leq C.
\label{equ15}
\end{align}
Collecting the estimates \eqref{equ11}, \eqref{equ12}, \eqref{equ13} and \eqref{equ15}, we arrive at the conclusion \eqref{uni5'}.
\end{proof}

\subsection{Passage to the limit as $\delta\to 0$}
\textbf{Proof of Theorem \ref{existthm}.}
From the uniform estimates obtained in Lemmas \ref{Uniform0}--\ref{Uniform4}, we find that there exist a triplet $\big(\boldsymbol{\varphi},\boldsymbol{\mu},\boldsymbol{\xi}\big)$ satisfying the regularity properties
\begin{align*}
&\boldsymbol{\varphi}\in H^{1}(0,T;(\mathcal{H}^{1})')\cap L^{\infty}(0,T;\widetilde{\mathcal{V}}^1),\quad\Delta \varphi\in L^{2}(0,T;H),\\
&\boldsymbol{\mu}\in L^{2}(0,T;\mathcal{H}^{1}),\\
&\boldsymbol{\xi}\in L^{2}(0,T;H\times (H^{1/2}(\Gamma))'),
\end{align*}
and the following convergence results hold  as $\delta\to 0$ (in the sense of a subsequence)
\begin{align}
\boldsymbol{\varphi}_{\delta}&\to\boldsymbol{\varphi}
&&\text{weakly star in }L^{\infty}(0,T;\widetilde{\mathcal{V}}^1),
\label{conver1}\\
\boldsymbol{\varphi}_{\delta}&\to\boldsymbol{\varphi}
&&\text{weakly in }H^{1}(0,T;(\mathcal{H}^{1})'),
\label{conver1a}\\
\Delta \varphi_{\delta}&\to\Delta \varphi
&&\text{weakly in }L^{2}(0,T;H),
\label{conver2}\\
\boldsymbol{\mu}_{\delta}&\to\boldsymbol{\mu}
&&\text{weakly in }L^{2}(0,T;\mathcal{H}^{1}),
\label{conver3}\\
\boldsymbol{\xi}_{\delta}&\to\boldsymbol{\xi}
&&\text{weakly in }L^{2}(0,T;H\times V_{\Gamma}'),
\label{conver4}\\
\delta\boldsymbol{\varphi}_{\delta}&\to\boldsymbol{0}
&&\text{strongly in }L^{\infty}(0,T;\mathcal{V}^{1}),
\label{conver5}\\
-\delta\Delta_{\Gamma}\psi_{\delta}+\xi_{\Gamma,\delta}&\to\xi_\Gamma
&&\text{weakly in }L^{2}(0,T;(H^{1/2}(\Gamma))').
\label{conver6}
\end{align}
Here, we note that \eqref{conver5} implies $\delta\Delta_\Gamma \psi_\delta\to 0$ in $L^2(0,T;V_\Gamma')$, thus \eqref{conver4} and \eqref{conver6} are consistent.
Next, by Lemma \ref{ALS}, we can conclude
\begin{align}
\boldsymbol{\varphi}_{\delta}\to\boldsymbol{\varphi}\quad\qquad\text{ strongly in }C([0,T];\mathcal{L}^{2}).
\label{compact1}
\end{align}
This implies $\boldsymbol{\varphi}(0)=\boldsymbol{\varphi}_{0}$ thanks to the assumption \eqref{del-3}.
Besides, it follows from \eqref{mass-d0} that
\begin{align}
\overline{m}(\boldsymbol{\varphi}(t))=\overline{m}(\bm{\varphi}_0),\quad \forall\, t\in [0,T].
\notag
\end{align}
The strong convergence \eqref{compact1} combined with the assumption $\mathbf{(A3)}$ also gives
\begin{align*}
\pi(\varphi_{\delta})\to\pi(\varphi)\quad\text{strongly in }C([0,T];H),
\quad\pi_{\Gamma}(\psi_{\delta})\to\pi_{\Gamma}(\psi)\quad\text{strongly in }C([0,T];H_{\Gamma}).
\end{align*}
Passing to the weak limit as $\delta\to 0$ in \eqref{eq3.2} yields the variational formulation \eqref{modelu}.
Next, we write \eqref{eq3.3} as
\begin{align}
\big(\boldsymbol{\mu}_\delta(t),\boldsymbol{z}\big)_{\mathcal{L}^{2}}
&= \big(\nabla  \varphi_\delta(t),\nabla z\big)_H + \big(\xi_\delta,z)_H
+ \big\langle -\delta\Delta_{\Gamma}\psi_{\delta} +\xi_{\Gamma,\delta}, z_\Gamma\big\rangle_{(H^{1/2}(\Gamma))',H^{1/2}(\Gamma)}
\notag\\
&\quad +\big(\boldsymbol{\pi}\big(\boldsymbol{\varphi}_\delta(t)\big)-\boldsymbol{f}^\delta(t),\boldsymbol{z}\big)_{\mathcal{L}^{2}}, \quad\forall\,\boldsymbol{z}\in\mathcal{V}^{1},
\notag
\end{align}
for a.a. $t\in (0,T)$.
Letting $\delta\to 0$, we easily recover \eqref{modeltheta}.
Thanks to the classical results in \cite{Br,B}, the weak convergence \eqref{conver4} and the strong convergence \eqref{compact1} combined with the demi-closedness of the maximal monotone operator $\beta$ yields \eqref{xi}, that is,  $\xi\in\beta(\varphi)$ almost everywhere in $Q$.
Moreover, by exactly the same argument as in \cite[Section 3.2]{CFS} we can justify the variational inequality \eqref{zeta} for $\xi_\Gamma$.
According to \eqref{conver6} and \cite[Remark 2.6]{CFS}, we have $\xi_\Gamma\in \partial J_\Sigma(\psi)$, where
$$
J_\Sigma:\,L^2(0,T;H^{1/2}(\Gamma))\to [0,\infty],\quad
J_\Sigma:=
\begin{cases}
\int_\Sigma \widehat{\beta}_\Gamma(\psi)\,\mathrm{d}S\mathrm{d}t, \quad \text{if}\ \widehat{\beta}_\Gamma(\psi)\in L^1(\Sigma),\\
\infty,\qquad\qquad \qquad\  \text{otherwise}.
\end{cases}
$$

In summary, the limit $(\boldsymbol{\varphi},\boldsymbol{\mu},\boldsymbol{\xi})$ is a weak solution to the system $(S_{L,0})$ in the sense of Definition \ref{weakdefinition}. The proof of Theorem \ref{existthm} is complete.
\hfill $\square$

\subsection{Uniqueness and further properties}
\textbf{Proof of Theorem \ref{continuousdepen}.}
The proof is similar to that for Theorem \ref{contidepen}. Keeping in mind that  $\delta=0$, in analogy to \eqref{3.19a}, we obtain
\begin{align}
&\frac{1}{2}\frac{\mathrm{d}}{\mathrm{d}s}\Vert\boldsymbol{\omega}_{1}(s)-\boldsymbol{\omega}_{2}(s)\Vert_{0,*}^{2}
+\Vert\nabla (\omega_{1}(s)-\omega_{2}(s))\Vert_{H}^{2}
+  \int_{\Omega}\big(\xi_1(s)-\xi_2(s)\big)\big(\omega_{1}(s)-\omega_{2}(s)\big)\,\mathrm{d}x
\notag\\
&\qquad +\big\langle\xi_{\Gamma,1}(s)-\xi_{\Gamma,2}(s),\omega_{\Gamma,1}(s)-\omega_{\Gamma,2}(s)\big\rangle_{(H^{1/2}(\Gamma))',H^{1/2}(\Gamma)}
\notag\\
&\quad\leq(K+K_{\Gamma})\Vert\boldsymbol{\omega}_{1}(s)-\boldsymbol{\omega}_{2}(s)\Vert_{\mathcal{L}^{2}}^{2}
+\Vert\boldsymbol{f}_{1}(s)-\boldsymbol{f}_{2}(s)\Vert_{(\widetilde{\mathcal{V}}^{1})'}\Vert\boldsymbol{\omega}_{1}(s) -\boldsymbol{\omega}_{2}(s)\Vert_{\widetilde{\mathcal{V}}^{1}}.
\label{3.19a-d}
\end{align}
Since $\boldsymbol{\omega}_{1}(s)-\boldsymbol{\omega}_{2}(s)\in \widetilde{\mathcal{V}}_{(0)}^{1}$ for a.a. $s\in (0,T)$,
from the Poincar\'{e} inequality \eqref{Po4} and
the interpolation inequality \eqref{int-0}, we get
\begin{align}
&\Vert\boldsymbol{\omega}_{1}(s)-\boldsymbol{\omega}_{2}(s)\Vert_{\widetilde{\mathcal{V}}^{1}}
\leq C \Vert\nabla (\omega_{1}(s)-\omega_{2}(s))\Vert_{H},
\notag\\
&\Vert\boldsymbol{\omega}_{1}(s)-\boldsymbol{\omega}_{2}(s)\Vert_{\mathcal{L}^{2}}^{2}
\leq \gamma \Vert\nabla (\omega_{1}(s)-\omega_{2}(s))\Vert_{H}^{2}
+C_\gamma \Vert\boldsymbol{\omega}_{1}(s)-\boldsymbol{\omega}_{2}(s)\Vert_{0,*}^{2},
\notag
\end{align}
for any $\gamma>0$ and a.a. $s\in (0,T)$.
Besides, it follows from  the monotonicity of $\beta$, $\beta_{\Gamma}$ that
\begin{align}
 &\int_{\Omega}\big(\xi_1(s)-\xi_2(s)\big)\big(\omega_{1}(s)-\omega_{2}(s)\big)\,\mathrm{d}x
\notag\\
&\quad +\big\langle\xi_{\Gamma,1}(s)-\xi_{\Gamma,2}(s),\omega_{\Gamma,1}(s)-\omega_{\Gamma,2}(s) \big\rangle_{(H^{1/2}(\Gamma))',H^{1/2}(\Gamma)} \geq 0,
\quad \text{for a.a.}\ s\in (0,T).
\notag
\end{align}
Inserting the above inequalities into \eqref{3.19a-d}, taking $\gamma$ sufficiently small, then
by Gronwall's lemma, the fact $\overline{m}(\boldsymbol{\varphi}_{1})=\overline{m}(\boldsymbol{\varphi}_{2})$ and the Poincar\'e inequality \eqref{Po4},
we can conclude \eqref{conti-d0}.
\hfill $\square$
\medskip

Finally, if the bulk and boundary potentials exhibit the same growth, some further properties of the asymptotic limit as $\delta\to 0$ can be obtained.
More precisely, we have

\begin{corollary}\label{Cor-d}
Let the assumptions of Theorem \ref{existthm} be satisfied. In addition, we assume
\begin{description}
\item[$\mathbf{(A8)}$] $D(\beta)= D(\beta_\Gamma)$ and there exists a constant $M\geq1$ such that
\begin{align}
\frac{1}{M}|\beta^\circ_{\Gamma}(r)|-M\leq|\beta^\circ(r)|\leq M |\beta^\circ_{\Gamma}(r)|+M,
\quad\forall\,r\in D(\beta).
\notag
\end{align}
\end{description}
\begin{itemize}
\item[(1)] Improved regularity: the
limit triplet $(\bm{\varphi}, \bm{\mu}, \bm{\xi})$ obtained in Theorem \ref{existthm} satisfies
\begin{align*}
&\varphi\in L^{2}(0,T;H^{3/2}(\Omega)),\quad\partial_{\mathbf{n}}\varphi\in L^{2}(0,T;H_{\Gamma}),\quad \psi\in L^{2}(0,T;V_{\Gamma}),\\
&\xi_{\Gamma}\in L^{2}(0,T;H_{\Gamma}),\quad\xi_{\Gamma}\in\beta_{\Gamma}(\psi)\quad\text{a.e. on }\Sigma.
\end{align*}
\item[(2)] Refined convergence (in the sense of a subsequence): it holds
\begin{align}
\xi_{\Gamma,\delta}&\to\xi\,&&\text{ weakly in }L^{2}(0,T;H_{\Gamma}),\notag\\
\delta \psi_{\delta}&\to0\,&&\text{ weakly in }L^{2}(0,T;H^{3/2}(\Gamma)),\notag\\
\partial_{\mathbf{n}}\varphi_{\delta}-\delta\Delta_{\Gamma}\psi_{\delta}\,&\to\partial_{\mathbf{n}}\varphi\,&&\text{ weakly in }L^{2}(0,T;H_{\Gamma}),
\notag
\end{align}
as $\delta\rightarrow0$ and \eqref{thetapoint} can be replaced by
$\theta=\partial_{\mathbf{n}}\varphi+\xi_{\Gamma}+\pi_{\Gamma}(\psi)-f_\Gamma$ almost everywhere on $\Sigma$.
\item[(3)] Convergence rate:  if $\overline{m}(\bm{\varphi}_0^\delta)\equiv\overline{m}(\bm{\varphi}_0)$ for $\delta\in (0,1)$,
then for all $\delta\in(0,1)$, it holds
\begin{align}
\Vert\boldsymbol{\varphi}_{\delta}-\boldsymbol{\varphi}\Vert_{L^{\infty}(0,T;\mathcal{H}_{(0)}^{-1})}
+\Vert\boldsymbol{\varphi}_{\delta}-\boldsymbol{\varphi}\Vert_{L^{2}(0,T;\widetilde{\mathcal{V}}^1)}
\leq C\big(\delta^{1/2}+ \Vert\boldsymbol{\varphi}^{\delta}_0-\boldsymbol{\varphi}_0\Vert_{0,*}
+ \|\bm{f}^\delta-\bm{f}\|_{L^2(0,T;\mathcal{L}^2)}\big).
\notag
\end{align}
\end{itemize}
\end{corollary}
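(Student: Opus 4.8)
The plan is to re-run the compactness argument of Theorem~\ref{existthm}, the only new ingredient being the reverse domination in $\mathbf{(A8)}$, and to read off the three assertions from uniform-in-$\delta$ bounds that become available on the boundary. The decisive step is to promote the boundary subdifferential from $L^2(0,T;V_\Gamma')$ to $L^2(0,T;H_\Gamma)$. Recall from the proof of Lemma~\ref{Uniform4} (estimate \eqref{equ11}) that $\|\beta_\varepsilon(\psi_{\delta,\varepsilon})\|_{L^2(0,T;H_\Gamma)}\le C$ uniformly in $\delta,\varepsilon$, where $\beta_\varepsilon$ is the \emph{bulk} Yosida operator evaluated at the trace $\psi_{\delta,\varepsilon}$. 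Since $\mathbf{(A8)}$ yields $|\beta_\Gamma^\circ(r)|\le M|\beta^\circ(r)|+M^2$, I would first prove a Yosida-level counterpart $|\beta_{\Gamma,\varepsilon}(r)|\le \widetilde M\,|\beta_\varepsilon(r)|+\widetilde M$ for all $r\in\mathbb R$, with $\widetilde M$ independent of $\varepsilon$ (the analogue of \eqref{2.8} in the opposite direction, obtained as in \cite[Lemma~4.4]{CC}, with care because the two regularizations use parameters $\varepsilon$ and $\varepsilon\varrho$). Together with \eqref{equ11} this gives $\|\beta_{\Gamma,\varepsilon}(\psi_{\delta,\varepsilon})\|_{L^2(0,T;H_\Gamma)}\le C$ uniformly, and letting $\varepsilon\to0$ produces
\[
\|\xi_{\Gamma,\delta}\|_{L^2(0,T;H_\Gamma)}\le C,\qquad C \text{ independent of }\delta .
\]

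With $\xi_{\Gamma,\delta}$ bounded in $L^2(0,T;H_\Gamma)$, the boundary identity \eqref{eq3.6} shows that the effective Neumann datum $\partial_{\mathbf n}\varphi_\delta-\delta\Delta_\Gamma\psi_\delta=\theta_\delta-\xi_{\Gamma,\delta}-\pi_\Gamma(\psi_\delta)+f_\Gamma$ is bounded in $L^2(0,T;H_\Gamma)$ as well. Passing to the limit along the subsequence of Theorem~\ref{existthm}, I extract weak limits $\xi_{\Gamma,\delta}\rightharpoonup\xi_\Gamma$ and $\partial_{\mathbf n}\varphi_\delta-\delta\Delta_\Gamma\psi_\delta\rightharpoonup\ell$ in $L^2(0,T;H_\Gamma)$. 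The strong convergence $\psi_\delta\to\psi$ in $C([0,T];H_\Gamma)$ together with the maximal monotonicity of $\beta_\Gamma$ upgrades the variational inequality \eqref{zeta} to the pointwise inclusion $\xi_\Gamma\in\beta_\Gamma(\psi)$ a.e.\ on $\Sigma$. To identify $\ell$, I compare topologies: the limit of \eqref{eq3.6} reads $\theta=\ell+\xi_\Gamma+\pi_\Gamma(\psi)-f_\Gamma$ in $H_\Gamma$, whereas \eqref{thetapoint} reads $\theta=\partial_{\mathbf n}\varphi+\xi_\Gamma+\pi_\Gamma(\psi)-f_\Gamma$ in $(H^{1/2}(\Gamma))'$; hence $\ell=\partial_{\mathbf n}\varphi$ and, in particular, $\partial_{\mathbf n}\varphi\in L^2(0,T;H_\Gamma)$. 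Combined with $\Delta\varphi\in L^2(0,T;H)$, elliptic regularity for the Neumann problem gives $\varphi\in L^2(0,T;H^{3/2}(\Omega))$, and the trace theorem yields $\psi=\varphi|_\Gamma\in L^2(0,T;V_\Gamma)$; this establishes part~(1), the first and third convergences of part~(2), and the announced strengthening of \eqref{thetapoint} to an a.e.\ equality on $\Sigma$. For the remaining limit $\delta\psi_\delta\to0$ weakly in $L^2(0,T;H^{3/2}(\Gamma))$, I note that at fixed $\delta$ the constant in \eqref{es-no2}--\eqref{es-no3} is of order $\delta^{-1}$, so that $\delta\bm\varphi_\delta$ is bounded in $L^2(0,T;\mathcal H^2)$ uniformly in $\delta$; by the trace theorem $\delta\psi_\delta$ is bounded in $L^2(0,T;H^{3/2}(\Gamma))$, while $\delta\bm\varphi_\delta\to\bm0$ in $L^\infty(0,T;\mathcal V^1)$ by \eqref{conver5} forces the weak $H^{3/2}$-limit to be $\bm0$.

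For part~(3) I use the energy method of Theorems~\ref{contidepen} and~\ref{continuousdepen}, now comparing a solution $\bm\varphi_\delta$ of $(S_{L,\delta})$ with the limit solution $\bm\varphi$ of $(S_{L,0})$. Since $\overline m(\bm\varphi_0^\delta)=\overline m(\bm\varphi_0)=\overline m_0$, the difference $\bm\varphi_\delta-\bm\varphi$ lies in $\mathcal L^2_{(0)}$, and under $\mathbf{(A8)}$ it even lies in $L^2(0,T;\mathcal V^1_{(0)})$ because $\psi,\psi_\delta\in L^2(0,T;V_\Gamma)$. Subtracting the weak formulations and testing the resulting identity by $\bm\varphi_\delta-\bm\varphi$, the monotone contribution $(\bm\xi_\delta-\bm\xi,\bm\varphi_\delta-\bm\varphi)_{\mathcal L^2}\ge0$ is discarded and the surface-diffusion term produces the only genuinely new contribution
\[
\delta\!\int_\Gamma\nabla_\Gamma\psi_\delta\cdot\nabla_\Gamma(\psi_\delta-\psi)\,\mathrm dS
\ \ge\ -\tfrac{\delta}{4}\|\nabla_\Gamma\psi\|_{H_\Gamma}^2 .
\]
Proceeding exactly as in Theorem~\ref{contidepen} (the interpolation \eqref{int-0} to absorb $\|\bm\varphi_\delta-\bm\varphi\|_{\mathcal L^2}^2$, Young's inequality on the forcing term, and Gronwall's lemma) and integrating the last term via $\int_0^T\delta\|\nabla_\Gamma\psi\|_{H_\Gamma}^2\,\mathrm dt\le\delta\|\psi\|_{L^2(0,T;V_\Gamma)}^2=O(\delta)$, I obtain the stated bound with rate $\delta^{1/2}$ after taking square roots and invoking the Poincar\'e inequality \eqref{Po4}.

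The main obstacle is the reverse-compatibility step: transferring $\mathbf{(A8)}$ from the graphs $\beta,\beta_\Gamma$ to their Yosida regularizations when the two approximations are built with different parameters ($\varepsilon$ versus $\varepsilon\varrho$), so that $\|\xi_{\Gamma,\delta}\|_{L^2(0,T;H_\Gamma)}$ is bounded uniformly in both $\delta$ and $\varepsilon$. Once this is in place, the remaining delicate point is the topological matching that identifies the $H_\Gamma$-weak limit of $\partial_{\mathbf n}\varphi_\delta-\delta\Delta_\Gamma\psi_\delta$ with the a priori only $(H^{1/2}(\Gamma))'$-defined normal trace $\partial_{\mathbf n}\varphi$. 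The convergence-rate estimate, by contrast, is essentially a perturbation of the continuous-dependence argument and should present no difficulty beyond bookkeeping the $O(\delta)$ surface term.
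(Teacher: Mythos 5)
Your overall route coincides with the one the paper relies on: the paper gives no self-contained proof of Corollary \ref{Cor-d} but defers entirely to \cite[Theorems 2.10, 2.12]{CFS}, and your plan --- transferring the reverse domination in $\mathbf{(A8)}$ to the Yosida level so that the uniform bound \eqref{equ11} yields $\Vert\xi_{\Gamma,\delta}\Vert_{L^2(0,T;H_\Gamma)}\le C$ independently of $\delta$, upgrading the boundary identity by comparison, identifying the $H_\Gamma$-weak limit of $\partial_{\mathbf n}\varphi_\delta-\delta\Delta_\Gamma\psi_\delta$ with the a priori $(H^{1/2}(\Gamma))'$-valued normal trace, and re-running the continuous-dependence energy estimate with the extra $O(\delta)$ surface term bounded via $\psi\in L^2(0,T;V_\Gamma)$ --- is exactly that argument. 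You also correctly single out the genuine technical point (the Yosida-level reverse compatibility with the mismatched parameters $\varepsilon$ and $\varepsilon\varrho$), and note, rightly, that one must argue on $\beta_{\Gamma,\varepsilon}$ rather than directly on the selection $\xi_{\Gamma,\delta}$, since $\xi_{\Gamma,\delta}$ need not be the minimal section. Parts (1), (3) and the first and third convergences in (2) are handled correctly.

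The one step that does not close as written is the claim that $\delta\bm{\varphi}_\delta$ is bounded in $L^2(0,T;\mathcal{H}^2)$ ``because the constant in \eqref{es-no2}--\eqref{es-no3} is of order $\delta^{-1}$.'' In that chain the $\mathcal{H}^2$ bound is obtained only after absorbing $\Vert\partial_{\mathbf n}\omega_{\varepsilon}\Vert_{L^2(0,T;H_\Gamma)}$ through the Ehrling lemma with $\gamma\sim\delta$, and the resulting constant $C_\gamma$ is not quantified in $\delta$; multiplying the final estimate by $\delta$ therefore does not yield a $\delta$-uniform bound. The conclusion $\delta\psi_\delta\rightharpoonup 0$ in $L^2(0,T;H^{3/2}(\Gamma))$ is nevertheless reachable from bounds you already possess: writing $\delta\Delta_\Gamma\psi_\delta=\partial_{\mathbf n}\varphi_\delta-\bigl(\partial_{\mathbf n}\varphi_\delta-\delta\Delta_\Gamma\psi_\delta\bigr)$, the first term is bounded in $L^2(0,T;(H^{1/2}(\Gamma))')$ by \eqref{uni5'} and the second in $L^2(0,T;H_\Gamma)$ by your new estimate, so $\delta\Delta_\Gamma\psi_\delta$ is bounded in $L^2(0,T;(H^{1/2}(\Gamma))')$; surface elliptic regularity together with the trivial bound on $\delta\langle\psi_\delta\rangle_\Gamma$ then gives $\delta\psi_\delta$ bounded in $L^2(0,T;H^{3/2}(\Gamma))$, and the weak limit is $\bm{0}$ by \eqref{conver5}. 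With this repair the proposal is complete and matches the cited argument.
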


Since the proof of Corollary \ref{Cor-d} follows the same arguments for \cite[Theorems 2.10, 2.12]{CFS},
we leave the details to interested readers.

\section{Asymptotic limits with respect to kinetic rate: $L\rightarrow0$ or $L\to\infty$}
\label{sec5}
\setcounter{equation}{0}
In this section, we study the asymptotic limit of solutions to problem $(S_{L,\delta})$ as $L\rightarrow0$ or $L\to\infty$, with $\delta\in (0,\infty)$ being fixed.

For any $\varepsilon\in(0,1)$, we consider the approximating problem \eqref{2.9}--\eqref{2.10}
with data $(\bm{\varphi}_0,\bm{f})$. Let $\boldsymbol{\omega}_{L, \varepsilon}$
be the unique solution given by Proposition \ref{approexist}.
We set
\begin{align}
\boldsymbol{\varphi}_{L,\varepsilon}= \boldsymbol{\omega}_{L,\varepsilon}
+\overline{m}_0\mathbf{1},\quad
\boldsymbol{\mu}_{L,\varepsilon}:=\varepsilon\boldsymbol{\omega}_{L,\varepsilon}'(t)
+\partial\Phi_{\delta}\big(\boldsymbol{\omega}_{L,\varepsilon}\big)
+\boldsymbol{\beta}_{\varepsilon}\big(\boldsymbol{\varphi}_{L,\varepsilon}\big)
+\boldsymbol{\pi}\big(\boldsymbol{\varphi}_{L,\varepsilon}\big)-\boldsymbol{f},
\notag
\end{align}
where $\overline{m}_0=\overline{m}(\bm{\varphi}_0)$.
Then $\big(\boldsymbol{\varphi}_{L,\varepsilon},\boldsymbol{\mu}_{L,\varepsilon}\big)$
is uniquely determined and satisfies \eqref{2.14}--\eqref{appromutheta} with data $(\bm{\varphi}_0,\bm{f})$.

As in the proof of Theorem \ref{weakexist}, when $\varepsilon\to 0$,
$\big(\boldsymbol{\varphi}_{L,\varepsilon},\boldsymbol{\mu}_{L,\varepsilon}, \boldsymbol{\beta}_{\varepsilon}(\boldsymbol{\varphi}_{L,\varepsilon})\big)$
converges to certain limit triplet $\big(\boldsymbol{\varphi}_{L},\boldsymbol{\mu}_{L},\boldsymbol{\xi}_{L}\big)$, which is a weak solution to problem $(S_{L,\delta})$ with data $(\bm{\varphi}_0,\bm{f})$.
Since the uniqueness of $\boldsymbol{\mu}_{L}$, $\boldsymbol{\xi}_{L}$ is not clear (cf. Theorem \ref{contidepen}), the related convergence should always be understood in the sense of a suitable subsequence. Below we will not relabel the convergent subsequence for the sake of simplicity.

The proofs of Theorem \ref{asymptotic0} and Theorem \ref{asymptoticinfinity} rely on uniform estimates with respect to the kinetic rate $L$. The first estimate comes from the mass conservation. Recalling \eqref{con-mass} and passing to the limit $\varepsilon\to 0$, we get
\begin{align}
\overline{m}(\bm{\varphi}_L(t))= \overline{m}(\bm{\varphi}_0)=\overline{m}_0,\quad \forall\,  t\in [0,T].
\label{con-mass-L}
\end{align}
Nevertheless, further uniform estimates have to be derived separately for the two different cases $L\to 0$ and $L\to \infty$.

\subsection{The case $L\rightarrow0$}

We now proceed to derive uniform estimates for $L\in(0, 1]$.

\begin{lemma}
\label{asympuniform1}
There exists a positive constant $C_{1}$, independent of $L\in(0, 1]$, such that
\begin{align}
&\Vert\boldsymbol{\omega}_{L}\Vert_{L^{\infty}(0,T;\mathcal{H}_{(0)}^{-1})}
+\Vert\boldsymbol{\omega}_{L}\Vert_{L^{2}(0,T;\mathcal{V}_{(0)}^{1})}
+ \Vert\xi_L\Vert_{L^{1}(0,T;L^{1}(\Omega))}
+\Vert\xi_{\Gamma,L}\Vert_{L^{1}(0,T;L^{1}(\Gamma))}
\leq C_1,
\label{uni-L1}
\end{align}
where $\bm{\omega}_L=\bm{\varphi}_L-\overline{m}_0\mathbf{1}$.
\end{lemma}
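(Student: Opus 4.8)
The plan is to reproduce, at the level of the Yosida approximation, the energy estimate that leads to \eqref{eq3.1}, and then to track carefully the dependence of every constant on the kinetic rate $L$. Concretely, I would fix $L\in(0,1]$ and test the approximating equation \eqref{2.9} (written for the data $(\boldsymbol{\varphi}_0,\boldsymbol{f})$) by $\boldsymbol{\omega}_{L,\varepsilon}$, exactly as in \eqref{3.1}--\eqref{3.3}. Using the coercivity bound \eqref{eq2.8} with $r_0=\overline{m}_0=\overline{m}(\boldsymbol{\varphi}_0)\in\mathrm{int}\,D(\beta_\Gamma)$ (legitimate by mass conservation \eqref{con-mass-L}, and $L$-independent since the data are fixed) on the monotone part, and the interpolation \eqref{int-0} to absorb $\Vert\boldsymbol{\omega}_{L,\varepsilon}\Vert_{\mathcal{L}^2}$, this produces a differential inequality of the form
\begin{align*}
&\varepsilon\frac{\mathrm{d}}{\mathrm{d}s}\Vert\boldsymbol{\omega}_{L,\varepsilon}(s)\Vert_{\mathcal{L}^2}^{2}
+\frac{\mathrm{d}}{\mathrm{d}s}\Vert\boldsymbol{\omega}_{L,\varepsilon}(s)\Vert_{0,*}^{2}
+\tfrac12\Vert\nabla\omega_{L,\varepsilon}(s)\Vert_{H}^{2}
+\delta\Vert\nabla_{\Gamma}\omega_{\Gamma,L,\varepsilon}(s)\Vert_{H_\Gamma}^{2}\\
&\qquad
+2\delta_{0}\int_{\Omega}|\beta_{\varepsilon}(\varphi_{L,\varepsilon}(s))|\,\mathrm{d}x
+2\delta_{0}\int_{\Gamma}|\beta_{\Gamma,\varepsilon}(\psi_{L,\varepsilon}(s))|\,\mathrm{d}S
\le C\big(1+\Vert\boldsymbol{f}(s)\Vert_{\mathcal{L}^2}^{2}+\Vert\boldsymbol{\omega}_{L,\varepsilon}(s)\Vert_{0,*}^{2}\big).
\end{align*}
Integrating, applying Gronwall's lemma, and finally letting $\varepsilon\to0$ by weak and weak-$*$ lower semicontinuity — via the convergences \eqref{3.14}, \eqref{3.15}, \eqref{3.18} that hold for each fixed $L$ — will yield \eqref{uni-L1}, provided the constant $C$ above and the constants turning $\Vert\cdot\Vert_{0,*}$ into $\Vert\cdot\Vert_{(\mathcal{H}^1)'}$ can be chosen independently of $L\in(0,1]$.

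This uniformity in $L$ is, I expect, the main obstacle, since two $L$-dependent objects appear: the dual norm $\Vert\cdot\Vert_{0,*}=\Vert\cdot\Vert_{0,*,L}$ entering the time-derivative term, and the constant $C_\gamma$ in \eqref{int-0}. I would control both through the monotone dependence of $a_L$ on $L$. From the variational characterization $\Vert\boldsymbol{y}\Vert_{0,*,L}^{2}=\sup_{\boldsymbol{\zeta}\in\mathcal{H}^1_{(0)}}\big(2\langle\boldsymbol{y},\boldsymbol{\zeta}\rangle-a_L(\boldsymbol{\zeta},\boldsymbol{\zeta})\big)$, together with the fact that $a_L(\boldsymbol{\zeta},\boldsymbol{\zeta})=\int_\Omega|\nabla\zeta|^2+\int_\Gamma|\nabla_\Gamma\zeta_\Gamma|^2+\tfrac1L\int_\Gamma|\zeta-\zeta_\Gamma|^2$ is nonincreasing in $L$, the dual norm is nondecreasing in $L$; restricting the supremum to the subspace $\mathcal{V}^1_{(0)}$ (where the penalty term vanishes and $a_L=a_0$) gives the lower comparison, so that
\[
\Vert\boldsymbol{y}\Vert_{0,*,0}\le\Vert\boldsymbol{y}\Vert_{0,*,L}\le\Vert\boldsymbol{y}\Vert_{0,*,1},
\qquad\forall\,L\in(0,1].
\]
The lower bound lets me invoke \eqref{int-0} with its $L=0$ constant $C_\gamma^{(0)}$ (since $\Vert\boldsymbol{y}\Vert_{0,*,0}\le\Vert\boldsymbol{y}\Vert_{0,*,L}$), so that \eqref{int-0} holds for every $L\in(0,1]$ with one and the same constant; it also gives $\Vert\boldsymbol{y}\Vert_{(\mathcal{H}^1)'}\le c_0\Vert\boldsymbol{y}\Vert_{0,*,0}\le c_0\Vert\boldsymbol{y}\Vert_{0,*,L}$, which converts the $L$-uniform bound on $\Vert\boldsymbol{\omega}_L\Vert_{0,*,L}$ into the asserted bound on $\Vert\boldsymbol{\omega}_L\Vert_{L^\infty(0,T;\mathcal{H}^{-1}_{(0)})}$. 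The same monotonicity shows the Poincaré constant $c_P$ in \eqref{Po3} may be taken uniform for $L\in(0,1]$.

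A useful structural simplification is that the trace relation \eqref{2.1e} forces $\boldsymbol{\omega}_{L,\varepsilon}\in\mathcal{V}^1_{(0)}$, so that the dissipated energy $\Phi_\delta(\boldsymbol{\omega}_{L,\varepsilon})=\tfrac12\Vert\boldsymbol{\omega}_{L,\varepsilon}\Vert_{\mathcal{V}^1_{(0)}}^2$ appearing on the left-hand side, as well as the norm $\Vert\cdot\Vert_{\mathcal{V}^1_{(0)}}$ in the conclusion \eqref{uni-L1}, are intrinsically $L$-independent; only the dissipation measured in $\Vert\cdot\Vert_{0,*}$ and the initial contribution $\Vert\boldsymbol{\omega}_0\Vert_{0,*}\le\Vert\boldsymbol{\omega}_0\Vert_{0,*,1}$ carry the $L$-dependence, which the displayed monotonicity neutralizes. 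With the uniform constants in hand, Gronwall's lemma gives $\varepsilon$- and $L$-uniform bounds for $\boldsymbol{\omega}_{L,\varepsilon}$ in $L^\infty(0,T;\mathcal{H}^{-1}_{(0)})\cap L^2(0,T;\mathcal{V}^1_{(0)})$ and in $L^1(0,T;L^1)$ for $\beta_{\varepsilon}(\varphi_{L,\varepsilon})$, $\beta_{\Gamma,\varepsilon}(\psi_{L,\varepsilon})$; passing $\varepsilon\to0$ and using lower semicontinuity of the norms (including $L^1$-norm lower semicontinuity under the weak $L^2$-convergence $\boldsymbol{\beta}_\varepsilon(\boldsymbol{\varphi}_{L,\varepsilon})\to\boldsymbol{\xi}_L$) then completes the proof of \eqref{uni-L1}. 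The only genuinely delicate point, as emphasized, is the $L$-uniformity of the interpolation and Poincaré constants, for which the monotonicity of $a_L$ in $L$ is the decisive tool.
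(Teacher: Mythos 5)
Your proposal is correct and follows essentially the same route as the paper: rerun the derivation of \eqref{eq3.1} at the Yosida level for fixed $L$, verify that every constant can be chosen uniformly for $L\in(0,1]$, apply Gronwall, and pass to the limit $\varepsilon\to0$ by weak lower semicontinuity. The only place where you and the paper diverge is the mechanism for the $L$-uniformity of the constants. The paper isolates a single quantity, the initial datum term $\Vert\boldsymbol{\omega}_{0}\Vert_{0,*}$, and bounds it by $\widehat{C}\Vert\boldsymbol{\omega}_{0}\Vert_{\mathcal{L}^{2}}$ via the identity $\Vert\boldsymbol{\omega}_{0}\Vert_{0,*}^{2}=(\boldsymbol{\omega}_{0},\mathfrak{S}^{L}\boldsymbol{\omega}_{0})_{\mathcal{L}^{2}}$ together with the primal-norm monotonicity $\Vert\cdot\Vert_{\mathcal{H}^{1}_{1,0}}\le\Vert\cdot\Vert_{\mathcal{H}^{1}_{L,0}}$ for $L\le1$; it does not comment explicitly on the $L$-dependence of the interpolation constant $C_\gamma$ in \eqref{int-0}. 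You instead prove the two-sided dual-norm comparison $\Vert\cdot\Vert_{0,*,0}\le\Vert\cdot\Vert_{0,*,L}\le\Vert\cdot\Vert_{0,*,1}$ from the monotonicity of $a_L$ in $L$, which simultaneously controls the initial-data term, lets you use the $L=0$ constant in \eqref{int-0} for all $L\in(0,1]$, and converts the bound on $\Vert\boldsymbol{\omega}_{L}\Vert_{0,*,L}$ into the asserted $(\mathcal{H}^{1})'$ bound. Both arguments are valid and of comparable effort; yours is slightly more systematic in that it addresses all $L$-dependent constants through one comparison principle, which is the genuinely delicate point of the lemma.
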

\begin{proof}
Recalling the derivation of \eqref{eq3.1}, we have
\begin{align}
&\Vert\boldsymbol{\omega}_{L,\varepsilon}\Vert_{L^{\infty}(0,T;\mathcal{H}_{(0)}^{-1})}^2
+\Vert\boldsymbol{\omega}_{L,\varepsilon}\Vert_{L^{2}(0,T;\mathcal{V}_{(0)}^{1})}^2
+\Vert\beta_{\varepsilon}(\varphi_{L,\varepsilon})\Vert_{L^{1}(0,T;L^{1}(\Omega))}
+\Vert\beta_{\Gamma,\varepsilon}(\psi_{L,\varepsilon})\Vert_{L^{1}(0,T;L^{1}(\Gamma))}
\notag\\
&\quad \leq C\big(\varepsilon\|\bm{\omega}_0\|_{\mathcal{L}^2}^2+\|\bm{\omega}_0\|_{0,*}^2+1),
\notag
\end{align}
where $C>0$ is independent of $L, \varepsilon$.
By the weak lower semicontinuity as $\varepsilon\to 0$, we find
\begin{align}
&\Vert\boldsymbol{\omega}_{L}\Vert_{L^{\infty}(0,T;\mathcal{H}_{(0)}^{-1})}^2
+\Vert\boldsymbol{\omega}_{L}\Vert_{L^{2}(0,T;\mathcal{V}_{(0)}^{1})}^2
+\Vert\xi_{L}\Vert_{L^{1}(0,T;L^{1}(\Omega))}
+\Vert\xi_{\Gamma,L}\Vert_{L^{1}(0,T;L^{1}(\Gamma))}
 \leq C\big(\|\bm{\omega}_0\|_{0,*}^2+1).
\label{uni-L1a}
\end{align}
Recall that for any fixed $L\in (0,\infty)$, the norms $\Vert\cdot\Vert_{\mathcal{H}^{1}}$ and $\Vert\cdot\Vert_{\mathcal{H}_{L,0}^{1}}$ are equivalent on $\mathcal{H}_{L,0}^{1}$.
This fact yields
\begin{align}
\Vert\boldsymbol{y}\Vert_{\mathcal{L}^{2}}
\leq \Vert\boldsymbol{y}\Vert_{\mathcal{H}^{1}}
\leq \widehat{C}\Vert\boldsymbol{y}\Vert_{\mathcal{H}_{1,0}^{1}}
\leq \widehat{C}\Vert\boldsymbol{y}\Vert_{\mathcal{H}_{L,0}^{1}},
\quad\forall\,\boldsymbol{y}\in\mathcal{H}_{L,0}^{1},
\label{equiv1}
\end{align}
where $\widehat{C}>0$ is independent of $L\in (0,1]$.
 As a consequence, it holds
\begin{align}
\Vert\boldsymbol{\omega}_{0}\Vert_{0,*}^{2}
&=\Vert\mathfrak{S}^{L}\boldsymbol{\omega}_{0}\Vert_{\mathcal{H}_{L,0}^{1}}^{2}
=\big(\boldsymbol{\omega}_{0},\mathfrak{S}^{L}\boldsymbol{\omega}_{0}\big)_{\mathcal{L}^{2}}\notag\\
&\leq\Vert\boldsymbol{\omega}_{0}\Vert_{\mathcal{L}^{2}}\Vert\mathfrak{S}^{L}\boldsymbol{\omega}_{0}\Vert_{\mathcal{L}^{2}}
\leq \widehat{C} \Vert\boldsymbol{\omega}_{0}\Vert_{\mathcal{L}^{2}}\Vert\mathfrak{S}^{L}\boldsymbol{\omega}_{0}\Vert_{\mathcal{H}_{L,0}^{1}} =\widehat{C}\Vert\boldsymbol{\omega}_{0}\Vert_{\mathcal{L}^{2}}\Vert\boldsymbol{\omega}_{0}\Vert_{0,*}.
\notag
\end{align}
Hence, $\Vert\boldsymbol{\omega}_{0}\Vert_{0,*}\leq \widehat{C}\Vert\boldsymbol{\omega}_{0}\Vert_{\mathcal{L}^{2}}$ and thus it is uniformly bounded for $L\in (0,1]$. This together with \eqref{uni-L1a} yields the conclusion \eqref{uni-L1}.
\end{proof}

\begin{lemma}
There exists a positive constant $C_{2}$, independent of $L\in(0, 1]$, such that
\begin{align}
& \Vert\boldsymbol{\varphi}_{L}\Vert_{L^{\infty}(0,T;\mathcal{V}^{1})}
+\|\widehat{\beta}(\varphi_{L})\|_{L^\infty(0,T;L^1(\Omega))}
+\|\widehat{\beta}_{\Gamma}(\psi_{L})\|_{L^\infty(0,T;L^1(\Omega))}
\notag\\
&\quad + \Vert\partial_t\boldsymbol{\varphi}_{L}\Vert_{L^2(0,T;\mathcal{H}_{(0)}^{-1})}
+ \Vert\mathbf{P}\boldsymbol{\mu}_{L}\Vert_{L^2(0,T;\mathcal{H}_{L,0}^{1})}
 \leq C_2.
\label{uni-L2}
\end{align}
\end{lemma}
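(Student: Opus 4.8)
The plan is to establish \eqref{uni-L2} as the ``second'' (higher-order) energy estimate, i.e.\ the analogue of Lemma~\ref{uni2}, but now tracking uniformity in the kinetic rate $L\in(0,1]$ rather than in $\delta$. I would work first at the approximating level with the unique solution $\boldsymbol{\omega}_{L,\varepsilon}$ of \eqref{2.9}--\eqref{2.10} (with data $(\boldsymbol{\varphi}_0,\boldsymbol{f})$), derive the bounds uniformly in both $\varepsilon\in(0,1)$ and $L\in(0,1]$, and then pass to the limit $\varepsilon\to0$ by weak/weak-star lower semicontinuity, exactly as in the proof of Lemma~\ref{Uniform2}. Testing \eqref{2.9} at time $s$ by $\boldsymbol{\omega}_{L,\varepsilon}'$ reproduces the energy identity \eqref{equ1'}, namely $\frac{\mathrm d}{\mathrm ds}\mathcal E_{L,\varepsilon}(s)+\varepsilon\|\boldsymbol{\omega}_{L,\varepsilon}'(s)\|_{\mathcal L^2}^2+\|\boldsymbol{\omega}_{L,\varepsilon}'(s)\|_{0,*}^2=(\mathbf P\boldsymbol f(s),\boldsymbol{\omega}_{L,\varepsilon}'(s))_{\mathcal L^2}$, where $\mathcal E_{L,\varepsilon}$ collects $\Phi_\delta(\boldsymbol{\omega}_{L,\varepsilon})$ together with the integrals of $\widehat\beta_\varepsilon(\varphi_{L,\varepsilon})$, $\widehat\beta_{\Gamma,\varepsilon}(\psi_{L,\varepsilon})$, $\widehat\pi(\varphi_{L,\varepsilon})$ and $\widehat\pi_\Gamma(\psi_{L,\varepsilon})$.

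The decisive point for $L$-uniformity is the treatment of the right-hand side. Assumption $\mathbf{(A4)}$ gives $\boldsymbol f\in L^2(0,T;\mathcal V^1)$, so the trace compatibility $f|_\Gamma=f_\Gamma$ holds and hence $\mathbf P\boldsymbol f(s)\in\mathcal V^1_{(0)}$; on this subspace the coupling penalty in $a_L$ vanishes, so $\|\mathbf P\boldsymbol f(s)\|_{\mathcal H^1_{L,0}}$ does not depend on $L$ and coincides with the norm in \eqref{nv01}. Consequently, exactly as in \eqref{equ1'-ho}, $(\mathbf P\boldsymbol f(s),\boldsymbol{\omega}_{L,\varepsilon}'(s))_{\mathcal L^2}=(\mathbf P\boldsymbol f(s),\mathfrak S^L\boldsymbol{\omega}_{L,\varepsilon}'(s))_{\mathcal H^1_{L,0}}\le\tfrac12\|\boldsymbol{\omega}_{L,\varepsilon}'(s)\|_{0,*}^2+\tfrac12\|\boldsymbol f(s)\|_{\mathcal V^1}^2$, with the last term independent of $L$. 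All the initial contributions to $\mathcal E_{L,\varepsilon}(0)$ are likewise $L$- and $\varepsilon$-independent: $\Phi_\delta(\boldsymbol{\omega}_0)=\tfrac12\|\boldsymbol{\omega}_0\|_{\mathcal V^1_{(0)}}^2$ is fixed, while $\int_\Omega\widehat\beta_\varepsilon(\varphi_0)\le\int_\Omega\widehat\beta(\varphi_0)$ and $\int_\Gamma\widehat\beta_{\Gamma,\varepsilon}(\psi_0)\le\int_\Gamma\widehat\beta_\Gamma(\psi_0)$ are finite by $\mathbf{(A5)}$, and the $\widehat\pi,\widehat\pi_\Gamma$ terms are controlled by the quadratic growth from $\mathbf{(A3)}$.

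Integrating in $s$ and using $\widehat\beta_\varepsilon,\widehat\beta_{\Gamma,\varepsilon}\ge0$, I would bound the concave contributions $-\int_\Omega\widehat\pi(\varphi_{L,\varepsilon}(t))$ and $-\int_\Gamma\widehat\pi_\Gamma(\psi_{L,\varepsilon}(t))$ from below by $-C(1+\|\boldsymbol{\varphi}_{L,\varepsilon}(t)\|_{\mathcal L^2}^2)$, which after using the conserved mean \eqref{con-mass-L} reduces matters to absorbing $\|\boldsymbol{\omega}_{L,\varepsilon}(t)\|_{\mathcal L^2}^2$ into the coercive term $\tfrac12\|\boldsymbol{\omega}_{L,\varepsilon}(t)\|_{\mathcal V^1_{(0)}}^2$. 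Here the already-established first-order bound of Lemma~\ref{asympuniform1} enters: since $\boldsymbol{\omega}_{L,\varepsilon}(t)\in\mathcal V^1_{(0)}$ is trace-compatible and, by the derivation of \eqref{uni-L1} together with the argument following \eqref{equiv1}, $\|\boldsymbol{\omega}_{L,\varepsilon}\|_{L^\infty(0,T;\mathcal H^{-1}_{(0)})}$ is bounded uniformly in $L\in(0,1]$, I can apply the interpolation inequality \eqref{int-0} with a sufficiently small parameter $\gamma$ (independent of $L$) to split $\|\boldsymbol{\omega}_{L,\varepsilon}(t)\|_{\mathcal L^2}^2\le\gamma\|\nabla\omega_{L,\varepsilon}(t)\|_H^2+C_\gamma\|\boldsymbol{\omega}_{L,\varepsilon}(t)\|_{0,*}^2$, absorb the gradient part on the left and bound the remainder uniformly. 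A Gronwall argument then yields, in the spirit of \eqref{equ2'b}, a uniform bound on $\|\boldsymbol{\omega}_{L,\varepsilon}\|_{L^\infty(0,T;\mathcal V^1_{(0)})}$, on $\int_\Omega\widehat\beta_\varepsilon(\varphi_{L,\varepsilon})$ and $\int_\Gamma\widehat\beta_{\Gamma,\varepsilon}(\psi_{L,\varepsilon})$ in $L^\infty(0,T)$, and on $\int_0^T\|\boldsymbol{\omega}_{L,\varepsilon}'\|_{0,*}^2$. Recalling $\|\mathbf P\boldsymbol{\mu}_{L,\varepsilon}\|_{\mathcal H^1_{L,0}}=\|\boldsymbol{\omega}_{L,\varepsilon}'\|_{0,*}$ from \eqref{pmue} and $\boldsymbol{\omega}_{L,\varepsilon}'=\partial_t\boldsymbol{\varphi}_{L,\varepsilon}$, this simultaneously controls the last two terms of \eqref{uni-L2}. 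Passing to the limit $\varepsilon\to0$ as in Lemma~\ref{Uniform2} and invoking lower semicontinuity (and Fatou's lemma for the $\widehat\beta$-integrals) transfers all bounds to $(\boldsymbol{\varphi}_L,\boldsymbol{\mu}_L,\boldsymbol{\xi}_L)$, completing the proof.

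I expect the main obstacle to be precisely the uniform-in-$L$ absorption of the lower-order term, i.e.\ guaranteeing that the constant $C_\gamma$ in \eqref{int-0} and the bound on $\|\boldsymbol{\omega}_{L,\varepsilon}(t)\|_{0,*}$ can be chosen independently of $L$ as $L\to0$. This hinges on the monotonicity of $\|\cdot\|_{0,*}$ in $L$ (a larger coupling penalty $1/L$ for smaller $L$ makes the dual norm smaller) and on the fact that $\mathfrak S^L\boldsymbol f$ never feels the singular penalty because $\boldsymbol f$ is trace-compatible; in particular, the boundary coupling term $\tfrac1L\|\mu-\theta\|_{H_\Gamma}^2$ must be shown not to degrade any of the estimates as $L\to0$. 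Everything else is a routine repetition of the arguments already carried out for the $\delta\to0$ analysis in Section~\ref{sec4}.
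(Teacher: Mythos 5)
Your proposal is correct and follows essentially the same route as the paper: the paper's proof simply recalls the $\varepsilon$-level energy estimate \eqref{equ2'b} (whose constants are $L$-independent precisely because $\mathbf{P}\boldsymbol{f}(s)\in\mathcal{V}^1_{(0)}$ makes \eqref{equ1'-ho} insensitive to the coupling penalty), invokes \eqref{pmue}, \eqref{con-mass-L}, \eqref{uni-L1} and the Poincar\'e inequality \eqref{Po4}, and passes to the limit $\varepsilon\to0$ by weak lower semicontinuity — exactly the steps you spell out. Your additional discussion of why the constant in the interpolation inequality \eqref{int-0} can be taken uniformly for $L\in(0,1]$ (via the monotonicity of $\Vert\cdot\Vert_{0,*}$ in $L$ and validity of the inequality at $L=0$) addresses a point the paper leaves implicit, and is a welcome clarification rather than a deviation.
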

\begin{proof}
Recalling \eqref{equ2'b} and \eqref{pmue}, passing to the limit as $\varepsilon\to 0$,
we can deduce the uniform estimate \eqref{uni-L2} from the weak lower semicontinuity,
\eqref{con-mass-L}, \eqref{uni-L1} and the Poincar\'{e} inequality \eqref{Po4}.
\end{proof}

\begin{lemma}
There exists a positive constant $C_{3}$, independent of $L\in(0, 1]$, such that
\begin{align}
\|\partial_t\bm{\varphi}_L\|_{L^2(0,T;(\mathcal{V}^1)')}\leq C_3.
\label{eq7.3-L}
\end{align}
\end{lemma}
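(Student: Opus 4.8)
The plan is to exploit the structural cancellation in the bilinear form $a_L$ that occurs when one tests the weak formulation \eqref{eq3.2} with functions in $\mathcal{V}^1$ rather than in the full space $\mathcal{H}^1$. First I would recall that the weak solution $(\bm{\varphi}_L,\bm{\mu}_L,\bm{\xi}_L)$ satisfies
\begin{align}
\big\langle\partial_t\bm{\varphi}_L(t),\bm{y}\big\rangle_{(\mathcal{H}^1)',\mathcal{H}^1}=-a_L\big(\bm{\mu}_L(t),\bm{y}\big),\qquad\forall\,\bm{y}\in\mathcal{H}^1,
\notag
\end{align}
for a.a. $t\in(0,T)$. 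Since $\mathcal{V}^1\subseteq\mathcal{H}^1$, the restriction of $\partial_t\bm{\varphi}_L(t)$ to $\mathcal{V}^1$ defines an element of $(\mathcal{V}^1)'$, whose norm is obtained by taking the supremum over $\bm{y}=(y,y_\Gamma)\in\mathcal{V}^1$ with $\|\bm{y}\|_{\mathcal{V}^1}\le 1$.

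The key observation is twofold. On the one hand, writing $\bm{\mu}_L=\mathbf{P}\bm{\mu}_L+\overline{m}(\bm{\mu}_L)\mathbf{1}$ and noting that $a_L(c\mathbf{1},\bm{y})=0$ for any constant $c$ (the spatial gradients of a constant vanish, and the factor $c-c$ annihilates the $\chi(L)$-term), we obtain $a_L(\bm{\mu}_L,\bm{y})=a_L(\mathbf{P}\bm{\mu}_L,\bm{y})$. On the other hand, for $\bm{y}\in\mathcal{V}^1$ one has $y|_\Gamma=y_\Gamma$ a.e. on $\Gamma$, so the penalization term $\chi(L)\int_\Gamma (y-y_\Gamma)(\,\cdot\,)\,\mathrm{d}S$ in $a_L$ drops out entirely; in particular $a_L(\bm{y},\bm{y})=\|\nabla y\|_H^2+\|\nabla_\Gamma y_\Gamma\|_{H_\Gamma}^2\le\|\bm{y}\|_{\mathcal{V}^1}^2$, uniformly in $L$. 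This is precisely where the uniformity in $L$ originates: restricting the test functions to $\mathcal{V}^1$ removes the only $L$-dependent (and potentially singular as $L\to 0$) contribution of $a_L$.

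Combining these two facts and applying the Cauchy--Schwarz inequality for the positive semidefinite bilinear form $a_L$, I would estimate, for a.a. $t$ and any $\bm{y}\in\mathcal{V}^1$,
\begin{align}
\big|\big\langle\partial_t\bm{\varphi}_L(t),\bm{y}\big\rangle\big|
=\big|a_L\big(\mathbf{P}\bm{\mu}_L(t),\bm{y}\big)\big|
\le a_L\big(\mathbf{P}\bm{\mu}_L(t),\mathbf{P}\bm{\mu}_L(t)\big)^{1/2}a_L(\bm{y},\bm{y})^{1/2}
\le \Vert\mathbf{P}\bm{\mu}_L(t)\Vert_{\mathcal{H}^1_{L,0}}\Vert\bm{y}\Vert_{\mathcal{V}^1},
\notag
\end{align}
where the last inequality uses the definition \eqref{norm-hL} of $\|\cdot\|_{\mathcal{H}^1_{L,0}}$. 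This yields $\|\partial_t\bm{\varphi}_L(t)\|_{(\mathcal{V}^1)'}\le\|\mathbf{P}\bm{\mu}_L(t)\|_{\mathcal{H}^1_{L,0}}$ for a.a. $t\in(0,T)$. Squaring and integrating over $(0,T)$, the right-hand side is controlled by the uniform bound $\Vert\mathbf{P}\bm{\mu}_L\Vert_{L^2(0,T;\mathcal{H}^1_{L,0})}\le C_2$ already established in \eqref{uni-L2}, so the conclusion \eqref{eq7.3-L} follows with $C_3=C_2$.

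I expect no genuine analytical obstacle here once the cancellation is identified; the only point demanding care is the verification that $a_L(c\mathbf{1},\bm{y})=0$ and that the $\chi(L)$-term vanishes on $\mathcal{V}^1$, so that the resulting constant is independent of $L$. Conceptually, the subtle part is recognizing that one must measure $\partial_t\bm{\varphi}_L$ in the weaker dual $(\mathcal{V}^1)'$, and not in $(\mathcal{H}^1)'$, where the $1/L$ penalization would reappear and preclude an $L$-uniform bound.
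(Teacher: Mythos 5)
Your proposal is correct and follows essentially the same route as the paper: test \eqref{eq3.2} with $\bm{z}\in\mathcal{V}^1$ so that the $\chi(L)$-penalization term drops out, estimate the remaining gradient terms (the paper does this term by term rather than via the abstract Cauchy--Schwarz inequality for $a_L$, but the outcome is identical), and conclude from the uniform bound on $\Vert\mathbf{P}\bm{\mu}_L\Vert_{L^2(0,T;\mathcal{H}^1_{L,0})}$ in \eqref{uni-L2}.
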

\begin{proof}
From \eqref{uni-L2} and the definition of $\|\cdot\|_{\mathcal{H}_{L,0}^{1}}$, we find
\begin{align}
&\Vert\nabla\mu_L\Vert_{L^{2}(0,T;H)}
+\Vert\nabla_{\Gamma}\theta_L\Vert_{L^{2}(0,T;H_{\Gamma})}
+\frac{1}{\sqrt{L}}\Vert\mu_L-\theta_L\Vert_{L^{2}(0,T;H_{\Gamma})}
\leq C,
\label{eq7.2}
\end{align}
where $C>0$ is independent of $L\in (0,1]$.
Taking $\boldsymbol{z}=(z,z_\Gamma)\in\mathcal{V}^{1}$ in the weak formulation \eqref{eq3.2}, we get
\begin{align}
\big\langle \partial_t\bm{\varphi}_L(t),\boldsymbol{z}\big\rangle_{(\mathcal{V}^1)',\mathcal{V}^1}
+\int_{\Omega}\nabla\mu_L(t)\cdot\nabla z\,\mathrm{d}x
+\int_\Gamma \nabla \theta_L(t)\cdot \nabla_\Gamma z_\Gamma \,\mathrm{d}S =0,
\quad \text{for a.a.}\ t\in (0,T).
\notag
\end{align}
Then it follows that
\begin{align*}
\left|\big\langle \partial_t\bm{\varphi}_L(t),\boldsymbol{z}\big\rangle_{(\mathcal{V}^1)',\mathcal{V}} \right|
&\leq \|\nabla\mu_L(t)\|_H\|\nabla z\|_H+ \|\nabla \theta_L(t)\|_{H_\Gamma}\| \nabla_\Gamma z_\Gamma\|_{H_\Gamma}
\\
&\leq \big(\|\nabla\mu_L(t)\|_H + \|\nabla \theta_L(t)\|_{H_\Gamma}\big)\|\bm{z}\|_{\mathcal{V}^1},
\end{align*}
which together with \eqref{eq7.2} implies the estimate \eqref{eq7.3-L}.
\end{proof}

\begin{lemma}
There exists a positive constant $C_{4}$, independent of $L\in(0,1]$, such that
\begin{align}
\Vert\xi_L\Vert_{L^{2}(0,T;L^{1}(\Omega))}+\Vert\xi_{\Gamma,L}\Vert_{L^{2}(0,T;L^{1}(\Gamma))}
+\Vert\boldsymbol{\mu}_L\Vert_{L^{2}(0,T;\mathcal{H}^{1})}
\leq C_{4}.
\label{7.10}
\end{align}
\end{lemma}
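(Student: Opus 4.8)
The plan is to reproduce, at the level of the approximating problem \eqref{2.9}--\eqref{2.10} with data $(\bm{\varphi}_0,\bm{f})$, the argument used for Lemma \ref{uni3}, keeping careful track of the dependence on $L$, and then to pass to the limit $\varepsilon\to 0$ via weak lower semicontinuity. First I would revisit the inequality \eqref{3.7'} (now with $\bm{\omega}_{L,\varepsilon}$ in place of $\bm{\omega}_\varepsilon$), namely
$$
\delta_0\int_\Omega |\beta_\varepsilon(\varphi_{L,\varepsilon})|\,\mathrm{d}x + \delta_0\int_\Gamma |\beta_{\Gamma,\varepsilon}(\psi_{L,\varepsilon})|\,\mathrm{d}S \leq c_1(|\Omega|+|\Gamma|) + \big(\|\bm{f}\|_{\mathcal{L}^2} + \|\bm{\pi}(\bm{\varphi}_{L,\varepsilon})\|_{\mathcal{L}^2} + \varepsilon\|\bm{\omega}_{L,\varepsilon}'\|_{\mathcal{L}^2}\big)\|\bm{\omega}_{L,\varepsilon}\|_{\mathcal{L}^2} + \|\bm{\omega}_{L,\varepsilon}'\|_{0,*}\|\bm{\omega}_{L,\varepsilon}\|_{0,*},
$$
which is justified by \eqref{3.1}, \eqref{3.2} and \eqref{eq2.8} with $r_0=\overline{m}_0\in\mathrm{int}\,D(\beta_\Gamma)$ (guaranteed by $\mathbf{(A2)}$ and $\mathbf{(A5)}$). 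The crucial observation is that each factor on the right is bounded in $L^2(0,T)$ uniformly in $L\in(0,1]$ and $\varepsilon\in(0,1)$: the quantities $\|\bm{\omega}_{L,\varepsilon}\|_{\mathcal{L}^2}$, $\|\bm{\omega}_{L,\varepsilon}\|_{0,*}$ and $\|\bm{\omega}_{L,\varepsilon}'\|_{0,*}=\|\mathbf{P}\bm{\mu}_{L,\varepsilon}\|_{\mathcal{H}^1_{L,0}}$ (recall \eqref{pmue}) are controlled by the $\varepsilon$-level estimates underlying Lemma \ref{asympuniform1} and \eqref{uni-L2} (via \eqref{eq3.1}, \eqref{equ2'b}), while $\|\bm{f}\|_{\mathcal{L}^2}$ and $\|\bm{\pi}(\bm{\varphi}_{L,\varepsilon})\|_{\mathcal{L}^2}$ are handled by $\mathbf{(A3)}$, $\mathbf{(A4)}$. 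Squaring, integrating in time, and letting $\varepsilon\to0$ with weak lower semicontinuity then delivers the uniform bounds on $\|\xi_L\|_{L^2(0,T;L^1(\Omega))}$ and $\|\xi_{\Gamma,L}\|_{L^2(0,T;L^1(\Gamma))}$.

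Next I would bound the generalized mean of $\bm{\mu}_L$. At the approximate level $m_{L,\varepsilon}:=\overline{m}\big(\bm{\beta}_\varepsilon(\bm{\varphi}_{L,\varepsilon})+\bm{\pi}(\bm{\varphi}_{L,\varepsilon})-\bm{f}\big)$ (cf. \eqref{me}), and, exactly as in the first estimate of \eqref{3.6}, the $L^1$-bounds just obtained together with $\mathbf{(A3)}$, $\mathbf{(A4)}$ give $\|m_{L,\varepsilon}\|_{L^2(0,T)}\leq C$ uniformly in $L,\varepsilon$; in the limit this yields a uniform $L^2(0,T)$-bound on $\overline{m}(\bm{\mu}_L)=\overline{m}\big(\bm{\xi}_L+\bm{\pi}(\bm{\varphi}_L)-\bm{f}\big)$. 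For the full $\mathcal{H}^1$-norm I would use the splitting $\bm{\mu}_L=\mathbf{P}\bm{\mu}_L+\overline{m}(\bm{\mu}_L)\mathbf{1}$, so that $\|\bm{\mu}_L\|_{\mathcal{H}^1}\leq\|\mathbf{P}\bm{\mu}_L\|_{\mathcal{H}^1}+(|\Omega|+|\Gamma|)^{1/2}|\overline{m}(\bm{\mu}_L)|$; the second summand is already controlled, and $\|\mathbf{P}\bm{\mu}_L\|_{L^2(0,T;\mathcal{H}^1_{L,0})}$ is uniformly bounded by \eqref{uni-L2}.

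The delicate point — and the only place where uniformity in $L$ is genuinely at stake — is converting the $\mathcal{H}^1_{L,0}$-control of $\mathbf{P}\bm{\mu}_L$ into an $\mathcal{H}^1$-control with a constant independent of $L$. Here the Poincaré inequality \eqref{Po3} cannot be used directly, since its constant $c_P$ depends on $L$ and could degenerate as $L\to0$. Instead I would invoke the $L$-uniform comparison \eqref{equiv1}, which rests on the elementary fact that for $L\leq1$ one has $\chi(L)=1/L\geq 1=\chi(1)$, whence $a_L\geq a_1$ and therefore $\|\bm{y}\|_{\mathcal{H}^1}\leq\widehat{C}\|\bm{y}\|_{\mathcal{H}^1_{1,0}}\leq\widehat{C}\|\bm{y}\|_{\mathcal{H}^1_{L,0}}$ for all $\bm{y}\in\mathcal{H}^1_{L,0}$, with $\widehat{C}$ independent of $L\in(0,1]$. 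Applying this to $\bm{y}=\mathbf{P}\bm{\mu}_L(t)$ gives $\|\mathbf{P}\bm{\mu}_L\|_{L^2(0,T;\mathcal{H}^1)}\leq\widehat{C}\|\mathbf{P}\bm{\mu}_L\|_{L^2(0,T;\mathcal{H}^1_{L,0})}\leq C$ uniformly in $L$. Collecting the three contributions then yields $\|\bm{\mu}_L\|_{L^2(0,T;\mathcal{H}^1)}\leq C_4$ and hence \eqref{7.10}. I expect this last uniformity step to be the main obstacle: all the remaining estimates are essentially carbon copies of the $\delta$-independent arguments for Lemma \ref{uni3} and the derivation of \eqref{uni4'}, but the potential degeneration of the coercivity/Poincaré constant as $L\to0$ must be circumvented precisely through \eqref{equiv1}.
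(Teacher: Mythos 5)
Your proposal is correct and follows essentially the same route as the paper, which simply repeats the argument of Lemma \ref{uni3} at the $\varepsilon$-level using the $L$-uniform bounds \eqref{con-mass-L}, \eqref{uni-L1}, \eqref{uni-L2} and then passes to the limit $\varepsilon\to 0$. In particular, you correctly identify the one genuinely $L$-sensitive step — replacing the Poincar\'e inequality \eqref{Po3}, whose constant depends on $L$, by the monotonicity comparison \eqref{equiv1} valid uniformly for $L\in(0,1]$ — which is exactly the device the paper relies on.
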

\begin{proof}
Keeping the estimates \eqref{con-mass-L}, \eqref{uni-L1}, \eqref{equiv1}, \eqref{uni-L2} in mind,
we can obtain the uniform estimate \eqref{7.10} with respect to $L\in(0,1]$ by the same argument for Lemma \ref{uni3} and passing to the limit as $\varepsilon\to 0$.
\end{proof}

\begin{lemma}
\label{asympuniform3}
There exists a positive constant $C_{5}$, independent of $L\in(0,1]$, such that
\begin{align}
&\Vert\xi_L\Vert_{L^{2}(0,T;H)}+\Vert\xi_{L}\Vert_{L^{2}(0,T;H_{\Gamma})}
+\Vert\xi_{\Gamma,L}\Vert_{L^{2}(0,T;H_{\Gamma})}
+\Vert\bm{\varphi}_L\Vert_{L^{2}(0,T;\mathcal{V}^2)}
\leq C_{5}.
\notag
\end{align}
\end{lemma}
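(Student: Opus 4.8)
The plan is to mirror the proof of Lemma \ref{uni4}, carried out at the level of the Yosida approximation $(\bm{\varphi}_{L,\varepsilon},\bm{\mu}_{L,\varepsilon})$ and then passed to the limit $\varepsilon\to 0$, while tracking carefully that every constant is independent of $L\in(0,1]$. The decisive structural observation is that the pointwise identities \eqref{2.16}--\eqref{2.17} for the chemical potentials do not involve $L$: the parameter $L$ enters problem $(S_{L,\delta})$ only through the flux boundary condition \eqref{appromutheta} for $\bm{\mu}_{L,\varepsilon}$, whereas the elliptic operator $\partial\Phi_\delta$ governing the second equation is, by \eqref{Phi}, purely the bulk--surface Laplacian with the trace constraint and is $L$-independent. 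Consequently the test-function computations that produce the desired $L^2$-in-time bounds on $\beta_\varepsilon(\varphi_{L,\varepsilon})$ and $\beta_{\Gamma,\varepsilon}(\psi_{L,\varepsilon})$ are formally identical to those in Lemma \ref{uni4}, and we only need the right-hand data to be controlled uniformly in $L$.

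Concretely, I would first test \eqref{2.16}, written for $(\bm{\varphi}_{L,\varepsilon},\bm{\mu}_{L,\varepsilon})$, by $\beta_\varepsilon(\varphi_{L,\varepsilon})\in L^2(0,T;V)$, whose trace is $\beta_\varepsilon(\psi_{L,\varepsilon})\in L^2(0,T;V_\Gamma)$, and exploit the compatibility inequality \eqref{2.8} together with the sign consistency of $\beta_\varepsilon,\beta_{\Gamma,\varepsilon}$ exactly as in the derivation of \eqref{be-be}. The monotone contributions $\int_\Omega\beta_\varepsilon'(\varphi_{L,\varepsilon})|\nabla\omega_{L,\varepsilon}|^2$ and $\delta\int_\Gamma\beta_\varepsilon'(\psi_{L,\varepsilon})|\nabla_\Gamma\omega_{\Gamma,L,\varepsilon}|^2$ are nonnegative and are discarded; the remaining right-hand side involves $f,f_\Gamma,\mu_{L,\varepsilon},\theta_{L,\varepsilon},\pi(\varphi_{L,\varepsilon}),\pi_\Gamma(\psi_{L,\varepsilon})$ and $\varepsilon\partial_t\omega_{L,\varepsilon}$, each bounded in the relevant $L^2$-space uniformly in $L\in(0,1]$ and $\varepsilon\in(0,1)$ by $\mathbf{(A3)}$, $\mathbf{(A4)}$ and the $L$-uniform analogues of Lemmas \ref{uni2}--\ref{uni3} already in hand (the bound \eqref{7.10} being the essential one). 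This yields a uniform bound on $\|\beta_\varepsilon(\varphi_{L,\varepsilon})\|_{L^2(0,T;H)}+\|\beta_\varepsilon(\psi_{L,\varepsilon})\|_{L^2(0,T;H_\Gamma)}$, and a comparison in \eqref{2.16} then controls $\|\Delta\omega_{L,\varepsilon}\|_{L^2(0,T;H)}$.

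Next I would test \eqref{2.17} by $\beta_{\Gamma,\varepsilon}(\psi_{L,\varepsilon})\in L^2(0,T;V_\Gamma)$ to obtain, as in \eqref{es-no4}, a bound on $\|\beta_{\Gamma,\varepsilon}(\psi_{L,\varepsilon})\|_{L^2(0,T;H_\Gamma)}$ in terms of $1+\|\partial_{\mathbf{n}}\omega_{L,\varepsilon}\|_{L^2(0,T;H_\Gamma)}$; a further comparison in \eqref{2.17} controls $\|\partial_{\mathbf{n}}\omega_{L,\varepsilon}-\Delta_\Gamma\omega_{\Gamma,L,\varepsilon}\|_{L^2(0,T;H_\Gamma)}$ by a quantity of the form $C(1+\delta^{-1})(1+\|\partial_{\mathbf{n}}\omega_{L,\varepsilon}\|_{L^2(0,T;H_\Gamma)})$. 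Combining this with the $\Delta\omega_{L,\varepsilon}$ bound and invoking the bulk--surface elliptic regularity estimate \eqref{2.2b} with $L=0$, $k=0$ — applicable precisely because $\partial\Phi_\delta$ solves the $L$-free problem \eqref{2.2} — gives $\|\bm{\omega}_{L,\varepsilon}\|_{L^2(0,T;\mathcal{H}^2)}\le C(1+\|\partial_{\mathbf{n}}\omega_{L,\varepsilon}\|_{L^2(0,T;H_\Gamma)})$ with $C$ depending on $\delta$ but not on $L$. The normal derivative is then absorbed via the trace theorem and the Ehrling Lemma \ref{Ehrling}, estimating $\|\partial_{\mathbf{n}}\omega_{L,\varepsilon}\|_{L^2(0,T;H_\Gamma)}$ by $\gamma\|\omega_{L,\varepsilon}\|_{L^2(0,T;H^2(\Omega))}+C_\gamma\|\omega_{L,\varepsilon}\|_{L^2(0,T;V)}$ and choosing $\gamma$ small. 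Passing to the limit $\varepsilon\to 0$ and using weak lower semicontinuity of the norms transfers all bounds to $(\xi_L,\xi_{\Gamma,L},\bm{\varphi}_L)$, which delivers the constant $C_5$.

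The main point requiring care — indeed the only genuine difference from Lemma \ref{uni4} — is that uniformity must be secured in $L$ rather than in $\delta$. This rests on two facts: the chemical-potential identities \eqref{2.16}--\eqref{2.17} and the operator $\partial\Phi_\delta$ carry no $L$-dependence, and the norm equivalence \eqref{equiv1} holds with a constant $\widehat{C}$ uniform over $L\in(0,1]$, so that the $L$-uniform estimates of the preceding lemmas, in particular \eqref{uni-L2} and \eqref{7.10}, feed into the right-hand sides with $L$-independent constants. All resulting constants may and do depend on the fixed parameter $\delta$, which is harmless since $\delta\in(0,\infty)$ is kept fixed throughout the limit $L\to 0$.
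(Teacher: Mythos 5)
Your proposal is correct and follows essentially the same route as the paper: the paper's proof of this lemma simply states that one applies the argument of Lemma \ref{uni4} at the level of the approximate solutions and passes to the limit as $\varepsilon\to 0$, recovering the $\mathcal{V}^2$-bound for $\bm{\varphi}_L$ from that for $\bm{\omega}_L$ via the mass conservation \eqref{con-mass-L} and the Poincar\'e inequality \eqref{Po4}. You have correctly identified the only point needing care — that the identities \eqref{2.16}--\eqref{2.17} and the operator $\partial\Phi_\delta$ are $L$-free, so uniformity in $L\in(0,1]$ reduces to the already established $L$-uniform bounds on $\bm{\mu}_L$ and $\bm{\omega}_L$ — which is exactly what the paper relies on implicitly.
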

\begin{proof}
To conclude, we apply the same argument for Lemma \ref{uni4} and then pass to the limit as $\varepsilon\to 0$. The estimate for
$\Vert\bm{\varphi}_L\Vert_{L^{2}(0,T;\mathcal{V}^2)}$ follows from the estimate for $\Vert\bm{\omega}_L\Vert_{L^{2}(0,T;\mathcal{V}^2)}$, \eqref{con-mass-L}
and the Poincar\'e inequality \eqref{Po4}.
\end{proof}

\textbf{Proof of Theorem \ref{asymptotic0}.}
The existence of a limit triplet $(\boldsymbol{\varphi}^{0},\boldsymbol{\mu}^{0},\boldsymbol{\xi}^{0})$ with expected regularity properties and the (sequential) convergence results \eqref{7.20}--\eqref{7.28} are guaranteed by \eqref{con-mass-L}, Lemma \ref{asympuniform1}--\ref{asympuniform3} and the same compactness argument as in the proof of Theorem \ref{existthm}.
By \eqref{7.22}, \eqref{7.28} and the maximal monotonicity of $\beta$ and $\beta_{\Gamma}$, we find
$$
\xi^{0}\in\beta(\varphi^{0})\ \ \text{a.e. in}\ Q,\quad\xi_{\Gamma}^{0}\in\beta_{\Gamma}(\psi^{0})\ \ \text{a.e. in}\ \Sigma.
$$
Then taking limit $L\rightarrow0$ in \eqref{eq3.2}, \eqref{eq3.3}, we recover \eqref{eq3.2-0} and \eqref{eq3.3-0}.
The inequality \eqref{7.30} follows from \eqref{eq7.2}. This also implies
$$
\mu^{0}|_{\Gamma}=\theta^{0}\quad \text{a.e. on}\ \Sigma.
$$
Hence, $(\boldsymbol{\varphi}^{0},\boldsymbol{\mu}^{0},\boldsymbol{\xi}^{0})$ is a weak solution of GMS model in the sense of Definition \ref{GMS}. The uniqueness of $\bm{\varphi}^0$ has been established in \cite[Theorem 2.1]{CF15}.
The proof of Theorem \ref{asymptotic0} is complete.
\hfill $\square$

\subsection{The case $L\rightarrow\infty$}

We now derive uniform estimates for $L\geq \widehat{L}$, where $\widehat{L}\geq 1$ is a constant to be determined later.

\begin{lemma}
There exists a positive constant $C_6$, independent of $L\geq 1$, such that
\begin{align}
&\Vert\partial_t\boldsymbol{\omega}_{L}\Vert_{L^{2}(0,T;\mathcal{H}_{(0)}^{-1})}
+\Vert\boldsymbol{\varphi}_{L}\Vert_{L^{\infty}(0,T;\mathcal{V}^{1})}
+\Vert\widehat{\beta}(\varphi_{L})\Vert_{L^{\infty}(0,T;L^{1}(\Omega))}
+\Vert\widehat{\beta}_{\Gamma}(\psi_{L})\Vert_{L^{\infty}(0,T;L^{1}(\Gamma))}\leq C_6.
\label{3.4-L}
\end{align}
\end{lemma}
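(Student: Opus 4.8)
The plan is to establish the bound \eqref{3.4-L} at the level of the approximations $\boldsymbol{\varphi}_{L,\varepsilon}=\boldsymbol{\omega}_{L,\varepsilon}+\overline{m}_0\mathbf{1}$, uniformly in \emph{both} $\varepsilon\in(0,1)$ and $L\geq 1$, and then to pass to the limit $\varepsilon\to 0$. The finite-$L$ reasoning of Lemma \ref{uni2} cannot be reused directly, because it rests on the first energy estimate \eqref{eq3.1} and the interpolation inequality \eqref{int-0}, both of which are tied to the $L$-dependent norm $\|\cdot\|_{0,*}$. As $L\to\infty$ the operator $\mathfrak{S}^{L}$ degenerates (the Robin condition in \eqref{2.2} tends to a Neumann one) and $\|\boldsymbol{\omega}_0\|_{0,*}$ in fact blows up as soon as $\langle\varphi_0\rangle_\Omega\neq\langle\psi_0\rangle_\Gamma$, reflecting that mass is conserved jointly for finite $L$ but separately in the bulk and on the boundary in the limit. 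I would therefore discard the $0,*$-based estimate entirely and work only with the dissipation identity \eqref{equ1'} obtained by testing \eqref{2.9} with $\boldsymbol{\omega}_{L,\varepsilon}'$.

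First I would record \eqref{equ1'} and handle its source term $L$-uniformly: since $\boldsymbol{f}\in\mathcal{V}^1$ by $\mathbf{(A4)}$, the trace of $f$ equals $f_\Gamma$, so the coupling contribution to $\|\mathbf{P}\boldsymbol{f}\|_{\mathcal{H}^1_{L,0}}$ vanishes and \eqref{equ1'-ho} produces a bound depending only on $\|\nabla f\|_{H}$ and $\|\nabla_\Gamma f_\Gamma\|_{H_\Gamma}$. After integrating in time, the remaining terms to control are the concave contributions $\int_\Omega\widehat{\pi}(\varphi_{L,\varepsilon})$ and $\int_\Gamma\widehat{\pi}_\Gamma(\psi_{L,\varepsilon})$ evaluated at the final time.

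The decisive step -- and the reason the extra hypothesis $\pi,\pi_\Gamma\in W^{1,\infty}(\mathbb{R})$ is imposed in Theorem \ref{asymptoticinfinity} -- is that this boundedness forces $\widehat{\pi},\widehat{\pi}_\Gamma$ to grow at most linearly, whence
\[
\Big|\int_\Omega\widehat{\pi}(\varphi_{L,\varepsilon})\Big|+\Big|\int_\Gamma\widehat{\pi}_\Gamma(\psi_{L,\varepsilon})\Big|\leq C\big(1+\|\varphi_{L,\varepsilon}\|_{L^1(\Omega)}+\|\psi_{L,\varepsilon}\|_{L^1(\Gamma)}\big).
\]
Because the data are fixed in this section, mass conservation \eqref{con-mass} keeps the generalized mean of $\boldsymbol{\omega}_{L,\varepsilon}$ equal to zero, so the $L$-independent Poincar\'e inequality \eqref{Po4} gives $\|\omega_{L,\varepsilon}\|_{H}\leq\widetilde{c}_P\|\nabla\varphi_{L,\varepsilon}\|_{H}$; together with the trace theorem (recall $\psi_{L,\varepsilon}=\varphi_{L,\varepsilon}|_\Gamma$) the right-hand side is bounded by $C(1+\|\nabla\varphi_{L,\varepsilon}\|_{H})$, and Young's inequality absorbs the \emph{linear} term into $\tfrac14\|\nabla\varphi_{L,\varepsilon}\|_{H}^2$, i.e. into the bulk part of $\Phi_\delta(\boldsymbol{\omega}_{L,\varepsilon})$, with an $L$-independent constant. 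Linear growth is essential here: a merely quadratic $\widehat{\pi}$ would generate $C\widetilde{c}_P^2\|\nabla\varphi_{L,\varepsilon}\|_{H}^2$ with a possibly non-absorbable constant. This yields $L$-uniform control of $\Phi_\delta(\boldsymbol{\omega}_{L,\varepsilon})$, of $\int_\Omega\widehat{\beta}_\varepsilon(\varphi_{L,\varepsilon})$ and $\int_\Gamma\widehat{\beta}_{\Gamma,\varepsilon}(\psi_{L,\varepsilon})$, and of $\int_0^T\|\boldsymbol{\omega}_{L,\varepsilon}'\|_{0,*}^2$; since $\delta$ is fixed and $\overline{m}_0$ is a fixed constant, adding back $\overline{m}_0\mathbf{1}$ and using the trace estimate controls $\|\boldsymbol{\varphi}_{L,\varepsilon}\|_{L^\infty(0,T;\mathcal{V}^1)}$.

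For the time derivative I would use the weak formulation \eqref{eq3.2}: as $a_L(\boldsymbol{\mu}_{L,\varepsilon},\boldsymbol{y})=a_L(\mathbf{P}\boldsymbol{\mu}_{L,\varepsilon},\boldsymbol{y})$ and $a_L(\boldsymbol{y},\boldsymbol{y})^{1/2}\leq C\|\boldsymbol{y}\|_{\mathcal{H}^1}$ uniformly for $L\geq 1$ (because $1/L\leq 1$ and the coupling seminorm is dominated by the trace), the identity \eqref{pmue} gives $\|\partial_t\boldsymbol{\varphi}_{L,\varepsilon}\|_{(\mathcal{H}^1)'}\leq C\|\boldsymbol{\omega}_{L,\varepsilon}'\|_{0,*}$, hence an $L$-uniform bound in $L^2(0,T;\mathcal{H}^{-1}_{(0)})$. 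Passing to the limit $\varepsilon\to0$ exactly as in the proof of Theorem \ref{weakexist}, all bounds survive by weak (weak-star) lower semicontinuity of the norms and by lower semicontinuity of the convex integrals $\int\widehat{\beta},\int\widehat{\beta}_\Gamma$ (using $\widehat{\beta}_\varepsilon\leq\widehat{\beta}$ and the strong convergence $\boldsymbol{\varphi}_{L,\varepsilon}\to\boldsymbol{\varphi}_L$ in $C([0,T];\mathcal{L}^2)$), which gives \eqref{3.4-L}. The hard part is precisely the $L$-uniform absorption in the third step: one must renounce the $0,*$-machinery that serves the regime $L\to 0$ and instead exploit the $W^{1,\infty}$-regularity of the concave parts in tandem with the $L$-independent tools \eqref{Po4} and the trace theorem.
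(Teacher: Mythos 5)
Your proof is correct, and it departs from the paper's at exactly the point you single out as decisive. The paper likewise abandons the first estimate \eqref{eq3.1} and the $0,*$-machinery for the initial datum, and starts from the identity obtained by testing \eqref{2.9} with $\boldsymbol{\omega}_{L,\varepsilon}'$; but it does \emph{not} fold the concave energies $\int_\Omega\widehat{\pi}$, $\int_\Gamma\widehat{\pi}_\Gamma$ into the functional being differentiated. Instead it keeps $\bm{\pi}(\bm{\varphi}_{L,\varepsilon})$ as a forcing term, writes $\big(\mathbf{P}\bm{\pi}(\bm{\varphi}_{L,\varepsilon}),\boldsymbol{\omega}_{L,\varepsilon}'\big)_{\mathcal{L}^2}=\big(\mathbf{P}\bm{\pi}(\bm{\varphi}_{L,\varepsilon}),\mathfrak{S}^L\boldsymbol{\omega}_{L,\varepsilon}'\big)_{\mathcal{H}^1_{L,0}}$, and shows $\|\mathbf{P}\bm{\pi}(\bm{\varphi}_{L,\varepsilon})\|^2_{\mathcal{H}^1_{L,0}}\leq C\big(1+\|\boldsymbol{\omega}_{L,\varepsilon}\|^2_{\mathcal{V}^1_{(0)}}\big)$ uniformly for $L\geq1$ (using $1/L\leq1$ for the coupling term and the Lipschitz bounds on $\pi,\pi_\Gamma$ for the gradient terms), closing the estimate with Gronwall's lemma. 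So the paper exploits $\pi'\in L^\infty$ where you exploit $\pi\in L^\infty$ (linear growth of $\widehat{\pi}$, then the $L$-independent Poincar\'e inequality \eqref{Po4}, the trace theorem, and absorption of a term linear in $\|\nabla\varphi_{L,\varepsilon}\|_H$ into $\Phi_\delta$); both properties follow from the stated hypothesis $\pi,\pi_\Gamma\in W^{1,\infty}(\mathbb{R})$. Your route avoids Gronwall and the chain-rule computation, at the price of genuinely requiring boundedness of $\pi$, whereas the paper's computation would survive for unbounded globally Lipschitz perturbations. The remaining ingredients coincide: the $L$-uniform treatment of $\mathbf{P}\boldsymbol{f}$ through $\boldsymbol{f}\in L^2(0,T;\mathcal{V}^1)$, the observation that $a_L(\boldsymbol{y},\boldsymbol{y})\leq C\|\boldsymbol{y}\|^2_{\mathcal{H}^1}$ uniformly for $L\geq1$ so that the dissipation of $\|\boldsymbol{\omega}_{L,\varepsilon}'\|_{0,*}$ controls $\partial_t\bm{\varphi}_{L}$ in $L^2(0,T;\mathcal{H}^{-1}_{(0)})$, and the passage $\varepsilon\to0$ by lower semicontinuity. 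Your diagnosis that $\|\boldsymbol{\omega}_0\|_{0,*}$ degenerates as $L\to\infty$ whenever $\langle\varphi_0\rangle_\Omega\neq\langle\psi_0\rangle_\Gamma$ is also the correct explanation of why Lemma \ref{uni2} cannot simply be recycled here.
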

\begin{proof}
Testing \eqref{2.9} at time $s\in(0,T)$ by $\boldsymbol{\omega}_{L,\varepsilon}'\in\mathcal{L}_{(0)}^{2}$, with the help of \eqref{eq2.2}, we obtain
\begin{align}
&\frac{\mathrm{d}}{\mathrm{d}s}\left(\Phi_\delta\big(\boldsymbol{\omega}_{L,\varepsilon}(s)\big)
+ \int_{\Omega}\widehat{\beta}_{\varepsilon}\big(\varphi_{L,\varepsilon}(s)\big)\,\mathrm{d}x
+ \int_{\Gamma}\widehat{\beta}_{\Gamma,\varepsilon}\big(\psi_{L,\varepsilon}(s)\big)\,\mathrm{d}S \right)
+\varepsilon\Vert\boldsymbol{\omega}_{L,\varepsilon}'(s)\Vert_{\mathcal{L}^{2}}^{2}
+\Vert\boldsymbol{\omega}_{L,\varepsilon}'(s)\Vert_{0,*}^{2}
\notag\\
&\quad = -\big(\mathbf{P}\bm{\pi}(\bm{\varphi}_{L,\varepsilon}(s)),\boldsymbol{\omega}_{L,\varepsilon}'(s)\big)_{\mathcal{L}^{2}}
+\big(\mathbf{P}\boldsymbol{f}(s),\boldsymbol{\omega}_{L,\varepsilon}'(s)\big)_{\mathcal{L}^{2}},
\quad \text{for a.a.}\ s\in(0,T).
\label{equ1'-L}
\end{align}
Similar to \eqref{equ1'-ho}, it holds
\begin{align}
 \big(\mathbf{P}\boldsymbol{f}(s),\boldsymbol{\omega}_{L,\varepsilon}'(s)\big)_{\mathcal{L}^{2}}
  = \big(\mathbf{P}\boldsymbol{f}(s),\mathfrak{S}^L \boldsymbol{\omega}_{L,\varepsilon}'(s) \big)_{\mathcal{H}_{L,0}^1}
 \leq\frac{1}{4}\Vert\boldsymbol{\omega}_{L,\varepsilon}'(s)\Vert_{0,*}^{2}
+ \|\boldsymbol{f}(s)\|_{\mathcal{V}^{1}}^2,
\label{equ1'-ho-L}
\end{align}
and
\begin{align}
\big|\big(\mathbf{P}\bm{\pi}(\bm{\varphi}_{L,\varepsilon}(s)),\boldsymbol{\omega}_{L,\varepsilon}'(s)\big)_{\mathcal{L}^{2}}\big|
&=\big| \big(\mathbf{P}\bm{\pi}(\bm{\varphi}_{L,\varepsilon}(s)),\mathfrak{S}^L\boldsymbol{\omega}_{L,\varepsilon}'(s)\big)_{\mathcal{H}_{L,0}^1}\big|
\notag\\
&\leq \|\mathbf{P}\bm{\pi}(\bm{\varphi}_{L,\varepsilon}(s))\|_{\mathcal{H}_{L,0}^1}\|\boldsymbol{\omega}_{L,\varepsilon}'(s)\|_{0,*}
\notag\\
&\leq\frac{1}{4}\Vert\boldsymbol{\omega}_{L,\varepsilon}'(s)\Vert_{0,*}^{2}
+ \|\mathbf{P}\bm{\pi}(\bm{\varphi}_{L,\varepsilon}(s))\|_{\mathcal{H}_{L,0}^1}^2.
\label{equ1'-ho-L2}
\end{align}
Thanks to the additional assumption $\pi, \pi_\Gamma \in W^{1,\infty}(\mathbb{R})$, we deduce from $\mathbf{(A3)}$,
$\boldsymbol{\varphi}_{L,\varepsilon}= \boldsymbol{\omega}_{L,\varepsilon}
+\overline{m}_0\mathbf{1}$ and the Poincar\'{e} inequality \eqref{Po4} that for all $L\geq 1$, it holds
\begin{align}
\|\mathbf{P}\bm{\pi}\big(\bm{\varphi}_{L,\varepsilon}(s)\big)\|_{\mathcal{H}_{L,0}^1}^2
&\leq \|\nabla\pi\big(\varphi_{L,\varepsilon}(s)\big)\|_H^{2}
+ \|\nabla_{\Gamma}\pi_{\Gamma}\big(\psi_{L,\varepsilon}(s)\big)\|_{H_\Gamma}^{2}
+ \|\pi\big(\varphi_{L,\varepsilon}(s)\big)-\pi_{\Gamma}\big(\psi_{L,\varepsilon}(s)\big)\|_{H_\Gamma}^{2}
\notag\\
&\leq K^2\|\nabla \omega_{L,\varepsilon}(s)\|_H^{2}
+K_\Gamma^2 \|\nabla_{\Gamma}\omega_{\Gamma,L,\varepsilon}(s)\|_{H_\Gamma}^{2}
+3 K^2\|\omega_{L,\varepsilon}(s)\|_{H}^{2}
\notag\\
&\quad + 3 K_\Gamma^2\|\omega_{\Gamma,L,\varepsilon}(s)\|_{H_\Gamma}^{2}
+ 6 |\Gamma| \left(|\pi(\overline{m}_{0})|^{2}+|\pi_{\Gamma}(\overline{m}_{0})|^{2}\right)\notag\\
&\leq C \Vert\boldsymbol{\omega}_{L,\varepsilon}(s)\Vert^{2}_{\mathcal{V}_{(0)}^{1}} +C,
\label{equ1'-ho-L3}
\end{align}
where $C>0$ is independent of $L\geq 1$.
In view of \eqref{equ1'-ho-L}, \eqref{equ1'-ho-L2}, \eqref{equ1'-ho-L3} and $\mathbf{(A4)}$ we infer from \eqref{equ1'-L} and Gronwall's lemma that for all $L\geq 1$, it holds
\begin{align}
&\frac{1}{2}\Vert\boldsymbol{\omega}_{L,\varepsilon}(t)\Vert^{2}_{\mathcal{V}_{(0)}^{1}}
+\int_{\Omega}\widehat{\beta}_{\varepsilon}\big(\varphi_{L,\varepsilon}(t)\big)\,\mathrm{d}x
+\int_{\Gamma}\widehat{\beta}_{\Gamma,\varepsilon}\big(\psi_{L,\varepsilon}(t)\big)\,\mathrm{d}S
\notag\\
&\quad +\varepsilon\int_{0}^{t}\Vert\boldsymbol{\omega}_{L,\varepsilon}'(s)\Vert_{\mathcal{L}^{2}}^{2}\,\mathrm{d}s +\frac{1}{2}\int_{0}^{t}\Vert\boldsymbol{\omega}_{L,\varepsilon}'(s)\Vert_{0,*}^{2}\,\mathrm{d}s
 \leq C,\quad \forall\, t\in [0,T],
\notag
\end{align}
where $C>0$ depends on $\Vert\boldsymbol{\omega}_{0}\Vert^{2}_{\mathcal{V}_{(0)}^{1}}$, $\overline{m}_0$, $\int_{\Omega}\widehat{\beta}(\varphi_{0})\,\mathrm{d}x$, $\int_{\Gamma}\widehat{\beta}_{\Gamma}(\psi_{0})\,\mathrm{d}S$, $\|\bm{f}\|_{L^2(0,T;\mathcal{V}^1)}$, $\Omega$ and $T$.
Passing to the limit as $\varepsilon\to 0$, by the weak lower semicontinuity and $\bm{\varphi}_L=\bm{\omega}_L+\overline{m}_0\mathbf{1}$,
we obtain the estimate \eqref{3.4-L}.
\end{proof}

\begin{lemma}
There exists a positive constant $C_7$, independent of $L\geq 1$, such that
\begin{align}
&\Vert\partial_t\varphi_L\Vert_{L^{2}(0,T;V')}\leq C_7,
\label{eq7.3}\\
&\Vert\partial_t\psi_L\Vert_{L^{2}(0,T;V_{\Gamma}')}\leq C_7.
\label{eq7.4}
\end{align}
\end{lemma}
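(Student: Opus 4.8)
The plan is to extract everything from the single coercivity bound already recorded in \eqref{3.4-L}. Passing to the limit $\varepsilon\to 0$ in \eqref{2.13} (together with \eqref{pmue}) gives $\mathbf{P}\boldsymbol{\mu}_L=\mathfrak{S}^{L}(-\partial_t\boldsymbol{\omega}_L)$, hence $\|\mathbf{P}\boldsymbol{\mu}_L\|_{\mathcal{H}^1_{L,0}}=\|\partial_t\boldsymbol{\omega}_L\|_{0,*}$; since the constant part of $\boldsymbol{\mu}_L$ does not contribute to $a_L$, expanding $\|\mathbf{P}\boldsymbol{\mu}_L\|_{\mathcal{H}^1_{L,0}}^2=a_L(\boldsymbol{\mu}_L,\boldsymbol{\mu}_L)$ and using the bound on $\partial_t\boldsymbol{\omega}_L$ in $L^2(0,T;\mathcal{H}^{-1}_{(0)})$ from \eqref{3.4-L} yields, uniformly for $L\geq 1$, the analogue of \eqref{eq7.2}:
\begin{align*}
\|\nabla\mu_L\|_{L^{2}(0,T;H)}^2+\|\nabla_{\Gamma}\theta_L\|_{L^{2}(0,T;H_{\Gamma})}^2
+\frac{1}{L}\|\mu_L|_{\Gamma}-\theta_L\|_{L^{2}(0,T;H_{\Gamma})}^2\leq C.
\end{align*}
This estimate is the key input for the whole lemma.

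Next I would isolate the bulk component. Testing the variational formulation \eqref{eq3.2} with $\boldsymbol{y}=(y,0)$, $y\in V$, the duality pairing on the left reduces to $\langle\partial_t\varphi_L(t),y\rangle_{V',V}$, while
\begin{align*}
a_L\big(\boldsymbol{\mu}_L(t),(y,0)\big)
=\int_{\Omega}\nabla\mu_L(t)\cdot\nabla y\,\mathrm{d}x
+\frac{1}{L}\int_{\Gamma}\big(\mu_L(t)-\theta_L(t)\big)\,y|_{\Gamma}\,\mathrm{d}S.
\end{align*}
Estimating by Cauchy--Schwarz, the trace inequality $\|y|_{\Gamma}\|_{H_{\Gamma}}\leq C\|y\|_{V}$, and writing $\tfrac{1}{L}=\tfrac{1}{\sqrt{L}}\cdot\tfrac{1}{\sqrt{L}}$ with $\tfrac{1}{\sqrt{L}}\leq 1$ for $L\geq 1$, I obtain
\begin{align*}
\|\partial_t\varphi_L(t)\|_{V'}
\leq \|\nabla\mu_L(t)\|_{H}
+\frac{C}{\sqrt{L}}\Big(\frac{1}{\sqrt{L}}\|\mu_L(t)|_{\Gamma}-\theta_L(t)\|_{H_{\Gamma}}\Big).
\end{align*}
Squaring, integrating in time, and inserting the displayed estimate above gives \eqref{eq7.3}. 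Testing instead with $\boldsymbol{y}=(0,y_{\Gamma})$, $y_{\Gamma}\in V_{\Gamma}$, isolates $\langle\partial_t\psi_L,y_{\Gamma}\rangle_{V_{\Gamma}',V_{\Gamma}}$ against $\int_{\Gamma}\nabla_{\Gamma}\theta_L\cdot\nabla_{\Gamma}y_{\Gamma}\,\mathrm{d}S+\tfrac{1}{L}\int_{\Gamma}(\mu_L-\theta_L)y_{\Gamma}\,\mathrm{d}S$, and the identical bookkeeping yields \eqref{eq7.4}.

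The only delicate point, and precisely the place where the hypothesis $L\geq 1$ (equivalently $L\to\infty$) is used, is the coupling term $\tfrac{1}{L}\int_{\Gamma}(\mu_L-\theta_L)(\cdot)\,\mathrm{d}S$. I would stress that it is controlled because the factor $\tfrac{1}{\sqrt{L}}\|\mu_L|_{\Gamma}-\theta_L\|_{H_{\Gamma}}$ is uniformly bounded in $L^{2}(0,T)$ (this being exactly the third term in the estimate above), while the remaining factor $\tfrac{1}{\sqrt{L}}\leq 1$ supplies the decay as $L\to\infty$. No cancellation or compactness argument is needed; once the coercivity estimate is in place the remainder is routine, so I do not anticipate a genuine obstacle beyond organizing the two test-function choices.
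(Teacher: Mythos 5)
Your proposal is correct and follows essentially the same route as the paper: both derive the uniform bound $\|\nabla\mu_L\|_{L^2(0,T;H)}+\|\nabla_\Gamma\theta_L\|_{L^2(0,T;H_\Gamma)}+L^{-1/2}\|\mu_L-\theta_L\|_{L^2(0,T;H_\Gamma)}\leq C$ from the energy estimate \eqref{3.4-L} via the identity $\|\mathbf{P}\boldsymbol{\mu}_L\|_{\mathcal{H}^1_{L,0}}=\|\partial_t\boldsymbol{\omega}_L\|_{0,*}$, and then test \eqref{eq3.2} with $(z,0)$ and $(0,z_\Gamma)$, splitting $1/L=(1/\sqrt{L})\cdot(1/\sqrt{L})$ to absorb the coupling term. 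No gaps.
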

\begin{proof}
From \eqref{3.4-L} and the definition of $\|\cdot\|_{\mathcal{H}_{L,0}^{1}}$, we see that for $L\geq 1$, it holds
\begin{align}
&\Vert\nabla\mu_L\Vert_{L^{2}(0,T;H)}
+\Vert\nabla_{\Gamma}\theta_L\Vert_{L^{2}(0,T;H_{\Gamma})}
+\frac{1}{\sqrt{L}}\Vert\mu_L-\theta_L\Vert_{L^{2}(0,T;H_{\Gamma})}
\leq C,
\label{eq7.2-L}
\end{align}
where $C>0$ is independent of $L$.
For any $z\in V$, taking $\boldsymbol{z}=(z,0)\in\mathcal{H}^{1}$ in \eqref{eq3.2}, we obtain
\begin{align}
\big\langle \partial_t\varphi_L(t),z\big\rangle_{V',V}+\int_{\Omega}\nabla\mu_L(t)\cdot\nabla z\,\mathrm{d}x+\frac{1}{L}\int_{\Gamma}\big(\mu_{L}(t)-\theta_{L}(t)\big)z\,\mathrm{d}S=0,
\quad \text{for a.a.}\ t\in (0,T).
\label{time-L}
\end{align}
It follows that
\begin{align}
\Big|\big\langle \partial_t\varphi_L(t),z\big\rangle_{V',V}\Big|
&\leq\Vert\nabla\mu_L(t)\Vert_{H}\Vert\nabla z\Vert_{H}
+\frac{1}{L}\Vert\mu_L(t)-\theta_L(t)\Vert_{H_{\Gamma}}\Vert z\Vert_{H_{\Gamma}}
\notag\\
&\leq C\Big(\Vert\nabla\mu_L(t)\Vert_{H}
+\frac{1}{\sqrt{L}}\cdot\frac{1}{\sqrt{L}}\Vert\mu_L(t)-\theta_L(t)\Vert_{H_{\Gamma}}\Big)\Vert z\Vert_{V},
\notag
\end{align}
which implies
$$\Vert\partial_t\varphi_L(t)\Vert_{V'}\leq C\Big(\Vert\nabla\mu_L(t)\Vert_{H}
+\frac{1}{\sqrt{L}}\cdot\frac{1}{\sqrt{L}}\Vert\mu_L(t)-\theta_L(t)\Vert_{H_{\Gamma}}\Big).$$
Then by \eqref{eq7.2-L}, we get \eqref{eq7.3}.
Analogously, for any $z_{\Gamma}\in V_{\Gamma}$, taking $\boldsymbol{z}=(0,z_{\Gamma})\in\mathcal{H}^{1}$ in \eqref{eq3.2}, we obtain
\begin{align}
\big\langle \partial_t \psi_L(t),z_{\Gamma}\big\rangle_{V_{\Gamma}',V_{\Gamma}}
+\int_{\Gamma}\nabla_{\Gamma}\theta_L(t)\cdot\nabla_{\Gamma}z_{\Gamma}\,\mathrm{d}S
-\frac{1}{L}\int_{\Gamma}\big(\mu_L(t)-\theta_L(t)\big)z_\Gamma\,\mathrm{d}S=0.
\notag
\end{align}
By the same reasoning, we arrive at \eqref{eq7.4}.
\end{proof}

\begin{lemma}
There exists a large constant $\widehat{L}\geq 1$, such that for all $L\geq \widehat{L}$, it holds
\begin{align}
\Vert\xi_L\Vert_{L^{2}(0,T;L^{1}(\Omega))}+\Vert\xi_{\Gamma,L}\Vert_{L^{2}(0,T;L^{1}(\Gamma))}
\leq C_8\big(1+\Vert\partial_{\mathbf{n}}\varphi_L\Vert_{L^{2}(0,T;H_\Gamma)}\big),
\label{7.42}
\end{align}
where $C_8>0$ is independent of $L$.
\end{lemma}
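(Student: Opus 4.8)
The plan is to estimate the bulk multiplier $\xi_L$ and the boundary multiplier $\xi_{\Gamma,L}$ separately, which mirrors the decoupling of the bulk and boundary dynamics as $L\to\infty$. Working at the approximating level $\varepsilon>0$ and passing to the limit $\varepsilon\to0$ at the end (using weak lower semicontinuity of the $L^2(0,T;L^1)$-norms under the convergences $\beta_{\varepsilon}(\varphi_{L,\varepsilon})\to\xi_L$, $\beta_{\Gamma,\varepsilon}(\psi_{L,\varepsilon})\to\xi_{\Gamma,L}$), I would test the pointwise identity \eqref{2.16} in $\Omega$ by $\varphi_{L,\varepsilon}-\langle\varphi_{L,\varepsilon}\rangle_{\Omega}$ and the identity \eqref{2.17} on $\Gamma$ by $\psi_{L,\varepsilon}-\langle\psi_{L,\varepsilon}\rangle_{\Gamma}$. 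The essential point in this choice is that centering at the bulk mean (resp. the boundary mean) annihilates the contribution of $\langle\mu_{L,\varepsilon}\rangle_{\Omega}$ (resp. $\langle\theta_{L,\varepsilon}\rangle_{\Gamma}$), which is precisely the quantity that is not yet under control for large $L$. After integrating by parts in the $-\Delta\omega_{L,\varepsilon}$ term, the boundary flux $\int_{\Gamma}\partial_{\mathbf{n}}\varphi_{L,\varepsilon}\,(\psi_{L,\varepsilon}-\langle\varphi_{L,\varepsilon}\rangle_{\Omega})\,\mathrm{d}S$ is produced, while on $\Gamma$ the term $\partial_{\mathbf{n}}\varphi_{L,\varepsilon}\,(\psi_{L,\varepsilon}-\langle\psi_{L,\varepsilon}\rangle_{\Gamma})$ appears directly; bounding these by $\|\partial_{\mathbf{n}}\varphi_{L,\varepsilon}\|_{H_\Gamma}$ (times the uniformly bounded $\mathcal{V}^1$-norm of $\bm{\varphi}_{L,\varepsilon}$, hence its trace) accounts for the factor $\|\partial_{\mathbf{n}}\varphi_L\|_{L^2(0,T;H_\Gamma)}$ on the right-hand side of \eqref{7.42}.

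The next step is to invoke the coercivity inequality \eqref{eq2.8}, now applied with the time-dependent centers $r_0=\langle\varphi_{L,\varepsilon}(t)\rangle_{\Omega}$ in the bulk and $r_0=\langle\psi_{L,\varepsilon}(t)\rangle_{\Gamma}$ on the boundary. This converts $\int_{\Omega}\beta_{\varepsilon}(\varphi_{L,\varepsilon})(\varphi_{L,\varepsilon}-\langle\varphi_{L,\varepsilon}\rangle_{\Omega})\,\mathrm{d}x$ into the lower bound $\delta_0\int_{\Omega}|\beta_{\varepsilon}(\varphi_{L,\varepsilon})|\,\mathrm{d}x-c_1|\Omega|$, and likewise on $\Gamma$. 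The nonnegative terms $\int_{\Omega}|\nabla\varphi_{L,\varepsilon}|^2$ and $\delta\int_{\Gamma}|\nabla_{\Gamma}\psi_{L,\varepsilon}|^2$ arising from the two Laplacians can be discarded in the favourable direction. The remaining contributions are all uniformly bounded: $\int_{\Omega}(\mu_{L,\varepsilon}-\langle\mu_{L,\varepsilon}\rangle_{\Omega})(\varphi_{L,\varepsilon}-\langle\varphi_{L,\varepsilon}\rangle_{\Omega})\,\mathrm{d}x$ is controlled by $\|\nabla\mu_{L,\varepsilon}\|_{H}$ via Poincaré, its boundary analogue by $\|\nabla_{\Gamma}\theta_{L,\varepsilon}\|_{H_\Gamma}$, and the $\bm{\pi}$, $\bm{f}$ and $\varepsilon\,\partial_t\bm{\omega}_{L,\varepsilon}$ terms by $\mathbf{(A3)}$, $\mathbf{(A4)}$ and the energy estimate \eqref{3.4-L}. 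Taking the $L^2(0,T)$-norm in time and using $\|\nabla\mu_L\|_{L^2(0,T;H)}+\|\nabla_{\Gamma}\theta_L\|_{L^2(0,T;H_\Gamma)}\leq C$ from \eqref{eq7.2-L} then delivers \eqref{7.42}.

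The hard part, and the reason for the restriction $L\geq\widehat{L}$, is the preliminary control of these centers. The inequality \eqref{eq2.8} holds with $\varepsilon$-independent constants $\delta_0,c_1$ only when $r_0$ ranges over a fixed compact subset of $\mathrm{int}\,D(\beta_\Gamma)$, so I must show that $\langle\varphi_{L,\varepsilon}(t)\rangle_{\Omega}$ and $\langle\psi_{L,\varepsilon}(t)\rangle_{\Gamma}$ remain in such a set uniformly in $t$, $\varepsilon$ and $L$. The key observation is that the inter-phase mass flux is $O(L^{-1/2})$: combining \eqref{2.18}, \eqref{2.19} and \eqref{appromutheta} gives $\frac{\mathrm{d}}{\mathrm{d}t}\int_{\Omega}\varphi_{L,\varepsilon}=\tfrac{1}{L}\int_{\Gamma}(\theta_{L,\varepsilon}-\mu_{L,\varepsilon})\,\mathrm{d}S$ and $\frac{\mathrm{d}}{\mathrm{d}t}\int_{\Gamma}\psi_{L,\varepsilon}=\tfrac{1}{L}\int_{\Gamma}(\mu_{L,\varepsilon}-\theta_{L,\varepsilon})\,\mathrm{d}S$, so integrating in time and using the uniform bound on $\tfrac{1}{\sqrt{L}}\|\mu_{L,\varepsilon}-\theta_{L,\varepsilon}\|_{L^2(0,T;H_\Gamma)}$ (the $\varepsilon$-level counterpart of \eqref{eq7.2-L}, which is already contained in the estimate for $\|\mathbf{P}\bm{\mu}_{L,\varepsilon}\|_{\mathcal{H}^1_{L,0}}$) yields $|\langle\varphi_{L,\varepsilon}(t)\rangle_{\Omega}-\langle\varphi_0\rangle_{\Omega}|\leq CL^{-1/2}$ and $|\langle\psi_{L,\varepsilon}(t)\rangle_{\Gamma}-\langle\psi_0\rangle_{\Gamma}|\leq CL^{-1/2}$. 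Since $\langle\varphi_0\rangle_{\Omega},\langle\psi_0\rangle_{\Gamma}\in\mathrm{int}\,D(\beta_\Gamma)$ by $\mathbf{(A5)}$, fixing $\widehat{L}\geq1$ large enough keeps both means within a fixed compact neighbourhood of these interior points, so that $\delta_0$ and $c_1$ in \eqref{eq2.8} may indeed be chosen independently of $L$ and $\varepsilon$. This uniform control of the \emph{separate} bulk and boundary means is exactly what is unavailable for moderate $L$, where only the total mass \eqref{con-mass-L} is conserved, and it constitutes the technical heart of the argument.
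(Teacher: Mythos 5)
Your proposal is correct and follows essentially the same route as the paper: the paper tests the weak formulation \eqref{eq3.2} with $\big(\mathcal{N}_{\Omega}(\varphi_L-\langle\varphi_L\rangle_\Omega),\mathcal{N}_{\Gamma}(\psi_L-\langle\psi_L\rangle_\Gamma)\big)$ and then expands via \eqref{eq3.5}--\eqref{eq3.6}, which produces exactly the identity you obtain by pairing the $\mu$- and $\theta$-equations with the mean-free phase variables, including the residual flux term $(\langle\varphi_L\rangle_\Omega-\langle\psi_L\rangle_\Gamma)\int_\Gamma\partial_{\mathbf{n}}\varphi_L$; and the technical heart you identify — the $O(L^{-1/2})$ drift of the separate bulk and boundary means derived from the uniform bound on $L^{-1/2}\Vert\mu_L-\theta_L\Vert_{L^2(0,T;H_\Gamma)}$, which fixes $\widehat{L}$ — is precisely the paper's \eqref{7.36}--\eqref{7.37}. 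The only (harmless) variations are that you control $\int_\Omega\mu_L(\varphi_L-\langle\varphi_L\rangle_\Omega)$ directly by Poincar\'e via $\Vert\nabla\mu_L\Vert_H$ where the paper routes it through the time-derivative bounds \eqref{eq7.3}--\eqref{eq7.4}, and that you apply \eqref{eq2.8} with moving centers confined to a compact subset of $\mathrm{int}\,D(\beta_\Gamma)$ where the paper uses the fixed centers $\langle\varphi_0\rangle_\Omega$, $\langle\psi_0\rangle_\Gamma$ and absorbs the drift as a perturbation of size $\widehat{\delta}_0/2$.
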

\begin{proof}
Taking the test function $\boldsymbol{y}=\big(\mathcal{N}_{\Omega}\big(\varphi_L-\langle\varphi_L\rangle_\Omega\big), \mathcal{N}_{\Gamma}\big(\psi_L-\langle\psi_L\rangle_\Gamma\big)\big)\in\mathcal{H}^{1}$
in \eqref{eq3.2}, then we obtain
\begin{align}
0&=\big\langle\partial_{t}\varphi_L,\mathcal{N}_{\Omega}\big(\varphi_L-\langle\varphi_L\rangle_\Omega\big)\big\rangle_{V',V}
+\big\langle\partial_{t}\psi_L,\mathcal{N}_{\Gamma}\big(\psi_L-\langle\psi_L\rangle_\Gamma\big)\big\rangle_{V_{\Gamma}',V_{\Gamma}}
\notag\\
&\quad +\int_{\Omega}\nabla\mu_L\cdot\nabla\mathcal{N}_{\Omega}\big(\varphi_L-\langle\varphi_L\rangle_\Omega\big)\,\mathrm{d}x
+\int_{\Gamma}\nabla_{\Gamma}\theta_L\cdot\nabla_{\Gamma}\mathcal{N}_{\Gamma}\big(\psi_L-\langle\psi_L\rangle_\Gamma\big)\,\mathrm{d}S
\notag\\
&\quad+\frac{1}{L}\int_{\Gamma}(\mu_L-\theta_L)\left[\mathcal{N}_{\Omega}\big(\varphi_L-\langle\varphi_L\rangle_\Omega\big)
-\mathcal{N}_{\Gamma}\big(\psi_L-\langle\psi_L\rangle_\Gamma\big)\right]\,\mathrm{d}S
\notag\\
&=\big\langle\partial_{t}\varphi_L,\mathcal{N}_{\Omega}\big(\varphi_L-\langle\varphi_L\rangle_\Omega\big)\big\rangle_{V',V}
+\big\langle\partial_{t}\psi_L,\mathcal{N}_{\Gamma}\big(\psi_L-\langle\psi_L\rangle_\Gamma\big)\big\rangle_{V_{\Gamma}',V_{\Gamma}}
\notag\\
&\quad
+\int_{\Omega}\mu_L\big(\varphi_L-\langle\varphi_L\rangle_\Omega\big)\,\mathrm{d}x
+\int_{\Gamma}\theta_L\big(\psi_L-\langle\psi_L\rangle_\Gamma\big)\,\mathrm{d}S
\notag\\
&\quad +\frac{1}{L}\int_{\Gamma}(\mu_L-\theta_L)\left[\mathcal{N}_{\Omega}\big(\varphi_L-\langle\varphi_L\rangle_\Omega\big)
-\mathcal{N}_{\Gamma}\big(\psi_L-\langle\psi_L\rangle_\Gamma\big)\right]\,\mathrm{d}S
\notag\\
&=: \sum_{j=1}^5 R_j.
\label{7.33}
\end{align}
It follows from the definition of $\mathcal{N}_{\Omega}$, $\mathcal{N}_{\Gamma}$, H\"{o}lder's inequality and Poincar\'e's inequalities \eqref{Po1}, \eqref{Po2} that
\begin{align}
|R_1|+|R_2|
& \leq
|\big\langle\partial_{t}\varphi_L,\mathcal{N}_{\Omega}\big(\varphi_L-\langle\varphi_L\rangle_\Omega\big)\big\rangle_{V',V}|
+|\big\langle\partial_{t}\psi_L,\mathcal{N}_{\Gamma}\big(\psi_L-\langle\psi_L\rangle_\Gamma\big)\big\rangle_{V_{\Gamma}',V_{\Gamma}}|
\notag\\
& \leq C\|\partial_{t}\varphi_L\|_{V'}\|\nabla\varphi_L\|_H+  C\|\partial_{t}\psi_L\|_{V_\Gamma'}\|\nabla_\Gamma\psi_L\|_{H_\Gamma}.
\label{7.33-L1}
\end{align}
Next, by the trace theorem and the elliptic estimates, we find
\begin{align}
|R_5|&=\left|\frac{1}{L}\int_{\Gamma}(\mu_L-\theta_L)\left[\mathcal{N}_{\Omega}\big(\varphi_L-\langle\varphi_L\rangle_\Omega\big)
-\mathcal{N}_{\Gamma}\big(\psi_L-\langle\psi_L\rangle_\Gamma\big)\right]\mathrm{d}S\right|
\notag\\
&\leq \frac{C}{L}\Vert\mu_L-\theta_L\Vert_{H_{\Gamma}}\Big(\Vert\varphi_L-\langle\varphi_L\rangle_\Omega\Vert_{V_{0}^*}
+\Vert \psi_L-\langle\psi_L\rangle_\Gamma\Vert_{V_{\Gamma,0}^*}\Big)
\notag\\
&\leq\frac{C}{\sqrt{L}}\left(\frac{1}{\sqrt{L}}\Vert\mu_L-\theta_L\Vert_{H_{\Gamma}}\right)
\big(\Vert \varphi_L\Vert_{H} +\Vert \psi_L\Vert_{H_{\Gamma}}\big).
\label{7.40}
\end{align}
The estimates for $R_2$, $R_3$ are more involved. By \eqref{eq3.5} and \eqref{eq3.6}, we get
\begin{align}
R_2+R_3
&=\int_{\Omega}\mu_L\big(\varphi_L-\langle\varphi_L\rangle_\Omega\big)\,\mathrm{d}x
+\int_{\Gamma}\theta_L\big(\psi_L-\langle\psi_L\rangle_\Gamma\big)\,\mathrm{d}S
\notag\\
&=\int_{\Omega}|\nabla \varphi_L|^{2}\,\mathrm{d}x
+\delta \int_{\Gamma}|\nabla_{\Gamma}\psi_L|^{2}\,\mathrm{d}S
+\int_{\Omega}\xi_L\big(\varphi_L-\langle\varphi_L\rangle_\Omega\big)\,\mathrm{d}x
+\int_{\Gamma}\xi_{\Gamma,L}\big(\psi_L-\langle\psi_L\rangle_\Gamma\big)\,\mathrm{d}S
\notag\\
&\quad+\int_{\Omega}\big(\pi(\varphi_L)-f\big)\big(\varphi_L-\langle\varphi_L\rangle_\Omega\big)\,\mathrm{d}x +\int_{\Gamma}\big(\pi_{\Gamma}(\psi_L)-f_\Gamma\big)\big(\psi_L-\langle\psi_L\rangle_\Gamma\big)\,\mathrm{d}S
\notag\\
&\quad + \big(\langle\varphi_L\rangle_\Omega-\langle\psi_L\rangle_\Gamma\big)\int_{\Gamma}\partial_{\mathbf{n}}\varphi_L\,\mathrm{d}S.
\label{7.34}
\end{align}
Consider the third and fourth terms on the right-hand side of \eqref{7.34}
\begin{align}
&\int_{\Omega}\xi_L\big(\varphi_L-\langle\varphi_L\rangle_\Omega\big)\,\mathrm{d}x
+\int_{\Gamma}\xi_{\Gamma,L}\big(\psi_L-\langle\psi_L\rangle_\Gamma\big)\,\mathrm{d}S
\notag\\
&\quad=\int_{\Omega}\xi_L\big(\varphi_L-\langle\varphi_0\rangle_\Omega\big)\,\mathrm{d}x
+\int_{\Gamma}\xi_{\Gamma,L}\big(\psi_L-\langle\psi_0\rangle_\Gamma\big)\,\mathrm{d}S
\notag\\
&\qquad
+\big(\langle\varphi_0\rangle_\Omega-\langle\varphi_L\rangle_\Omega\big)\int_{\Omega}\xi_L\,\mathrm{d}x
+\big(\langle\psi_0\rangle_\Gamma-\langle\psi_L\rangle_\Gamma\big)\int_{\Gamma}\xi_{\Gamma,L}\,\mathrm{d}S.
\label{7.34a}
\end{align}
It follows from \eqref{eq7.2-L} and \eqref{time-L} that
\begin{align}
\big|\langle\varphi_0\rangle_\Omega-\langle\varphi_L(t)\rangle_\Omega\big|
&=\frac{1}{|\Omega|}\left|\int_0^t\big\langle \partial_t\varphi_L(s),1\big\rangle_{V',V}\,\mathrm{d}s\right|
\notag\\
&\leq \frac{1}{|\Omega|}\frac{1}{L}\int_{0}^{t}\|\mu_L(s)-\theta_L(s)\|_{L^1(\Gamma)}\,\mathrm{d}s
\notag\\
&\leq\frac{|\Gamma|^{1/2}|T|^{1/2}}{|\Omega|}\frac{1}{\sqrt{L}}\left(\frac{1}{\sqrt{L}}\Vert\mu_L-\theta_L\Vert_{L^{2}(0,T;H_{\Gamma})} \right)
\notag\\
&\leq\frac{C'|\Gamma|^{1/2}|T|^{1/2}}{|\Omega|}\frac{1}{\sqrt{L}}, \quad \forall\, t\in [0,T],
\label{7.36}
\end{align}
and in a similar manner,
\begin{align}
\big|\langle\psi_0\rangle_\Gamma-\langle\psi_L(t)\rangle_\Gamma\big|
\leq\frac{C''|T|^{1/2}}{|\Gamma|^{1/2}}\frac{1}{\sqrt{L}},\quad \forall\, t\in [0,T],
\label{7.37}
\end{align}
where the constants $C',C''>0$ in \eqref{7.36}, \eqref{7.37} are independent of $L$.
Since $\langle \varphi_0\rangle_\Omega, \langle \psi_0\rangle_\Gamma \in \mathrm{int}D(\beta_\Gamma)$, recalling \eqref{eq2.8}, we can first work with the approximate solutions $\bm{\varphi}_{L,\varepsilon}$ and then pass to the limit as $\varepsilon\to 0$, to get
\begin{align*}
&\int_{\Omega}\xi_L\big(\varphi_L-\langle\varphi_0\rangle_\Omega\big)\,\mathrm{d}x
\geq \delta_{0}^{(1)} \|\xi_L\|_{L^1(\Omega)}-c_{1}^{(1)}|\Omega|,\\
&\int_{\Gamma}\xi_{\Gamma,L}\big(\psi_L-\langle\psi_0\rangle_\Gamma\big)\,\mathrm{d}S
\geq \delta_{0}^{(2)} \|\xi_{\Gamma,L}\|_{L^1(\Gamma)} -c_{1}^{(2)}|\Gamma|.
\end{align*}
Set $\widehat{\delta}_0=\min\big\{\delta_{0}^{(1)}, \delta_{0}^{(2)}\big\}$, $\widehat{c}_1=\max\big\{c_{1}^{(1)},c_{1}^{(2)}\big\}$.
There exists some $\widehat{L}\geq 1$ sufficiently large such that
$$
\frac{C'|\Gamma|^{\frac{1}{2}}|T|^{\frac{1}{2}}}{|\Omega|}\frac{1}{\sqrt{L}}
\leq\frac{\widehat{\delta}_{0}}{2},\quad\frac{C''|T|^{\frac{1}{2}}}{|\Gamma|^{\frac{1}{2}}}\frac{1}{\sqrt{L}}\leq\frac{\widehat{\delta}_{0}}{2},
\qquad \forall\,L\geq \widehat{L}.
$$
Then it follows from \eqref{7.34a}--\eqref{7.37} that
\begin{align}
&\int_{\Omega}\xi_L\big(\varphi_L-\langle\varphi_L\rangle_\Omega\big)\,\mathrm{d}x
+\int_{\Gamma}\xi_{\Gamma,L}\big(\psi_L-\langle\psi_L\rangle_\Gamma\big)\,\mathrm{d}S
\geq \frac{\widehat{\delta}_{0}}{2}\left(\|\xi_L\|_{L^1(\Omega)}+\|\xi_{\Gamma,L}\|_{L^1(\Gamma)}\right)-\widehat{c}_{1}(|\Omega|+|\Gamma|).
\label{7.38}
\end{align}
Concerning the last three terms on the right-hand side of \eqref{7.34},
from $\mathbf{(A3)}$ and H\"{o}lder's inequality, we get
\begin{align}
&\left|\int_{\Omega}\big(\pi(\varphi_L)-f\big)\big(\varphi_L-\langle\varphi_L\rangle_\Omega\big)\,\mathrm{d}x \right| + \left|\int_{\Gamma}\big(\pi_{\Gamma}(\psi_L)-f_\Gamma\big)\big(\psi_L-\langle\psi_L\rangle_\Gamma\big)\,\mathrm{d}S\right|
\notag\\
&\qquad + \left|\big(\langle\varphi_L\rangle_\Omega-\langle\psi_L\rangle_\Gamma\big)\int_{\Gamma}\partial_{\mathbf{n}}\varphi_L\,\mathrm{d}S\right|
\notag\\
&\quad \leq C \|\bm{\varphi}_L\|_{\mathcal{L}^2}\big(1+\|\bm{\varphi}_L\|_{\mathcal{L}^2}+\|\bm{f}\|_{\mathcal{L}^2}
+ \Vert\partial_{\mathbf{n}}\varphi_L\Vert_{H_{\Gamma}}\big).
\label{7.39}
\end{align}
Returning to \eqref{7.33}, owing to the estimates \eqref{7.33-L1}--\eqref{7.39}, we infer from
\eqref{3.4-L}, \eqref{eq7.3}, \eqref{eq7.4}, \eqref{eq7.2-L} and $\mathbf{(A4)}$ that
\begin{align}
&\frac{\widehat{\delta}_{0}}{2}\left(\|\xi_L\|_{L^2(0,T;L^1(\Omega))}+\|\xi_{\Gamma,L}\|_{L^2(0,T;L^1(\Gamma))}\right)\notag\\
&\quad \leq CT^{1/2}\left(1+ \|\bm{\varphi}_L\|_{L^\infty(0,T;\mathcal{L}^2)}^2\right)
+C\|\bm{\varphi}_L\|_{L^\infty(0,T;\mathcal{L}^2)}\left(\|\bm{f}\|_{L^2(0,T;\mathcal{L}^2)}
+ \Vert\partial_{\mathbf{n}}\varphi_{L}\Vert_{L^{2}(0,T;H_{\Gamma})}\right)\notag\\
&\qquad + \frac{C}{\sqrt{L}}\left(\frac{1}{\sqrt{L}}\Vert\mu_L-\theta_L\Vert_{L^2(0,T;H_{\Gamma})}\right)\|\bm{\varphi}_L\|_{L^\infty(0,T;\mathcal{L}^2)}
\notag\\
&\qquad + C \left(\|\partial_{t}\varphi_L\|_{L^2(0,T;V')}+  \|\partial_{t}\psi_L\|_{L^2(0,T;V_\Gamma')}\right)\|\bm{\varphi}_L\|_{L^\infty(0,T;\mathcal{V}^1)}
\notag\\
&\quad \leq C\left(1+ \Vert\partial_{\mathbf{n}}\varphi_{L}\Vert_{L^{2}(0,T;H_{\Gamma})}\right),\qquad \forall\, L\geq \widehat{L},
 \notag
\end{align}
where $C>0$ is independent of $L\in [\widehat{L},\infty)$. Thus, the conclusion \eqref{7.42} follows.
\end{proof}

\begin{lemma}
There exists a positive constant $C_9$, independent of $L\geq \widehat{L}$, such that
\begin{align}
\Vert\boldsymbol{\mu}_L\Vert_{L^{2}(0,T;\mathcal{H}^{1})}
\leq C_9\left(1+\Vert\partial_{\mathbf{n}}\varphi_L\Vert_{L^{2}(0,T;H_{\Gamma})}\right).
\label{7.43}
\end{align}
\end{lemma}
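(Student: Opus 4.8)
The plan is to control the full $\mathcal{H}^1$-norm of $\boldsymbol{\mu}_L=(\mu_L,\theta_L)$ by splitting each component into its spatial mean and a zero-mean remainder. The gradient contributions are already under control: by \eqref{eq7.2-L} the quantities $\Vert\nabla\mu_L\Vert_{L^2(0,T;H)}$ and $\Vert\nabla_\Gamma\theta_L\Vert_{L^2(0,T;H_\Gamma)}$ are bounded uniformly in $L\geq 1$. Hence, via the Poincar\'e--Wirtinger inequalities \eqref{Po1} and \eqref{Po2}, the remainders $\mu_L-\langle\mu_L\rangle_\Omega$ and $\theta_L-\langle\theta_L\rangle_\Gamma$ are controlled in $L^2(0,T;H)$ and $L^2(0,T;H_\Gamma)$ respectively, and the problem reduces to estimating the two scalar means $\langle\mu_L\rangle_\Omega$ and $\langle\theta_L\rangle_\Gamma$ in $L^2(0,T)$. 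I stress that one cannot simply invoke \eqref{Po3} here, because its constant $c_P$ depends on $L$ and degenerates as $L\to\infty$ (the norm $\Vert\cdot\Vert_{\mathcal{H}^1_{L,0}}$ loses the coupling term $\tfrac1L\int_\Gamma|\mu_L-\theta_L|^2\,\mathrm{d}S$); this degeneration is precisely why the bulk and boundary means must be treated separately, and it is the main difficulty of the lemma.

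To compute the means I would use the pointwise identities \eqref{eq3.5} and \eqref{eq3.6}, which hold for the weak solution $(\boldsymbol{\varphi}_L,\boldsymbol{\mu}_L,\boldsymbol{\xi}_L)$. Integrating \eqref{eq3.5} over $\Omega$ and applying the divergence theorem, together with $\int_\Omega(-\Delta\varphi_L)\,\mathrm{d}x=-\int_\Gamma\partial_{\mathbf{n}}\varphi_L\,\mathrm{d}S$, gives
\begin{align}
|\Omega|\,\langle\mu_L\rangle_\Omega
=-\int_\Gamma\partial_{\mathbf{n}}\varphi_L\,\mathrm{d}S
+\int_\Omega\xi_L\,\mathrm{d}x
+\int_\Omega\pi(\varphi_L)\,\mathrm{d}x
-\int_\Omega f\,\mathrm{d}x.
\notag
\end{align}
Likewise, integrating \eqref{eq3.6} over $\Gamma$ and using that $\int_\Gamma\Delta_\Gamma\psi_L\,\mathrm{d}S=0$ on the closed surface $\Gamma$ yields
\begin{align}
|\Gamma|\,\langle\theta_L\rangle_\Gamma
=\int_\Gamma\partial_{\mathbf{n}}\varphi_L\,\mathrm{d}S
+\int_\Gamma\xi_{\Gamma,L}\,\mathrm{d}S
+\int_\Gamma\pi_\Gamma(\psi_L)\,\mathrm{d}S
-\int_\Gamma f_\Gamma\,\mathrm{d}S.
\notag
\end{align}
The crucial observation is that the surface diffusion term drops out of the boundary mean, while $\int_\Gamma\partial_{\mathbf{n}}\varphi_L\,\mathrm{d}S$ survives in both identities; by Cauchy--Schwarz it is bounded by $|\Gamma|^{1/2}\Vert\partial_{\mathbf{n}}\varphi_L\Vert_{H_\Gamma}$, which is exactly the term appearing on the right-hand side of \eqref{7.43}.

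The remaining terms would then be estimated as follows. The $L^1$-norms $\Vert\xi_L\Vert_{L^1(\Omega)}$ and $\Vert\xi_{\Gamma,L}\Vert_{L^1(\Gamma)}$ are controlled in $L^2(0,T)$ by \eqref{7.42}, which itself produces the factor $1+\Vert\partial_{\mathbf{n}}\varphi_L\Vert_{L^2(0,T;H_\Gamma)}$; the $\pi$- and $\pi_\Gamma$-terms are handled by the Lipschitz property $\mathbf{(A3)}$ together with the uniform bound $\Vert\boldsymbol{\varphi}_L\Vert_{L^\infty(0,T;\mathcal{V}^1)}\leq C_6$ from \eqref{3.4-L}; and the source terms are bounded by $\mathbf{(A4)}$. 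Taking $L^2(0,T)$-norms of the two mean identities and combining with the Poincar\'e--Wirtinger estimates and the uniform gradient bound \eqref{eq7.2-L}, I would assemble
\begin{align}
\Vert\mu_L\Vert_{L^2(0,T;H^1(\Omega))}+\Vert\theta_L\Vert_{L^2(0,T;H^1(\Gamma))}
\leq C\big(1+\Vert\partial_{\mathbf{n}}\varphi_L\Vert_{L^2(0,T;H_\Gamma)}\big),
\notag
\end{align}
with $C$ independent of $L\geq\widehat{L}$, which is the desired estimate \eqref{7.43}.
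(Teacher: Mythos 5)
Your proposal is correct and follows essentially the same route as the paper: integrate \eqref{eq3.5} over $\Omega$ and \eqref{eq3.6} over $\Gamma$ to express $\langle\mu_L\rangle_\Omega$ and $\langle\theta_L\rangle_\Gamma$ in terms of $\int_\Gamma\partial_{\mathbf{n}}\varphi_L\,\mathrm{d}S$, $\bm{\xi}_L$, the $\bm{\pi}$-terms and the data, bound these via \eqref{7.42}, \eqref{3.4-L} and $\mathbf{(A4)}$, and then combine with the uniform gradient bound \eqref{eq7.2-L} and the Poincar\'e--Wirtinger inequalities \eqref{Po1}, \eqref{Po2}. Your added remark on why \eqref{Po3} cannot be invoked (its constant degenerates as $L\to\infty$) is a correct and worthwhile clarification of the point the paper leaves implicit.
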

\begin{proof}
From \eqref{eq3.5} and \eqref{eq3.6}, we find
\begin{align}
&\langle\mu_L\rangle_\Omega=\frac{1}{|\Omega|}\Big[\int_{\Omega}\big(\xi_L+\pi(\varphi_L)-f\big)\,\mathrm{d}x
-\int_{\Gamma}\partial_{\mathbf{n}}\varphi_{L}\, \mathrm{d}S\Big],
\notag\\
&\langle\theta_L\rangle_\Gamma
=\frac{1}{|\Gamma|}\int_{\Gamma}\big(\xi_{\Gamma,L}+\pi_{\Gamma}(\psi_L)-f_\Gamma+\partial_{\mathbf{n}}\varphi_L\big)\, \mathrm{d}S,
\notag
\end{align}
which together with \eqref{3.4-L}, \eqref{7.42} and $\mathbf{(A4)}$ imply
\begin{align}
\Vert \langle\mu_L\rangle_\Omega\Vert_{L^{2}(0,T)}+\Vert \langle\theta_L\rangle_\Gamma\Vert_{L^{2}(0,T)}\leq C\left(1+\Vert\partial_{\mathbf{n}}\varphi_L\Vert_{L^{2}(0,T;H_{\Gamma})}\right).
\label{7.45}
\end{align}
By \eqref{eq7.2-L}, \eqref{7.45} and Poincar\'e's inequalities \eqref{Po1}, \eqref{Po2}, we obtain \eqref{7.43}.
\end{proof}

\begin{lemma}
There exists a positive constant $C_{10}$, independent of $L\geq \widehat{L}$, such that
\begin{align}
&\Vert\xi_L\Vert_{L^{2}(0,T;H)}
+\Vert\xi_{L}\Vert_{L^{2}(0,T;H_{\Gamma})}
+\Vert\xi_{\Gamma,L}\Vert_{L^{2}(0,T;H_{\Gamma})}
+ \|\bm{\varphi}_L\|_{L^2(0,T;\mathcal{H}^2)}
\leq
C_{10}\left(1+\Vert\partial_{\mathbf{n}}\varphi_L\Vert_{L^{2}(0,T;H_{\Gamma})}\right).
\label{7.47}
\end{align}
\end{lemma}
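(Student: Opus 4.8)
The plan is to repeat, at the level of the Yosida approximations $\boldsymbol{\varphi}_{L,\varepsilon}$, the argument already used for Lemma~\ref{uni4}, but keeping careful track of the dependence on the boundary flux and, crucially, ensuring that every constant is uniform with respect to $L\in[\widehat{L},\infty)$. Since before the present lemma the selections $\xi_L$, $\xi_{\Gamma,L}$ are only known to lie in $L^{2}(0,T;L^{1})$, the $H$-valued bounds cannot be obtained by testing the limit identities \eqref{eq3.5}, \eqref{eq3.6} directly; instead I would first derive the approximate analogues of \eqref{3.4-L}, \eqref{7.42} and \eqref{7.43} (their proofs use only \eqref{3.4-L}-type quantities, the scaled flux control \eqref{eq7.2-L} and the operators $\mathcal{N}_\Omega$, $\mathcal{N}_\Gamma$, hence remain valid for $\boldsymbol{\varphi}_{L,\varepsilon}$ uniformly in $\varepsilon$ and in $L\geq\widehat{L}$), and pass to the limit $\varepsilon\to0$ only at the very end via weak lower semicontinuity.

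First I would test the bulk identity \eqref{2.16} by $\beta_{\varepsilon}(\varphi_{L,\varepsilon})\in L^{2}(0,T;V)$, use the boundary identity \eqref{2.17} together with $(\beta_{\varepsilon}(\varphi_{L,\varepsilon}))|_{\Gamma}=\beta_{\varepsilon}(\psi_{L,\varepsilon})$, and exploit the compatibility condition \eqref{2.8} to bound the coupling term $\int_{\Gamma}\beta_{\Gamma,\varepsilon}(\psi_{L,\varepsilon})\beta_{\varepsilon}(\psi_{L,\varepsilon})\,\mathrm{d}S$ from below, exactly as in the derivation of \eqref{be-be}; the monotonicity terms $\int_{\Omega}\beta_{\varepsilon}'|\nabla\omega_{L,\varepsilon}|^{2}$ and $\delta\int_{\Gamma}\beta_{\Gamma,\varepsilon}'|\nabla_{\Gamma}\omega_{\Gamma,L,\varepsilon}|^{2}$ are discarded as nonnegative. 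On the right-hand side the factors $f$, $\mu_{L,\varepsilon}$, $\theta_{L,\varepsilon}$, $\varepsilon\partial_{t}\omega_{L,\varepsilon}$, $\pi$, $\pi_{\Gamma}$ are estimated by Young's inequality, the Lipschitz continuity of $\pi,\pi_{\Gamma}$, the uniform bounds \eqref{3.4-L}, and the approximate chemical-potential bound of the form $C(1+\|\partial_{\mathbf{n}}\varphi_L\|_{L^{2}(0,T;H_{\Gamma})})$ provided by \eqref{7.43}. This yields
\[
\|\beta_{\varepsilon}(\varphi_{L,\varepsilon})\|_{L^{2}(0,T;H)}+\|\beta_{\varepsilon}(\psi_{L,\varepsilon})\|_{L^{2}(0,T;H_{\Gamma})}\leq C\big(1+\|\partial_{\mathbf{n}}\varphi_L\|_{L^{2}(0,T;H_{\Gamma})}\big),
\]
with $C$ independent of $\varepsilon$ and of $L\geq\widehat{L}$, and a comparison in \eqref{2.16} gives the analogous bound for $\|\Delta\omega_{L,\varepsilon}\|_{L^{2}(0,T;H)}$.

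Next I would test \eqref{2.17} by $\beta_{\Gamma,\varepsilon}(\psi_{L,\varepsilon})\in L^{2}(0,T;V_{\Gamma})$, in analogy with \eqref{betaG}, to control $\|\beta_{\Gamma,\varepsilon}(\psi_{L,\varepsilon})\|_{L^{2}(0,T;H_{\Gamma})}$ by $C(1+\|\partial_{\mathbf{n}}\omega_{L,\varepsilon}\|_{L^{2}(0,T;H_{\Gamma})})$, and then compare terms in \eqref{2.17} to obtain a bound of the same type on $\|\partial_{\mathbf{n}}\omega_{L,\varepsilon}-\Delta_{\Gamma}\omega_{\Gamma,L,\varepsilon}\|_{L^{2}(0,T;H_{\Gamma})}$, with a constant depending on $\delta$ but not on $L$. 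Combining this with the bulk bound on $\Delta\omega_{L,\varepsilon}$ and applying the elliptic regularity estimate \eqref{2.2b} to the decoupled bulk--surface Laplace system (the case $L=0$, $k=0$, whose constant depends only on $\delta$ and $\Omega$) yields $\|\boldsymbol{\omega}_{L,\varepsilon}\|_{L^{2}(0,T;\mathcal{H}^2)}\leq C(1+\|\partial_{\mathbf{n}}\omega_{L,\varepsilon}\|_{L^{2}(0,T;H_{\Gamma})})$. Unlike in Lemma~\ref{uni4}, I would \emph{not} absorb the normal-derivative term by the Ehrling lemma here, since that absorption is postponed to the subsequent loop-closing step; the purpose of \eqref{7.47} is precisely to retain the factor $1+\|\partial_{\mathbf{n}}\varphi_L\|_{L^{2}(0,T;H_{\Gamma})}$. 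Passing to the limit $\varepsilon\to0$, then converting $\boldsymbol{\omega}_L$ to $\boldsymbol{\varphi}_L$ via the mass conservation \eqref{con-mass-L} and the Poincar\'e inequality \eqref{Po4}, gives \eqref{7.47}.

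I expect the main obstacle to be the uniformity in $L$ as $L\to\infty$: the norm equivalence between $\|\cdot\|_{\mathcal{H}^{1}}$ and $\|\cdot\|_{\mathcal{H}^{1}_{L,0}}$ invoked for $L\le1$ in \eqref{equiv1} degenerates, and the bulk--surface coupling weakens. To keep all constants $L$-independent one must route every appearance of the flux $\mu_L-\theta_L$ through the scaled estimate \eqref{eq7.2-L}, so that the dangerous $1/L$ factors recombine into the bounded quantities $\tfrac{1}{\sqrt{L}}\cdot\tfrac{1}{\sqrt{L}}\|\mu_L-\theta_L\|_{L^{2}(0,T;H_{\Gamma})}$, and one must rely on the elliptic constant in \eqref{2.2b} for the limiting $L=0$ system, which is independent of $L$.
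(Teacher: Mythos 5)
Your proposal is correct and follows essentially the same route as the paper: the paper's proof of this lemma simply says to repeat the argument of Lemma \ref{uni4} (testing \eqref{2.16} by $\beta_{\varepsilon}(\varphi_{L,\varepsilon})$ with the compatibility condition \eqref{2.8}, testing \eqref{2.17} by $\beta_{\Gamma,\varepsilon}(\psi_{L,\varepsilon})$, and invoking the elliptic estimate \eqref{2.2b}), now keeping the factor $1+\Vert\partial_{\mathbf{n}}\varphi_L\Vert_{L^{2}(0,T;H_{\Gamma})}$ rather than absorbing it via Ehrling, and then pass to the limit $\varepsilon\to 0$, with the $\bm{\varphi}_L$ bound recovered from the $\bm{\omega}_L$ bound via \eqref{con-mass-L} and \eqref{Po4}. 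Your additional care about uniformity in $L$ (routing the flux through \eqref{eq7.2-L} and using the $L$-independent elliptic constant) correctly fills in the details the paper leaves implicit.
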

\begin{proof}
To conclude, we apply the same argument for Lemma \ref{uni4} and then pass to the limit as $\varepsilon\to 0$. The estimate for
$\Vert\bm{\varphi}_L\Vert_{L^{2}(0,T;\mathcal{V}^2)}$ follows from the estimate for $\Vert\bm{\omega}_L\Vert_{L^{2}(0,T;\mathcal{V}^2)}$, \eqref{con-mass-L}
and the Poincar\'e inequality \eqref{Po4}.
\end{proof}

\textbf{Proof of Theorem \ref{asymptoticinfinity}.} To complete the proof, it remains to control $\Vert\partial_{\mathbf{n}}\varphi_L\Vert_{L^{2}(0,T;H_{\Gamma})}$. Thanks to the trace theorem and the Ehrling lemma, we find
\begin{align}
\Vert\partial_{\mathbf{n}}\varphi_{L}\Vert_{L^2(0,T;H_{\Gamma})}
&\leq C\Vert\varphi_{L}\Vert_{L^2(0,T;H^{2-r}(\Omega))}\quad \text{for some}\ r\in (0,1/2)\notag\\
&\leq \gamma \Vert\varphi_{L}\Vert_{L^2(0,T;H^2(\Omega))}+ C_\gamma \Vert\varphi_{L}\Vert_{L^2(0,T;V)}.
\notag
\end{align}
Hence, taking $\gamma>0$ sufficiently small, we can deduce from \eqref{3.4-L} and \eqref{7.47} that
$$
\|\bm{\varphi}_L\|_{L^2(0,T;\mathcal{H}^2)}
\leq \frac12 \Vert\varphi_{L}\Vert_{L^2(0,T;H^2(\Omega))}+ C, \quad \forall\, L\geq \widehat{L}.
$$
This yields the uniform bound of $\|\bm{\varphi}_L\|_{L^2(0,T;\mathcal{H}^2)}$ for all $L\geq \widehat{L}$ and thus $\Vert\partial_{\mathbf{n}}\varphi_L\Vert_{L^{2}(0,T;H_{\Gamma})}$ is bounded as well.

Keeping the above uniform estimates in mind, we can take the limit as $L\to \infty$ (in the sense of a subsequence) and prove Theorem \ref{asymptoticinfinity} in the same way as for Theorem \ref{asymptotic0}.
\hfill $\square$

\appendix
\section{Appendix}
\noindent We report some technical lemmas that have been frequently used
in our analysis.

First, we recall the compactness lemma of Aubin-Lions-Simon
type (see, for instance, \cite{Lions} in the case $q>1$ and \cite{Simon}
when $q=1 $).

\begin{lemma}
\label{ALS} Let $X_{0} \overset{c}{\hookrightarrow } X_{1}\subset X_{2}$
where $X_{j}$ are (real) Banach spaces ($j=0,1,2$). Let $1<p\leq \infty $, $%
1\leq q\leq \infty ~$and $I$ be a bounded subinterval of $\mathbb{R}$. Then,
the sets
\begin{equation*}
\left\{ \varphi \in L^{p}\left( I;X_{0}\right) :\partial _{t}\varphi \in
L^{q}\left( I;X_{2}\right) \right\} \overset{c}{\hookrightarrow }
L^{p}\left( I;X_{1}\right),\quad \text{ if }1<p<\infty,
\end{equation*}
and
\begin{equation*}
\left\{ \varphi \in L^{p}\left( I;X_{0}\right) :\partial _{t}\varphi \in
L^{q}\left( I;X_{2}\right) \right\} \overset{c}{\hookrightarrow } C\left(
I;X_{1}\right),\quad \text{ if }p=\infty ,\text{ }q>1.
\end{equation*}
\end{lemma}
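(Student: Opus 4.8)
The statement is the classical Aubin--Lions--Simon compactness lemma, for which we only outline the structure of the argument (full details are in \cite{Lions,Simon}). The plan is to combine an Ehrling--type interpolation inequality, coming from the compact embedding $X_0\overset{c}{\hookrightarrow} X_1$, with the Fr\'echet--Kolmogorov characterization of relative compactness for vector-valued $L^p$ functions, the equicontinuity in time being supplied by the bound on the time derivative in $L^q(I;X_2)$. First I would record the interpolation estimate: since $X_0\overset{c}{\hookrightarrow} X_1\hookrightarrow X_2$, for every $\eta>0$ there exists $C_\eta>0$ with
\begin{align}
\|u\|_{X_1}\leq \eta\|u\|_{X_0}+C_\eta\|u\|_{X_2},\qquad \forall\, u\in X_0.
\notag
\end{align}
This is proved by contradiction: a sequence violating the estimate, after normalization, converges in $X_1$ by compactness but has vanishing $X_2$-limit, forcing its $X_1$-limit to be zero, a contradiction.

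Fix a bounded subset $\mathcal{B}$ of the space $\{\varphi\in L^{p}(I;X_0):\partial_t\varphi\in L^{q}(I;X_2)\}$ equipped with its graph norm. The next step is the time-translation estimate. Writing $\varphi(t+h)-\varphi(t)=\int_0^{h}\partial_t\varphi(t+s)\,\mathrm{d}s$ and applying Minkowski's integral inequality (together with H\"older's inequality in time on the bounded interval $I$) gives
\begin{align}
\|\varphi(\cdot+h)-\varphi\|_{L^{p}(I_h;X_2)}\leq C\,|h|\,\|\partial_t\varphi\|_{L^{q}(I;X_2)},
\notag
\end{align}
with a constant $C=C(p,q,|I|)$ in the representative case $p\leq q$, the general case being treated in \cite{Simon}. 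Integrating the Ehrling inequality pointwise in $t$ and combining it with the above, I obtain
\begin{align}
\|\varphi(\cdot+h)-\varphi\|_{L^{p}(I_h;X_1)}\leq 2\eta\,\|\varphi\|_{L^{p}(I;X_0)}+C_\eta\,\|\varphi(\cdot+h)-\varphi\|_{L^{p}(I_h;X_2)}.
\notag
\end{align}
Choosing $\eta$ small makes the first term uniformly small over $\mathcal{B}$, while the second term tends to $0$ as $h\to 0$; hence the time translations are uniformly equicontinuous in $L^{p}(I;X_1)$.

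The remaining hypothesis of Simon's criterion is the relative compactness in $X_1$ of the time-averages $\{(t_2-t_1)^{-1}\int_{t_1}^{t_2}\varphi(t)\,\mathrm{d}t:\varphi\in\mathcal{B}\}$ for $I\supset[t_1,t_2]$; this holds because such averages are bounded in $X_0$ (by H\"older) and $X_0\overset{c}{\hookrightarrow} X_1$. Simon's theorem then yields relative compactness of $\mathcal{B}$ in $L^{p}(I;X_1)$ for $1<p<\infty$. For the case $p=\infty$, $q>1$, the derivative bound now provides genuine H\"older equicontinuity in $X_2$ with exponent $1-1/q>0$ uniformly in $t$, which the Ehrling inequality upgrades to equicontinuity of the family as maps $I\to X_1$; pointwise relative compactness in $X_1$ again follows from the uniform $X_0$-bound and the compact embedding, so the Arzel\`a--Ascoli theorem gives relative compactness in $C(I;X_1)$. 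I expect the main obstacle to be precisely this last, $C(I;X_1)$, case: one must pass from integral estimates to uniform-in-$t$ continuity, and the restriction $q>1$ is essential, since the borderline $q=1$ yields only an $L^\infty$ control of the primitive and fails to produce the equicontinuity needed for the Arzel\`a--Ascoli argument.
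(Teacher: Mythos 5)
The paper does not prove this lemma at all: it is quoted verbatim in the Appendix as the classical Aubin--Lions--Simon compactness result, with the proof delegated to \cite{Lions} (for $q>1$) and \cite{Simon} (for $q=1$). Your outline is the standard argument from those references --- Ehrling interpolation from the compact embedding $X_0\overset{c}{\hookrightarrow}X_1$, uniform smallness of time translations in $L^p(I;X_2)$ from the derivative bound, Simon's translation/average criterion for $1<p<\infty$, and Arzel\`a--Ascoli for $p=\infty$, $q>1$ --- and it is essentially correct; the only slip is cosmetic: the translation estimate obtained from H\"older is $\|\varphi(\cdot+h)-\varphi\|_{L^p(I_h;X_2)}\leq C|h|^{1-1/q}\|\partial_t\varphi\|_{L^q(I;X_2)}$ in general (your $C|h|$ is valid in the sub-case $p\le q$ you single out, via Minkowski first), and what matters, namely that this quantity tends to $0$ as $h\to 0$ uniformly over the bounded set, is unaffected. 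Your closing remark correctly identifies why $q>1$ is needed for the $C(I;X_1)$ conclusion and why $q=1$ only falls under the $L^p$ statement.
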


The following Ehrling lemma can be found in \cite{Lions}.
\begin{lemma}
\label{Ehrling}
Let $B_{0}$, $B_{1}$, $B$ be three Banach spaces so that $B_{0}$ and $B_{1}$ are reflexive. Moreover, $B_{0}\hookrightarrow\hookrightarrow B\hookrightarrow B_{1}$. Then, for each $\gamma>0$, there exists a positive constant $C_{\gamma}$ depends on $\gamma$ such that
$$\Vert z\Vert_{B}\leq\gamma\Vert z\Vert_{B_{0}}+C_{\gamma}\Vert z\Vert_{B_{1}},\quad \forall\, z\in B_{0}.$$
\end{lemma}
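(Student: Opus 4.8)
The statement is the classical Ehrling lemma, and the plan is to argue by contradiction, exploiting the compact embedding together with the reflexivity of $B_0$. Suppose the inequality fails. Then there exists some $\gamma_0>0$ for which no admissible constant can be found, so that for every $n\in\mathbb{Z}^+$ one can select $z_n\in B_0$ with
\begin{align}
\Vert z_n\Vert_{B}>\gamma_0\Vert z_n\Vert_{B_0}+n\Vert z_n\Vert_{B_1}.
\notag
\end{align}
In particular $z_n\neq 0$, so after rescaling I may assume the normalization $\Vert z_n\Vert_{B}=1$ for all $n$. The displayed inequality then forces the two bounds $\Vert z_n\Vert_{B_0}<1/\gamma_0$ and $\Vert z_n\Vert_{B_1}<1/n$, so that $\{z_n\}$ is bounded in $B_0$ while $\Vert z_n\Vert_{B_1}\to 0$ as $n\to\infty$.

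Next I would extract limits. Since $B_0$ is reflexive and $\{z_n\}$ is bounded in $B_0$, there is a subsequence (not relabeled) and an element $z\in B_0$ with $z_n\rightharpoonup z$ weakly in $B_0$. Because $B_0\overset{c}{\hookrightarrow}B$, bounded subsets of $B_0$ are precompact in $B$, so a further subsequence converges strongly in $B$ to some limit; since the embedding $B_0\hookrightarrow B$ is continuous (hence weakly continuous), this strong limit must coincide with the weak limit $z$. Therefore $z_n\to z$ strongly in $B$, whence $\Vert z\Vert_{B}=\lim_{n}\Vert z_n\Vert_{B}=1$. This identification of the strong $B$-limit with the weak $B_0$-limit is the one place that requires a little care, and it is the main (modest) obstacle of the argument.

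Finally I would derive the contradiction using the continuous embedding $B\hookrightarrow B_1$. From the strong convergence $z_n\to z$ in $B$ I get $z_n\to z$ strongly in $B_1$ as well; combined with $\Vert z_n\Vert_{B_1}\to 0$ this yields $\Vert z\Vert_{B_1}=0$, i.e. $z=0$ in $B_1$. Since $B\hookrightarrow B_1$ is an (injective) embedding, this forces $z=0$ in $B$, contradicting $\Vert z\Vert_{B}=1$. Hence the assumption was false, and for every $\gamma>0$ an admissible constant $C_\gamma$ exists, which is precisely the asserted interpolation inequality. (I note that only the reflexivity of $B_0$ is actually invoked, the reflexivity of $B_1$ in the hypotheses being harmless.)
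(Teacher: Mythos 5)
Your proof is correct. Note that the paper does not actually prove this lemma: it is quoted as a known result with a citation to Lions' book, so your argument supplies a proof where the paper gives none; the contradiction-plus-compactness argument you use is the standard one found in the literature. One remark on economy: the detour through weak convergence in $B_0$ (and hence the use of reflexivity) is unnecessary. The compact embedding $B_{0}\hookrightarrow\hookrightarrow B$ by itself means that the bounded sequence $\{z_n\}\subset B_0$ has a subsequence converging \emph{strongly} in $B$ to some $z\in B$, which is all you need: then $\Vert z\Vert_{B}=1$, while the continuous embedding $B\hookrightarrow B_1$ together with $\Vert z_n\Vert_{B_1}\to 0$ forces $z=0$, a contradiction. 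So the limit-identification step you flag as the delicate point can be removed entirely, and with it both reflexivity hypotheses (not just the one on $B_1$) become superfluous to the argument.
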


\bigskip
\noindent \textbf{Acknowledgments.} H. Wu is a member of the Key Laboratory of Mathematics for Nonlinear Sciences (Fudan University), Ministry
of Education of China. The research of H. Wu was partially supported by NNSFC Grant No. 12071084
and the Shanghai Center for Mathematical Sciences at Fudan University.

%\textbf{Compliance with Ethical Standards}
%
%\textbf{Conflict of Interest}: The authors declare that they have no
%conflict of interest.

\smallskip


\begin{thebibliography}{99}
\itemsep=-3pt
\bibitem{An97}
D.M. Anderson, G.B. McFadden, A.A. Wheeler,
\emph{Diffuse-interface methods in fluid mechanics},
Annu. Rev. Fluid Mech., 30 (1998), 139--165.

\bibitem{BDGP}
E. B\"{a}nsch, K. Deckelnick, H. Garcke, P. Pozzi,
\emph{Interfaces: Modeling, Analysis, Numerics},
Oberwolfach Semin., 51, Birkh\"{a}user/Springer, Cham, 2023.

\bibitem{B} V. Barbu,
\emph{Nonlinear Differential Equations of Monotone Types in Banach Spaces},
Springer Monogr. Math., Springer, New York, 2010.

\bibitem{BE91}
J.F. Blowey, C.M. Elliott,
\emph{The Cahn–Hilliard gradient theory for phase separation with nonsmooth free energy. I. Mathematical analysis},
Eur. J. Appl. Math., 2 (1991), 233--280.

\bibitem{Br}
H. Br\'{e}zis,
\emph{Op\'{e}rateurs Maximaux Monotones et Semi-Groupes de Contractions Dans les Espaces de Hilbert},
North. Holland Math. Studies, vol. 5, North-Holland, Amsterdam, 1973.

\bibitem{BG}
F. Brezzi, G. Gilardi,
\emph{Chapters 1–3 in Finite Element Handbook},
H. Kardestuncer and D. H.Norrie (Eds.), McGraw–Hill Book Co., New York, 1987.

%\bibitem{BMS} B. Bene\v{s}ov\'a, C. Melcher, E. S\"uli, \emph{An implicit midpoint spectral approximation of nonlocal Cahn-Hilliard equations}, SIAM J. Numer. Anal., 52(3) (2014), 1466--1496.

\bibitem{BZ}
X.-L. Bao, H. Zhang,
\emph{Numerical approximations and error analysis of the Cahn-Hilliard equation with reaction rate dependent dynamic conditions},
J. Sci. Comput., 87(3) (2021), Paper No. 72.

\bibitem{CH}
J.W. Cahn, J.E. Hilliard,
\emph{Free energy of a nonuniform system I. Interfacial free energy},
J. Chem. Phys., 2 (1958), 258--267.

\bibitem{CC}
L. Calatroni, P. Colli,
\emph{Global solution to the Allen-Cahn equation with singular potentials and dynamic boundary conditions},
Nonlinear Anal., 79 (2013), 12--27.

\bibitem{CG00}
K. Chelmi\'nski, P. Gwiazda,
\emph{Nonhomogeneous initial-boundary value problems for coercive and self-controlling models of monotone type},
Contin. Mech. Thermodyn., 12(4) (2000), 217--234.

\bibitem{CWX}
X.-F. Chen, X.-P. Wang, X.-M. Xu,
\emph{Analysis of the Cahn–Hilliard equation with a relaxation boundary condition modeling the contact angle dynamics},
Arch. Rational Mech. Anal., 213 (2014), 1--24.


%\bibitem{CY}
%C. Chen, X. Yang,
%\emph{Fast, provably unconditionally energy stable, and, second-order accurate algorithms for the anisotropic Cahn-Hilliard Model},
%Comput. Methods Appl. Mech. Engrg., 351 (2019), 35--59.

\bibitem{CSM}
L. Cherfils, S. Gatti, A. Miranville,
\emph{A variational approach to a Cahn-Hilliard model in a domain with nonpermeable walls},
J. Math. Sci. (N.Y.), 189(4) (2013), 604--636.

\bibitem{CMZ}
L. Cherfils, A. Miranville, S. Zelik,
\emph{The Cahn-Hilliard equation with logarithmic potentials},
Milan J. Math., 79(2) (2011), 561--596.

\bibitem{CP}
L. Cherfils, M. Petcu,
\emph{A numerical analysis of the Cahn-Hilliard equation with non-permeable walls},
Numer. Math., 128(3) (2014), 517--549.

%\bibitem{CF-AC} P. Colli, T. Fukao, \emph{Allen-Cahn equation with dynamic boundary conditions and mass constraints}, Math. Methods Appl. Sci., 38(17) (2015), 3950--3967.

\bibitem{CF-CH}
P. Colli, T. Fukao,
\emph{Cahn-Hilliard equation with dynamic boundary conditions and mass constraint on the boundary},
J. Math. Anal. Appl., 429(2) (2015), 1190--1213.

\bibitem{CF15}
P. Colli, T. Fukao,
\emph{Equation and dynamic boundary condition of Cahn-Hilliard type with singular potentials},
Nonlinear Anal., 127 (2015), 413--433.

%\bibitem{CF16}
%P. Colli, T. Fukao,
%\emph{Nonlinear diffusion equations as asymptotic limits of Cahn–Hilliard systems},
%J. Differential Equations, 260 (2016), 6930–6959.

\bibitem{CF20}
P. Colli, T. Fukao,
\emph{Vanishing diffusion in a dynamic boundary condition for the Cahn-Hilliard equation},
Nonlinear Differ. Equ. Appl., 27 (2020), article number 53.

%\bibitem{CF20} P. Colli, T. Fukao, \emph{Cahn-Hilliard equation on the boundary with bulk condition of Allen-Cahn type}, Adv. Nonlinear Anal. 9(1) (2020),16--38.

%\bibitem{CFL}
%P. Colli, T. Fukao, K.-F. Lam,
%\emph{On a coupled bulk-surface Allen-Cahn system with an affine linear transmission condition and its approximation by a Robin boundary condition},
%Nonlinear Anal., 184 (2019), 116--147.

\bibitem{CFS}
P. Colli, T. Fukao, L. Scarpa,
\emph{The Cahn-Hilliard equation with forward-backward dynamic boundary condition via vanishing viscosity},
SIAM J. Math. Anal., 54(3) (2022), 3292--3315.

\bibitem{CFSJEE}
P. Colli, T. Fukao, L. Scarpa,
\emph{A Cahn-Hilliard system with forward-backward dynamic condition and non-smooth potentials},
J. Evol. Equ., 22(89) (2022), Paper No. 89.

\bibitem{CFW}
P. Colli, T. Fukao, H. Wu,
\emph{On a transmission problem for equation and dynamic boundary condition of Cahn-Hilliard type with nonsmooth potentials},
Math. Nachr., 293(11) (2020), 2051--2081.

%\bibitem{CGNS} P. Colli, G. Gilardi, R. Nakayashiki, K. Shirakawa, \emph{A class of quasi-linear Allen-Cahn type equations with dynamic boundary conditions}, Nonlinear Anal. 158 (2017),32--59.

\bibitem{CGS14}
P. Colli, G. Gilardi, J. Sprekels,
\emph{On the Cahn-Hilliard equation with dynamic boundary conditions and a dominating boundary potential},
J. Math. Anal. Appl., 419(2) (2014), 972--994.

%\bibitem{CGS17}
%P. Colli, G. Gilardi, J. Sprekels,
%\emph{Global existence for a nonstandard viscous Cahn-Hilliard system with dynamic boundary condition},
%SIAM J. Math. Anal., 49(3) (2017), 1732--1760.

\bibitem{CGS18}
P. Colli, G. Gilardi, J. Sprekels,
\emph{On a Cahn-Hilliard system with convection and dynamic boundary conditions},
Ann. Mat. Pura Appl., 197(5) (2018), 1445--1475.

\bibitem{CKS}
P. Colli, P. Knopf, G. Schimperna, A. Signori,
\emph{Two-phase flows through porous media described by a Cahn-Hilliard-Brinkman model with dynamic boundary conditions},
(2023), preprint. arXiv:2312.15274.

\bibitem{CV}
P. Colli, A. Visintin,
\emph{On a class of doubly nonlinear evolution equations},
Comm. Partial Differential Equations, 15(5) (1990), 737--756.

\bibitem{DF}
Q. Du, X.-B. Feng,
\emph{Chapter 5 – The phase field method for geometric moving interfaces and their numerical approximations},
in Handbook of Numerical Analysis, Vol. 21, (eds. A. Bonito and R.H. Nochetto), Elsevier, (2020), 425--508.

\bibitem{DE13}
G. Dziuk and C. M. Elliott,
\emph{Finite element methods for surface PDEs},
Acta Numer., 22 (2013), 289--396.

\bibitem{FM97}
H.P. Fischer, P.Maass, W. Dieterich,
\emph{Novel surface modes in spinodal decomposition},
Phys. Rev. Lett., 79 (1997), 893--896.

\bibitem{FW}
T. Fukao, H. Wu,
\emph{Separation property and convergence to equilibrium for the equation and dynamic boundary condition of Cahn–Hilliard type with singular potential}, Asymptot. Anal., 124(3--4) (2021), 303--341.

\bibitem{Gal06}
C.G. Gal,
\emph{A Cahn-Hilliard model in bounded domains with permeable walls},
Math. Meth. Appl. Sci., 29 (2006), 2009--2036.

\bibitem{GalWu}
C.G. Gal, H. Wu,
\emph{Asymptotic behavior of a Cahn-Hilliard equation with Wentzell boundary conditions and mass conservation},
Discrete Contin. Dyn. Syst., 22(4) (2008), 1041--1063.

\bibitem{GLW}
C.G. Gal, M.-Y. Lv, H. Wu,
\emph{On a thermodynamically consistent diffuse interface model for two-phase incompressible flows with non-matched densities:Dynamics of moving contact lines, surface diffusion, and mass transfer}.
(2024), preprint. http://doi.org/10.13140/RG.2.2.27875.73764.

\bibitem{GP}
H. Garcke, P. Knopf,
\emph{Weak solutions of the Cahn-Hilliard equation with dynamic boundary conditions: a gradient flow approach},
SIAM J. Math. Anal., 52(1) (2020), 340--369.

\bibitem{GKY}
H. Garcke, P. Knopf, S. Yayla,
\emph{Long-time dynamics of the Cahn-Hilliard equation with kinetic rate dependent dynamic boundary conditions},
Nonlinear Anal., 215 (2022), No. 112619, 44 pp.

\bibitem{GMS09}
G. Gilardi, A. Miranville, G. Schimperna,
\emph{On the Cahn--Hilliard equation with irregular potentials and dynamic boundary conditions},
Commun. Pure. Appl. Anal., 8(3) (2009), 881--912.

\bibitem{GMS10}
G. Gilardi, A. Miranville, G. Schimperna,
\emph{Long time behavior of the Cahn-Hilliard equation with irregular potentials and dynamic boundary conditions},
Chinese Ann. Math. Ser. B, 31(5) (2010), 679--712.

\bibitem{GK}
A. Giorgini, P. Knopf,
\emph{Two-phase flows with bulk-surface interaction: Thermodynamically consistent Navier-Stokes-Cahn-Hilliard models with dynamic boundary conditions},
J. Math. Fluid Mech., 25(3) (2023), Paper No. 65, 44 pp.

%\bibitem{GM}
%G.R. Goldstein, A. Miranville,
%\emph{A Cahn-Hilliard-Gurtin model with dynamic boundary conditions},
%Discrete Conti. Dyn. Syst. Ser. S, 6(2) (2013), 387--400.

\bibitem{GMS}
G.R. Goldstein, A. Miranville, G. Schimperna,
\emph{A Cahn-Hilliard model in a domain with non-permeable walls},
Phys. D, 240(8) (2011), 754--766.

\bibitem{GST}
P. Gwiazda, H. Skrzeczkowski, L. Trussardi,
\emph{On the rate of convergence of Yosida approximation for the nonlocal Cahn-Hilliard equation},
(2023), preprint. arXiv:2306.12772v1.

\bibitem{JW23}
X.-B. Jing, Q. Wang,
\emph{Thermodynamically consistent dynamic boundary conditions of phase field models},
Commun. Math. Sci., 21(3) (2023), 859--883.

\bibitem{Kubo}
M. Kubo,
\emph{The Cahn-Hilliard equation with time-dependent constraint},
Nonlinear Anal., 75(14) (2012), 5672--5685.

\bibitem{KLam20}
P. Knopf, K.-F. Lam,
\emph{Convergence of a Robin boundary approximation for a Cahn-Hilliard system with dynamic boundary conditions},
Nonlinearity, 33(8) (2020), 4191--4235.

\bibitem{KLLM}
P. Knopf, K.-F. Lam, C. Liu, S. Metzger,
\emph{Phase-field dynamics with transfer of materials: The Cahn-Hilliard equation with reaction rate dependent dynamic boundary conditions},
ESAIM Math. Model. Numer. Anal., 55(1) (2021), 229--282.

\bibitem{KL}
P. Knopf, C. Liu,
\emph{On second-order and fourth-order elliptic systems consisting of bulk and surface PDEs: Well-posedness, regularity theory and eigenvalue problems}, Interfaces Free Bound., 23(4) (2021), 507--533.

\bibitem{KS}
P. Knopf, A. Signori,
\emph{On the nonlocal Cahn-Hilliard equation with nonlocal dynamic boundary condition and boundary penalization},
J. Differential Equations, 280 (2021), 236--291.

\bibitem{KS24}
P. Knopf, J. Stange,
\emph{Well-posedness of a bulk-surface convective Cahn-Hilliard system with dynamic boundary conditions},
(2024), preprint. arxiv:2401.08400v1.

\bibitem{Lions}
J.-L. Lions,
\emph{Quelques M\'{e}thodes de R\'{e}solution des Probl\`{e}mes aux Limites Non Lin\'{e}aires},
Paris, Dunod, 1969.

\bibitem{LW}
C. Liu, H. Wu,
\emph{An energetic variational approach for the Cahn-Hilliard equation with dynamic boundary condition: model derivation and mathematical analysis},
Arch. Ration. Mech. Anal., 233(1) (2019), 167--247.

\bibitem{Me21}
S. Metzger,
\emph{An efficient and convergent finite element scheme for Cahn-Hilliard equations with dynamic boundary conditions},
SIAM J. Numer. Anal., 59 (2021), 219--248.

\bibitem{Mi19}
A. Miranville,
\emph{The Cahn-Hilliard equation},
CBMS-NSF Regional Conf. Ser. in Appl. Math., 95,
Society for Industrial and Applied Mathematics (SIAM), Philadelphia, PA, 2019,

\bibitem{MW}
A. Miranville, H. Wu,
\emph{Long-time behavior of the Cahn-Hilliard equation with dynamic boundary conditions},
J. Elliptic Parabol. Equ., 6(1) (2020), 283--309.

\bibitem{MZ04}
A. Miranville, S. Zelik,
\emph{Robust exponential attractors for Cahn-Hilliard type equations with singular potentials},
Math. Methods Appl. Sci., 27 (2004), 545--582.

\bibitem{MZ}
A. Miranville, S. Zelik,
\emph{Exponential attractors for the Cahn–Hilliard equation with dynamic boundary conditions},
Math. Methods Appl. Sci., 28(6) (2005), 709--735.

\bibitem{QWS}
T.-Z. Qian, X.-P. Wang, P. Sheng,
\emph{A variational approach to moving contact line hydrodynamics},
J. Fluid Mech., 564 (2006), 333--360.

\bibitem{RZ03}
R. Racke, S. Zheng,
\emph{The Cahn-Hilliard equation with dynamical boundary conditions},
Adv. Differential Equations, 8(1) (2003), 83--110.

\bibitem{S}
L. Scarpa,
\emph{Existence and uniqueness of solutions to singular Cahn-Hilliard equations with nonlinear viscosity terms and dynamic boundary conditions},
J. Math. Anal. Appl., 469(2) (2019), 730--764.

\bibitem{R.E.S}
R.E. Showalter,
\emph{Monotone Operators in Banach Spaces and Nonlinear Partial Differential Equations},
Math. Surveys Monogr., vol. 49, American Mathematical Society, Providence, RI, 1997.

\bibitem{Simon}
J. Simon,
\emph{Compact sets in the space} $L^{p}\left(0,T;B\right)$,
Ann. Mat. Pura Appl., 146(4) (1987), 65--96.

\bibitem{Wu07}
H. Wu,
\emph{Convergence to equilibrium for a Cahn-Hilliard model with the Wentzell boundary condition},
Asymptot. Anal., 54(1--2) (2007), 71--92.

\bibitem{Wu22}
H. Wu,
\emph{A review on the Cahn-Hilliard equation: classical results and recent advances in dynamic boundary conditions},
Electron. Res. Arch., 30(8) (2022), 2788--2832.

\bibitem{WZ04}
H. Wu, S.-M. Zheng,
\emph{Convergence to equilibrium for the Cahn–Hilliard equation with dynamic boundary condition},
J. Differential Equations, 204(2) (2004), 511--531.

\end{thebibliography}
\end{document}